\documentclass{amsart}
\usepackage{amssymb}
\usepackage{amsmath}
\usepackage{amsthm}
\usepackage{mathrsfs}
\usepackage{colortbl}
\usepackage{graphicx}
\newtheorem{theorem}{Theorem}
\newtheorem{lemma}[theorem]{Lemma}

\newtheorem{remark}[theorem]{Remark}
\usepackage[margin=35truemm]{geometry}

\theoremstyle{definition}
\newtheorem{definition}[theorem]{Definition}

\newtheorem{proposition}[theorem]{Proposition}
\newtheorem{corollary}[theorem]{Corollary}

\makeatletter

\@addtoreset{theorem}{section}
\makeatother

\makeatletter

\@addtoreset{equation}{section}
\makeatother

\newcommand{\dual}[1]{\langle #1 \rangle }
\begin{document}                                                 

\title[Heat equations in two unbounded domains and an interface]{Global solvability of the heat equations in two unbounded domains and the interface}                                   
\author[Hajime Koba]{Hajime Koba}                            
\address{Faculty of Advanced Science and Technology, Kumamoto University, 2-39-1 Kurokami, Chuo-ku, Kumamoto, 860-8555, Japan}
\email{koba-hajime@kumamoto-u.ac.jp}
                                                
\keywords{Heat equations, Three phase problems, Interface, Surface mass, Surface diffusion}                    
\subjclass[]{35K05, 35D35, 80A05, 76T30}

\begin{abstract}
This paper considers the existence of a unique global-in-time strong solution to the heat equations in two unbounded domains $\Omega_A$, $\Omega_B$ and the interface $\Gamma (= \partial \Omega_A \cap \partial \Omega_B)$. We introduce and study some function spaces in the two unbounded domains and the interface. We apply our function spaces and maximal $L^p$-regularity for Hilbert space-valued functions to show the existence of a local-in-time strong solution to our heat equations. By using an energy equality of our heat system, we prove the existence of a unique global-in-time strong solution to the system with large initial data when the slope of the interface is gentle and our parameters are limited. The key ideas for showing the existence of our strong solutions are to transform our system into a system of equations in two half spaces $\mathbb{R}^3_+, \mathbb{R}^3_-$ and the whole space $\mathbb{R}^2$, and to make use of nice properties of the heat semigroups and kernels for $\mathbb{R}^3_+$, $\mathbb{R}^3_-$, and $\mathbb{R}^2$. In Appendix (I), we derive our heat equations in the two unbounded domains and the interface from an energetic point of view. In Appendix (II), we study representation formulas for differential operators on unbounded domains and surfaces.
\end{abstract}
\maketitle

\section{Introduction}\label{sect1}

\begin{figure}[htbp]
\includegraphics[width=13cm]{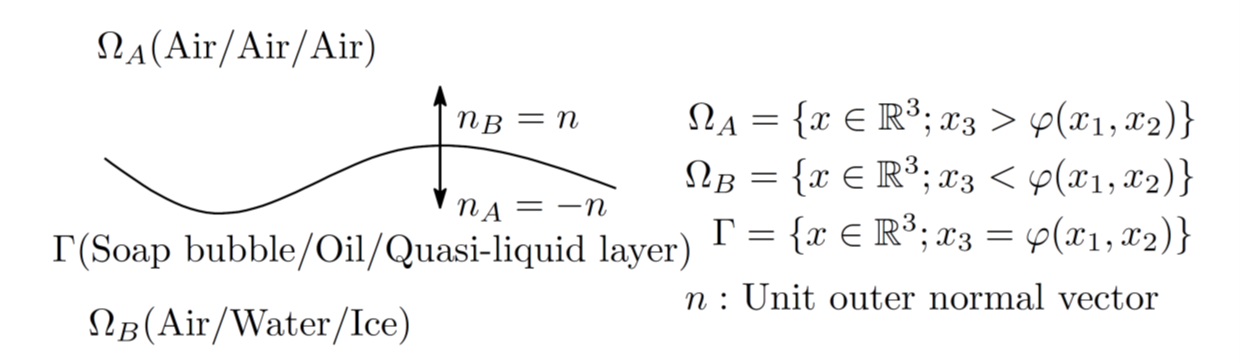}
\caption{Three-phase problems}
\label{Fig1}
\end{figure}

We are interested in the existence of a global-in-time strong solution to the heat equations in two unbounded domains $\Omega_A$, $\Omega_B$ and the interface $\Gamma (= \partial \Omega_A \cap \partial \Omega_B)$. The heat system is a simple heat transfer model for a soap bubble flying in the air, oil floating on water, or a melting ice (see Figure \ref{Fig1}). This paper considers soap bubbles, oil, and quasi-liquid layers as fluids in surfaces. One can see \cite{Gur93,GP01,SSO07,Kob20, Kob23b} for heat transfer on surfaces and three-phase problems, and \cite{KL82,FYK87,SZNYF10} for quasi-liquid layers.

This paper has two purposes. The first one is to introduce and study some function spaces in two unbounded domains $\Omega_A, \Omega_B$ and the interface $\Gamma (= \partial \Omega_A \cap \partial \Omega_B)$. The second one is to apply our function spaces and maximal $L^p$-regularity for Hilbert space-valued functions to construct a strong solution of our heat equations.

Let us first introduce basic notations. We use the characters $i,j, \ell, i',j',\ell'$ as $1,2,3$, and the characters $\alpha, \beta,\alpha', \beta'$ as $1,2$, that is, $i , j, \ell, i',j',\ell' \in \{ 1,2,3 \}$ and $\alpha, \beta, \alpha',\beta' \in \{ 1,2 \}$. Let $x = { }^t (x_1, x_2, x_3), y= { }^t(y_1,y_2,y_3), z = { }^t(z_1,z_2,z_3) \in \mathbb{R}^3$, $x_h = { }^t (x_1, x_2),y_h= { }^t(y_1,y_2), z_h = { }^t (z_1,z_2), X = { }^t (X_1,X_2) \in \mathbb{R}^2$ be the spatial variables, and $t , \tau, t_1 ,t_2 \geq 0$ be the time variables. The symbols $\nabla$, $\nabla_h$, $\nabla_X$ and $\Delta$, $\Delta_h$, $\Delta_X$ are three gradient and three Laplace operators defined by $\nabla = { }^t (\partial_1 , \partial_2 , \partial_3)$, $\nabla_h = { }^t ( \partial_1 , \partial_2)$, $\nabla_X = { }^t ( \partial_{X_1} , \partial_{X_2})$, $\Delta = \partial_1^2 + \partial_2^2 + \partial_3^2$, $\Delta_h = \partial_1^2 + \partial_2^2$, and $\Delta_X = \partial_{X_1}^2 + \partial_{X_2}^2$, where $\partial_j = \partial/{\partial x_j}$ and $\partial_{X_\alpha} = \partial/{\partial X_{\alpha}}$. Write $\Delta_y = \partial^2_{y_1} + \partial^2_{y_2} + \partial^2_{y_3}$, $\nabla_y = { }^t (\partial_{y_1}, \partial_{y_2}, \partial_{y_3})$, $\Delta_{y_h} = \partial_{y_1}^2 + \partial^2_{y_2}$, $\nabla_{y_h} = { }^t ( \partial_{y_1} , \partial_{y_2} )$, $\Delta_z = \partial^2_{z_1} + \partial^2_{z_2} + \partial^2_{z_3}$, $\Delta_{z_h} = \partial_{z_1}^2 + \partial^2_{z_2}$, $\nabla_z = { }^t (\partial_{z_1}, \partial_{z_2}, \partial_{z_3})$, and $\nabla_{z_h} = { }^t ( \partial_{z_1} , \partial_{z_2} )$. Set $\mathbb{R}^2 = \{ X = { }^t (X_1,X_2); X_1 , X_2 \in \mathbb{R} \}$,
\begin{equation*}
\mathbb{R}^3_+ = \{ y = { }^t (y_1,y_2,y_3) \in \mathbb{R}^3; { }y_3 >0 \},{ \ } \mathbb{R}^3_- = \{ z = { }^t(z_1,z_2,z_3) \in \mathbb{R}^3; { }z_3 < 0 \}.
\end{equation*}
It is clear that $\overline{\mathbb{R}^3_+} = \{ y \in \mathbb{R}^3; { }y_3 \geq 0 \},{ \ } \overline{\mathbb{R}^3_-} = \{ z \in \mathbb{R}^3; { }z_3 \leq 0 \}$, and $\partial \mathbb{R}^3_+ \cap \partial \mathbb{R}^3_- = \mathbb{R}^2 \times \{ 0 \}$. In this paper we equate $\mathbb{R}^2 \times \{ 0 \}$ to $\mathbb{R}^2$. For $1 \leq p \leq \infty$, $k \in \mathbb{N}$, and $U \in \{ \mathbb{R}^3_+, \mathbb{R}^3_- , \mathbb{R}^2 \}$, the symbols $L^p(U)$ and $W^{k,p}(U)$ denote the usual Lebesgue and Sobolev spaces, and $\Vert \cdot \Vert_{L^p(U)}$ and $\Vert \cdot \Vert_{W^{k,p} (U)}$ denote its norms. Let $\left< \cdot , \cdot \right>_{L^2(U)}$ be the $L^2$-inner product on $L^2(U)$. In particular, for all $\varphi \in BC^2( \mathbb{R}^2)$ we use the symbol $\Vert \nabla_X \varphi \Vert_{L^\infty (\mathbb{R}^2)}$ as follows:
\begin{equation*}
\Vert \nabla_X \varphi \Vert_{L^\infty ( \mathbb{R}^2)} = \sup_{X \in \mathbb{R}^2} \bigg\vert \frac{\partial \varphi}{\partial X_1}(X) \bigg\vert + \sup_{X \in \mathbb{R}^2} \bigg\vert \frac{\partial \varphi}{\partial X_2} (X) \bigg\vert.
\end{equation*}
Fix $\varphi \in BC^2( \mathbb{R}^2)$. Set $\Omega_A = \{ x \in \mathbb{R}^3;{ \ }x_3 > \varphi (x_1,x_2) \}$,
\begin{equation*}
\Omega_B = \{ x \in \mathbb{R}^3;{ \ }x_3 < \varphi (x_1, x_2) \},{ \ }\Gamma = \{ x \in \mathbb{R}^3;{ \ }x_3  = \varphi (x_1, x_2) \},
\end{equation*}
$\Omega_{A,T} = \Omega_A \times (0,T)$, $\Omega_{B,T} = \Omega_B \times (0,T)$, and $\Gamma_T = \Gamma \times (0,T)$ for some $T \in ( 0, \infty ]$. It is clear that $\overline{\Omega_A} = \{ x \in \mathbb{R}^3;{ \ }x_3 \geq \varphi (x_1,x_2) \}$, $\overline{\Omega_B} = \{ x \in \mathbb{R}^3;{ \ }x_3 \leq \varphi (x_1, x_2) \}$. For each $\alpha , \beta \in \{ 1,2 \}$,  we write
\begin{equation*}
\varphi_\alpha = \varphi_\alpha (X) = \frac{\partial \varphi}{\partial X_\alpha} \text{ and }\varphi_{\alpha \beta} = \varphi_{\alpha \beta} (X) = \frac{\partial^2 \varphi}{\partial X_\alpha \partial X_\beta}.
\end{equation*}
Note that $\varphi_\alpha (x_h) = \partial \varphi/{\partial x_\alpha}$, $\varphi_{\alpha \beta} (x_h) = \partial^2 \varphi/{\partial x_\alpha \partial x_\beta}$. Let $n= n(x) = { }^t (n_1,n_2,n_3)$ be the unit outer normal vector at $x \in \Gamma$ defined by
\begin{equation}\label{eq11}
n(x_1,x_2, x_3) = \frac{1}{\sqrt{1 + \varphi_1^2 + \varphi_2^2}} 
\begin{pmatrix}
- \varphi_1\\
- \varphi_2\\
1
\end{pmatrix}.
\end{equation}
Let $n_A = n_A (x)$ and $n_B = n_B (x)$ be the two unit outer normal vectors at $x \in \partial \Omega_A$ and $x \in \partial \Omega_B$, respectively. We easily find that $n_A = -n_B = -n$ (see Figure \ref{Fig1}). For smooth functions $f$, $x \in \Gamma$, and $j \in \{ 1 , 2, 3 \}$, we define
\begin{align*}
\partial_j^\Gamma f &= \partial_j f - n_j (n \cdot \nabla) f,\\
\nabla_\Gamma f & = { }^t (\partial_1^\Gamma f , \partial_2^\Gamma f, \partial_3^\Gamma f),\\
\Delta_\Gamma f  & = (\partial_1^\Gamma)^2 f + (\partial_2^\Gamma)^2 f + (\partial_3^\Gamma)^2 f.
\end{align*} 
We call $\Delta_\Gamma$ the \emph{Laplace-Beltrami operator}. Note that $\Delta_\Gamma f = \nabla_\Gamma \cdot \nabla_\Gamma f$. See Section \ref{sect7} for differential operators $\partial_j^\Gamma$, $\nabla_\Gamma$, and $\Delta_\Gamma$. The symbol $d \mathcal{H}_x^2$ denotes the $2$-dimensional Hausdorff measure.

This paper considers the existence of a global-in-time strong solution to the heat equations in two unbounded domains $\Omega_A$, $\Omega_B$ and the interface $\Gamma$:
\begin{equation}\label{eq12}
\begin{cases}
\rho_A \partial_t \theta_A = \mu_A \Delta \theta_A & \text{ in } \Omega_{A , T} ,\\
\rho_B \partial_t \theta_B = \mu_B \Delta \theta_B & \text{ in } \Omega_{B,T},\\
\rho_S \partial_t \theta_S = \mu_S \Delta_\Gamma \theta_S + \mu_A \gamma_A[ (n \cdot \nabla) \theta_A] - \mu_B \gamma_B[ (n \cdot \nabla) \theta_B] & \text{ in } \Gamma_T,\\
\gamma_A [\theta_A] = \gamma_B [\theta_B] = \theta_S & \text{ in } \Gamma_T,
\end{cases}
\end{equation}
with
\begin{equation*}
\begin{cases}
{\displaystyle{\lim_{ \vert x \vert \to \infty , { \ }x \in \Omega_A } \theta_A =0}},{ \ }{\displaystyle{\lim_{ \vert x \vert \to \infty,{ \ }x \in \Omega_B } \theta_B  =0}},{ \ }{\displaystyle{\lim_{ \vert x_h \vert \to \infty,{ \ }x \in \Gamma } \theta_S =0}} & \text{ on } (0, T),\\
\theta_A \vert_{t=0} = \theta^A_0{ \ } \text{ in } \Omega_A, { \ }\theta_B \vert_{t = 0} = \theta_0^B{ \ } \text{ in }\Omega_B, { \ }\theta_S \vert_{t=0} = \theta_0^S{ \ } \text{ in } \Gamma,
\end{cases}
\end{equation*}
where $T \in (0, \infty]$, and $\gamma_A$, $\gamma_B$ are two trace operators such that
\begin{align*}
\gamma_A[\cdot] : W^{1,2} (\Omega_A) \to L^2 (\Gamma)(= L^2 (\partial \Omega_A)),\\
\gamma_B[\cdot] : W^{1,2} (\Omega_B) \to L^2 (\Gamma)(= L^2(\partial \Omega_B)).
\end{align*}
The unknown functions $\theta_A = \theta_A (x,t)$, $\theta_B = \theta_B (x,t)$, and $\theta_S = \theta_S (x,t)$ are the temperatures of the fluid(or substance) in $\Omega_A$, $\Omega_B$, and $\Gamma$, respectively. The given positive constants $\mu_A$, $\mu_B$, and $\mu_S$ are the thermal conductivities of the fluid(or substance) in $\Omega_A$, $\Omega_B$, and $\Gamma$, respectively. The given functions $\theta^A_0 = \theta^A_0 (x)$, $\theta^B_0 = \theta^B_0 (x)$, $\theta^S_0 = \theta^S_0 (x)$ are initial data. The symbols $\rho_A$, $\rho_B$, and $\rho_S$ are given positive constants. One can see Section \ref{sect6} for the derivation of system \eqref{eq12} and parameters $(\rho_A,\rho_B,\rho_S)$, and Section \ref{sect3} for two trace operators $\gamma_A,\gamma_B$, function spaces $W^{1,2} (\Omega_A)$, $W^{1,2} (\Omega_B)$, $L^2(\Gamma)$, and terms $\gamma_A [(n \cdot \nabla) \theta_A]$, $\gamma_B[ (n \cdot \nabla ) \theta_B]$.

To construct a strong solution of system \eqref{eq12}, we have to deal with both
\begin{equation}\label{eq13}
\mu_A \gamma_A[ (n \cdot \nabla) \theta_A] - \mu_B \gamma_B[ (n \cdot \nabla) \theta_B] \text{ and }\gamma_A [\theta_A] = \gamma_B [\theta_B] = \theta_S.
\end{equation} 
However, it is not easy to handle \eqref{eq13} directly since trace operators $\gamma_A,\gamma_B$ are not closed operators in general. Moreover, it is difficult to derive the estimates necessary to construct a solution of our system from these terms. To overcome these difficulties, we apply our function spaces and maximal $L^p$-regularity for Hilbert space-valued functions. For that reason, we introduce new parameters $(\kappa_A , \kappa_B, \kappa_S)$ such that
\begin{equation}\label{eq14}
\mu_A = \rho_A \kappa_A , { \ }\mu_B = \rho_B \kappa_B ,{ \ } \mu_S = \rho_S \kappa_S.
\end{equation}
We make use of conditions \eqref{eq14} and our ideas (see the second half of this section) to construct a strong solution of our system. From \eqref{eq12} and \eqref{eq14}, we have
\begin{equation}\label{eq15}
\begin{cases}
\partial_t \theta_A = \kappa_A \Delta \theta_A & \text{ in } \Omega_{A,T},\\
\partial_t \theta_B = \kappa_B \Delta \theta_B & \text{ in } \Omega_{B,T},\\
\partial_t \theta_S = \kappa_S \Delta_\Gamma \theta_S + \frac{\rho_A \kappa_A}{\rho_S} \gamma_A[ (n \cdot \nabla) \theta_A] - \frac{ \rho_B \kappa_B}{\rho_S} \gamma_B[ (n \cdot \nabla) \theta_B] & \text{ in } \Gamma_T,\\
\gamma_A [\theta_A] = \gamma_B [\theta_B] = \theta_S & \text{ in } \Gamma_T,\\
\theta_A \vert_{t=0} = \theta^A_0{ \ } \text{ in } \Omega_A, { \ }\theta_B \vert_{t = 0} = \theta_0^B{ \ } \text{ in }\Omega_B, { \ }\theta_S \vert_{t=0} = \theta_0^S{ \ } \text{ in } \Gamma,&
\end{cases}
\end{equation}
with
\begin{equation*}
\begin{cases}
{\displaystyle{\lim_{ \vert x \vert \to \infty , { \ }x \in \Omega_A } \theta_A =0}},{ \ }{\displaystyle{\lim_{ \vert x \vert \to \infty,{ \ }x \in \Omega_B } \theta_B  =0}},{ \ }{\displaystyle{\lim_{ \vert x_h \vert \to \infty,{ \ }x \in \Gamma } \theta_S =0}} & \text{ on } (0, T).
\end{cases}
\end{equation*}

Now we state one of the main results of this paper.
\begin{theorem}\label{thm11}
Let $\varphi \in BC^2( \mathbb{R}^2)$ and $\rho_A, \rho_B, \rho_S, \kappa_A , \kappa_B , \kappa_S >0$. Assume that $\Vert \nabla_X \varphi \Vert_{L^\infty (\mathbb{R}^2)} \leq 1/2$. Then there is $\mathcal{M}_0 = \mathcal{M}_0 (\kappa_A , \kappa_B , \kappa_S) >0$ such that if
\begin{equation*}
\Vert \nabla_X \varphi \Vert_{L^\infty ( \mathbb{R}^2)} + \frac{\rho_A + \rho_B}{\rho_S} \leq \mathcal{M}_0
\end{equation*}
then for each $\theta_0^A \in W^{1,2} (\Omega_A)$, $\theta_0^B \in W^{1,2}( \Omega_B)$, $\theta_0^S \in W^{1,2} (\Gamma)$ satisfying $\gamma_A[\theta_0^A] = \theta_0^S$ and $\gamma_B[\theta_0^B] = \theta_0^S$, system \eqref{eq15} admits a unique global-in-time strong solution ${ }^t(\theta_A , \theta_B, \theta_S)$:
\begin{align*}
\theta_A & \in C ([0, \infty); L^2 (\Omega_A)) \cap L_{loc}^2 (0,\infty; W^{2,2}(\Omega_A)) \cap W_{loc}^{1,2}(0,\infty ; L^2(\Omega_A)) \cap L^2_{loc}( \Omega_A \times \mathbb{R}_+),\\
\theta_B & \in C ([0, \infty); L^2 (\Omega_B)) \cap L_{loc}^2 (0,\infty; W^{2,2}(\Omega_B)) \cap W_{loc}^{1,2}(0,\infty ; L^2(\Omega_B)) \cap L^2_{loc}( \Omega_B \times \mathbb{R}_+),\\
\theta_S & \in C ([0, \infty); L^2 (\Gamma)) \cap L_{loc}^2 (0,\infty; W^{2,2}(\Gamma)) \cap W_{loc}^{1,2}(0,\infty ; L^2(\Gamma)) \cap L_{loc}^2 (\Gamma \times \mathbb{R}_+),
\end{align*}
satisfying two properties that
\begin{equation*}
\lim_{t \to 0 + 0} ( \Vert \theta_A (t) - \theta_0^A \Vert_{L^2(\Omega_A)} + \Vert \theta_B (t) - \theta_0^B \Vert_{L^2(\Omega_B)} + \Vert \theta_S (t) - \theta_0^S \Vert_{L^2 (\Gamma)}  ) = 0,
\end{equation*}
and that for each fixed $T >0$
\begin{equation*}
\Vert \gamma_A [\theta_A] - \theta_S \Vert_{L^2 (0,T ; L^2(\Gamma))} = \Vert \gamma_B [\theta_B] - \theta_S \Vert_{L^2 (0,T ; L^2(\Gamma))} = 0.
\end{equation*}
Moreover, the solution ${ }^t(\theta_A , \theta_B , \theta_S)$ satisfies that for $0 \leq t_1 \leq t_2 < \infty$
\begin{multline}\label{eq16}
\rho_A \Vert \theta_A (t_2) \Vert^2_{L^2 (\Omega_A)} + \rho_B \Vert \theta_B (t_2) \Vert^2_{L^2 (\Omega_B)} + \rho_S \Vert \theta_S (t_2) \Vert^2_{L^2 (\Gamma)}\\
 + 2 \kappa_A \rho_A \int_{t_1}^{t_2} \Vert \nabla \theta_A ( \tau) \Vert^2_{L^2 (\Omega_A)}{ \ } d \tau + 2 \kappa_B \rho_B \int_{t_1}^{t_2} \Vert \nabla \theta_B ( \tau ) \Vert^2_{L^2 (\Omega_B)}{ \ }d \tau \\
 +  2 \kappa_S \rho_S \int_{t_1}^{t_2} \Vert \nabla_\Gamma \theta_S (\tau) \Vert^2_{L^2 (\Gamma)}{ \ }d \tau\\
  = \rho_A \Vert \theta_A (t_1) \Vert^2_{L^2 (\Omega_A)} + \rho_B \Vert \theta_B (t_1) \Vert^2_{L^2 (\Omega_B)} + \rho_S \Vert \theta_S (t_1) \Vert^2_{L^2 (\Gamma)}.
\end{multline}
Here
\begin{align*}
& L^2_{loc}(0,\infty; W^{2,2} (\Omega_\sharp)) := \{ \Phi ; \Phi \in L^2(0,T; W^{2,2} (\Omega_\sharp)) \text{ for each fixed } T >0  \},\\
& W^{1,2}_{loc}(0,\infty; L^2 (\Omega_\sharp)) := \{ \Phi ; \Phi \in W^{1,2}(0,T; L^2 (\Omega_\sharp)) \text{ for each fixed } T >0  \},
\end{align*}
where $\sharp \in \{ A , B , S\}$ and $\Omega_S := \Gamma$.
\end{theorem}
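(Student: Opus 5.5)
The plan follows the strategy announced in the abstract: flatten the interface, construct a local strong solution, and extend it to all times with the energy equality \eqref{eq16}.

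\textbf{Flattening.} I would use the changes of variables $y = (x_h, x_3 - \varphi(x_h))$ on $\Omega_A$, $z = (x_h, x_3 - \varphi(x_h))$ on $\Omega_B$, and $X = x_h$ on $\Gamma$. These map $\Omega_A, \Omega_B, \Gamma$ onto $\mathbb{R}^3_+, \mathbb{R}^3_-, \mathbb{R}^2$. In the new variables \eqref{eq15} becomes
\begin{align*}
\partial_t u_A &= \kappa_A \Delta_y u_A + \kappa_A F_A(\nabla \varphi, \nabla^2\varphi; u_A),\\
\partial_t u_B &= \kappa_B \Delta_z u_B + \kappa_B F_B(\nabla \varphi, \nabla^2\varphi; u_B),\\
\partial_t u_S &= \kappa_S \Delta_X u_S + \kappa_S F_S + \frac{\rho_A\kappa_A}{\rho_S} g_A - \frac{\rho_B\kappa_B}{\rho_S} g_B,
\end{align*}
with the coupling $u_A|_{y_3=0} = u_B|_{z_3=0} = u_S$. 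Here $F_A, F_B, F_S$ are second-order perturbations whose top-order coefficients are $O(\|\nabla_X\varphi\|_{L^\infty})$ (using $\|\nabla_X\varphi\|_{L^\infty} \le 1/2$ to keep the metric uniformly elliptic). The terms $g_A, g_B$ are the transformed normal derivatives, and the representation formulas of Section \ref{sect7} give them explicitly. Using the function spaces of Section \ref{sect3}, I would check that $W^{k,2}$ norms on the curved sets are equivalent to those on the flat ones, with constants depending on $\|\varphi\|_{BC^2}$.

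\textbf{Local solution.} I would reduce the coupling to a homogeneous Dirichlet problem. Given $u_S$, write $u_A = E_+ u_S + v_A$, where $E_+$ is the heat-type extension to $\mathbb{R}^3_+$ built from the half-space heat kernel, and similarly for $u_B$. Then $v_A, v_B$ solve Dirichlet heat problems on the half spaces, whose semigroups have maximal $L^2$-regularity. The normal traces $g_A, g_B$ are controlled in $L^2(0,T;W^{1/2,2})$ by the $L^2 W^{2,2}\cap W^{1,2}L^2$ norms. Since $W^{1/2,2} \subset L^2$ is lower order relative to $\Delta_X$, the traces act as a lower-order perturbation of the surface heat equation on $\mathbb{R}^2$. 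I would apply maximal $L^p$-regularity ($p=2$) for Hilbert-space-valued functions to the generator $\mathrm{diag}(\kappa_A\Delta_y^D, \kappa_B\Delta_z^D, \kappa_S\Delta_X)$ on $L^2(\mathbb{R}^3_+)\times L^2(\mathbb{R}^3_-)\times L^2(\mathbb{R}^2)$. A contraction mapping on a short interval $[0,T_0]$ then yields a unique local strong solution.

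\textbf{Main obstacle.} Making the map a contraction is the hard part. The second-order perturbations $F_\sharp$ need $\|\nabla_X\varphi\|_{L^\infty}$ small, and the interface fluxes carry the factors $\rho_A\kappa_A/\rho_S$ and $\rho_B\kappa_B/\rho_S$. These fluxes are as strong as a half-derivative, and in the maximal-regularity estimate they appear with constants that do not shrink as $T_0 \to 0$ (trace estimates). This is exactly why the smallness $\|\nabla_X\varphi\|_{L^\infty} + (\rho_A+\rho_B)/\rho_S \le \mathcal{M}_0$ is imposed, with $\mathcal{M}_0$ depending only on the maximal-regularity constants, hence only on $\kappa_A, \kappa_B, \kappa_S$. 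I would first try to absorb all these terms with this smallness. The existence time $T_0$ would then depend only on the size of the $W^{1,2}$ data.

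\textbf{Energy equality and global extension.} Multiply the equations by $\rho_A\theta_A$, $\rho_B\theta_B$, $\rho_S\theta_S$ and integrate by parts; the interface fluxes cancel because $n_A = -n$ and the traces all equal $\theta_S$. This gives \eqref{eq16}. The same computation with $\partial_t\theta$ as multipliers gives monotonicity of $\rho_A\kappa_A\|\nabla\theta_A\|^2 + \rho_B\kappa_B\|\nabla\theta_B\|^2 + \rho_S\kappa_S\|\nabla_\Gamma\theta_S\|^2$. The $W^{1,2}$ norm of the solution is therefore bounded uniformly in time, so the local existence time stays bounded below. Restarting from $t = T_0, 2T_0, \dots$ produces the global solution. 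Uniqueness follows by applying \eqref{eq16} to the difference of two solutions, and continuity at $t=0$ comes from the $C([0,T];L^2)$ part of the local theory.
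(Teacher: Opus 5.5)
Your outline has the same architecture as the paper: flatten by $y=(x_h,x_3-\varphi(x_h))$, lift the interface value to get homogeneous Dirichlet conditions, use maximal $L^2$-regularity of the diagonal operator $\mathcal{A}$, close a contraction with smallness of $\Vert\nabla_X\varphi\Vert_{L^\infty}$ and $(\rho_A+\rho_B)/\rho_S$, and get uniqueness and \eqref{eq16} from the energy identity. It differs from the paper in two places.

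\emph{The lifting.} The paper does not use a caloric extension; it lifts with the explicit static profile $v_S(y_h)e^{-\delta y_3}$ (resp.\ $v_S(z_h)e^{\delta z_3}$). This puts top-order sources $-(dv_S/dt)e^{-\delta y_3}$ and $\kappa_A(\Delta_{y_h}v_S)e^{-\delta y_3}$ into the bulk equations. These are controlled only through $\Vert\psi(y_h)e^{-\delta y_3}\Vert_{L^2(\mathbb{R}^3_+)}=(2\delta)^{-1/2}\Vert\psi\Vert_{L^2(\mathbb{R}^2)}$, i.e.\ by a third smallness parameter $\delta=\delta_0\gg1$; this is where $\kappa_A/\kappa_S$ and $\kappa_B/\kappa_S$ enter $\delta_0$. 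The payoff is that everything reduces to elementary $L^2$ computations and Lemma \ref{lem52}. Your $E_+$ avoids these sources, but you then owe two things:
\begin{itemize}
\item anisotropic parabolic trace/extension bounds for $\Vert E_+u_S\Vert_{L^2(0,T;W^{2,2}(\mathbb{R}^3_+))}$, with constants uniform as $T\to0$;
\item a compatible initial value for $E_+u_S$, e.g.\ the pulled-back $\theta_0^A$. With zero initial data and $\theta_0^S\neq0$ the extension is not in $L^2(0,T;W^{2,2})$; already in one dimension, $w=\mathrm{erfc}(y/2\sqrt{t})$ has $\Vert\partial_y^2w(t)\Vert_{L^2}^2\sim t^{-3/2}$.
\end{itemize}

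\emph{The globalization.} The system is linear, so the difference of two iterates solves the same problem with zero data. The contraction factor $1/5+C_*T^{1/2}$ in \eqref{eq5065} therefore does not depend on $v_0$, so the paper's $T_*$ is data-independent. One simply restarts with step at least $T_*/2$ and glues by uniqueness. Your extra identity with multipliers $\partial_t\theta_\sharp$ is formally right: the interface terms cancel because $\partial_t\theta_A|_\Gamma=\partial_t\theta_S$, and you get monotonicity of the Dirichlet energy as a bonus. It is not needed here, though, and making it rigorous requires a regularization argument, since $\partial_t\theta_A\in L^2(0,T;L^2(\Omega_A))$ has no trace on $\Gamma$.
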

\noindent See Section \ref{sect4} for the definition of strong solutions to system \eqref{eq15} and Section \ref{sect3} for function spaces such as $L^2(\Omega_A)$, $W^{1,2} (\Omega_B)$, and $W^{2,2}(\Gamma)$. Note that the structure of the heat system is different for one-phase and multi-phase problems. Indeed, the solution $\theta_S$ is not necessarily equal to zero even if the initial value $\theta_0^S$ is zero.

Let us explain main difficulties and five key ideas for constructing a strong solution to our heat equations \eqref{eq15}. The main difficulty in proving the existence of a strong solution to our system is dealing with terms \eqref{eq13} involving trace operators $\gamma_A,\gamma_B$ since $\gamma_A, \gamma_B$ are not closed operators in general. Moreover, it is difficult to derive the estimates necessary to construct a solution of our system from these terms. To overcome these difficulties, we introduce parameters $(\rho_A , \rho_B ,\rho_S, \delta, \epsilon, \kappa_A, \kappa_B, \kappa_S)$. By restricting parameters $(\rho_A , \rho_B , \rho_S, \delta, \epsilon)$ and $\Vert \nabla_X \varphi \Vert_{L^\infty (\mathbb{R}^2)}$, we can derive the estimates necessary for showing the existence of solutions to our systems (see Propositions \ref{prop53} and \ref{prop55} in Section \ref{sect5} for details). Parameters $(\rho_A , \rho_B, \rho_S)$ are derived from mathematical modeling of system \eqref{eq12} in Section \ref{sect6}, $\delta$ from the transformation that transforms system \eqref{eq15} into system \eqref{eq24} in Section \ref{sect2}, $\epsilon$ from the construction of our solutions in Section \ref{sect5}, and $(\kappa_A , \kappa_B , \kappa_S)$ from \eqref{eq14}. This is the first key idea. The second idea is to transform equations \eqref{eq15} into a system of equations in two half spaces $\mathbb{R}^3_+$, $\mathbb{R}^3_-$ and the whole space $\mathbb{R}^2$. We apply a strong solution to the system in two half spaces $\mathbb{R}^3_+,\mathbb{R}^3_-$ and the whole space $\mathbb{R}^2$ to construct a strong solution to our heat equations in two unbounded domains $\Omega_A, \Omega_B$ and the interface $\Gamma$. The third idea is to use a function $\hat{\theta}_S$ in \eqref{eq21} and a solution $\hat{u}_S$ in system \eqref{eq28} to deal with $\gamma_A [\theta_A] = \gamma_B [\theta_B] = \theta_S$ (see Section \ref{sect2} and Proposition \ref{prop31} for details). Indeed, we find that $\hat{\theta}_S = \hat{u}_S$ from Section \ref{sect2}. We transform system \eqref{eq15} into system \eqref{eq28} using variable transformations with $\varphi$ and $\hat{\theta}_S (=\hat{u}_S)$. This allows us to make use of nice properties of the heat semigroups and kernels for $\mathbb{R}^3_+$, $\mathbb{R}^3_-$, and $\mathbb{R}^2$. The fourth idea is to apply our function spaces and the Laplace operator $\mathcal{A}$ for two half spaces $\mathbb{R}^3_+$, $\mathbb{R}^3_-$ and the whole space $\mathbb{R}^2$. Applying a strong solution to system \eqref{eq2010}, we construct a strong solution to system \eqref{eq15}. To show the existence of a strong solution to \eqref{eq2010}, we use the following function spaces:
\begin{align}
H & = \{ \psi = { }^t (\psi_A , \psi_B, \psi_S ) \in L^2 (\mathbb{R}^3_+) \times L^2 (\mathbb{R}^3_-) \times L^2 (\mathbb{R}^2); { \ } \Vert \psi \Vert_H < + \infty \},\label{eq17}\\
X_T & = \{ w \in C ([0,T]; H ); { \ } \Vert w \Vert_{X_T} < \infty \}\notag
\end{align}
with 
\begin{align*}
\Vert \psi \Vert_H & = (\Vert \psi_A \Vert_{L^2(\mathbb{R}^3_+)}^2 + \Vert \psi_B \Vert_{L^2 (\mathbb{R}^3_-)}^2 + \Vert \psi_S \Vert_{L^2 (\mathbb{R}^2)}^2)^{1/2},\\
\Vert w \Vert_{X_T} & = \sup_{0 \leq t \leq T}\Vert w \Vert_H + \Vert d w /{d t } \Vert_{L^2(0,T;H)} + \Vert  \mathcal{A} w \Vert_{L^2(0,T;H)},
\end{align*}
where $\mathcal{A}$ is the linear operator on $H$ defined by
\begin{equation*}
\begin{cases}
\mathcal{A} \psi = { }^t ( - \kappa_A \Delta_y \psi_A , - \kappa_B \Delta_z \psi_B , - \kappa_S \Delta_X \psi_S ),\\
D (\mathcal{A}) = [W_0^{1,2} (\mathbb{R}^3_+) \cap W^{2,2} (\mathbb{R}^3_+)] \times [W_0^{1,2} (\mathbb{R}^3_-) \cap W^{2,2} (\mathbb{R}^3_-)] \times W^{2,2} (\mathbb{R}^2).
\end{cases}
\end{equation*}
Using maximal $L^p$-regularity of $\mathcal{A}$, we show the existence of a local-in-time strong solution to \eqref{eq2010}; see Section \ref{sect5}. The fifth idea is to apply the uniqueness of the strong solutions to \eqref{eq15} in order to show the uniqueness of the strong solutions to system \eqref{eq29}. Using an energy equality for equations \eqref{eq15}, we discuss the uniqueness of the strong solutions to system \eqref{eq29} (see Proposition \ref{prop43} in Section \ref{sect4} for details).

Let us introduce some papers related to this paper, and compare this paper with those papers. Ukai \cite{Uka87} studied representation formulas for solutions of the heat and the Stokes equations in a half space, and derived $L^p$-$L^q$ estimates of the solutions. Davies \cite[Chapter 5]{Dav89} studied the heat kernel of a complete Riemannian manifold. They derived fundamental properties of their heat semigroup under the condition that the Ricci curvature of the manifold is bounded below by a negative constant. Desch-Hieber-Pr\"{u}ss \cite{DHP01} studied maxima $L^p$-$L^q$-regularity for the solutions of the heat and the Stokes equations in a half space. They applied spectral analysis to prove that their heat and Stokes operators admit $\mathcal{R}$-bounded $H^\infty$-calculus on $L^p$-spaces. Favini-Goldstein-Goldstein-Romanelli \cite{FGGR02} studied the heat equation in a bounded domain $\Omega$ with their generalized Wentzell boundary condition: $\Delta \theta + a_0(n \cdot \nabla ) \theta + b_0 \theta =0$ on $\partial \Omega$. They showed that their heat operator generates a semigroup on $L^p$-spaces when $a_0$ and $b_0$ are non-negative functions. Mazzucato-Nistor \cite{MN06} studied maximal $L^p$-$L^q$-regularity of the heat kernel on noncompact manifolds. They showed that their heat system admits a unique local-in-time strong $L^p$ solution when initial data is in an interpolation space. Koba \cite{Kob22} applied maximal $L^p$-regularity of the heat kernel for a bounded domain to show the existence of a unique global-in-time strong solution to their advection-diffusion equation on an evolving surface with a boundary. Koba \cite{Kob26} considered their heat equations in two half spaces $\mathbb{R}^3_+$, $\mathbb{R}^3_-$ and the interface $\mathbb{R}^2 \times \{ 0 \} ( \cong \mathbb{R}^2)$. They studied fundamental properties of their Laplace operator for the two half spaces and the interface to construct a unique global-in-time strong solution to their system. In this paper, we study the heat equations in two unbounded domains $\Omega_A$, $\Omega_B$ and the interface $\Gamma (= \partial \Omega_A \cap \partial \Omega_B)$ when the slope of the interface is gentle. To prove Theorem \ref{thm11}, we make use of results in \cite{Uka87, DHP01} and improve methods in \cite{Kob22,Kob26}. We apply maximal $L^p$-regularity of our Laplace operator $\mathcal{A}$ and nice properties of the semigroup ${\rm{e}}^{- t \mathcal{A}}$ generated by $\mathcal{A}$ to construct strong solutions to our system. The paper \cite{FGGR02} studied the heat equation in a bounded domain with their generalized Wentzell boundary condition, on the other hand, this paper considers an evolution equation in the interface $\Gamma (=\partial \Omega_A \cap \partial \Omega_B)$.

The outline of this paper is as follows: In Section \ref{sect2}, we first introduce four systems to prove Theorem \ref{thm11}, and then state one of the main results of this paper. In Section \ref{sect3}, we introduce and study function spaces in two unbounded domains $\Omega_A$, $\Omega_B$ and the interface $\Gamma$. In particular, we investigate properties of trace operators $\gamma_A$ and $\gamma_B$. In Section \ref{sect4}, we discuss the uniqueness and the existence of solutions to our systems. More precisely, we investigate the relationship between the solutions to system \eqref{eq15} and system \eqref{eq29}. In Section \ref{sect5}, we prove Theorem \ref{thm11}. We apply our function spaces, maximal $L^2$-regularity of operator $\mathcal{A}$, and an energy equality to show the existence of a global-in-time strong solution to our systems. In Appendix $(\rm{I})$ (Section \ref{sect6}), we derive our heat equations \eqref{eq12} in two unbounded domains $\Omega_A$, $\Omega_B$ and the interface $\Gamma$ from an energetic point of view. In Appendix $(\rm{II})$ (Section \ref{sect7}), we recall representation formulas for differential operators $\partial_j^\Gamma$, $\Delta_\Gamma$, and study the relationships between $\Vert \cdot \Vert_{\mathcal{W}^{2,2} (\Gamma)}$ and $\Vert \cdot \Vert_{W^{2,2}(\mathbb{R}^2)}$, between $\Vert \cdot \Vert_{\mathcal{W}^{2,2} (\Omega_A)}$ and $\Vert \cdot \Vert_{W^{2,2}(\mathbb{R}^3_+)}$, and between $\Vert \cdot \Vert_{\mathcal{W}^{2,2} (\Omega_B)}$ and $\Vert \cdot \Vert_{W^{2,2}(\mathbb{R}^3_-)}$. See Sections \ref{sect3} and \ref{sect7} for $\Vert \cdot \Vert_{\mathcal{W}^{2,2} (\Gamma)}$, $\Vert \cdot \Vert_{\mathcal{W}^{2,2} (\Omega_A)}$, and $\Vert \cdot \Vert_{\mathcal{W}^{2,2} (\Omega_B)}$.

\section{Four Systems and Main Results}\label{sect2}

In this section, we introduce four systems to construct strong solutions of equations \eqref{eq15}. We first deal with $\gamma_A [\theta_A] = \gamma_B[\theta_B] = \theta_S$ in \eqref{eq15}. Secondly, we transform a system of equations in two unbounded domains $\Omega_A,\Omega_B$ and the interface $\Gamma (= \partial \Omega_A \cap \partial \Omega_B)$ into a system of equations in two half spaces $\mathbb{R}^3_+, \mathbb{R}^3_-$ and the whole space $\mathbb{R}^2$. Thirdly, we change our equations to abstract systems. Finally, we state one of the main results of this paper.

Let $\varphi \in BC^2(\mathbb{R}^2)$ and $n= n(x) = { }^t (n_1,n_2,n_3)$ be the unit outer normal vector at $x \in \Gamma$ defined by \eqref{eq11}. Let $\rho_A, \rho_B, \rho_S, \kappa_A, \kappa_B, \kappa_S >0$. Let $\theta_0^A \in W^{1,2} (\Omega_A) \cap C_0^1 ( \overline{\Omega_A})$, $\theta_0^B \in W^{1,2}( \Omega_B) \cap C_0^1 ( \overline{\Omega_B})$, and $\theta_0^S \in W^{1,2} (\Gamma) \cap C_0^1 ( \Gamma)$ such that $\gamma_A [ \theta_0^A ] = \theta_0^A \vert_{\partial \Omega_A} = \theta_0^S$ and $\gamma_B[\theta_0^B] = \theta_0^B \vert_{\partial \Omega_B} = \theta_0^S$, where $\gamma_A$, $\gamma_B$ are two trace operators such that $\gamma_A:W^{1,2} (\Omega_A ) \to L^2(\Gamma) (=L^2(\partial \Omega_A))$ and $\gamma_B: W^{1,2} (\Omega_B) \to L^2(\Gamma)(=L^2(\partial \Omega_B))$. See Section \ref{sect3} for function spaces $W^{1,2} (\Omega_A)$, $C_0^1 ( \overline{\Omega_A})$, $W^{1,2} (\Omega_B)$, $C_0^1 ( \overline{\Omega_B})$, $W^{1,2} (\Gamma)$, $C_0^1 ( \Gamma)$, and two trace operators $\gamma_A$, $\gamma_B$. Assume that $\theta_A$, $\theta_B$, $\theta_S$ are smooth functions in $\mathbb{R}^4$, and that ${ }^t(\theta_A , \theta_B , \theta_S)$ is a strong solution to system \eqref{eq15} with initial data ${ }^t (\theta_0^A , \theta_0^B , \theta_0^S)$.

Let us first deal with $\gamma_A [\theta_A] = \gamma_B[\theta_B] = \theta_S$ in system \eqref{eq15}. For $x_h = { }^t (x_1,x_2) \in \mathbb{R}^2$ and $t>0$, we define
\begin{equation}\label{eq21}
\hat{\theta}_S = \hat{\theta}_S (x_h,t) = \theta_S (x_1 , x_2 , \varphi (x_1,x_2) ,t ). 
\end{equation}
Let $\delta >0$. Set
\begin{equation}\label{eq22}
\begin{cases}
u_A = u_A (x , t ) = \theta_A (x ,t ) - \hat{\theta}_S ( x_h , t) {\rm{e}}^{\delta \{ \varphi (x_h) -x_3 \} } { \ }(x \in \overline{\Omega_A},{ \ }t>0),\\
u_B = u_B (x , t ) = \theta_B (x ,t ) - \hat{\theta}_S ( x_h , t) {\rm{e}}^{ \delta \{ x_3 - \varphi (x_h )  \} } { \ }(x \in \overline{\Omega_B},{ \ }t>0),\\
u_S = u_S (x , t ) = \theta_S ( x , t ) { \ }(x \in \Gamma,{ \ }t>0),\\
\end{cases}
\end{equation}
\begin{equation}\label{eq23}
\begin{cases}
u_0^A = u_0^A (x) = \theta_0^A(x) - \hat{\theta}_0^S(x_h) {\rm{e}}^{ \delta \{ \varphi (x_h ) -x_3 \} } { \ }(x \in \overline{\Omega_A}),\\
u_0^B = u_0^B (x) = \theta_0^B(x)  - \hat{\theta}_0^S(x_h) {\rm{e}}^{ \delta \{ x_3 - \varphi (x_h )  \} } { \ }(x \in \overline{\Omega_B}),\\
u_0^S = u_0^S (x) = \theta_0^S(x) { \ }(x \in \Gamma).
\end{cases}
\end{equation}
From \eqref{eq22}, \eqref{eq23}, and \eqref{eq15}, we have the following equations.
\begin{equation}\label{eq24}
\begin{cases}
\partial_t u_A - \kappa_A \Delta u_A = \mathcal{F}_A(u_A, u_B, \hat{\theta}_S)  & \text{ in } \Omega_{A,T} ,\\
\partial_t u_B - \kappa_B \Delta u_B = \mathcal{F}_B (u_A, u_B, \hat{\theta}_S) & \text{ in } \Omega_{B,T}\\
\partial_t u_S - \kappa_S \Delta_\Gamma u_S = \mathcal{F}_S (u_A, u_B , \hat{\theta}_S) & \text{ in } \Gamma_T,\\
\gamma_A [u_A] = \gamma_B [u_B] = 0 & \text{ in } \Gamma_T,\\
u_A \vert_{t=0} = u^A_0{ \ } \text{ in } \Omega_A, { \ }u_B \vert_{t = 0} = u_0^B{ \ } \text{ in }\Omega_B, { \ }u_S \vert_{t=0} = u_0^S{ \ } \text{ in } \Gamma,&
\end{cases}
\end{equation}
with
\begin{equation*}
{\displaystyle{\lim_{ \vert x \vert \to \infty , { \ }x \in \Omega_A } u_A =0}},{ \ }{\displaystyle{\lim_{ \vert x \vert \to \infty,{ \ }x \in \Omega_B } u_B  =0}},{ \ }{\displaystyle{\lim_{ \vert x_h \vert \to \infty,{ \ }x \in \Gamma } u_S =0}} \text{ on } (0, T),
\end{equation*}
where
\begin{multline}\label{eq25}
\mathcal{F}_A (u_A ,u_B ,\hat{\theta}_S ) = - (\partial_t \hat{\theta}_S) {\rm{e}}^{ \delta \{  \varphi (x_h) - x_3 \}} + \kappa_A  (\Delta_h \hat{\theta}_S) {\rm{e}}^{ \delta \{ \varphi (x_h) - x_3 \}}\\ +  \kappa_A \hat{\theta}_S \{ \delta^2 + \delta \varphi_{11} + \delta \varphi_{22} + (\delta \varphi_1)^2 + (\delta \varphi_2)^2 \}  {\rm{e}}^{ \delta \{ \varphi (x_h) - x_3 \}}\\
+ 2 \kappa_A (\delta \varphi_1 \partial_1 \hat{\theta}_S + \delta \varphi_2 \partial_2 \hat{\theta}_S)  {\rm{e}}^{ \delta \{ \varphi (x_h) - x_3 \}} ,
\end{multline}
\begin{multline}\label{eq26}
\mathcal{F}_B (u_A ,u_B , \hat{\theta}_S ) = - (\partial_t \hat{\theta}_S) {\rm{e}}^{ \delta \{  x_3 - \varphi (x_h) \}} + \kappa_B  (\Delta_h \hat{\theta}_S) {\rm{e}}^{ \delta \{ x_3 - \varphi (x_h) \}}\\ +  \kappa_B \hat{\theta}_S \{ \delta^2 - \delta \varphi_{11} - \delta \varphi_{22} + (\delta \varphi_1)^2 + (\delta \varphi_2)^2 \}  {\rm{e}}^{ \delta \{ x_3 - \varphi (x_h) \}}\\
- 2 \kappa_B(\delta \varphi_1 \partial_1 \hat{\theta}_S + \delta \varphi_2 \partial_2 \hat{\theta}_S) {\rm{e}}^{ \delta \{ x_3 - \varphi (x_h) \}} ,
\end{multline}
\begin{multline}\label{eq27}
\mathcal{F}_S (u_A ,u_B , \hat{\theta}_S ) = \frac{\rho_A \kappa_A}{\rho_S} \gamma_A[  (n, \nabla ) u_A ] - \frac{\rho_B \kappa_B}{\rho_S} \gamma_B[  (n \cdot \nabla ) u_B ]\\
+ \frac{\rho_A \kappa_A}{\rho_S} \gamma_A[  (n, \nabla ) \{ \hat{\theta}_S {\rm{e}}^{ \delta \{ \varphi (x_h) - x_3  \}} \} ] - \frac{\rho_B \kappa_B}{\rho_S} \gamma_B[  (n, \nabla ) \{ \hat{\theta}_S  {\rm{e}}^{ \delta \{ x_3 - \varphi (x_h) \}} \} ].
\end{multline}
Here $\varphi_\alpha = \partial \varphi/{\partial x_\alpha}$ and $\varphi_{\alpha \beta} = \partial^2 \varphi/{\partial x_\alpha \partial x_\beta }$. See Proposition \ref{prop3017} and Remark \ref{rem7017} for $\gamma_A [(n \cdot \nabla) f]$ and $\gamma_B[ (n \cdot \nabla ) f]$.
\begin{remark}\label{rem21}
We check that for all $x \in \Gamma $, $0 < t < T$, and $\sharp \in \{ A , B \}$,
\begin{align*}
u_\sharp (x,t) &= \theta_\sharp (x,t) \vert_{\Gamma} - \hat{\theta}_S (x_h,t) = \theta_S (x,t ) - \hat{\theta}_S (x_h ,t)\\
 &= \theta_S (x_1,x_2, \varphi (x_h), t ) - \hat{\theta}_S (x_h ,t) = \hat{\theta}_S (x_h ,t) - \hat{\theta}_S (x_h ,t) = 0.
\end{align*}
We also find that $u_0^A \in W_0^{1,2} ( \Omega_A)$ and $u_0^B \in W_0^{1,2} ( \Omega_B)$. See also Proposition \ref{prop31}.
\end{remark}

To solve system \eqref{eq24}, we transform the system into a system of equations in two half spaces $\mathbb{R}^3_+$, $\mathbb{R}^3_-$ and the whole space $\mathbb{R}^2$. To this end, we use the tools.
\begin{lemma}\label{lem22}
$(\rm{i})$ For all $y= { }^t(y_1,y_2,y_3) \in \overline{\mathbb{R}^3_+}$ and $x = { }^t(x_1,x_2,x_3) \in \overline{\Omega_A}$, we set
\begin{equation*}
\tilde{x}_A = \tilde{x}_A (y)  =
\begin{pmatrix}
\tilde{x}_1^A\\
\tilde{x}_2^A\\
\tilde{x}_3^A
\end{pmatrix}
=
\begin{pmatrix}
y_1\\
y_2\\
y_3 + \varphi (y_1,y_2)
\end{pmatrix},{ \ } \tilde{x}_A^{-1} (x) = \begin{pmatrix}
x_1\\
x_2\\
x_3 - \varphi (x_1,x_2)
\end{pmatrix}.
\end{equation*}
Then each mapping $\tilde{x}_A: \mathbb{R}^3_+ \to \Omega_A$ and $\tilde{x}_A: \overline{\mathbb{R}^3_+} \to \overline{\Omega_A}$ is bijective, $\tilde{x}_A^{-1}$ is the inverse function of $\tilde{x}_A$, and
\begin{equation*}
\Omega_A = \{ x \in \mathbb{R}^3; x = \tilde{x}_A(y), y \in \mathbb{R}^3_+ \},{ \ }\mathbb{R}^3_+ = \{ y \in \mathbb{R}^3;{ \ }y = \tilde{x}^{-1}_A (x), { \ }x \in \Omega_A \}.
\end{equation*}
$(\rm{ii})$ For all $z= { }^t(z_1,z_2,z_3) \in \overline{\mathbb{R}^3_-}$ and $x = { }^t(x_1,x_2,x_3) \in \overline{\Omega_B}$, we set
\begin{equation*}
\check{x}_B = \check{x}_B (z) =
\begin{pmatrix}
\check{x}_1^B\\
\check{x}_2^B\\
\check{x}_3^B
\end{pmatrix} =
\begin{pmatrix}
z_1\\
z_2\\
z_3 + \varphi (z_1,z_2)
\end{pmatrix},{ \ }
\check{x}_B^{-1} (x) = \begin{pmatrix}
x_1\\
x_2\\
x_3 - \varphi (x_1,x_2)
\end{pmatrix}.
\end{equation*}
Then each mapping $\check{x}_B: \mathbb{R}^3_- \to \Omega_B$ and $\check{x}_B: \overline{\mathbb{R}^3_-} \to \overline{\Omega_B}$ is bijective, $\check{x}_B^{-1}$ is the inverse function of $\check{x}_B$, and
\begin{equation*}
\Omega_B = \{ x \in \mathbb{R}^3; x = \check{x}_B(z), z \in \mathbb{R}^3_- \},{ \ }\mathbb{R}^3_- = \{ z \in \mathbb{R}^3;{ \ }z = \check{x}^{-1}_B (x), { \ }x \in \Omega_B \}.
\end{equation*}
$(\rm{iii})$ For all $X= { }^t(X_1,X_2) \in \mathbb{R}^2$ and $x = { }^t(x_1,x_2,x_3) \in \Gamma$, we set
\begin{equation*}
\hat{x}_S = \hat{x}_S (X) =
\begin{pmatrix}
\hat{x}_1^S\\
\hat{x}_2^S\\
\hat{x}_3^S
\end{pmatrix} =
\begin{pmatrix}
X_1\\
X_2\\
\varphi (X_1,X_2)
\end{pmatrix},{ \ }\hat{x}_S^{-1} (x) = \begin{pmatrix}
x_1\\
x_2
\end{pmatrix}.
\end{equation*}
Then the mapping $\hat{x}_S: \mathbb{R}^2 \to \Gamma$ is bijective, $\hat{x}_S^{-1}$ is the inverse function of $\hat{x}_S$, and
\begin{equation*}
\Gamma = \{ x \in \mathbb{R}^3; x = \hat{x}_S(X), X \in \mathbb{R}^2 \},{ \ }\mathbb{R}^2 = \{ X \in \mathbb{R}^2;{ \ }X = \hat{x}^{-1}_S (x), { \ }x \in \Gamma \}.
\end{equation*}
\end{lemma}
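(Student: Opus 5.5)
The plan is to verify each part of Lemma \ref{lem22} by direct computation, since all three maps are explicit shears or graph maps built from the continuous function $\varphi \in BC^2(\mathbb{R}^2)$. For part (i), I would first check that $\tilde{x}_A$ maps $\mathbb{R}^3_+$ into $\Omega_A$. If $y_3>0$, then $x=\tilde{x}_A(y)$ satisfies
\begin{equation*}
x_3 - \varphi(x_1,x_2) = y_3 + \varphi(y_1,y_2) - \varphi(y_1,y_2) = y_3 > 0,
\end{equation*}
so $x \in \Omega_A$. The same identity with $y_3 \geq 0$ gives $\tilde{x}_A(\overline{\mathbb{R}^3_+}) \subset \overline{\Omega_A}$, using the description $\overline{\Omega_A} = \{x_3 \geq \varphi(x_h)\}$ noted in Section \ref{sect1}. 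Conversely, for $x \in \Omega_A$ (resp. $\overline{\Omega_A}$), the point $\tilde{x}_A^{-1}(x)$ has third component $x_3 - \varphi(x_h) > 0$ (resp. $\geq 0$), so $\tilde{x}_A^{-1}$ maps $\Omega_A$ into $\mathbb{R}^3_+$ and $\overline{\Omega_A}$ into $\overline{\mathbb{R}^3_+}$.

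Next I would compute the two compositions, $\tilde{x}_A^{-1}(\tilde{x}_A(y)) = y$ and $\tilde{x}_A(\tilde{x}_A^{-1}(x)) = x$. These follow at once because the first two coordinates are left unchanged, so $\varphi$ is evaluated at the same horizontal point in both steps. Having two-sided inverses between the stated sets gives bijectivity of both restricted maps. The set identities $\Omega_A = \tilde{x}_A(\mathbb{R}^3_+)$ and $\mathbb{R}^3_+ = \tilde{x}_A^{-1}(\Omega_A)$ are then just restatements of surjectivity.

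Part (ii) is identical with the inequalities reversed: $z_3<0$ if and only if $x_3<\varphi(x_h)$. For part (iii), if $x \in \Gamma$ then $x_3 = \varphi(x_1,x_2)$, so $\hat{x}_S(\hat{x}_S^{-1}(x)) = {}^t(x_1,x_2,\varphi(x_1,x_2)) = x$. Also $\hat{x}_S^{-1}(\hat{x}_S(X)) = X$ trivially, and clearly $\hat{x}_S(X) \in \Gamma$.

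There is no genuine obstacle here. The only point needing care is using the closure descriptions $\overline{\Omega_A} = \{x_3 \geq \varphi\}$ and $\overline{\Omega_B} = \{x_3 \leq \varphi\}$, which rely on the continuity of $\varphi$ and are already stated in the paper.
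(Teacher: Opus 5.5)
Your proposal is correct. The paper gives no proof of this lemma (it is explicitly left to the reader), and your direct verification is exactly the intended elementary argument: you check that $\tilde{x}_A$, $\check{x}_B$, $\hat{x}_S$ and the stated inverses map between the indicated sets, and that they compose to the identity in both orders, with the closure descriptions $\overline{\Omega_A}=\{x_3\geq\varphi(x_h)\}$ and $\overline{\Omega_B}=\{x_3\leq\varphi(x_h)\}$ (valid by continuity of $\varphi$) as the only extra input.
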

\noindent The proof of Lemma \ref{lem22} is left for the readers. See Section \ref{sect7} for properties of $(\tilde{x}_A$, $\check{x}_B$, $\hat{x}_S$).

Let $\tilde{x}_A$, $\check{x}_B$, $\hat{x}_S$ be the three mappings defined by Lemma \ref{lem22}. Now we use $\tilde{x}_A$, $\check{x}_B$, $\hat{x}_S$ to change system \eqref{eq24} into a system of equations in two half spaces $\mathbb{R}^3_+,\mathbb{R}^3_-$ and the whole space $\mathbb{R}^2$. For $X \in \mathbb{R}^2$, we set
\begin{equation*}
G_S = G_S (X) = 1 + \varphi_1^2 + \varphi_2^2.
\end{equation*}
See subsection \ref{subsec71} for $G_S$ and differential operators $\partial_j^\Gamma$ and $\Delta_\Gamma$. Set $\tilde{u}_A = \tilde{u}_A (y,t) = u_A ( \tilde{x}_A (y),t)$, $\check{u}_B = \check{u}_B (z,t) = u_B ( \check{x}_B (z),t)$, $\hat{u}_S = \hat{u}_S (X,t) = u_S ( \hat{x}_S (X) , t)$, $\tilde{u}_0^A = \tilde{u}_0^A (y) = u_0^A ( \tilde{x}_A (y))$, $\check{u}_0^B = \check{u}_0^B (z) = u_0^B ( \check{x}_B (z))$, and $\hat{u}_0^S = \hat{u}_0^S (X) = u_0^S ( \hat{x}_S (X))$. Using change of variables with $(\tilde{x}_A$, $\check{x}_B$, $\hat{x}_S$), and then applying Lemma \ref{lem71}, assertion $(\rm{ii})$ in Remark \ref{rem72}, Lemma \ref{lem75}, $(\rm{ii})$ in Remark \ref{rem76}, Lemma \ref{lem79}, $(\rm{i})$ in Remark \ref{rem7010}, and Remark \ref{rem7017} (see also Remark \ref{rem23}) into system \eqref{eq24}, we have
\begin{equation}\label{eq28}
\begin{cases}
\partial_t \tilde{u}_A + \mathscr{L}_1 \tilde{u}_A = \mathring{\mathcal{F}}_1 (\tilde{u}_A, \check{u}_B, \hat{u}_S)  & \text{ in } \mathbb{R}^3_+ \times (0, T) ,\\
\partial_t \check{u}_B + \mathscr{L}_2 \check{u}_B = \mathring{\mathcal{F}}_2 (\tilde{u}_A, \check{u}_B, \hat{u}_S)  & \text{ in } \mathbb{R}^3_- \times (0, T ),\\
\partial_t \hat{u}_S + \mathscr{L}_3 \hat{u}_S = \mathring{\mathcal{F}}_3 (\tilde{u}_A, \check{u}_B, \hat{u}_S)  & \text{ in } \mathbb{R}^2 \times (0, T ),\\
\gamma_+ [\tilde{u}_A] = \gamma_- [\check{u}_B] = 0 & \text{ in } \mathbb{R}^2 \times (0, T),\\
\tilde{u}_A \vert_{t=0} = \tilde{u}^A_0{ \ } \text{ in } \mathbb{R}^3_+, { \ }\check{u}_B \vert_{t = 0} = \check{u}_0^B{ \ } \text{ in }\mathbb{R}^3_-,& { \ }\hat{u}_S \vert_{t=0} = \hat{u}_0^S{ \ } \text{ in } \mathbb{R}^2,
\end{cases}
\end{equation}
with
\begin{equation*}
{\displaystyle{\lim_{ \vert y \vert \to \infty , { \ }y \in \mathbb{R}^3_+ } \tilde{u}_A =0}},{ \ }{\displaystyle{\lim_{ \vert z \vert \to \infty,{ \ }z \in \mathbb{R}^3_- } \check{u}_B  =0}},{ \ }{\displaystyle{\lim_{ \vert X \vert \to \infty,{ \ }X \in \mathbb{R}^2 } \hat{u}_S =0}} \text{ on } (0, T),
\end{equation*}
where $\mathscr{L}_1 \tilde{u}_A = - \kappa_A \Delta_y \tilde{u}_A + \mathscr{B}_1 \tilde{u}_A$, $\mathscr{L}_2 \check{u}_B = - \kappa_B \Delta_z \check{u}_B + \mathscr{B}_2 \check{u}_B$, $\mathscr{L}_3 f_S = - \kappa_S \Delta_X \hat{u}_S + \mathscr{B}_3 \hat{u}_S$, 
\begin{align*}
\mathscr{B}_1 \tilde{u}_A = \kappa_A \bigg( 2 \varphi_1 \frac{\partial^2 \tilde{u}_A}{\partial y_1 \partial y_3} + 2 \varphi_2 \frac{\partial^2 \tilde{u}_A}{\partial y_2 \partial y_3} - (\varphi_1^2 + \varphi_2^2) \frac{\partial^2 \tilde{u}_A}{\partial y_3^2 } + (\varphi_{11} +\varphi_{22}) \frac{\partial \tilde{u}_A}{\partial y_3} \bigg),\\
\mathscr{B}_2 \check{u}_B = \kappa_B \bigg(  2 \varphi_1 \frac{\partial^2 \check{u}_B}{\partial z_1 \partial z_3} + 2 \varphi_2 \frac{\partial^2 \check{u}_B}{\partial z_2 \partial z_3} - (\varphi_1^2 + \varphi_2^2) \frac{\partial^2 \check{u}_B}{\partial z_3^2 } + ( \varphi_{11} + \varphi_{22} ) \frac{\partial \check{u}_B}{\partial z_3} \bigg),
\end{align*}
\begin{multline*}
\mathscr{B}_3 \hat{u}_S = \kappa_S \bigg( \frac{\varphi_1^2}{G_S} \frac{\partial^2 \hat{u}_S}{\partial X_1^2} + \frac{\varphi_2^2}{G_S}\frac{\partial^2 \hat{u}_S}{\partial X_2^2} + \frac{2 \varphi_1 \varphi_2}{G_S}\frac{\partial^2 \hat{u}_S}{\partial X_1 \partial X_2}\\
+ \frac{\varphi_1 ( \varphi_{11} + \varphi_{22} + \varphi_2^2 \varphi_{11} +  \varphi_1^2 \varphi_{22} - 2 \varphi_1 \varphi_2 \varphi_{12}) }{G^2_S} \frac{\partial \hat{u}_S}{\partial X_1}\\
+ \frac{ \varphi_2 ( \varphi_{11} + \varphi_{22} + \varphi_2^2 \varphi_{11} +  \varphi_1^2 \varphi_{22} - 2 \varphi_1 \varphi_2 \varphi_{12})}{G^2_S}  \frac{\partial \hat{u}_S}{\partial X_2} \bigg),
\end{multline*}
\begin{multline*}
\mathring{\mathcal{F}}_1 (\tilde{u}_A, \check{u}_B, \hat{u}_S)  = - (\partial_t \hat{u}_S) {\rm{e}}^{ - \delta y_3} + \kappa_A  (\Delta_{y_h} \hat{u}_S) {\rm{e}}^{ - \delta y_3}\\ +  \kappa_A \hat{u}_S \{ \delta^2 + \delta \varphi_{11} + \delta \varphi_{22} + (\delta \varphi_1)^2 + (\delta \varphi_2)^2 \}  {\rm{e}}^{ - \delta y_3}\\
+ 2 \kappa_A (\delta \varphi_1 \partial_{y_1} \hat{u}_S + \delta \varphi_2 \partial_{y_2} \hat{u}_S) {\rm{e}}^{ - \delta y_3},
\end{multline*}
\begin{multline*}
\mathring{\mathcal{F}}_2 (\tilde{u}_A, \check{u}_B, \hat{u}_S)  = - (\partial_t \hat{u}_S) {\rm{e}}^{ \delta z_3} + \kappa_B  (\Delta_{z_h} \hat{u}_S) {\rm{e}}^{ \delta z_3}\\ +  \kappa_B \hat{u}_S \{ \delta^2 - \delta \varphi_{11} - \delta \varphi_{22} + (\delta \varphi_1)^2 + (\delta \varphi_2)^2 \}  {\rm{e}}^{ \delta z_3}\\
- 2 \kappa_B(\delta \varphi_1 \partial_{z_1} \hat{u}_S + \delta \varphi_2 \partial_{z_2} \hat{u}_S) {\rm{e}}^{ \delta z_3},
\end{multline*}
\begin{multline*}
\mathring{\mathcal{F}}_3 (\tilde{u}_A, \check{u}_B, \hat{u}_S)  = \frac{\rho_A \kappa_A}{\rho_S} \gamma_+ \bigg[  - \frac{\varphi_1}{\sqrt{G_S}} \frac{\partial \tilde{u}_A }{\partial y_1} - \frac{\varphi_2}{\sqrt{G_S}} \frac{\partial \tilde{u}_A }{\partial y_2} +  \frac{ 1 + \varphi_1^2 + \varphi_2^2 }{\sqrt{G_S}} \frac{\partial \tilde{u}_A }{\partial y_3} \bigg](X,t)\\
 - \frac{\rho_B \kappa_B}{\rho_S} \gamma_- \bigg[ - \frac{\varphi_1}{\sqrt{G_S}} \frac{\partial \check{u}_B }{\partial z_1} - \frac{\varphi_2}{\sqrt{G_S}} \frac{\partial \check{u}_B }{\partial z_2} +  \frac{1 + \varphi_1^2 + \varphi_2^2 }{\sqrt{G_S}} \frac{\partial \check{u}_B }{\partial z_3} \bigg](X,t)\\
- \bigg( \frac{ \rho_A \kappa_A - \rho_B \kappa_B }{\rho_S} \bigg) \bigg( \frac{\varphi_1}{\sqrt{G_S}} \frac{\partial \hat{u}_S}{\partial X_1} + \frac{\varphi_2}{\sqrt{G_S}} \frac{\partial \hat{u}_S}{\partial X_2} \bigg)\\
- \bigg( \frac{ \rho_A \kappa_A + \rho_B \kappa_B  }{\rho_S} \bigg) \bigg( \frac{\delta ( 1 + \varphi_1^2 + \varphi_2^2 )}{\sqrt{G_S}} \hat{u}_S \bigg) .
\end{multline*}
Here $\gamma_+$, $\gamma_-$ are the two trace operators such that
\begin{align*}
\gamma_+[\cdot] : W^{1,2} (\mathbb{R}^3_+) \to L^2 (\mathbb{R}^2)(= L^2 (\partial \mathbb{R}^3_+)),\\
\gamma_-[\cdot] : W^{1,2} (\mathbb{R}^3_-) \to L^2 (\mathbb{R}^2)(= L^2(\partial \mathbb{R}^3_-)).
\end{align*}
\noindent Note that $\hat{u}_S = \hat{\theta}_S$ and that $\hat{\theta}_S$ and $\varphi$ do not depend on $x_3$.
\begin{remark}\label{rem23} $(\rm{i})$ Applying Remark \ref{rem7017}, we see that
\begin{multline*}
\int_\Gamma (n,\nabla) \{ \hat{\theta}_S {\rm{e}}^{\delta \{ \varphi (x_h ) -x_3 \}} \} { \ }d \mathcal{H}_x^2\\
 =\int_{\mathbb{R}^2} \bigg( - \frac{\varphi_1}{\sqrt{G_S}} \frac{\partial }{\partial y_1} - \frac{\varphi_2}{\sqrt{G_S}}  \frac{\partial }{\partial y_2} + \frac{ 1 + \varphi_1^2 + \varphi_2^2 }{\sqrt{G_S}} \frac{\partial }{\partial y_3}\bigg) \{  \hat{\theta}_S(y_h,t) {\rm{e}}^{- \delta y_3} \} \bigg\vert_{y_3 =0} \sqrt{G_S} { \ }dy_h\\
 = \int_{\mathbb{R}^2} \bigg( - \frac{\varphi_1}{\sqrt{G_S}} \frac{\partial \hat{\theta}_S }{\partial y_1} - \frac{\varphi_2}{\sqrt{G_S}}  \frac{\partial \hat{\theta}_S }{\partial y_2} - \delta \frac{ 1 + \varphi_1^2 + \varphi_2^2 }{\sqrt{G_S}} \hat{\theta}_S \bigg) \sqrt{G_S} { \ }d y_h\\
= \int_{\mathbb{R}^2 } \bigg( - \frac{\varphi_1}{\sqrt{G_S}} \frac{\partial \hat{u}_S }{\partial X_1} - \frac{\varphi_2}{\sqrt{G_S}}  \frac{\partial \hat{u}_S }{\partial X_2} - \delta \frac{ 1 + \varphi_1^2 + \varphi_2^2 }{\sqrt{G_S}} \hat{u}_S \bigg) \sqrt{G_S} { \ }dX.
\end{multline*}
Similarly, we see that
\begin{multline*}
\int_\Gamma (n,\nabla) \{ \hat{\theta}_S {\rm{e}}^{\delta \{ x_3 - \varphi (x_h ) \}} \} { \ }d \mathcal{H}_x^2\\
= \int_{\mathbb{R}^2 } \bigg( - \frac{\varphi_1}{\sqrt{G_S}} \frac{\partial \hat{u}_S }{\partial X_1} - \frac{\varphi_2}{\sqrt{G_S}}  \frac{\partial \hat{u}_S }{\partial X_2} + \delta \frac{ 1 + \varphi_1^2 + \varphi_2^2 }{\sqrt{G_S}} \hat{u}_S \bigg) \sqrt{G_S} { \ }dX.
\end{multline*}
$(\rm{ii})$ From Remark \ref{rem21} and Section \ref{sect3}, we find that
\begin{equation*}
\tilde{u}_0^A \in W_0^{1,2} (\mathbb{R}^3_+), { \ }\check{u}_0^B \in W_0^{1,2} (\mathbb{R}^3_-), { \ }\hat{u}_0^S \in W^{1,2} (\mathbb{R}^2).
\end{equation*}
\end{remark}

In this paper, we apply the semigroup and maximal regularity theories to construct strong solutions of our systems. To end this, we change system \eqref{eq28} into an abstract system. Define six operators $\mathcal{A}_1$, $\mathcal{A}_2$, $\mathcal{A}_3$, $\mathcal{B}_1$, $\mathcal{B}_2$, $\mathcal{B}_3$ as follows:
\begin{equation*}
\begin{cases}
\mathcal{A}_1 \psi_A= - \kappa_A \Delta_y \psi_A,\\
D (\mathcal{A}_1) = W_0^{1,2} (\mathbb{R}^3_+) \cap W^{2,2} ( \mathbb{R}^3_+),
\end{cases}{ \ }
\begin{cases}
\mathcal{B}_1 \psi_A = \mathscr{B}_1 \psi_A,\\
D (\mathcal{B}_1) = W_0^{1,2} (\mathbb{R}^3_+) \cap W^{2,2} ( \mathbb{R}^3_+),
\end{cases} 
\end{equation*}
\begin{equation*}
\begin{cases}
\mathcal{A}_2 \psi_B= - \kappa_B \Delta_z \psi_B,\\
D (\mathcal{A}_2) = W_0^{1,2} (\mathbb{R}^3_-) \cap W^{2,2} ( \mathbb{R}^3_-),
\end{cases}{ \ }
\begin{cases}
\mathcal{B}_2 \psi_B = \mathscr{B}_2 \psi_B,\\
D (\mathcal{B}_1) = W_0^{1,2} (\mathbb{R}^3_-) \cap W^{2,2} ( \mathbb{R}^3_-),
\end{cases} 
\end{equation*}
\begin{equation*}
\begin{cases}
\mathcal{A}_3 \psi_S = - \kappa_S \Delta_X \psi_S,\\
D (\mathcal{A}_3) = W^{2,2} ( \mathbb{R}^2),
\end{cases}{ \ }
\begin{cases}
\mathcal{B}_3 \psi_S = \mathscr{B}_3 \psi_S,\\
D (\mathcal{B}_3) = W^{2,2} ( \mathbb{R}^2).
\end{cases} 
\end{equation*}
See subsections \ref{subsec51} and \ref{subsec52} for details on the six operators. Applying these operators into equations \eqref{eq28}, we have
\begin{equation}\label{eq29}
\begin{cases}
\displaystyle{\frac{d}{dt} v_A + \mathcal{A}_1 v_A =  - \mathcal{B}_1 v_A + \mathcal{F}_1 (v_A, v_B, v_S) { \ \ }\text{ on } (0, T) } ,\\[10pt]
\displaystyle{\frac{d}{dt} v_B + \mathcal{A}_2 v_B = - \mathcal{B}_2 v_B + \mathcal{F}_2 (v_A, v_B, v_S) { \ \ }\text{ on } (0, T) } ,\\[10pt]
\displaystyle{\frac{d}{dt} v_S + \mathcal{A}_3 v_S = - \mathcal{B}_3 v_S + \mathcal{F}_3 (v_A, v_B, v_S) { \ \ }\text{ on } (0, T) } ,\\[10pt]
v_A \vert_{t=0} = v_0^A, { \ }v_B \vert_{t = 0} = v_0^B, { \ }v_S \vert_{t=0} = v_0^S,
\end{cases}
\end{equation}
where $v_0^A = \tilde{u}_0^A$, $v_0^B = \check{u}_0^B$, $v_0^S = \hat{u}_0^S$,
\begin{multline*}
\mathcal{F}_1 (v_A, v_B, v_S)  = - ( d v_S/{dt}) {\rm{e}}^{ - \delta y_3} + \kappa_A  (\Delta_{y_h} v_S) {\rm{e}}^{ - \delta y_3}\\ +  \kappa_A v_S \{ \delta^2 + \delta \varphi_{11} + \delta \varphi_{22} + (\delta \varphi_1)^2 + (\delta \varphi_2)^2 \}  {\rm{e}}^{ - \delta y_3}\\
+ 2 \kappa_A (\delta \varphi_1 \partial_{y_1} v_S + \delta \varphi_2 \partial_{y_2} v_S) {\rm{e}}^{ - \delta y_3},
\end{multline*}
\begin{multline*}
\mathcal{F}_2 (v_A, v_B, v_S)  = - (d v_S/{dt}) {\rm{e}}^{ \delta z_3} + \kappa_B  (\Delta_{z_h} v_S) {\rm{e}}^{ \delta z_3}\\ +  \kappa_B v_S \{ \delta^2 - \delta \varphi_{11} - \delta \varphi_{22} + (\delta \varphi_1)^2 + (\delta \varphi_2)^2 \}  {\rm{e}}^{ \delta z_3}\\
- 2 \kappa_B(\delta \varphi_1 \partial_{z_1} v_S + \delta \varphi_2 \partial_{z_2} v_S) {\rm{e}}^{ \delta z_3},
\end{multline*}
\begin{multline*}
\mathcal{F}_3 (v_A, v_B, v_S)  = \frac{\rho_A \kappa_A}{\rho_S} \gamma_+ \bigg[  - \frac{\varphi_1}{\sqrt{G_S}} \frac{\partial v_A }{\partial y_1} - \frac{\varphi_2}{\sqrt{G_S}} \frac{\partial v_A }{\partial y_2} +  \frac{1 + \varphi_1^2 + \varphi_2^2 }{\sqrt{G_S}} \frac{\partial v_A }{\partial y_3}  \bigg]\\
 - \frac{\rho_B \kappa_B}{\rho_S} \gamma_- \bigg[ - \frac{\varphi_1}{\sqrt{G_S}} \frac{\partial v_B }{\partial z_1} - \frac{\varphi_2}{\sqrt{G_S}} \frac{\partial v_B }{\partial z_2} +  \frac{1 + \varphi_1^2 + \varphi_2^2 }{\sqrt{G_S}} \frac{\partial v_B }{\partial z_3} \bigg]\\
- \bigg( \frac{ \rho_A \kappa_A - \rho_B \kappa_B  }{\rho_S} \bigg) \bigg( \frac{\varphi_1}{\sqrt{G_S}} \frac{\partial v_S}{\partial X_1} + \frac{\varphi_2}{\sqrt{G_S}} \frac{\partial v_S}{\partial X_2} \bigg)\\
- \bigg( \frac{ \rho_A \kappa_A + \rho_B \kappa_B }{\rho_S} \bigg) \bigg( \frac{\delta ( 1 + \varphi_1^2 + \varphi_2^2 )}{\sqrt{G_S}} v_S \bigg).
\end{multline*}
\noindent To solve system \eqref{eq29}, we transform the system into a more abstract system. Set $v = v(t) = { }^t(v_A,v_B,v_S)$, $\mathcal{F} = \mathcal{F} (v) = { }^t (\mathcal{F}_1 ,\mathcal{F}_2,\mathcal{F}_3)$, and $v_0 = { }^t ( v_0^A,v_0^B,v_0^S)$. Define operators $\mathcal{A}$, $\mathcal{B}$ on $H$ as follows:
\begin{equation*}
\begin{cases}
\mathcal{A} f=  { }^t ( \mathcal{A}_1 f_A, \mathcal{A}_2 f_B , \mathcal{A}_3 f_S )\\
D (\mathcal{A}) = D ( \mathcal{A}_1) \times D ( \mathcal{A}_2) \times D (\mathcal{A}_3),
\end{cases}{ \ }
\begin{cases}
\mathcal{B} f=  { }^t ( \mathcal{B}_1 f_A, \mathcal{B}_2 f_B , \mathcal{B}_3 f_S )\\
D (\mathcal{B}) = D ( \mathcal{B}_1) \times D ( \mathcal{B}_2) \times D (\mathcal{B}_3).
\end{cases} 
\end{equation*}
Here $H$ is the function space defined by \eqref{eq17}. From \eqref{eq29}, we obtain
\begin{equation}\label{eq2010}
\begin{cases}
\displaystyle{\frac{d}{dt} v + \mathcal{A} v =  - \mathcal{B} v + \mathcal{F} (v) { \ \ }\text{ on } (0, T) } ,\\
v \vert_{t=0} = v_0,
\end{cases}
\end{equation}
Therefore, we have four systems \eqref{eq24}, \eqref{eq28}, \eqref{eq29}, \eqref{eq2010} to construct a solution of equations \eqref{eq15}. In Section \ref{sect5}, we construct a strong solution to system \eqref{eq2010} by applying nice properties of operator $\mathcal{A}$ and restricting our parameters. We make use of a strong solution of \eqref{eq2010} to construct a strong solution to \eqref{eq15}.

Finally, we state one of the main results of this paper.
\begin{theorem}\label{thm24}
Let $\varphi \in BC^2(\mathbb{R}^2)$ and $\rho_A, \rho_B, \rho_S, \kappa_A , \kappa_B , \kappa_S >0$. Let $\mathcal{M}_0 = \mathcal{M}_0(\kappa_A , \kappa_B ,\kappa_S)$ and $\delta_0 = \delta_0(\kappa_A , \kappa_B, \kappa_S)$ be the two positive constants defined by \eqref{eq5044} and \eqref{eq5045}, respectively. Suppose that $\Vert \nabla_X \varphi \Vert_{L^\infty (\mathbb{R}^2)} \leq 1/2$ and that $\delta = \delta_0$. Assume that
\begin{equation*}
\Vert \nabla_X \varphi \Vert_{L^\infty ( \mathbb{R}^2)} + \frac{\rho_A + \rho_B}{\rho_S} \leq \mathcal{M}_0.
\end{equation*}
Then for each $v_0 \in W_0^{1,2} (\mathbb{R}^3_+) \times W_0^{1,2} (\mathbb{R}^3_-) \times W^{1,2} ( \mathbb{R}^2)$, system \eqref{eq2010} admits a unique global-in-time strong solution $v$:
\begin{equation*}
v \in C ([0, \infty); H) \cap L_{loc}^2 (0,\infty; H_0^1 \cap H^2) \cap W_{loc}^{1,2}(0,\infty ; H) \cap L^2_{loc}( \mathbb{R}_+ \times \mathbb{R}^3_{+,-,0}),
\end{equation*}
satisfying 
\begin{equation*}
\lim_{t \to 0 + 0} \Vert v (t) - v_0 \Vert_{H}  = 0.
\end{equation*}
Here
\begin{align*}
& L^2_{loc}(0,\infty; H_0^1 \cap H^2) := \{ w ; w \in L^2(0,T; H_0^1 \cap H^2) \text{ for each fixed } T >0  \},\\
& W^{1,2}_{loc}(0,\infty; H) := \{ w ; w \in W^{1,2}(0,T; H) \text{ for each fixed } T >0  \},\\
& L^2_{loc}( \mathbb{R}_+ \times \mathbb{R}^3_{+,-,0}) := \{ w; w \in L^2((0,T) \times \mathbb{R}^3_{+,-,0}) \text{ for each fixed } T >0 \},\\
& H_0^1 := W_0^{1,2}(\mathbb{R}^3_+) \times W_0^{1,2}(\mathbb{R}^3_-) \times W^{1,2}(\mathbb{R}^2),\\
& H^2 := W^{2,2}(\mathbb{R}^3_+) \times W^{2,2}(\mathbb{R}^3_-) \times W^{2,2}(\mathbb{R}^2).
\end{align*}
\end{theorem}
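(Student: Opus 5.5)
The plan is to treat \eqref{eq2010} as a perturbation of the decoupled heat system $\frac{d}{dt}v+\mathcal{A}v=f$ on the Hilbert space $H$. I would first record the linear facts. $\mathcal{A}$ is nonnegative and self-adjoint on $H$, since it is the Dirichlet Laplacian on $\mathbb{R}^3_\pm$ and the Laplacian on $\mathbb{R}^2$. Hence it has maximal $L^2$-regularity. For $v_0\in H_0^1$ and $f\in L^2(0,T;H)$, the mild solution lies in $X_T$ and satisfies
\[
\Vert v\Vert_{X_T}\le C_*\bigl(\Vert v_0\Vert_{H_0^1}+\Vert f\Vert_{L^2(0,T;H)}\bigr),
\]
with $C_*$ depending only on $(\kappa_A,\kappa_B,\kappa_S)$ and not on $T$ for the top-order part. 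I would also use that $\Vert \mathcal{A}w\Vert_H$ is equivalent to the $H^2$ seminorm on $D(\mathcal{A})$, with constants depending only on the $\kappa$'s.

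Next I would estimate the right-hand side $-\mathcal{B}v+\mathcal{F}(v)$ in $L^2(0,T;H)$ by $\Vert v\Vert_{X_T}$.
\begin{itemize}
\item The second-order parts of $\mathcal{B}_1,\mathcal{B}_2,\mathcal{B}_3$ have coefficients of size $C\Vert\nabla_X\varphi\Vert_{L^\infty}$, using $\Vert\nabla_X\varphi\Vert\le 1/2$ to bound $G_S^{-1}$ and $\varphi_1^2+\varphi_2^2$. Their first-order parts involve $\varphi_{\alpha\beta}$ and are handled by interpolation, $\Vert\nabla w\Vert\le\eta\Vert\nabla^2 w\Vert+C_\eta\Vert w\Vert$.
\item In $\mathcal{F}_1,\mathcal{F}_2$ the critical terms are $-(dv_S/dt)\mathrm{e}^{-\delta y_3}$ and $\kappa_A(\Delta_{y_h}v_S)\mathrm{e}^{-\delta y_3}$. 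Their $L^2(\mathbb{R}^3_+)$ norms equal $(2\delta)^{-1/2}$ times the $L^2(\mathbb{R}^2)$ norms of $dv_S/dt$ and $\Delta_X v_S$. Choosing $\delta=\delta_0$ large makes these small multiples of $\Vert v\Vert_{X_T}$; the price is lower-order terms of size $C(\delta)\Vert v_S\Vert$.
\item $\mathcal{F}_3$ carries the prefactor $(\rho_A\kappa_A+\rho_B\kappa_B)/\rho_S\le C(\rho_A+\rho_B)/\rho_S$. I would bound the traces of $\nabla v_A,\nabla v_B$ via $\Vert\gamma_+[\nabla w]\Vert_{L^2(\mathbb{R}^2)}\le C\Vert \nabla w\Vert^{1/2}\Vert\nabla^2w\Vert^{1/2}$. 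The smallness of $(\rho_A+\rho_B)/\rho_S$ then absorbs these terms, even though the $\delta$-term in $\mathcal{F}_3$ grows with $\delta_0$. This fixes the order of choices: first $\delta_0$ from the $\kappa$'s, then $\mathcal{M}_0$ depending on $\delta_0$.
\end{itemize}

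Altogether I expect
\[
\Vert -\mathcal{B}v+\mathcal{F}(v)\Vert_{L^2(0,T;H)}\le \tfrac{1}{2C_*}\Vert v\Vert_{X_T}+C T^{1/2}\sup_t\Vert v\Vert_H .
\]
Because \eqref{eq2010} is linear in $v$, a contraction argument in $X_T$ for small $T$ then gives a unique local strong solution. The small $T$ absorbs the lower-order terms, and uniqueness in this class follows from the same estimate applied to a difference.

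For globality I would extend step by step, with the step length controlled by a bound on $\Vert v(t)\Vert_{H_0^1}$. Pulling $v$ back to $(\theta_A,\theta_B,\theta_S)$ via \eqref{eq22} and Lemma \ref{lem22} gives a solution of \eqref{eq15}, which satisfies the energy equality \eqref{eq16}. This equality bounds $\Vert v(t)\Vert_H$ and $\int\Vert\nabla v\Vert^2$ uniformly in time. The a priori estimate then bounds $v$ in $X_T$ on every bounded interval, which yields an $H_0^1$ bound at each time.

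The main obstacle I expect is the absorption step. The coupling through $dv_S/dt$ in $\mathcal{F}_1,\mathcal{F}_2$ and the normal-derivative traces in $\mathcal{F}_3$ are both of top order. Closing the estimate needs a careful, simultaneous choice of $\delta_0$ and $\mathcal{M}_0$ so that the total coefficient stays below one. I would first try to close it using exactly the $(2\delta)^{-1/2}$ gain together with the trace interpolation inequality.
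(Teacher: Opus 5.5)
Your proof is correct. The local step is the paper's own (Propositions \ref{prop53}, \ref{prop55}, \ref{prop56}): you perturb off $\mathcal{A}$ and use maximal $L^2$-regularity. You take $\delta$ large so that the $\mathrm{e}^{-\delta y_3}\,dv_S/dt$ and $\mathrm{e}^{-\delta y_3}\Delta_{y_h}v_S$ terms carry the factor $(2\delta)^{-1/2}$, use the small slope for the second-order part of $\mathcal{B}$, and iterate in $X_T$. You diverge from the paper in globality and in uniqueness.

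\textbf{Globality.} The paper never uses the energy equality here. Because \eqref{eq2010} is linear, the contraction factor $1/5+C_*T^{1/2}$ in \eqref{eq5047} does not involve $v_0$, so $T_*$ is independent of the data. The solution is simply restarted from $v(T_1)$, with $T_1\in[T_*/2,T_*)$ chosen so that $v(T_1)\in D(\mathcal{A})$, as often as needed. Your own contraction has the same property, so your premise that the step length must be controlled by $\Vert v(t)\Vert_{H_0^1}$ is unnecessary. Your energy-based continuation is still valid and even gives a uniform-in-time bound on $\Vert v(t)\Vert_H$. It costs more, though: the pull-back to \eqref{eq15} via Proposition \ref{prop43} and Lemma \ref{lem44}, and a long-interval a priori estimate. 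In that estimate only the $T$-independent top-order part of maximal regularity may be used for absorption, and $\sup_t\Vert v\Vert_H$ must come from the energy rather than from \eqref{eq54}.

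\textbf{Uniqueness.} The paper again goes through Proposition \ref{prop43} and the energy equality of \eqref{eq15}. Your argument is more direct and equally valid: the difference $w$ of two strong solutions lies in $X_T$ with $w(0)=0$, so $\Vert w\Vert_{X_T}\le\theta\Vert w\Vert_{X_T}$ with $\theta<1$ on a short interval, and you restart.

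\textbf{Constants.} Two of your choices are more restrictive than needed.
\begin{itemize}
\item The $\delta$-term in $\mathcal{F}_3$ is of order zero in $v_S$. Small $T$ absorbs it, so $\mathcal{M}_0$ need not depend on $\delta_0$.
\item Your trace inequality $\Vert\gamma_+[\partial_j w]\Vert_{L^2(\mathbb{R}^2)}\le\sqrt{2}\,\Vert\partial_j w\Vert_{L^2(\mathbb{R}^3_+)}^{1/2}\Vert\partial_{y_3}\partial_j w\Vert_{L^2(\mathbb{R}^3_+)}^{1/2}$ shows the trace terms in $\mathcal{F}_3$ are of order $3/2$. They are therefore bounded by $\eta\Vert\mathcal{A}w\Vert_H+C_\eta\Vert w\Vert_H$ for every $\eta>0$. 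Every $\rho$-dependent term is then lower order, so your argument does not need the smallness of $(\rho_A+\rho_B)/\rho_S$. The paper needs that hypothesis only because its bound \eqref{eq5011} treats these traces as top order.
\end{itemize}
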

\noindent See Definition \ref{def54} for function spaces $L^2(0,T; H_0^1 \cap H^2)$ and $L^2((0,T) \times \mathbb{R}^3_{+,-,0})$. In Section \ref{sect5}, we prove Theorem \ref{thm24}. 
\section{Function Spaces and Trace Operators}\label{sect3}

In this section, we first introduce and study several function spaces such as $L^2(\Omega_A)$, $W_0^{1,2}(\Omega_B)$, and $W^{2,2} (\Gamma)$, and then characterize trace operators $\gamma_A,\gamma_B$ such that $\gamma_A: W^{1,2} (\Omega_A) \to L^2(\Gamma)$ and $\gamma_B: W^{1,2} (\Omega_B) \to L^2(\Gamma)$. Fix $\varphi \in BC^2( \mathbb{R}^2)$. Let $n= n(x) = { }^t (n_1,n_2,n_3)$ be the unit outer normal vector at $x \in \Gamma$ defined by \eqref{eq11}. Let $\tilde{x}_A$, $\check{x}_B$, $\hat{x}_S$, $g_A^{ij}$, $g_B^{ij}$, and $g_S^{\alpha \beta}$ be the functions appearing in Section \ref{sect7} (see Section \ref{sect7} for their properties of these functions and see also Lemma \ref{lem22}). Throughout this section, we assume that
\begin{equation*}
\Vert \nabla_X \varphi \Vert_{L^\infty ( \mathbb{R}^2)} = \sup_{X \in \mathbb{R}^2} \bigg\vert \frac{\partial \varphi}{\partial X_1}(X) \bigg\vert + \sup_{X \in \mathbb{R}^2} \bigg\vert \frac{\partial \varphi}{\partial X_2} (X) \bigg\vert \leq \frac{1}{2}.
\end{equation*}

The aim of this section is to prove the following key proposition.
\begin{proposition}\label{prop31}
Let $\delta>0$. Let $f_A \in W^{1,2}(\Omega_A)$, $f_B \in W^{1,2}(\Omega_B)$, $f_S \in W^{1,2}(\Gamma)$, and $\hat{f}_S \in W^{1,2} (\mathbb{R}^2)$ such that $f_S = \mathcal{P}_S [\hat{f}_S]$, where $\mathcal{P}_S$ is the pullback operator defined by Definition \ref{def32}. Assume that $\gamma_A[f_A] = f_S$ and $\gamma_B[f_B] = f_S$. Then, the following two assertions hold:\\ 
$(\rm{i})$ For almost all $x \in \Omega_A$, we define
\begin{equation*}
\mathfrak{u}_A = \mathfrak{u}_A (x) = f_A (x) - \hat{f}_S(x_h) {\rm{e}}^{ \delta \{ \varphi (x_h) - x_3 \}}.
\end{equation*}
Then $\mathfrak{u}_A \in W^{1,2} (\Omega_A)$ and
\begin{equation*}
\gamma_A [\mathfrak{u}_A] = 0. 
\end{equation*}
$(\rm{ii})$ For almost all $x \in \Omega_B$, we define
\begin{equation*}
\mathfrak{u}_B = \mathfrak{u}_B (x) = f_B (x) - \hat{f}_S(x_h) {\rm{e}}^{ \delta \{ x_3 - \varphi (x_h) \}}.
\end{equation*}
Then $\mathfrak{u}_B \in W^{1,2} (\Omega_B)$ and
\begin{equation*}
\gamma_B [\mathfrak{u}_B] = 0. 
\end{equation*}
Here $W^{1,2}(\Omega_A)$, $W^{1,2}(\Omega_B)$, $W^{1,2} (\Gamma)$ are the function spaces defined by Definitions \ref{def39}, \ref{def3010}, and $\gamma_A$, $\gamma_B$ are the two trace operators defined by Definition \ref{def3015}.
\end{proposition}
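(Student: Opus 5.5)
The proof reduces everything, via the flattening maps of Lemma \ref{lem22}, to the half spaces $\mathbb{R}^3_\pm$. There the statement follows from two facts: the function $\hat{f}_S(y_h){\rm{e}}^{-\delta y_3}$ is a $W^{1,2}$ extension with trace $\hat{f}_S$, and the trace is linear. We treat $(\rm{i})$; $(\rm{ii})$ is identical after replacing $\tilde{x}_A$, $\mathbb{R}^3_+$, ${\rm{e}}^{-\delta y_3}$ by $\check{x}_B$, $\mathbb{R}^3_-$, ${\rm{e}}^{\delta z_3}$.

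\emph{Step 1 (membership).} Set $E_A(x)=\hat{f}_S(x_h){\rm{e}}^{\delta\{\varphi(x_h)-x_3\}}$. Its pullback is $E_A(\tilde{x}_A(y))=\hat{f}_S(y_h){\rm{e}}^{-\delta y_3}$. By Fubini this lies in $W^{1,2}(\mathbb{R}^3_+)$, with
\[
\Vert \hat f_S(y_h){\rm{e}}^{-\delta y_3}\Vert_{W^{1,2}(\mathbb{R}^3_+)}\le C(\delta)\Vert \hat f_S\Vert_{W^{1,2}(\mathbb{R}^2)} .
\]
The key inputs are $\int_0^\infty {\rm{e}}^{-2\delta y_3}\,dy_3=1/(2\delta)$ and the fact that the weak derivatives are $\partial_{y_\alpha}\hat{f}_S\,{\rm{e}}^{-\delta y_3}$ and $-\delta\hat{f}_S\,{\rm{e}}^{-\delta y_3}$.

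Next we use how Definitions \ref{def39}--\ref{def3010} define $W^{1,2}(\Omega_A)$, namely via the pullback by $\tilde{x}_A$ (equivalently, via the norm equivalence of Section \ref{sect7}). Since the Jacobian of $\tilde{x}_A$ is unimodular and $\Vert\nabla_X\varphi\Vert_{L^\infty}\le 1/2$, we get $E_A\in W^{1,2}(\Omega_A)$. Linearity then gives $\mathfrak{u}_A=f_A-E_A\in W^{1,2}(\Omega_A)$.

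\emph{Step 2 (trace of $E_A$).} We show $\gamma_A[E_A]=\mathcal{P}_S[\hat{f}_S]=f_S$. Definition \ref{def3015} presumably defines $\gamma_A$ as the continuous extension of restriction to $\Gamma$ from $C_0^1(\overline{\Omega_A})$, or equivalently as $\mathcal{P}_S\circ\gamma_+\circ(\text{pullback by }\tilde{x}_A)$. Take $\hat{g}_k\in C_0^\infty(\mathbb{R}^2)$ with $\hat{g}_k\to\hat{f}_S$ in $W^{1,2}(\mathbb{R}^2)$. The functions $\hat{g}_k(x_h){\rm{e}}^{\delta\{\varphi-x_3\}}$ are $C^1$ up to $\Gamma$ but not compactly supported in $x_3$. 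We therefore multiply by a cutoff $\chi(x_3-\varphi(x_h))$ with $\chi(0)=1$; equivalently we work in $y$-coordinates, which changes nothing at the boundary.

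For these approximants the restriction to $\Gamma$ is exactly $\mathcal{P}_S[\hat{g}_k]$. By the Step 1 estimate they converge to $E_A$ in $W^{1,2}(\Omega_A)$ as $k\to\infty$ and as the cutoff is removed. Continuity of $\gamma_A$, together with continuity of $\mathcal{P}_S:L^2(\mathbb{R}^2)\to L^2(\Gamma)$ (which holds because $1\le\sqrt{G_S}\le C$), then gives $\gamma_A[E_A]=\mathcal{P}_S[\hat{f}_S]$.

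\emph{Step 3 (conclusion).} Linearity of $\gamma_A$ and the hypothesis $\gamma_A[f_A]=f_S$ give $\gamma_A[\mathfrak{u}_A]=f_S-f_S=0$.

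The main obstacle is Step 2: matching the approximation to the exact way Definition \ref{def3015} builds $\gamma_A$, in particular handling the non-compact support in the normal direction and checking that $\mathcal{P}_S$ intertwines $\gamma_+$ with $\gamma_A$. If $\gamma_A$ is defined through $\gamma_+$ after flattening, this reduces to the half-space identity $\gamma_+[\hat{f}_S(y_h){\rm{e}}^{-\delta y_3}]=\hat{f}_S$. That identity follows from density of $C_0^\infty(\mathbb{R}^2)$ in $W^{1,2}(\mathbb{R}^2)$ and the standard trace theorem on $\mathbb{R}^3_+$.
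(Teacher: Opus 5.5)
Your argument is correct and is essentially the paper's proof. Definition \ref{def3015} sets $\gamma_A[f_A]=\mathcal{P}_S[\gamma_+[\psi_A]]$ for $f_A=\mathcal{P}_A[\psi_A]$, so your Step 2 reduces to the half-space identity $\gamma_+[\hat f_S(y_h){\rm e}^{-\delta y_3}]=\hat f_S$. The paper likewise checks $\hat f_S(y_h){\rm e}^{-\delta y_3}\in W^{1,2}(\mathbb{R}^3_+)$ by direct integration and then concludes $\gamma_A[\mathfrak u_A]=0$, using the bound \eqref{eq3020} where you use linearity. Your density justification of that trace identity, which the paper uses without comment, is a welcome addition; the cutoff construction in $\Omega_A$ is unnecessary given the actual definition.
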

\noindent We prove Proposition \ref{prop31} at the second half of this section.

We define our function spaces by applying the following pullback operators.
\begin{definition}[Pullback operators]\label{def32}{ \ }\\
$(\rm{i})$ Let $\psi_A \in L^1_{loc}(\mathbb{R}^3_+)$. We define $\mathcal{P}_A[\psi_A]$ as follows: for almost all $x \in \Omega_A$
\begin{equation}\label{eq31}
\mathcal{P}_A [\psi_A] = \mathcal{P}_A [\psi_A] (x_1,x_2,x_3) := \psi_A (x_1 , x_2 , x_3 - \varphi (x_1,x_2)).
\end{equation}
In the case when $\psi_A \in C_0(\overline{ \mathbb{R}^3_+})$, for all $x \in \overline{\Omega_A}$, we define $\mathcal{P}_A[\psi_A]$ by \eqref{eq31}.\\
$(\rm{ii})$ Let $\psi_B \in L^1_{loc}( \mathbb{R}^3_-)$. We define $\mathcal{P}_B[\psi_B]$ as follows: for almost all $x \in \Omega_B$
\begin{equation}\label{eq32}
\mathcal{P}_B [\psi_B] = \mathcal{P}_B [\psi_B] (x_1,x_2,x_3) := \psi_B (x_1 , x_2 ,  x_3 - \varphi (x_1,x_2) ).
\end{equation}
In the case when $\psi_B \in C_0(\overline{ \mathbb{R}^3_-})$, for all $x \in \overline{\Omega_B}$, we define $\mathcal{P}_B[\psi_B]$ by \eqref{eq32}.\\
$(\rm{iii})$ Let $\psi_S \in L^1_{loc}(\mathbb{R}^2)$. We define $\mathcal{P}_S[\psi_S]$ as follows: for almost all $x \in \Gamma$
\begin{equation}\label{eq33}
\mathcal{P}_S [\psi_S] = \mathcal{P}_S [\psi_S] (x_1,x_2,x_3) := \psi_S (x_1 , x_2).
\end{equation}
In the case when $\psi_S \in C_0(\mathbb{R}^2 )$, for all $x \in \Gamma$, we define $\mathcal{P}_S[\psi_S]$ by \eqref{eq33}.
\end{definition}
\noindent From Lemma \ref{lem22}, we can define pullback operators $\mathcal{P}_A$, $\mathcal{P}_B$, and $\mathcal{P}_S$ on $L_{loc}^1(\mathbb{R}^3_+)$, $L^1_{loc}(\mathbb{R}^3_-)$, and $L^1_{loc} (\mathbb{R}^2)$, respectively.

Let us introduce several norms and semi-norms.
\begin{definition}[Norms and semi-norms (I)]\label{def33}
For $\psi_S \in L^1_{loc}(\mathbb{R}^2)$, we define
\begin{align*}
& \Vert \psi_S \Vert_{ \mathcal{L}^2 (\Gamma) } = \bigg( \int_{\mathbb{R}^2} \vert \psi_S (X) \vert^2 \sqrt{G_S} { \ }d X \bigg)^{1/2},\\
& \Vert \psi_S \Vert_{ \dot{\mathcal{W}}^{1,2} (\Gamma)} = \bigg( \int_{\mathbb{R}^2}  g_S^{\alpha \beta} \frac{\partial \psi_S }{\partial X_\alpha} \frac{ \partial \psi_S }{\partial X_\beta} \sqrt{G_S} { \ }dX \bigg)^{1/2},\\
& \Vert \psi_S \Vert_{ \dot{\mathcal{W}}^{2,2} (\Gamma) } = \bigg( \sum_{i,j=1}^3 \int_{\mathbb{R}^2} \bigg\vert g_S^{\alpha' \beta'} \frac{\partial \hat{x}^S_i }{\partial X_{\alpha'}}  \frac{\partial}{\partial X_{\beta'}}\bigg( g_S^{\alpha \beta} \frac{\partial \hat{x}^S_j }{\partial X_\alpha} \frac{ \partial \psi_S }{\partial X_\beta} \bigg) \bigg\vert^2 \sqrt{G_S} { \ }dX \bigg)^{1/2},\\
& \Vert \psi_S \Vert_{\mathring{\mathcal{W}}^{2,2} (\Gamma) }  = \bigg( \int_{\mathbb{R}^2} \bigg\vert \frac{1}{\sqrt{G_S}} \frac{\partial}{\partial X_\alpha} \bigg( \sqrt{G_S} g_S^{\alpha \beta} \frac{\partial \hat{f}}{\partial X_\beta} \bigg) \bigg\vert^2 \sqrt{G_S} { \ }dX \bigg)^{1/2},\\
& \Vert \psi_S \Vert_{ \mathcal{W}^{1,2} (\Gamma) } = ( \Vert \psi_S \Vert_{ \mathcal{L}^2 (\Gamma) }^2 + \Vert \psi_S \Vert_{ \dot{\mathcal{W}}^{1,2} (\Gamma)  }^2 )^{1/2},\\
& \Vert \psi_S \Vert_{ \mathcal{W}^{2,2} (\Gamma) } = ( \Vert \psi_S \Vert_{ \mathcal{L}^2 (\Gamma) }^2 + \Vert \psi_S \Vert_{ \dot{\mathcal{W}}^{1,2} (\Gamma)  }^2  + \Vert \psi_S \Vert_{ \mathring{\mathcal{W}}^{2,2} (\Gamma)  }^2)^{1/2},\\
& \Vert \psi_S \Vert_{ \mathcal{L}^1 (\Gamma) } = \int_{\mathbb{R}^2} \vert \psi_S (X) \vert \sqrt{G_S} { \ }d X,
\end{align*}
and
\begin{equation*}
\Vert \psi_S \Vert_{\mathcal{W}^{1,1} (\Gamma) } = \int_{\mathbb{R}^2} \vert \psi_S \vert \sqrt{G_S} { \ }dX + \sum_{j=1}^3 \int_{\mathbb{R}^2} \bigg\vert g_S^{\alpha \beta} \frac{\partial \hat{x}^S_j }{\partial X_\alpha} \frac{ \partial \psi_S }{\partial X_\beta} \bigg\vert \sqrt{G_S} { \ }dX.
\end{equation*}
Here $G_S =G_S (X) = 1 + \varphi_1^2 + \varphi_2^2$. See subsection \ref{subsec71} for $\hat{x}_j^S$ and $g_S^{\alpha \beta}$.
\end{definition}

\begin{remark}\label{rem34} $(\rm{i})$ From assertion $(\rm{iv})$ in Remark \ref{rem72}, and Lemma \ref{lem37}, we find that for all $\psi_S \in W^{2,2} (\mathbb{R}^2)$,
\begin{align*}
\Vert \psi_S \Vert_{\dot{\mathcal{W}}^{2,2}(\Gamma)} & = \bigg( \sum_{i,j=1}^3 \Vert \partial_i^\Gamma \partial_j^\Gamma \mathcal{P}_S [\psi_S] \Vert_{L^2(\Gamma)}^2 \bigg)^{1/2},\\
\Vert \psi_S \Vert_{\mathring{\mathcal{W}}^{2,2}(\Gamma)} & = \Vert \Delta_\Gamma \mathcal{P}_S [\Psi_S] \Vert_{L^2(\Gamma)},\\
\Vert \psi_S \Vert_{\mathcal{W}^{2,2}(\Gamma)} & = \bigg( \Vert \mathcal{P}_S[\psi_S] \Vert_{L^2(\Gamma)}^2 + \Vert \nabla_\Gamma \mathcal{P}_S[\psi_S] \Vert^2_{L^2(\Gamma)} + \Vert \Delta_\Gamma \mathcal{P}_S [\Psi_S] \Vert_{L^2(\Gamma)}^2 \bigg)^{1/2},
\end{align*}
and  that there is $C(\Vert \nabla_X^2 \varphi \Vert_{L^\infty (\mathbb{R}^2)}) >0$ such that for each $\psi_S \in W^{2,2} (\mathbb{R}^2)$,
\begin{equation*}
\Vert \psi_S \Vert_{\dot{\mathcal{W}}^{2,2}(\Gamma)} + \Vert \psi_S \Vert_{\mathring{\mathcal{W}}^{2,2}(\Gamma)} + \Vert \psi_S \Vert_{\mathcal{W}^{2,2}(\Gamma)} \leq C(\Vert \nabla_X^2 \varphi \Vert_{L^\infty (\mathbb{R}^2)}) \Vert \psi_S \Vert_{W^{2,2} (\mathbb{R}^2) }.
\end{equation*}
Here $\partial_j^\Gamma f = \partial_j f - n_j (n \cdot \nabla) f$, $\nabla_\Gamma = { }^t (\partial_1^\Gamma , \partial_2^\Gamma , \partial_3^\Gamma)$, $\Delta_\Gamma = (\partial_1^\Gamma)^2 + (\partial_2^\Gamma)^2 +  (\partial_3^\Gamma)^2$, and $n = { }^t(n_1,n_2,n_3)$ denotes the unit outer normal vector of $\Gamma$. See subsection \ref{subsec71} for details on differential operators $\partial_j^\Gamma$, $\nabla_\Gamma$, and $\Delta_\Gamma$.\\
$(\rm{ii})$ From assertion $(\rm{v})$ in Remark \ref{rem72}, we find that for $\psi_S \in W^{1,1} (\mathbb{R}^2)$,
\begin{equation*}
\Vert \psi_S \Vert_{\mathcal{W}^{1,1}(\Gamma)} = \Vert \mathcal{P}_S[\psi_S] \Vert_{L^1(\Gamma)} + \Vert \nabla_\Gamma \mathcal{P}_S[\psi_S] \Vert_{L^1(\Gamma)}.
\end{equation*}

\end{remark}

\begin{definition}[Norms and semi-norms (II)]\label{def35}
$(\rm{i})$ For $\psi_A \in L^1_{loc} (\mathbb{R}^3_+)$, we define
\begin{align*}
& \Vert \psi_A \Vert_{ \mathcal{L}^2 (\Omega_A) } = \bigg( \int_{\mathbb{R}^3_+} \vert \psi_A (y) \vert^2 { \ }d y \bigg)^{1/2},\\
& \Vert \psi_A \Vert_{ \dot{\mathcal{W}}^{1,2} (\Omega_A)} = \bigg( \int_{\mathbb{R}^3_+}  g_A^{i j} \frac{\partial \psi_A }{\partial y_i} \frac{ \partial \psi_A }{\partial y_j} { \ }dy \bigg)^{1/2},\\
& \Vert \psi_A \Vert_{ \dot{\mathcal{W}}^{2,2} (\Omega_A) } = \bigg( \sum_{\ell, \ell' =1}^3 \int_{\mathbb{R}^3_+} \bigg\vert g_A^{i' j'} \frac{\partial \tilde{x}^A_{\ell'} }{\partial y_{i'}} \frac{ \partial }{\partial y_{j'} } \bigg( g_A^{i j} \frac{\partial \tilde{x}^A_\ell }{\partial y_i} \frac{ \partial \psi_A }{\partial y_j}  \bigg) \bigg\vert^2 { \ }dy \bigg)^{1/2},\\
& \Vert \psi_A \Vert_{ \mathcal{W}^{1,2} (\Omega_A) } = ( \Vert \psi_A \Vert_{ \mathcal{L}^2 (\Omega_A) }^2 + \Vert \psi_A \Vert_{ \dot{\mathcal{W}}^{1,2} (\Omega_A)  }^2 )^{1/2},\\
& \Vert \psi_A \Vert_{ \mathcal{W}^{2,2} (\Omega_A) } = ( \Vert \psi_A \Vert_{ \mathcal{L}^2 (\Omega_A) }^2 + \Vert \psi_A \Vert_{ \dot{\mathcal{W}}^{1,2} (\Omega_A)  }^2  + \Vert \psi_A \Vert_{ \dot{\mathcal{W}}^{2,2} (\Omega_A)  }^2)^{1/2},\\
& \Vert \psi_A \Vert_{ \mathcal{L}^1 (\Omega_A) } = \int_{\mathbb{R}^3_+} \vert \psi_A (y) \vert { \ }d y,\\
& \Vert \psi_A \Vert_{\mathcal{W}^{1,1} (\Omega_A) } = \int_{\mathbb{R}^3_+} \vert \psi_A \vert { \ }dy + \sum_{\ell =1}^3 \int_{\mathbb{R}^3_+} \bigg\vert g_A^{i j} \frac{\partial \tilde{x}^A_\ell }{\partial y_i} \frac{ \partial \psi_A }{\partial y_j} \bigg\vert { \ }dy.
\end{align*}
$(\rm{ii})$ For $\psi_B \in L^1_{loc} (\mathbb{R}^3_-)$, we define
\begin{align*}
& \Vert \psi_B \Vert_{ \mathcal{L}^2 (\Omega_B) } = \bigg( \int_{\mathbb{R}^3_-} \vert \psi_B (z) \vert^2 { \ }d z \bigg)^{1/2},\\
& \Vert \psi_B \Vert_{ \dot{\mathcal{W}}^{1,2} (\Omega_B)} = \bigg( \int_{\mathbb{R}^3_-}  g_B^{i j} \frac{\partial \psi_B }{\partial z_i} \frac{ \partial \psi_B }{\partial z_j} { \ }dz \bigg)^{1/2},\\
& \Vert \psi_B \Vert_{ \dot{\mathcal{W}}^{2,2} (\Omega_B) } = \bigg( \sum_{\ell, \ell' =1}^3 \int_{\mathbb{R}^3_-} \bigg\vert g_B^{i' j'} \frac{\partial \check{x}^B_{\ell'} }{\partial z_{i'}} \frac{ \partial }{\partial z_{j'} } \bigg( g_B^{i j} \frac{\partial \check{x}^B_\ell }{\partial z_i} \frac{ \partial \psi_B }{\partial z_j}  \bigg) \bigg\vert^2 { \ }dz \bigg)^{1/2},\\
& \Vert \psi_B \Vert_{ \mathcal{W}^{1,2} (\Omega_B) } = ( \Vert \psi_B \Vert_{ \mathcal{L}^2 (\Omega_B) }^2 + \Vert \psi_B \Vert_{ \dot{\mathcal{W}}^{1,2} (\Omega_B)  }^2 )^{1/2},\\
& \Vert \psi_B \Vert_{ \mathcal{W}^{2,2} (\Omega_B) } = ( \Vert \psi_B \Vert_{ \mathcal{L}^2 (\Omega_B) }^2 + \Vert \psi_B \Vert_{ \dot{\mathcal{W}}^{1,2} (\Omega_B)  }^2  + \Vert \psi_B \Vert_{ \dot{\mathcal{W}}^{2,2} (\Omega_B)  }^2)^{1/2},\\
& \Vert \psi_B \Vert_{ \mathcal{L}^1 (\Omega_B) } = \int_{\mathbb{R}^3_-} \vert \psi_B (z) \vert { \ }d z,\\
& \Vert \psi_B \Vert_{\mathcal{W}^{1,1} (\Omega_B) } = \int_{\mathbb{R}^3_-} \vert \psi_B \vert { \ }dz + \sum_{\ell =1}^3 \int_{\mathbb{R}^3_-} \bigg\vert g_B^{i j} \frac{\partial \check{x}^B_\ell }{\partial z_i} \frac{ \partial \psi_B }{\partial z_j} \bigg\vert { \ }dz.
\end{align*}
See subsection \ref{subsec72} for $\tilde{x}_\ell^A$, $g_A^{ij}$ and subsection \ref{subsec73} for $\check{x}_\ell^B$, $g_B^{ij}$.
\end{definition}

\begin{remark}\label{rem36} $(\rm{i})$ From assertion $(\rm{iii})$ in Remark \ref{rem76}, assertion $(\rm{ii})$ in Remark \ref{rem7010}, and Lemma \ref{lem37}, we find that for $\psi_A \in W^{2,2} (\mathbb{R}^3_+)$ and $\psi_B \in W^{2,2} (\mathbb{R}^3_-)$,
\begin{align*}
\Vert \psi_A \Vert_{\mathcal{W}^{2,2}(\Omega_A)} & = \bigg( \Vert \mathcal{P}_A[\psi_A] \Vert_{L^2(\Omega_A)}^2 + \Vert \nabla \mathcal{P}_A[\psi_A] \Vert_{L^2(\Omega_A)}^2 + \Vert \nabla^2 \mathcal{P}_A [\psi_A] \Vert_{L^2(\Omega_A)}^2 \bigg)^{1/2},\\
\Vert \psi_B \Vert_{\mathcal{W}^{2,2}(\Omega_B)} & = \bigg( \Vert \mathcal{P}_B[\psi_B] \Vert_{L^2(\Omega_B)}^2 + \Vert \nabla \mathcal{P}_B[\psi_B] \Vert_{L^2(\Omega_B)}^2 + \Vert \nabla^2 \mathcal{P}_B [\psi_B] \Vert_{L^2(\Omega_B)}^2 \bigg)^{1/2},
\end{align*}
and that there is $C(\Vert \nabla_X^2 \varphi \Vert_{L^\infty (\mathbb{R}^2)}) >0$ for $\psi_A \in W^{2,2} (\mathbb{R}^3_+)$ and $\psi_B \in W^{2,2} (\mathbb{R}^3_-)$,
\begin{align*}
\Vert \psi_A \Vert_{\mathcal{W}^{2,2}(\Omega_A)} & \leq C(\Vert \nabla_X^2 \varphi \Vert_{L^\infty (\mathbb{R}^2)}) \Vert \psi_A \Vert_{W^{2,2} (\mathbb{R}^3_+)},\\
\Vert \psi_B \Vert_{\mathcal{W}^{2,2}(\Omega_B)} & \leq C(\Vert \nabla_X^2 \varphi \Vert_{L^\infty (\mathbb{R}^2)}) \Vert \psi_B \Vert_{W^{2,2} (\mathbb{R}^3_-)}.
\end{align*}
$(\rm{ii})$ From assertion $(\rm{iv})$ in Remark \ref{rem76} and assertion $(\rm{iii})$ in Remark \ref{rem7010}, we find that for $\psi_A \in W^{1,1} (\mathbb{R}^3_+)$ and $\psi_B \in W^{1,1} (\mathbb{R}^3_-)$,
\begin{align*}
\Vert \psi_A \Vert_{\mathcal{W}^{1,1}(\Omega_A)} & = \Vert \mathcal{P}_A[\psi_A] \Vert_{L^1(\Omega_A)} + \Vert \nabla \mathcal{P}_A[\psi_A] \Vert_{L^1(\Omega_A)},\\
\Vert \psi_B \Vert_{\mathcal{W}^{1,1}(\Omega_B)} & = \Vert \mathcal{P}_B[\psi_B] \Vert_{L^1(\Omega_B)} + \Vert \nabla \mathcal{P}_B[\psi_B] \Vert_{L^1(\Omega_B)}.
\end{align*}
\end{remark}

Let us check fundamental properties of our norms and semi-norms. From Lemmas \ref{lem73}, \ref{lem77}, \ref{lem7011} and Lemmas \ref{lem74}, \ref{lem78}, \ref{lem7012}, we have Lemma \ref{lem37} and Lemma \ref{lem38}, respectively.
\begin{lemma}\label{lem37}
$(\rm{i})$ For all $\psi_A \in L^2(\mathbb{R}^3_+)$, $\psi_B \in L^2(\mathbb{R}^3_-)$, and $\psi_S \in L^2 (\mathbb{R}^2)$,
\begin{align}
\Vert \psi_A \Vert_{L^2 (\mathbb{R}^3_+)} & = \Vert \psi_A \Vert_{\mathcal{L}^2(\Omega_A)},\label{eq34}\\
\Vert \psi_B \Vert_{L^2 (\mathbb{R}^3_-)} &= \Vert \psi_B \Vert_{\mathcal{L}^2(\Omega_B)},\\
\Vert \psi_S \Vert_{L^2 (\mathbb{R}^2)} &\leq \Vert \psi_S \Vert_{\mathcal{L}^2(\Gamma)} \leq \sqrt{2} \Vert \psi_S \Vert_{L^2(\mathbb{R}^2)}. \label{eq36}
\end{align}
$(\rm{ii})$ For all $\psi_A \in W^{1,2}(\mathbb{R}^3_+)$, $\psi_B \in W^{1,2}(\mathbb{R}^3_-)$, and $\psi_S \in W^{1,2} (\mathbb{R}^2)$,
\begin{align}
(1/3) \Vert \nabla_y \psi_A \Vert_{L^2 (\mathbb{R}^3_+)} & \leq \Vert \psi_A \Vert_{\dot{\mathcal{W}}^{1,2}(\Omega_A)} \leq 3 \Vert \nabla_y \psi_A \Vert_{L^2(\mathbb{R}^3_+)},\label{eq37}\\
(1/3) \Vert \nabla_z \psi_B \Vert_{L^2 (\mathbb{R}^3_-)} & \leq \Vert \psi_B \Vert_{\dot{\mathcal{W}}^{1,2}(\Omega_B)} \leq 3 \Vert \nabla_z \psi_B \Vert_{L^2(\mathbb{R}^3_-)},\\
(1/2)\Vert \nabla_X \psi_S \Vert_{L^2 (\mathbb{R}^2)} & \leq \Vert \psi_S \Vert_{\dot{\mathcal{W}}^{1,2}(\Gamma)} \leq 2 \Vert \nabla_X \psi_S \Vert_{L^2(\mathbb{R}^2)}.
\end{align}
$(\rm{iii})$ There is $C = C ( \Vert \nabla_X^2 \varphi \Vert_{L^\infty (\mathbb{R}^2)}) >0$ such that for all $\psi_A \in W^{2,2}(\mathbb{R}^3_+)$, $\psi_B \in W^{2,2}(\mathbb{R}^3_-)$, and $\psi_S \in W^{2,2} (\mathbb{R}^2)$,
\begin{align}
C^{-1} \Vert \psi_A \Vert_{W^{2,2}(\mathbb{R}^3_+)} & \leq \Vert \psi_A \Vert_{ \mathcal{W}^{2,2}(\Omega_A)}  \leq C \Vert \psi_A \Vert_{W^{2,2}(\mathbb{R}^3_+)},\label{eq3010}\\
C^{-1} \Vert \psi_B \Vert_{W^{2,2}(\mathbb{R}^3_-)} & \leq \Vert \psi_B \Vert_{ \mathcal{W}^{2,2}(\Omega_B)} \leq C \Vert \psi_B \Vert_{W^{2,2}(\mathbb{R}^3_-)},\\
C^{-1} \Vert \psi_S \Vert_{W^{2,2}(\mathbb{R}^2)} & \leq \Vert \psi_S \Vert_{ \mathcal{W}^{2,2}(\Gamma)} \leq C \Vert \psi_S \Vert_{W^{2,2}(\mathbb{R}^2)},\\
\Vert \psi_S \Vert_{ \dot{\mathcal{W}}^{2,2}(\Gamma)} & \leq C \Vert \psi_S \Vert_{W^{2,2}(\mathbb{R}^2)}.
\end{align}
\end{lemma}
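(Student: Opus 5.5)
The plan is to pull everything back through the explicit maps of Lemma \ref{lem22} and compare the curvilinear quantities with flat ones pointwise. The inputs are the representation formulas of Section \ref{sect7}: Lemmas \ref{lem73}, \ref{lem77}, \ref{lem7011} and Remarks \ref{rem72}, \ref{rem76}, \ref{rem7010}. Throughout I use the standing hypothesis $\Vert \nabla_X \varphi \Vert_{L^\infty} \leq 1/2$, which gives $\varphi_1^2+\varphi_2^2 \leq (|\varphi_1|+|\varphi_2|)^2 \leq 1/4$ and hence $1 \leq G_S \leq 5/4$.

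\emph{Step (i).} For the $L^2$ statements, the norms $\mathcal{L}^2(\Omega_A)$ and $\mathcal{L}^2(\Omega_B)$ are by definition the flat norms, so there is nothing to prove for $A$ and $B$. This is consistent with the geometry: $\tilde{x}_A$ and $\check{x}_B$ are shears $y \mapsto (y_h, y_3+\varphi(y_h))$ with Jacobian determinant $1$, so $\Vert \mathcal{P}_A \psi_A\Vert_{L^2(\Omega_A)}=\Vert\psi_A\Vert_{L^2(\mathbb{R}^3_+)}$. For $\Gamma$, the weight satisfies $1\le \sqrt{G_S}\le (5/4)^{1/2}\le 2$. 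This gives \eqref{eq36}.

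\emph{Step (ii).} For the first-order bounds I use that $g_A^{ij}\partial_i\psi_A\partial_j\psi_A=|\nabla_x \mathcal{P}_A[\psi_A]|^2\circ\tilde{x}_A$. The chain rule gives $\partial_{x_\alpha}=\partial_{y_\alpha}-\varphi_\alpha\partial_{y_3}$ and $\partial_{x_3}=\partial_{y_3}$. Hence pointwise
\begin{equation*}
|\nabla_x u|\le |\nabla_y\psi|+(|\varphi_1|+|\varphi_2|)|\partial_{y_3}\psi|\le \tfrac32|\nabla_y\psi|.
\end{equation*}
Conversely, $\partial_{y_\alpha}=\partial_{x_\alpha}+\varphi_\alpha\partial_{x_3}$ gives $|\nabla_y\psi|\le\frac32|\nabla_x u|$. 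Integrating with unit Jacobian yields \eqref{eq37} with room to spare, and $B$ is identical.

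For $\Gamma$, $g_S^{\alpha\beta}=\delta_{\alpha\beta}-\varphi_\alpha\varphi_\beta/G_S$. Its eigenvalues are $1$ and $1/G_S\in[4/5,1]$. Combined with $1\le\sqrt{G_S}\le 5/4$, the quadratic form $g_S^{\alpha\beta}\xi_\alpha\xi_\beta\sqrt{G_S}$ lies between $\frac45|\xi|^2$ and $\frac54|\xi|^2$. This gives the constants $1/2$ and $2$.

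\emph{Step (iii).} For the second-order bounds in the bulk, $\Vert\cdot\Vert_{\dot{\mathcal W}^{2,2}(\Omega_A)}=\Vert\nabla_x^2\mathcal{P}_A[\psi_A]\Vert_{L^2(\Omega_A)}$ by Remark \ref{rem36}. Differentiating the chain-rule formulas twice, $\partial_{x_i}\partial_{x_j}u$ is a combination of $\nabla_y^2\psi$ with coefficients polynomial in $\varphi_\alpha$, plus $\varphi_{\alpha\beta}\partial_{y_3}\psi$. The upper bound $C\Vert\psi\Vert_{W^{2,2}}$ is then immediate. For the lower bound I invert: $\nabla_y^2\psi$ equals a bounded combination of $\nabla_x^2u$ plus $\varphi_{\alpha\beta}\partial_{x_3}u$. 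Hence
\begin{equation*}
\Vert\nabla_y^2\psi\Vert\le C(\Vert\nabla_x^2 u\Vert+\Vert\nabla^2\varphi\Vert_\infty\Vert\nabla_x u\Vert),
\end{equation*}
and the lower-order term is controlled by step (ii).

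\emph{Main obstacle: the surface.} Here $\mathcal W^{2,2}(\Gamma)$ contains only the Laplace--Beltrami term $\Vert\Delta_\Gamma\mathcal P_S\psi_S\Vert$, not the full Hessian, so the lower bound is genuinely an elliptic estimate. In coordinates,
\begin{equation*}
\Delta_\Gamma=g_S^{\alpha\beta}\partial_\alpha\partial_\beta+b^\alpha\partial_\alpha,
\end{equation*}
where $|b^\alpha|\le C\Vert\nabla^2\varphi\Vert_\infty$. The $\dot{\mathcal W}^{2,2}$ seminorm also involves only $g_S$, $\partial\hat{x}^S$ and one derivative of them, so its upper bound, like that for $\mathcal W^{2,2}$, is straightforward. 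For the lower bound I use the flat identity $\Vert\nabla_X^2\psi\Vert_{L^2(\mathbb{R}^2)}=\Vert\Delta_X\psi\Vert_{L^2(\mathbb{R}^2)}$ (Fourier). Writing $\Delta_X\psi=g_S^{\alpha\beta}\partial_\alpha\partial_\beta\psi+(\varphi_\alpha\varphi_\beta/G_S)\partial_\alpha\partial_\beta\psi$, the last term is pointwise at most $\frac{|\nabla\varphi|^2}{G_S}|\nabla^2\psi|\le\frac14|\nabla^2\psi|$. It can therefore be absorbed, giving
\begin{equation*}
\tfrac34\Vert\nabla^2_X\psi\Vert\le\Vert\Delta_\Gamma\mathcal P_S\psi\Vert_{L^2(\Gamma)}+C\Vert\nabla_X\psi\Vert.
\end{equation*}
This is exactly where the smallness of the slope is used. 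Together with steps (i) and (ii), it yields $C^{-1}\Vert\psi_S\Vert_{W^{2,2}}\le\Vert\psi_S\Vert_{\mathcal W^{2,2}(\Gamma)}$.
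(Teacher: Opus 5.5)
Your proposal is correct and follows essentially the paper's route through Lemmas \ref{lem73}, \ref{lem77} and \ref{lem7011}: pointwise comparison of the metric coefficients under the slope bound; chain-rule inversion of the bulk Hessian, with the $\varphi_{\alpha\beta}$ term controlled by first derivatives; and on $\Gamma$ the flat identity $\Vert \nabla_X^2 \psi \Vert = \Vert \Delta_X \psi \Vert$ followed by absorbing the $\varphi_\alpha \varphi_\beta / G_S$ part of $\Delta_X$. Your constants are only sharper because you use $G_S \le 5/4$ where the paper uses $G_S \le 4$, and you should take the chain-rule identity for $\Vert \cdot \Vert_{\dot{\mathcal{W}}^{2,2}(\Omega_A)}$ from Remark \ref{rem76}, not Remark \ref{rem36}, since the latter is itself deduced from this lemma.
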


\begin{lemma}\label{lem38}$(\rm{i})$ For all $\psi_A \in L^1(\mathbb{R}^3_+)$, $\psi_B \in L^1(\mathbb{R}^3_-)$, and $\psi_S \in L^1 (\mathbb{R}^2)$,
\begin{align*}
\Vert \psi_A \Vert_{L^1 (\mathbb{R}^3_+)} & = \Vert \psi_A \Vert_{\mathcal{L}^1(\Omega_A)},\\
\Vert \psi_B \Vert_{L^1 (\mathbb{R}^3_-)} &= \Vert \psi_B \Vert_{\mathcal{L}^1(\Omega_B)},\\
\Vert \psi_S \Vert_{L^1 (\mathbb{R}^2)} &\leq \Vert \psi_S \Vert_{\mathcal{L}^1(\Gamma)} \leq 2 \Vert \psi_S \Vert_{L^1(\mathbb{R}^2)}. 
\end{align*}
$(\rm{ii})$ For all $\psi_A \in W^{1,1} (\mathbb{R}^3_+)$, $\psi_B \in W^{1,1} (\mathbb{R}^3_-)$, and $\psi_S \in W^{1,1} (\mathbb{R}^2)$,
\begin{align*}
(1/2) \Vert \psi_A \Vert_{W^{1,1} (\mathbb{R}^3_+)} & \leq \Vert \psi_A \Vert_{\mathcal{W}^{1,1}(\Omega_A)} \leq 18 \Vert \psi_A \Vert_{W^{1,1}(\mathbb{R}^3_+)},\\
(1/2) \Vert \psi_B \Vert_{W^{1,1} (\mathbb{R}^3_-)} & \leq \Vert \psi_B \Vert_{\mathcal{W}^{1,1}(\Omega_B)} \leq 18 \Vert \psi_B \Vert_{W^{1,1}(\mathbb{R}^3_-)},\\
\Vert \psi_S \Vert_{W^{1,1} (\mathbb{R}^2)} & \leq \Vert \psi_S \Vert_{\mathcal{W}^{1,1}(\Gamma)} \leq 12 \Vert \psi_S \Vert_{W^{1,1}(\mathbb{R}^2)}.
\end{align*}
\end{lemma}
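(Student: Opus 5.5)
The statement to prove is Lemma \ref{lem38}, the $L^1$/$W^{1,1}$ analogue of Lemma \ref{lem37}. Since the paper derives it from Lemmas \ref{lem74}, \ref{lem78}, \ref{lem7012} in Section \ref{sect7}, the plan is to reduce everything to pointwise bounds on the metric coefficients under $\Vert \nabla_X \varphi \Vert_{L^\infty} \leq 1/2$.

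\emph{Assertion (i).} The identities for $\Omega_A$ and $\Omega_B$ are immediate: by Definition \ref{def35} the norms $\Vert \cdot \Vert_{\mathcal{L}^1(\Omega_A)}$ and $\Vert \cdot \Vert_{\mathcal{L}^1(\Omega_B)}$ are literally the Lebesgue integrals over $\mathbb{R}^3_\pm$. Equivalently, the Jacobians of $\tilde{x}_A$ and $\check{x}_B$ from Lemma \ref{lem22} have determinant $1$, since they are triangular with unit diagonal. For $\Gamma$, I use
\[
1 \leq \sqrt{G_S} = \sqrt{1+\varphi_1^2+\varphi_2^2} \leq \sqrt{1 + 1/4 + 1/4} \leq 2,
\]
which follows from $|\varphi_1|+|\varphi_2| \leq 1/2$. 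Integrating gives $\Vert \psi_S \Vert_{L^1(\mathbb{R}^2)} \leq \Vert \psi_S \Vert_{\mathcal{L}^1(\Gamma)} \leq 2\Vert \psi_S \Vert_{L^1(\mathbb{R}^2)}$.

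\emph{Assertion (ii).} I compute the vectors $g^{ij}_A \partial_{y_i}\tilde{x}^A_\ell \partial_{y_j}\psi_A$ explicitly. By the chain rule (Remark \ref{rem76}) this vector equals $\nabla \mathcal{P}_A[\psi_A]$ evaluated at $\tilde{x}_A(y)$. Since $\partial_{x_\alpha} = \partial_{y_\alpha} - \varphi_\alpha \partial_{y_3}$ and $\partial_{x_3} = \partial_{y_3}$, the components are
\[
\partial_{y_1}\psi_A - \varphi_1 \partial_{y_3}\psi_A, \qquad \partial_{y_2}\psi_A - \varphi_2 \partial_{y_3}\psi_A, \qquad \partial_{y_3}\psi_A.
\]
\emph{Upper bound:} each component is bounded by $|\nabla_y \psi_A|$ summed with weights $\leq 1$, so the sum over $\ell$ is at most a fixed multiple of $\sum_j |\partial_{y_j}\psi_A|$. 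The stated constant $18$ is generous and absorbs any crude counting.

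\emph{Lower bound:} I invert the relations, $\partial_{y_\alpha}\psi_A = (\partial_{x_\alpha} + \varphi_\alpha\partial_{x_3})(\cdot)$, and obtain
\[
\sum_j |\partial_{y_j}\psi_A| \leq (1 + |\varphi_1| + |\varphi_2|) \sum_\ell |\partial_{x_\ell}\mathcal{P}_A\psi_A| \leq \tfrac{3}{2}\sum_\ell |\partial_{x_\ell}\mathcal{P}_A\psi_A|.
\]
Adding the $L^1$ part then yields $(1/2)\Vert\psi_A\Vert_{W^{1,1}} \leq \Vert \psi_A\Vert_{\mathcal{W}^{1,1}(\Omega_A)}$, with room to spare. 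The argument for $B$ is identical with $z$ in place of $y$.

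\emph{Surface case.} Here I write $\nabla_\Gamma \mathcal{P}_S\psi_S$ in coordinates via Remark \ref{rem72}(v). Its components are $g_S^{\alpha\beta}\partial_\alpha \hat{x}^S_j\,\partial_\beta\psi_S$, where $g_S^{\alpha\beta} = \delta_{\alpha\beta} - \varphi_\alpha\varphi_\beta/G_S$ and $\partial_\alpha\hat{x}_S = (e_\alpha, \varphi_\alpha)$. The entries of $g_S$ and of $\partial\hat{x}_S$ are bounded by $1$, and $\sqrt{G_S}\leq 2$, so the upper bound holds with a constant such as $12$.

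For the lower bound I use that the tangential gradient is $P\,\nabla_x$ of the extension, where $P = I - n\otimes n$ is the projection. Its first two components $w_\alpha$ satisfy $\partial_{X_\alpha}\psi_S = w_\alpha + \varphi_\alpha w_3$, because $\partial_\alpha\hat{x}_S\cdot\nabla_\Gamma f = \partial_{X_\alpha}(f\circ\hat{x}_S)$. Hence
\[
|\nabla_X\psi_S|_{\ell^1} \leq \sum_j |w_j|.
\]
Combining this with $\sqrt{G_S}\geq 1$ gives $\Vert\psi_S\Vert_{W^{1,1}} \leq \Vert\psi_S\Vert_{\mathcal{W}^{1,1}(\Gamma)}$.

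The main obstacle is bookkeeping the specific constants ($18$, $12$, $1/2$, $1$). The most delicate of these is the surface lower bound with constant exactly $1$, which must go through the identity $\partial_{X_\alpha}\psi_S = w_\alpha+\varphi_\alpha w_3$ rather than through bounds on $g_S^{\alpha\beta}$. In that step I use $|\varphi_1|+|\varphi_2|\leq 1/2$, so that $|\varphi_\alpha w_3|$ summed over $\alpha$ is at most $|w_3|$.
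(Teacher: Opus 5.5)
Your proposal is correct and follows the paper's own route. Part (i) is immediate from the definitions together with $1\le\sqrt{G_S}\le 2$; part (ii) inverts the explicit formulas of Remarks \ref{rem72}(i) and \ref{rem76}(i) as $\partial_{y_\alpha}\psi_A = w_\alpha+\varphi_\alpha w_3$, $\partial_{y_3}\psi_A = w_3$ and $\partial_{X_\alpha}\psi_S = w_\alpha+\varphi_\alpha w_3$, then bounds the coefficients using $|\varphi_1|+|\varphi_2|\le 1/2$, exactly as in Lemmas \ref{lem74}, \ref{lem78}, \ref{lem7012}. The only differences are minor: your domain-case constants are sharper than the paper's crude ones, and you obtain the surface identity geometrically via $\partial_{X_\alpha}\hat{x}_S\cdot\nabla_\Gamma f=\partial_{X_\alpha}(f\circ\hat{x}_S)$ instead of by direct algebra.
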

\noindent Note that we assume that $\Vert \nabla_X \varphi \Vert_{L^\infty(\mathbb{R}^2)} \leq 1/2$ in this section.

Let us define our function spaces. 
\begin{definition}[Function spaces $(\rm{I})$]\label{def39}
Let $k \in \{ 0,1,2 \}$. Define
\begin{align*}
C_0^k (\Gamma) &= \{ f_S :\Gamma \to \mathbb{R}; f_S = \mathcal{P}_S [\psi_S], \psi_S \in C_0^k(\mathbb{R}^2)  \},\\
L^2(\Gamma) &= \{ f_S :\Gamma \to \mathbb{R}; { \ }\Vert f_S \Vert_{L^2(\Gamma)} < \infty, f_S = \mathcal{P}_S [\psi_S], \psi_S \in L^2(\mathbb{R}^2) \},\\
W^{1,2}(\Gamma) &= \{ f_S \in L^2(\Gamma); { \ }\Vert f_S \Vert_{W^{1,2}(\Gamma)} < \infty,  f_S = \mathcal{P}_S [\psi_S], \psi_S \in W^{1,2}(\mathbb{R}^2)  \},\\
W^{2,2}(\Gamma) &= \{ f_S \in L^2(\Gamma); { \ }\Vert f_S \Vert_{W^{2,2}(\Gamma)} < \infty,  f_S = \mathcal{P}_S [\psi_S], \psi_S \in W^{2,2}(\mathbb{R}^2)  \},
\end{align*}
where
\begin{equation*}
\Vert f_S \Vert_{L^2(\Gamma)} := \Vert \psi_S \Vert_{\mathcal{L}^2 (\Gamma)},{ \ }\Vert f_S \Vert_{W^{1,2}(\Gamma)} := \Vert \psi_S \Vert_{\mathcal{W}^{1,2} (\Gamma)},{ \ }\Vert f_S \Vert_{W^{2,2}(\Gamma)} := \Vert \psi_S \Vert_{\mathcal{W}^{2,2} (\Gamma)}.
\end{equation*}
\end{definition}

\begin{definition}[Function spaces $(\rm{II})$]\label{def3010}
Let $k \in \{ 0,1,2 \}$. Define
\begin{align*}
C_0^k(\overline{\Omega_A}) &= \{ f_A :\overline{\Omega_A} \to \mathbb{R}; f_A = \mathcal{P}_A [\psi_A], \psi_A \in C_0^k(\overline{\mathbb{R}^3_+})  \},\\
C_0^k(\Omega_A) &= \{ f_A :\Omega_A \to \mathbb{R}; f_A = \mathcal{P}_A [\psi_A], \psi_A \in C_0^k(\mathbb{R}^3_+)  \},\\
L^2(\Omega_A) &= \{ f_A :\Omega_A \to \mathbb{R}; { \ }\Vert f_A \Vert_{L^2(\Omega_A)} < \infty, f_A = \mathcal{P}_A [\psi_A], \psi_A \in L^2(\mathbb{R}^3_+) \},\\
W^{1,2}(\Omega_A) &= \{ f_A \in L^2(\Omega_A); { \ }\Vert f_A \Vert_{W^{1,2}(\Omega_A)} < \infty,  f_A = \mathcal{P}_A [\psi_A], \psi_A \in W^{1,2}(\mathbb{R}^3_+)  \},\\
W_0^{1,2}(\Omega_A) &= \{ f_A \in L^2(\Omega_A); { \ }\Vert f_A \Vert_{W^{1,2}(\Omega_A)} < \infty,  f_A = \mathcal{P}_A [\psi_A], \psi_A \in W_0^{1,2}(\mathbb{R}^3_+)  \},\\
W^{2,2}(\Omega_A) &= \{ f_A \in L^2(\Omega_A); { \ }\Vert f_A \Vert_{W^{2,2}(\Omega_A)} < \infty,  f_A = \mathcal{P}_A [\psi_A], \psi_A \in W^{2,2}(\mathbb{R}^3_+)  \},\\
C_0^k(\overline{\Omega_B}) &= \{ f_B :\overline{\Omega_B} \to \mathbb{R}; f_B = \mathcal{P}_B [\psi_B], \psi_B \in C_0^k(\overline{\mathbb{R}^3_-})  \},\\
C_0^k(\Omega_B) &= \{ f_B :\Omega_B \to \mathbb{R}; f_B = \mathcal{P}_B [\psi_B], \psi_B \in C_0^k(\mathbb{R}^3_-)  \},\\
L^2(\Omega_B) &= \{ f_B :\Omega_B \to \mathbb{R}; { \ }\Vert f_B \Vert_{L^2(\Omega_B)} < \infty, f_B = \mathcal{P}_B [\psi_B], \psi_B \in L^2(\mathbb{R}^3_-) \},\\
W^{1,2}(\Omega_B) &= \{ f_B \in L^2(\Omega_B); { \ }\Vert f_B \Vert_{W^{1,2}(\Omega_B)} < \infty, f_B = \mathcal{P}_B [\psi_B], \psi_B \in W^{1,2}(\mathbb{R}^3_-)  \},\\
W_0^{1,2}(\Omega_B) &= \{ f_B \in L^2(\Omega_B); { \ }\Vert f_B \Vert_{W^{1,2}(\Omega_B)} < \infty, f_B = \mathcal{P}_B [\psi_B], \psi_B \in W_0^{1,2}(\mathbb{R}^3_-)  \},\\
W^{2,2}(\Omega_B) &= \{ f_B \in L^2(\Omega_B); { \ }\Vert f_B \Vert_{W^{2,2}(\Omega_B)} < \infty, f_B = \mathcal{P}_B [\psi_B], \psi_B \in W^{2,2}(\mathbb{R}^3_-)  \},
\end{align*}
where
\begin{multline*}
\Vert f_A \Vert_{L^2(\Omega_A)} := \Vert \psi_A \Vert_{\mathcal{L}^2 (\Omega_A)},{ \ }\Vert f_A \Vert_{W^{1,2}(\Omega_A)} := \Vert \psi_A \Vert_{\mathcal{W}^{1,2} (\Omega_A)},\\
\Vert f_A \Vert_{W^{2,2}(\Omega_A)} := \Vert \psi_A \Vert_{\mathcal{W}^{2,2} (\Omega_A)},{ \ }\Vert f_B \Vert_{L^2(\Omega_B)} := \Vert \psi_B \Vert_{\mathcal{L}^2 (\Omega_B)},\\
\Vert f_B \Vert_{W^{1,2}(\Omega_B)} := \Vert \psi_B \Vert_{\mathcal{W}^{1,2} (\Omega_B)},{ \ }\Vert f_B \Vert_{W^{2,2}(\Omega_B)} := \Vert \psi_B \Vert_{\mathcal{W}^{2,2} (\Omega_B)}.
\end{multline*}
\end{definition}

\begin{definition}[Function spaces $(\rm{III})$]\label{def3011} $(\rm{I})$
Define
\begin{align*}
L^1 (\Gamma) &= \{ f_S: \Gamma \to \mathbb{R} ; { \ }\Vert f_S \Vert_{L^1(\Gamma)} < \infty,  f_S = \mathcal{P}_S [\psi_S], \psi_S \in L^1(\mathbb{R}^2) \},\\
L^1 (\Omega_A) &= \{ f_A: \Omega_A \to \mathbb{R} ; { \ }\Vert f_A \Vert_{L^1(\Omega_A)} < \infty,  f_A = \mathcal{P}_A [\psi_A], \psi_A \in L^1(\mathbb{R}^3_+) \},\\
L^1 (\Omega_B) &= \{ f_B: \Omega_B \to \mathbb{R} ; { \ }\Vert f_B \Vert_{L^1(\Omega_B)} < \infty,  f_B = \mathcal{P}_B [\psi_B], \psi_B \in L^1(\mathbb{R}^3_-) \},\\
W^{1,1}(\Gamma) &= \{ f_S: \Gamma \to \mathbb{R} ; { \ }\Vert f_S \Vert_{W^{1,1}(\Gamma)} < \infty,  f_S = \mathcal{P}_S [\psi_S], \psi_S \in W^{1,1}(\mathbb{R}^2) \},\\
W^{1,1}(\Omega_A) &= \{ f_A: \Omega_A \to \mathbb{R} ; { \ }\Vert f_A \Vert_{W^{1,1}(\Omega_A)} < \infty,  f_A = \mathcal{P}_A [\psi_A], \psi_A \in W^{1,1}(\mathbb{R}^3_+) \},\\
W^{1,1}(\Omega_B) &= \{ f_B: \Omega_B \to \mathbb{R} ; { \ }\Vert f_B \Vert_{W^{1,1}(\Omega_B)} < \infty,  f_B = \mathcal{P}_B [\psi_B], \psi_B \in W^{1,1}(\mathbb{R}^3_-) \},
\end{align*}
where
\begin{multline*}
\Vert f_S \Vert_{L^1(\Gamma)} := \Vert \psi_S \Vert_{\mathcal{L}^1 (\Gamma)},{ \ }\Vert f_A \Vert_{L^1(\Omega_A)} := \Vert \psi_A \Vert_{\mathcal{L}^1 (\Omega_A)}, { \ }\Vert f_B \Vert_{L^1(\Omega_B)} := \Vert \psi_B \Vert_{\mathcal{L}^1 (\Omega_B)},\\
\Vert f_S \Vert_{W^{1,1}(\Gamma)} := \Vert \psi_S \Vert_{\mathcal{W}^{1,1} (\Gamma)},{ \ }\Vert f_A \Vert_{W^{1,1}(\Omega_A)} := \Vert \psi_A \Vert_{\mathcal{W}^{1,1} (\Omega_A)},\\
\Vert f_B \Vert_{W^{1,1}(\Omega_B)} := \Vert \psi_B \Vert_{\mathcal{W}^{1,1} (\Omega_B)}.
\end{multline*}
$(\rm{II})$ Define
\begin{align*}
L^\infty(\Gamma) &= \{ f_S: \Gamma \to \mathbb{R} ; { \ }\Vert f_S \Vert_{L^\infty (\Gamma)} < \infty,  f_S = \mathcal{P}_S [\psi_S], \psi_S \in L^\infty(\mathbb{R}^2) \},\\
L^\infty(\Omega_A) &= \{ f_A: \Omega_A \to \mathbb{R} ; { \ }\Vert f_A \Vert_{L^\infty(\Omega_A)} < \infty,  f_A = \mathcal{P}_A [\psi_A], \psi_A \in L^\infty (\mathbb{R}^3_+) \},\\
L^\infty (\Omega_B) &= \{ f_B: \Omega_B \to \mathbb{R} ; { \ }\Vert f_B \Vert_{L^\infty(\Omega_B)} < \infty,  f_B = \mathcal{P}_B [\psi_B], \psi_B \in L^\infty(\mathbb{R}^3_-) \},
\end{align*}
where
\begin{multline*}
\Vert f_S \Vert_{L^\infty(\Gamma)} := \Vert \psi_S \Vert_{L^\infty (\mathbb{R}^2)},{ \ }\Vert f_A \Vert_{L^\infty (\Omega_A)} := \Vert \psi_A \Vert_{L^\infty (\mathbb{R}^3_+)},\\
\Vert f_B \Vert_{L^\infty(\Omega_B)} := \Vert \psi_B \Vert_{L^\infty (\mathbb{R}^3_-)}.
\end{multline*}
\end{definition}

From Lemmas \ref{lem37} and \ref{lem38}, we see that we can define our function spaces.

\begin{definition}[Inner products]\label{def3012}
For all $f_A^\natural, f_A^\flat \in L^2(\Omega_A)$, $f_B^\natural, f_B^\flat \in L^2(\Omega_B)$, and $f_S^\natural, f_S^\flat \in L^2(\Gamma)$, we set
\begin{align*}
\dual{f_A^\natural , f_A^\flat}_{L^2(\Omega_A)} & =  \dual{\psi_A^\natural, \psi_A^\flat}_{L^2(\mathbb{R}^3_+)},\\
\dual{f_B^\natural , f_B^\flat}_{L^2(\Omega_B)} &=  \dual{\psi_B^\natural, \psi_B^\flat}_{L^2(\mathbb{R}^3_-)},\\
\dual{f_S^\natural , f_S^\flat}_{L^2(\Gamma)} & =  \dual{\psi_S^\natural (1 + \varphi_1^2 + \varphi_2^2)^{1/4}, \psi_S^\flat (1 + \varphi_1^2 + \varphi_2^2)^{1/4} }_{L^2(\mathbb{R}^2)},
\end{align*}
where $\psi_A^\natural, \psi_A^\flat \in L^2(\mathbb{R}^3_+)$, $\psi_B^\natural, \psi_B^\flat \in L^2(\mathbb{R}^3_-)$, and $\psi_S^\natural, \psi_S^\flat \in L^2(\mathbb{R}^2)$ such that $f_A^\natural = \mathcal{P}_A[\psi_A^\natural]$, $f_A^\flat = \mathcal{P}_A[\psi_A^\flat]$, $f_B^\natural = \mathcal{P}_B[\psi_B^\natural]$, $f_B^\flat = \mathcal{P}_B[\psi_B^\flat]$,
$f_S^\natural = \mathcal{P}_S[\psi_S^\natural]$, $f_S^\flat = \mathcal{P}_S[\psi_S^\flat]$, and $\dual{\cdot , \cdot }_{L^2(\mathbb{R}^3_+)}$, $\dual{\cdot , \cdot }_{L^2(\mathbb{R}^3_-)}$, $\dual{\cdot , \cdot }_{L^2(\mathbb{R}^2)}$ are the usual inner products on $L^2(\mathbb{R}^3_+)$, $L^2(\mathbb{R}^3_-)$, and $L^2(\mathbb{R}^2)$, respectively.
\end{definition}
Note that if $f_A \in L^2(\Omega_A)$, $f_B \in L^2(\Omega_B)$, $f_S \in L^2(\Gamma)$ then
\begin{multline*}
\Vert f_A \Vert_{L^2(\Omega_A)} = \dual{f_A , f_A}_{L^2(\Omega_A)}^{1/2}, { \ }\Vert f_B \Vert_{L^2(\Omega_B)} = \dual{f_B , f_B}_{L^2(\Omega_B)}^{1/2},\\
\Vert f_S \Vert_{L^2(\Gamma)} = \dual{f_S , f_S}_{L^2(\Gamma)}^{1/2}.
\end{multline*}

Using the following lemma, we discuss the uniqueness of strong solutions to our systems in Section \ref{sect4}.
\begin{lemma}\label{lem3013}Let $p \in \{ 1 , 2 \}$. Let $f_A \in L^p(\Omega_A)$, $f_B \in L^p(\Omega_B)$, and $f_S \in L^p(\Gamma)$. Assume that
\begin{equation*}
\Vert f_A \Vert_{L^p(\Omega_A)} + \Vert f_B \Vert_{L^p(\Omega_B)} + \Vert f_S \Vert_{L^p(\Gamma)} =0.
\end{equation*}
Then $f_A =0$, $f_B =0$, and $f_S =0$.
\end{lemma}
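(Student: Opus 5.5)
By Definitions \ref{def39}, \ref{def3010} and \ref{def3011}, each of $f_A$, $f_B$, $f_S$ is a pullback. So I would write $f_A = \mathcal{P}_A[\psi_A]$, $f_B = \mathcal{P}_B[\psi_B]$ and $f_S = \mathcal{P}_S[\psi_S]$, with $\psi_A \in L^p(\mathbb{R}^3_+)$, $\psi_B \in L^p(\mathbb{R}^3_-)$ and $\psi_S \in L^p(\mathbb{R}^2)$. By the same definitions, the norms are computed on these representatives. The hypothesis then reads
\begin{equation*}
\Vert \psi_A \Vert_{\mathcal{L}^p(\Omega_A)} + \Vert \psi_B \Vert_{\mathcal{L}^p(\Omega_B)} + \Vert \psi_S \Vert_{\mathcal{L}^p(\Gamma)} = 0 .
\end{equation*}
Since all three terms are nonnegative, each one vanishes separately.

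Next I transfer to the flat norms. For $p=2$, assertion (i) of Lemma \ref{lem37} gives $\Vert \psi_A \Vert_{L^2(\mathbb{R}^3_+)} = \Vert \psi_A \Vert_{\mathcal{L}^2(\Omega_A)} = 0$ and the analogous identity for $\psi_B$. For $\psi_S$ it gives $\Vert \psi_S \Vert_{L^2(\mathbb{R}^2)} \leq \Vert \psi_S \Vert_{\mathcal{L}^2(\Gamma)} = 0$, using that $\sqrt{G_S} \geq 1$. For $p=1$, assertion (i) of Lemma \ref{lem38} gives the same conclusions. Hence $\psi_A = 0$ a.e. in $\mathbb{R}^3_+$, $\psi_B = 0$ a.e. in $\mathbb{R}^3_-$, and $\psi_S = 0$ a.e. in $\mathbb{R}^2$.

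Finally I return to the original domains through the maps of Lemma \ref{lem22}. On $\Omega_A$ we have $f_A(x) = \psi_A(\tilde{x}_A^{-1}(x))$. The map $\tilde{x}_A^{-1}$ is a bijective $C^1$ shear with Jacobian determinant $1$, so it maps null sets to null sets, and therefore $f_A = 0$ a.e. in $\Omega_A$. The same argument with $\check{x}_B^{-1}$ gives $f_B = 0$ a.e. in $\Omega_B$. On $\Gamma$ we have $f_S(x) = \psi_S(x_1,x_2)$. By Definition \ref{def33}, integration against $d\mathcal{H}^2_x$ on $\Gamma$ corresponds to $\sqrt{G_S}\,dX$ on $\mathbb{R}^2$ via $\hat{x}_S$. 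The density $\sqrt{G_S}$ lies between $1$ and $\sqrt{2}$ (respectively between $1$ and $2$ in the $L^1$ comparison), so $\mathcal{H}^2$-null sets on $\Gamma$ correspond exactly to Lebesgue-null sets in $\mathbb{R}^2$. Thus $f_S = 0$ $\mathcal{H}^2$-a.e. on $\Gamma$.

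This argument has essentially no hard step. The only point needing care is this last identification of null sets under the pullbacks, which is needed to make "$=0$" meaningful as an a.e. statement on $\Omega_A$, $\Omega_B$ and $\Gamma$. That identification follows from the Jacobian being $1$ for $\tilde{x}_A$ and $\check{x}_B$, and from $\sqrt{G_S}$ being bounded above and below for $\hat{x}_S$.
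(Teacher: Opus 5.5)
Your proof is correct and follows the paper's argument: you write $f_A,f_B,f_S$ as pullbacks, use assertion (i) of Lemmas \ref{lem37} and \ref{lem38} to bound the flat $L^p$ norms of the representatives by the given norms, conclude that the representatives vanish, and hence so do their pullbacks. Your additional remark on null sets (the shears have unit Jacobian, and $\sqrt{G_S}$ is bounded above and below) only makes explicit the final step $\mathcal{P}_\sharp[0]=0$ a.e., which the paper takes for granted.
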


\begin{proof}[Proof of Lemma \ref{lem3013}]
Let $p \in \{ 1 , 2\}$. Let $f_A \in L^p(\Omega_A)$, $f_B \in L^p(\Omega_B)$, and $f_S \in L^p(\Gamma)$. By definitions, there are $\psi_A \in L^p(\mathbb{R}^3_+)$, $\psi_B \in L^p(\mathbb{R}^3_-)$, and $\psi_S \in L^p(\mathbb{R}^2)$ such that $f_A = \mathcal{P}_A [\psi_A]$, $f_B = \mathcal{P}_B [\psi_B]$, and $f_S = \mathcal{P}_S [\psi_S]$. By assumption and Lemmas \ref{lem37}, \ref{lem38}, we check that 
\begin{equation*}
\Vert \psi_A \Vert_{L^p(\mathbb{R}^3_+)} +\Vert \psi_B \Vert_{L^p(\mathbb{R}^3_-)} + \Vert \psi_S \Vert_{L^p(\mathbb{R}^2)}  \leq \Vert f_A \Vert_{L^p(\Omega_A)} + \Vert f_B \Vert_{L^p(\Omega_B)} + \Vert f_S \Vert_{L^p(\Gamma)} =0.
\end{equation*}
This implies that $\psi_A = 0$, $\psi_B = 0$, and $\psi_S =0$. Therefore, we conclude that $f_A = \mathcal{P}_A [\psi_A] =0$, $f_B = \mathcal{P}_B [\psi_B] =0$, and $f_S = \mathcal{P}_S [\psi_S] =0$. 
\end{proof}

From Lemmas \ref{lem37}, \ref{lem38}, and \ref{lem3013}, we find that $\Vert \cdot \Vert_{L^p(\Omega_A)} $, $\Vert \cdot \Vert_{L^p(\Omega_B)} $ $\Vert \cdot \Vert_{L^p(\Gamma)}$ are norms $(p \in \{ 1 , 2 \})$.

Let us characterize our function spaces.
\begin{proposition}\label{prop3014}
$(\rm{i})$ $L^2 (\Gamma ) = \overline{ C_0^2(\Gamma) }^{\Vert \cdot \Vert_{L^2(\Gamma)}}$,
 $(\rm{ii})$ $W^{1,2} (\Gamma ) = \overline{ C_0^2(\Gamma) }^{\Vert \cdot \Vert_{W^{1,2}(\Gamma)}}$,
  $(\rm{iii})$ $W^{2,2} (\Gamma ) = \overline{ C_0^2(\Gamma) }^{\Vert \cdot \Vert_{W^{2,2}(\Gamma)}}$,
   $(\rm{iv})$ $L^2 (\Omega_A ) = \overline{ C_0^2(\Omega_A) }^{\Vert \cdot \Vert_{L^2(\Omega_A)}}$,
    $(\rm{v})$ $W^{1 , 2} (\Omega_A ) = \overline{ C_0^2(\overline{\Omega_A}) }^{\Vert \cdot \Vert_{W^{1,2}(\Omega_A)}}$,
     $(\rm{vi})$ $W_0^{1 , 2} (\Omega_A ) = \overline{ C_0^2(\Omega_A) }^{\Vert \cdot \Vert_{W^{1,2}(\Omega_A)}}$,
      $(\rm{vii})$ $W^{2,2} (\Omega_A ) = \overline{ C_0^2(\overline{\Omega_A}) }^{\Vert \cdot \Vert_{W^{2,2}(\Omega_A)}}$,
       $(\rm{viii})$ $L^2 (\Omega_B ) = \overline{ C_0^2(\Omega_B) }^{\Vert \cdot \Vert_{L^2(\Omega_B)}}$,
        $(\rm{ix})$ $W^{1 , 2} (\Omega_B ) = \overline{ C_0^2(\overline{\Omega_B}) }^{\Vert \cdot \Vert_{W^{1,2}(\Omega_B)}}$,
         $(\rm{x})$ $W_0^{1 , 2} (\Omega_B ) = \overline{ C_0^2(\Omega_B) }^{\Vert \cdot \Vert_{W^{1,2}(\Omega_B)}}$,
          $(\rm{xi})$ $W^{2,2} (\Omega_B ) = \overline{ C_0^2(\overline{\Omega_B}) }^{\Vert \cdot \Vert_{W^{2,2}(\Omega_B)}}$,
           $(\rm{xii})$ $W^{1,1}(\Gamma) = \overline{ C_0^2(\Gamma) }^{\Vert \cdot \Vert_{W^{1,1}(\Gamma)}}$,
            $(\rm{xiii})$ $W^{1,1} (\Omega_A ) = \overline{ C_0^2(\overline{\Omega_A}) }^{\Vert \cdot \Vert_{W^{1,1}(\Omega_A)}}$,
             $(\rm{xiv})$ $W^{1,1} (\Omega_B ) = \overline{ C_0^2(\overline{\Omega_B}) }^{\Vert \cdot \Vert_{W^{1,1}(\Omega_B)}}$,
              $(\rm{xv})$ Each function space $L^2(\Gamma)$, $W^{1,2} (\Gamma)$, $W^{2,2} (\Gamma)$, $L^2(\Omega_A)$, $W^{1,2} (\Omega_A)$, $W_0^{1,2}(\Omega_A)$, $W^{2,2} (\Omega_A)$, $L^2(\Omega_B)$, $W^{1,2} (\Omega_B)$, $W_0^{1,2}(\Omega_B)$, $W^{2,2} (\Omega_B)$, $W^{1,1}(\Gamma)$, $W^{1,1} (\Omega_A)$, and $W^{1,1} (\Omega_B)$ is a Banach space,
               $(\rm{xvi})$ Each function space $L^2(\Omega_A)$, $L^2(\Omega_B)$, and $L^2(\Gamma)$ is a Hilbert space.
\end{proposition}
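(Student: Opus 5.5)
The plan is to transfer every assertion to the flat domains $\mathbb{R}^3_+$, $\mathbb{R}^3_-$, $\mathbb{R}^2$ through the pullback operators of Definition \ref{def32}, and there to use the classical density results together with the norm equivalences of Lemmas \ref{lem37} and \ref{lem38}. The basic observation is that each of $\mathcal{P}_A$, $\mathcal{P}_B$, $\mathcal{P}_S$ is a linear bijection, by Lemma \ref{lem22}. By definition, every space in the statement is the image under $\mathcal{P}_\sharp$ of the corresponding classical space. For example, $W^{1,2}(\Omega_A)=\mathcal{P}_A[W^{1,2}(\mathbb{R}^3_+)]$ with $\Vert \mathcal{P}_A[\psi_A]\Vert_{W^{1,2}(\Omega_A)}=\Vert\psi_A\Vert_{\mathcal{W}^{1,2}(\Omega_A)}$. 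Lemmas \ref{lem37} and \ref{lem38}, valid under $\Vert\nabla_X\varphi\Vert_{L^\infty}\le 1/2$, say that this norm is equivalent to the standard norm of $\psi_A$. Also, by Lemma \ref{lem3013} and its analogues, these quantities are genuine norms.

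\textbf{Density, (i)--(xiv).} I take (v) as the model case. The inclusion $\overline{C_0^2(\overline{\Omega_A})}\subset W^{1,2}(\Omega_A)$ comes first. If $f_n=\mathcal{P}_A[\psi_n]$ with $\psi_n\in C_0^2(\overline{\mathbb{R}^3_+})$ is Cauchy in $\Vert\cdot\Vert_{W^{1,2}(\Omega_A)}$, then by \eqref{eq34} and \eqref{eq37} the sequence $\psi_n$ is Cauchy in $W^{1,2}(\mathbb{R}^3_+)$. It therefore converges to some $\psi$ in that space, and $f_n\to\mathcal{P}_A[\psi]\in W^{1,2}(\Omega_A)$. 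For the reverse inclusion, take $f=\mathcal{P}_A[\psi]$ with $\psi\in W^{1,2}(\mathbb{R}^3_+)$. The classical density of $C_0^\infty(\overline{\mathbb{R}^3_+})$ in $W^{1,2}(\mathbb{R}^3_+)$ gives approximants $\psi_n$, and the upper bound in Lemma \ref{lem37} yields $\Vert f-\mathcal{P}_A[\psi_n]\Vert_{W^{1,2}(\Omega_A)}\le C\Vert\psi-\psi_n\Vert_{W^{1,2}(\mathbb{R}^3_+)}\to0$.

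The other cases follow the same pattern, each using the appropriate classical density result:
\begin{itemize}
\item $C_0^\infty(\mathbb{R}^3_\pm)$ is dense in $L^2$ and, by definition, in $W_0^{1,2}$;
\item $C_0^\infty(\overline{\mathbb{R}^3_\pm})$ is dense in $W^{2,2}$ and $W^{1,1}$, by extension and mollification;
\item $C_0^\infty(\mathbb{R}^2)$ is dense in $L^2$, $W^{1,2}$, $W^{2,2}$ and $W^{1,1}$.
\end{itemize}
The equivalences \eqref{eq36}, \eqref{eq3010} and Lemma \ref{lem38} supply the needed two-sided norm comparisons.

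\textbf{Completeness and Hilbert structure, (xv)--(xvi).} Item (xv) then follows directly. Each space is linearly isometric to a classical Banach space equipped with an equivalent norm: a Cauchy sequence pulls back to a Cauchy sequence, which converges, and its limit pushes forward. For (xvi), I check from Definition \ref{def3012} that $\dual{\cdot,\cdot}$ is a symmetric positive-definite bilinear form inducing the norm, using Lemma \ref{lem3013} for definiteness. On $L^2(\Gamma)$ the form is the weighted inner product with the weight $\sqrt{G_S}$, which lies between $1$ and $\sqrt2$. Combined with (xv), this makes each $L^2$ space a Hilbert space.

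\textbf{Main obstacle.} The delicate point is (iii) and (vii)/(xi). There the norm $\mathcal{W}^{2,2}$ contains only $\Delta_\Gamma$, or the curvilinear second derivatives, so a genuine two-sided comparison with $W^{2,2}$ is needed, not just an upper bound. This is exactly \eqref{eq3010} and its $\Gamma$-analogue in Lemma \ref{lem37}(iii), whose lower bound for $\Gamma$ rests on elliptic regularity of $\Delta_\Gamma$ in the chart, i.e. the uniform ellipticity of $g_S^{\alpha\beta}$ with $\varphi\in BC^2$. I would invoke it as stated rather than reprove it. A second technical point is that the pullback of a $C_0^2$ function lies in $C_0^2$ only because $\varphi\in BC^2$; since the spaces are defined as pullback images, this point is handled by definition.
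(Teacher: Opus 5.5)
Your proposal is correct and takes the same approach as the paper. The paper proves only (v), exactly as you do: it pulls back through $\mathcal{P}_A$ and uses density of $C_0^2(\overline{\mathbb{R}^3_+})$ in $W^{1,2}(\mathbb{R}^3_+)$ together with the upper bounds in \eqref{eq34} and \eqref{eq37}; for the reverse inclusion it pulls back a Cauchy sequence via the lower bounds and uses completeness of $W^{1,2}(\mathbb{R}^3_+)$, leaving the remaining items (including the two-sided $W^{2,2}$ comparisons of Lemma \ref{lem37}(iii)) as analogous, as you sketch.
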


\begin{proof}[Proof of Proposition \ref{prop3014}]
We only show $(\rm{v})$. To this end, we show that
\begin{align}
W^{1,2} (\Omega_A) \subset \overline{ C_0^2(\overline{\Omega_A}) }^{\Vert \cdot \Vert_{W^{1,2}(\Omega_A)}},\label{eq3014}\\
 \overline{ C_0^2(\overline{\Omega_A}) }^{\Vert \cdot \Vert_{W^{1,2}(\Omega_A)}} \subset W^{1,2} (\Omega_A).\label{eq3015}
\end{align}

We first show \eqref{eq3014}. Let $f_A \in W^{1,2} (\Omega_A)$. By definition, there is $\psi_A \in W^{1,2} (\mathbb{R}^3_+)$ such that $f_A = \mathcal{P}[\psi_A]$. Since $C_0^2(\overline{\mathbb{R}^3_+})$ is dense in $W^{1,2} (\mathbb{R}^3_+)$, there is $\{ \psi_A^m \} \subset C_0^2( \overline{\mathbb{R}^3_+})$ such that
\begin{equation}\label{eq3016}
\lim_{m \to \infty} \Vert \psi_A - \psi_A^m \Vert_{W^{1,2} (\mathbb{R}^3_+)} = 0.
\end{equation}
By definition, we see that $f_A^m := \mathcal{P}_A[\psi_A^m] \in C_0^2(\overline{ \Omega_A })$. From \eqref{eq34}, \eqref{eq37}, and \eqref{eq3016}, we check that
\begin{align*}
\Vert f_A - f_A^m \Vert_{W^{1,2} (\Omega_A)} \leq 3 \Vert \psi_A - \psi_A^m \Vert_{W^{1,2}(\mathbb{R}^3_+)} \to 0 \text{ as } m \to \infty.
\end{align*}
This implies that $f_A \in \overline{C_0^2( \overline{\Omega_A})}^{ \Vert \cdot \Vert_{W^{2,2}(\Omega_A) }}$.

Next, we show \eqref{eq3015}. Let $f_A \in \overline{ C_0^2(\overline{\Omega_A}) }^{\Vert \cdot \Vert_{W^{1,2}(\Omega_A)}}$. By definition, there are $f_A^m \in C_0^2(\overline{\Omega_A})$ and $\psi_A^m \in C_0^2(\overline{\mathbb{R}^3_+})$ such that $f^m_A = \mathcal{P}_A [\psi_A^m]$ and
\begin{equation}\label{eq3017}
\lim_{m \to \infty} \Vert f_A - f_A^m \Vert_{W^{1,2} (\Omega_A)} = 0.
\end{equation}
By \eqref{eq3017}, \eqref{eq34}, and \eqref{eq37}, we find that
\begin{multline*}
\Vert \psi_A^m - \psi_A^{m'} \Vert_{W^{1,2} (\mathbb{R}^3_+)}  \leq 3 \Vert f_A^m - f_A^{m'} \Vert_{W^{1,2} (\Omega_A)}\\
 \leq 3 \Vert f_A^m - f_A \Vert_{W^{1,2} (\Omega_A)} + 3 \Vert f_A - f_A^{m'} \Vert_{W^{1,2} (\Omega_A)} \to 0 \text{ as }m,m' \to \infty.
\end{multline*}
Since $W^{1,2} (\mathbb{R}^3_+)$ is a Banach space, there is $\psi_A^\infty \in W^{1,2} (\mathbb{R}^3_+)$ such that
\begin{equation}\label{eq3018}
\lim_{m \to \infty} \Vert \psi_A^m - \psi_A^\infty \Vert_{W^{1,2} (\mathbb{R}^3_+)} = 0.
\end{equation}
Set $f_A^\infty = \mathcal{P}_A [\psi_A^\infty]$. By \eqref{eq3017} and \eqref{eq3018}, we check that
\begin{multline*}
\Vert f_A - f_A^\infty \Vert_{W^{1,2} (\Omega_A)}  \leq C \Vert f_A - f_A^m \Vert_{W^{1,2} (\Omega_A)} + C \Vert f_A^m - f_A^\infty \Vert_{W^{1,2}(\Omega_A)}\\
  \leq C \Vert f_A - f_A^m \Vert_{W^{1,2} (\Omega_A)} + C \Vert \psi_A^m - \psi_A^\infty \Vert_{W^{1,2}(\mathbb{R}^3_+)} \to 0 \text{ as }m \to \infty.
\end{multline*}
This implies that $f_A = f_A^\infty$, that is, $f_A \in W^{1,2} (\Omega_A)$. Therefore, we have \eqref{eq3015}. From \eqref{eq3014} and \eqref{eq3015}, we see $(\rm{v})$.
\end{proof}

Let us define and study our trace operators.
\begin{definition}[Trace Operators (I)]\label{def3015}{ \ }\\
$(\rm{i})$ Let $f_A \in W^{1,2} (\Omega_A)$. By definition, there is $\psi_A \in W^{1,2} (\mathbb{R}^3_+)$ such that
\begin{equation*}
f_A = \mathcal{P}_A[\psi_A].
\end{equation*}
We define the operator $\gamma_A: W^{1,2} (\Omega_A) \to L^2(\Gamma)$ by
\begin{equation*}
\gamma_A[f_A] = \mathcal{P}_S[ \gamma_+[\psi_A]].
\end{equation*}
Here $\gamma_+$ is the trace operator such that $\gamma_+: W^{1,2} (\mathbb{R}^3_+) \to L^2(\mathbb{R}^2)$. We call operator $\gamma_A$ a \emph{trace operator}.\\
$(\rm{ii})$ Let $f_B \in W^{1,2} (\Omega_B)$. By definition, there is $\psi_B \in W^{1,2} (\mathbb{R}^3_-)$ such that
\begin{equation*}
f_B = \mathcal{P}_B[\psi_B].
\end{equation*}
We define the operator $\gamma_B: W^{1,2} (\Omega_B) \to L^2(\Gamma)$ by
\begin{equation*}
\gamma_B[f_B] = \mathcal{P}_S[ \gamma_-[\psi_B]].
\end{equation*}
Here $\gamma_-$ is the trace operator such that $\gamma_-: W^{1,2} (\mathbb{R}^3_-) \to L^2(\mathbb{R}^2)$. We call operator $\gamma_B$ a \emph{trace operator}.
\end{definition}

\begin{proposition}[Properties of trace operators (I)]\label{prop3016}
$(\rm{i})$ Let $f_A \in W^{1,2} (\Omega_A)$ and $\psi_A \in W^{1,2} (\mathbb{R}^3_+)$ such that $f_A = \mathcal{P}_A [\psi_A]$. Then
\begin{align}
\Vert \gamma_A[f_A] \Vert_{L^2(\Gamma)} & \leq 12 \Vert f_A \Vert_{W^{1,2} (\Omega_A)},\label{eq3019}\\
\Vert \gamma_A[f_A] \Vert_{L^2(\Gamma)} & \leq 2 \Vert \gamma_+[\psi_A ] \Vert_{L^2(\mathbb{R}^2)}.\label{eq3020}
\end{align}
$(\rm{ii})$ If $f_A \in W_0^{1,2} (\Omega_A)$, then $\gamma_A[f_A] =0$.\\
$(\rm{iii})$ Let $f_B \in W^{1,2} (\Omega_B)$ and $\psi_B \in W^{1,2} (\mathbb{R}^3_-)$ such that $f_B = \mathcal{P}_B [\psi_B]$. Then
\begin{align*}
\Vert \gamma_B[f_B] \Vert_{L^2(\Gamma)} & \leq 12 \Vert f_B \Vert_{W^{1,2} (\Omega_B)},\\
\Vert \gamma_B[f_B] \Vert_{L^2(\Gamma)} & \leq 2 \Vert \gamma_-[\psi_B ] \Vert_{L^2(\mathbb{R}^2)}.
\end{align*}
$(\rm{iv})$ If $f_B \in W_0^{1,2} (\Omega_B)$, then $\gamma_B[f_B] =0$.
\end{proposition}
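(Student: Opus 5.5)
The plan is to reduce everything to the flat setting via Definition \ref{def3015}, using the norm comparisons of Lemma \ref{lem37} and the standard trace inequality on the half space. For $(\rm{i})$, write $f_A = \mathcal{P}_A[\psi_A]$ with $\psi_A \in W^{1,2}(\mathbb{R}^3_+)$, so that $\gamma_A[f_A] = \mathcal{P}_S[\gamma_+[\psi_A]]$. By Definition \ref{def39} and \eqref{eq36},
\begin{equation*}
\Vert \gamma_A[f_A] \Vert_{L^2(\Gamma)} = \Vert \gamma_+[\psi_A] \Vert_{\mathcal{L}^2(\Gamma)} \leq \sqrt{2} \Vert \gamma_+[\psi_A] \Vert_{L^2(\mathbb{R}^2)} \leq 2 \Vert \gamma_+[\psi_A] \Vert_{L^2(\mathbb{R}^2)},
\end{equation*}
which is \eqref{eq3020}. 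Here I use that $\sqrt{G_S} \leq \sqrt{1 + 1/4} \leq 2$ under $\Vert \nabla_X \varphi \Vert_{L^\infty} \leq 1/2$.

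For \eqref{eq3019}, I will invoke the elementary trace inequality $\Vert \gamma_+[\psi] \Vert_{L^2(\mathbb{R}^2)} \leq \Vert \psi \Vert_{W^{1,2}(\mathbb{R}^3_+)}$. First take $\psi \in C_0^2(\overline{\mathbb{R}^3_+})$ and write
\begin{equation*}
\vert \psi(y_h,0)\vert^2 = -\int_0^\infty \partial_{y_3} \vert \psi \vert^2 \, dy_3 \leq \int_0^\infty (\vert \psi \vert^2 + \vert \partial_{y_3}\psi \vert^2)\, dy_3 .
\end{equation*}
Integrating over $y_h$ gives the inequality for smooth $\psi$, and density (as in the proof of Proposition \ref{prop3014}) extends it to all of $W^{1,2}(\mathbb{R}^3_+)$. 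Then, by \eqref{eq34} and \eqref{eq37},
\begin{equation*}
\Vert \psi_A \Vert_{W^{1,2}(\mathbb{R}^3_+)} \leq \Vert \psi_A \Vert_{\mathcal{L}^2(\Omega_A)} + 3 \Vert \psi_A \Vert_{\dot{\mathcal{W}}^{1,2}(\Omega_A)} \leq 6 \Vert f_A \Vert_{W^{1,2}(\Omega_A)}.
\end{equation*}
Combining this with \eqref{eq3020} yields the constant $12$.

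A preliminary point I must address is well-definedness: the representative $\psi_A$ is unique a.e., since $\mathcal{P}_A$ is composition with the bi-Lipschitz bijection of Lemma \ref{lem22}. Consequently $\gamma_+[\psi_A]$ is determined by $f_A$.

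For $(\rm{ii})$, if $f_A \in W_0^{1,2}(\Omega_A)$ then $\psi_A \in W_0^{1,2}(\mathbb{R}^3_+)$, so $\gamma_+[\psi_A] = 0$. This follows since $\gamma_+$ vanishes on $C_0^\infty(\mathbb{R}^3_+)$ and is continuous, by the trace inequality above. Hence $\gamma_A[f_A] = \mathcal{P}_S[0] = 0$.

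Parts $(\rm{iii})$ and $(\rm{iv})$ are identical, using $\mathcal{P}_B$, $\gamma_-$ and the integral $\int_{-\infty}^0 \partial_{z_3}\vert\psi\vert^2\,dz_3$. The only mildly delicate step is the trace inequality with explicit constant together with the density extension; everything else is bookkeeping of the constants in Lemma \ref{lem37}.
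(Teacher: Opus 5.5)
Your proof is correct. It rests on the same two ingredients as the paper's: the norm comparisons of Lemma \ref{lem37} and the flat trace inequality. It is, however, organized more directly.

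\textbf{The paper's route.} It approximates $f_A$ by $f_A^m=\mathcal{P}_A[\psi_A^m]$ with $\psi_A^m\in C_0^2(\overline{\mathbb{R}^3_+})$. It then computes the genuine surface integral $\int_\Gamma |f_A^m|^2\,d\mathcal{H}^2_x=\int_{\mathbb{R}^2}|\psi_A^m(X,0)|^2\sqrt{G_S}\,dX$. This is bounded using two facts:
\begin{itemize}
\item the quoted trace bound $\Vert\gamma_+[\psi]\Vert_{L^2(\mathbb{R}^2)}\le 2\Vert\psi\Vert_{W^{1,2}(\mathbb{R}^3_+)}$;
\item the comparison $\Vert\psi_A^m\Vert_{W^{1,2}(\mathbb{R}^3_+)}\le 3\Vert f_A^m\Vert_{W^{1,2}(\Omega_A)}$.
\end{itemize}
Together with the factor $2$ from $G_S^{1/4}$, this gives $2\cdot 2\cdot 3=12$, and the paper then lets $m\to\infty$.

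\textbf{Your route.} You read $\Vert\gamma_A[f_A]\Vert_{L^2(\Gamma)}=\Vert\gamma_+[\psi_A]\Vert_{\mathcal{L}^2(\Gamma)}$ directly off Definition \ref{def39}. So no approximation on $\Omega_A$ is needed at all. The only density argument is the one inside your proof of the flat trace inequality. You obtain that inequality with constant $1$, which lets you absorb the looser comparison constant $6$ and still arrive at $12$.

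\textbf{What each buys.} Your version is shorter and self-contained. It also makes explicit that $\gamma_A[f_A]$ does not depend on the choice of representative $\psi_A$, via the measure-preserving bi-Lipschitz change of variables. That same fact is what legitimizes part (ii), since the $\psi_A\in W_0^{1,2}(\mathbb{R}^3_+)$ from the definition of $W_0^{1,2}(\Omega_A)$ must be the one entering $\gamma_A$. The paper leaves this implicit. The paper's version, in exchange, exhibits $\Vert\gamma_A[f_A]\Vert_{L^2(\Gamma)}$ as the limit of norms of the classical restrictions $f_A^m|_\Gamma$, computed with the Hausdorff measure $d\mathcal{H}^2_x$. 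This ties the abstractly defined trace to geometric restriction on $\Gamma$, which is the interpretation relied on when passing to limits in the integration-by-parts formulas of Proposition \ref{prop3021}.
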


\begin{proof}[Proof of Proposition \ref{prop3016}]

We only show $(\rm{i})$ and $(\rm{ii})$. Let $f_A \in W^{1,2} (\Omega_A)$ and $\psi_A \in W^{1,2} (\mathbb{R}^3_+)$ such that $f_A = \mathcal{P}_A[\psi_A]$.
Since $W^{1,2} (\Omega_A) = \overline{ C_0^2(\overline{\Omega_A}) }^{\Vert \cdot \Vert_{W^{1,2}(\Omega_A)}}$ and $W^{1,2} (\mathbb{R}^3_+) = \overline{ C_0^2(\overline{\mathbb{R}^3_+}) }^{\Vert \cdot \Vert_{W^{1,2}(\mathbb{R}^3_+)}}$, there are $\{ f_A^m \} \subset C_0^2(\overline{\Omega_A})$ and $\{ \psi_A^m \} \subset C_0^2(\overline{\mathbb{R}^3_+})$ such that $f_A^m = \mathcal{P}_A[ \psi_A^m]$,
\begin{align}
\lim_{m \to \infty} \Vert f^m_A  - f_A \Vert_{W^{1,2}(\Omega_A)} & = 0,\label{eq3021}\\
\lim_{m \to \infty} \Vert \psi^m_A  - \psi_A \Vert_{W^{1,2}(\mathbb{R}^3_+)} & = 0.\label{eq3022}
\end{align}
By the definition of surface integral, we check that
\begin{multline}\label{eq3023}
\int_{\Gamma} \vert f_A^m(x) \vert^2 { \ }d \mathcal{H}_x^2 = \int_{\Gamma} \vert \mathcal{P}_A[\psi_A^m](x) \vert^2 { \ }d \mathcal{H}_x^2\\
 = \int_{\Gamma} \vert \mathcal{P}_A[\psi_A^m](X_1,X_2, \varphi (X_1,X_2)) \vert^2  \sqrt{1 + \varphi_1^2 + \varphi_2^2}{ \ }d X\\
= \int_{\mathbb{R}^2} \vert \psi^m_A(X_1,X_2, 0) \vert^2 \sqrt{1 + \varphi_1^2 + \varphi_2^2} { \ }dX.
\end{multline}
By $\Vert \nabla_X \varphi \Vert_{L^\infty (\mathbb{R}^2)} \leq 1/2$, \eqref{eq34}, and \eqref{eq37}, we see that
\begin{align}
\Vert \psi_A^m \vert_{y_3 =0} (1+\varphi_1^2 + \varphi_2^2 )^{1/4} \Vert_{L^2(\mathbb{R}^2)} &\leq 2 \Vert \gamma_+ [\psi_A^m] \Vert_{L^2(\mathbb{R}^2)}\notag\\
 & \leq 4 \Vert \psi_A^m \Vert_{W^{1,2}(\mathbb{R}^3_+)}\notag\\
& \leq 12 \Vert f_A^m \Vert_{W^{1,2}(\Omega_A)}.\label{eq3024}
\end{align}
Here we used the property of the trace operator $\gamma_+$ such that
\begin{equation*}
\Vert \gamma_+ [\psi_A^m] \Vert_{L^2(\mathbb{R}^2)} \leq 2 \Vert \psi_A^m \Vert_{W^{1,2} (\mathbb{R}^3_+)}.
\end{equation*}
From \eqref{eq3024} and \eqref{eq3021}, we observe that
\begin{equation}\label{eq3025}
\lim_{m \to \infty}\Vert \psi_A^m \vert_{y_3 =0} (1+\varphi_1^2 + \varphi_2^2 )^{1/4} \Vert_{L^2(\mathbb{R}^2)} \leq 12 \Vert f_A \Vert_{W^{1,2}(\Omega_A)}.
\end{equation}
By the definition of $\gamma_+$, we find that
\begin{multline}\label{eq3026}
\lim_{m \to \infty }\int_{\mathbb{R}^2} \vert \psi^m_A(X_1,X_2, 0) \vert^2 \sqrt{1 + \varphi_1^2 + \varphi_2^2} { \ }dX \\
= \int_{\mathbb{R}^2} \vert \gamma_+[\psi_A](X_1,X_2) \vert^2 \sqrt{1 + \varphi_1^2 + \varphi_2^2} { \ }dX = \Vert \mathcal{P}_S [\gamma_+[\psi_A]] \Vert_{L^2(\Gamma)}^2.
\end{multline}
From \eqref{eq3023}, \eqref{eq3025}, and \eqref{eq3026}, we see that
\begin{equation*}
\lim_{m \to \infty} \int_{\Gamma} \vert f_A^m(x) \vert^2 { \ }d \mathcal{H}_x^2 = \Vert \mathcal{P}_S [\gamma_+[\psi_A]] \Vert_{L^2(\Gamma)}^2,
\end{equation*}
and that
\begin{equation*}
\Vert \mathcal{P}_S [\gamma_+[\psi_A]] \Vert_{L^2(\Gamma)} \leq 12 \Vert f_A \Vert_{W^{1,2} (\Omega_A)}.
\end{equation*}
Thus, we have \eqref{eq3019}. From \eqref{eq3026} and \eqref{eq3023}, we observe that
\begin{multline*}
\Vert \mathcal{P}_S [\gamma_+[\psi_A]] \Vert_{L^2(\Gamma)} = \lim_{m \to \infty} \Vert \psi_A^m \vert_{y_3 =0} (1+\varphi_1^2 + \varphi_2^2 )^{1/4} \Vert_{L^2(\mathbb{R}^2)}\\
 \leq 2 \lim_{m \to \infty} \Vert \gamma_+ [\psi_A^m] \Vert_{L^2(\mathbb{R}^2)} = 2 \Vert \gamma_+ [\psi_A] \Vert_{L^2(\mathbb{R}^2)},
\end{multline*}
which is \eqref{eq3020}. Therefore, we see $(\rm{i})$.

Next, we consider $(\rm{ii})$. Assume that $f_A \in W_0^{1,2}(\Omega_A)$. By definition, there are $\psi_A \in W_0^{1,2} (\mathbb{R}^3_+)$ such that $f_A = \mathcal{P}_A[f_A]$. From $\gamma_+ [\psi_A] =0$ and \eqref{eq3020}, we find that $\gamma_A[f_A] =0$. Therefore, the lemma follows.
\end{proof}

Now we attack Proposition \ref{prop31}.
\begin{proof}[Proof of Proposition \ref{prop31}]
Let $\delta >0$. We only show $(\rm{i})$. Let $f_A \in W^{1,2}(\Omega_A)$ and $f_S \in W^{1,2}(\Gamma)$ such that $\gamma_A[f_A] = f_S$. By definitions, there $\psi_A \in W^{1,2} (\mathbb{R}^3_+)$, $\psi_S \in W^{1,2}(\mathbb{R}^2)$ such that $f_A = \mathcal{P}_A[\psi_A]$ and $f_S = \mathcal{P}_S[\psi_S]$. For almost all $y \in \mathbb{R}^3_+$, we set
\begin{equation*}
\tilde{\mathfrak{u}}_A = \tilde{\mathfrak{u}}_A(y) = \psi_A (y) - \psi_S (y_h) {\rm{e}}^{- \delta y_3}. 
\end{equation*}
We easily check that
\begin{align*}
\int_{\mathbb{R}^3_+} \vert \psi_S (y_h)  {\rm{e}}^{- \delta y_3} \vert^2 { \ }dy = \frac{1}{2 \delta} \Vert \psi_S \Vert_{L^2(\mathbb{R}^2)}^2 < \infty,\\
\int_{\mathbb{R}^3_+} \vert \partial_{y_1}\{ \psi_S (y_h)  {\rm{e}}^{- \delta y_3} \} \vert^2 { \ }dy = \frac{1}{2 \delta} \Vert \partial_{y_1}\psi_S \Vert_{L^2(\mathbb{R}^2)}^2 < \infty,\\
\int_{\mathbb{R}^3_+} \vert \partial_{y_2}\{\psi_S (y_h)  {\rm{e}}^{- \delta y_3} \} \vert^2 { \ }dy = \frac{1}{2 \delta} \Vert \partial_{y_2}\psi_S \Vert_{L^2(\mathbb{R}^2)}^2 < \infty,\\
\int_{\mathbb{R}^3_+} \vert \partial_{y_3}\{\psi_S (y_h)  {\rm{e}}^{- \delta y_3} \} \vert^2 { \ }dy = \frac{\delta}{2 } \Vert \psi_S \Vert_{L^2(\mathbb{R}^2)}^2 < \infty.
\end{align*}
Thus, we find that $\psi_S (y_h) {\rm{e}}^{- \delta y_3} \in W^{1,2} (\mathbb{R}^3_+)$. From $\psi_A, \psi_S (y_h) {\rm{e}}^{- \delta y_3} \in W^{1,2} (\mathbb{R}^3_+)$,
we see that $ \tilde{\mathfrak{u}}_A \in W^{1,2} (\mathbb{R}^3_+)$. Since $\psi_S = \hat{f}_S$, we find that $\mathfrak{u}_A = \mathcal{P}_A [ \tilde{\mathfrak{u}}_A]$, that is, $\mathfrak{u}_A \in W^{1,2} (\Omega_A)$. From assumption that $\gamma_A[f_A] = f_S$ and \eqref{eq3020}, we check that
\begin{align*}
\Vert \gamma_A [\mathfrak{u}_A] \Vert_{L^2(\Gamma)} & \leq 2 \Vert \gamma_+[ \tilde{\mathfrak{u}}_A ] \Vert_{L^2(\mathbb{R}^2)}\\
& = 2 \Vert \gamma_+[ \psi_A (y) - \psi_S (y_h) {\rm{e}}^{- \delta y_3}) ] \Vert_{L^2(\mathbb{R}^2)}\\
& = 2 \Vert \gamma_+ [\psi_A] - \psi_S \Vert_{L^2(\mathbb{R}^2)} = 2 \Vert \psi_S - \psi_S \Vert_{L^2(\mathbb{R}^2)} =0.
\end{align*}
Thus, we see that $\gamma_A [\mathfrak{u}_A] =0$. Therefore, Proposition \ref{prop31} is proved.
\end{proof}

Next we study properties of our trace operators. Since four trace operators $\gamma_A$, $\gamma_B$, $\gamma_+$, $\gamma_-$ are not closed operators, we need Proposition \ref{prop3017} and Corollary \ref{cor3018} to construct strong solutions of our systems.
\begin{proposition}[Properties of trace operators (II)]\label{prop3017}
Let $\mu_A , \mu_B \in \mathbb{R}$. Let $\{ f_A^m \}_{m \in \mathbb{N}} \subset W^{2,2}(\Omega_A)$, $f_A \in W^{2,2} (\Omega_A)$, $\{ f_B^m \}_{m \in \mathbb{N}} \subset W^{2,2} (\Omega_B)$, $f_B \in W^{2,2} (\Omega_B)$, and $f_S \in L^2(\Gamma)$. Assume that
\begin{align*}
&\lim_{m \to \infty} \Vert f_A^m- f_A \Vert_{W^{2,2} (\Omega_A)} =0,\\
&\lim_{m \to \infty} \Vert f_B^m - f_B \Vert_{W^{2,2} (\Omega_B)} =0,\\
&\lim_{m \to \infty} \Vert (\mu_A \gamma_A [(n \cdot \nabla ) f_A^m] + \mu_B \gamma_B [ (n \cdot \nabla) f_B^m]) - f_S \Vert_{L^2(\Gamma)} = 0.
\end{align*}
Then
\begin{equation}\label{eq3027}
f_S = \mu_A \gamma_A [ (n\cdot \nabla )f_A] + \mu_B \gamma_B [(n \cdot \nabla )f_B].
\end{equation}
\end{proposition}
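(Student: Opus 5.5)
The plan is to show that the map $(f_A,f_B)\mapsto \mu_A\gamma_A[(n\cdot\nabla)f_A]+\mu_B\gamma_B[(n\cdot\nabla)f_B]$ is \emph{bounded} from $W^{2,2}(\Omega_A)\times W^{2,2}(\Omega_B)$ into $L^2(\Gamma)$. Once this is known, \eqref{eq3027} follows from uniqueness of limits in $L^2(\Gamma)$.

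First I pull everything back to the half spaces. Write $f_A^m=\mathcal{P}_A[\psi_A^m]$, $f_A=\mathcal{P}_A[\psi_A]$ with $\psi_A^m,\psi_A\in W^{2,2}(\mathbb{R}^3_+)$, and analogously $\psi_B^m,\psi_B$ on $\mathbb{R}^3_-$. By \eqref{eq3010} and its $B$-analogue in Lemma \ref{lem37}, the hypotheses give $\psi_A^m\to\psi_A$ in $W^{2,2}(\mathbb{R}^3_+)$ and $\psi_B^m\to\psi_B$ in $W^{2,2}(\mathbb{R}^3_-)$. By the chain rule (Lemma \ref{lem71}, Remark \ref{rem7017}, as already used to derive $\mathring{\mathcal{F}}_3$), the normal derivative is
\begin{equation*}
(n\cdot\nabla)f_A=\mathcal{P}_A[N\psi_A],\qquad N\psi:=-\frac{\varphi_1}{\sqrt{G_S}}\partial_{y_1}\psi-\frac{\varphi_2}{\sqrt{G_S}}\partial_{y_2}\psi+\frac{G_S}{\sqrt{G_S}}\partial_{y_3}\psi .
\end{equation*}
The coefficients of $N$ depend only on $y_h$, are bounded, and have bounded first derivatives because $\varphi\in BC^2$. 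Hence $N$ maps $W^{2,2}(\mathbb{R}^3_+)$ boundedly into $W^{1,2}(\mathbb{R}^3_+)$, so $(n\cdot\nabla)f_A\in W^{1,2}(\Omega_A)$ and its trace $\gamma_A[(n\cdot\nabla)f_A]=\mathcal{P}_S[\gamma_+[N\psi_A]]$ is well defined. The same holds on the $B$ side.

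Next I estimate. By \eqref{eq3020}, the trace inequality $\Vert\gamma_+[\cdot]\Vert_{L^2(\mathbb{R}^2)}\le 2\Vert\cdot\Vert_{W^{1,2}(\mathbb{R}^3_+)}$, and the boundedness of $N$,
\begin{equation*}
\Vert\gamma_A[(n\cdot\nabla)(f_A^m-f_A)]\Vert_{L^2(\Gamma)}\le 2\Vert\gamma_+[N(\psi_A^m-\psi_A)]\Vert_{L^2(\mathbb{R}^2)}\le C(\Vert\varphi\Vert_{BC^2})\Vert\psi_A^m-\psi_A\Vert_{W^{2,2}(\mathbb{R}^3_+)}\to0 .
\end{equation*}
The same bound holds on the $B$ side. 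Linearity of $\gamma_A,\gamma_B,\mathcal{P}_S,\gamma_\pm$ then gives
\begin{equation*}
\mu_A\gamma_A[(n\cdot\nabla)f_A^m]+\mu_B\gamma_B[(n\cdot\nabla)f_B^m]\to\mu_A\gamma_A[(n\cdot\nabla)f_A]+\mu_B\gamma_B[(n\cdot\nabla)f_B]\quad\text{in } L^2(\Gamma).
\end{equation*}
By hypothesis the same sequence converges to $f_S$. Uniqueness of limits in the normed space $L^2(\Gamma)$, where the norm property comes from Lemma \ref{lem3013}, yields \eqref{eq3027}.

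The main obstacle is the rigorous identification $(n\cdot\nabla)\mathcal{P}_A[\psi]=\mathcal{P}_A[N\psi]$ for $\psi\in W^{2,2}$, together with the fact that this object lies in $W^{1,2}(\Omega_A)$, so that $\gamma_A$ applies to it in the sense of Definition \ref{def3015}. I would prove this first for $\psi\in C_0^2(\overline{\mathbb{R}^3_+})$ by direct differentiation. I would then extend it by density, using Proposition \ref{prop3014}(vii) and the two-sided norm equivalences \eqref{eq37} and \eqref{eq3010}. In that density step one must check that the products $\varphi_\alpha/\sqrt{G_S}\cdot\partial\psi$ stay in $W^{1,2}$, which is exactly where $\varphi\in BC^2$ is needed.
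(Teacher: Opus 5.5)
Your proposal is correct and follows essentially the same route as the paper. The paper likewise pulls back to $\psi_A^m,\psi_A$ and $\psi_B^m,\psi_B$, and identifies $(n\cdot\nabla)f_A=\mathcal{P}_A[\Psi_A]$ with $\Psi_A\in W^{1,2}(\mathbb{R}^3_+)$ via the chain rule of Remark \ref{rem76}. It then bounds $\Vert\gamma_A[(n\cdot\nabla)(f_A-f_A^m)]\Vert_{L^2(\Gamma)}\le C\Vert f_A-f_A^m\Vert_{W^{2,2}(\Omega_A)}$ through \eqref{eq3020}, the trace inequality and \eqref{eq3010}, and concludes by the triangle inequality. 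The only cosmetic difference is that the paper pulls the $y_h$-dependent coefficients outside $\gamma_+$ and estimates each $\gamma_+[\partial_{y_j}\psi_A]$ separately, whereas you apply $\gamma_+$ to $N\psi_A$ as a whole.
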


\begin{proof}[Proof of Proposition \ref{prop3017}]
By assumptions, there are $\{ \psi_A^m\} \subset W^{2,2}(\mathbb{R}^3_+)$, $\psi_A \in W^{2,2} (\mathbb{R}^3_+)$ such that $f_A^m = \mathcal{P}_A[\psi_A^m] $, $f_A = \mathcal{P}_A[\psi_A]$, and
\begin{equation*}
\lim_{m \to \infty } \Vert \psi_A - \psi_A^m \Vert_{W^{2,2} (\mathbb{R}^3_+)} = 0.
\end{equation*}
Now we set for almost all $y \in \mathbb{R}$
\begin{equation*}
\Psi_A = \Psi_A (y) = - \frac{\varphi_1}{ \sqrt{G_S} } \frac{\partial \psi_A}{\partial y_1} - \frac{\varphi_2}{ \sqrt{G_S} } \frac{\partial \psi_A}{\partial y_2} + \frac{1 + \varphi_1^2 + \varphi_2^2}{ \sqrt{G_S} } \frac{\partial \psi_A}{\partial y_3},
\end{equation*}
where $G_S = G_S (y_h) = 1 + \varphi_1^2 + \varphi_2^2$, $\varphi_1 = \partial \varphi/{\partial y_1}$, $\varphi_2 = \partial \varphi/{\partial y_2}$. Since $\psi_A \in W^{2,2}(\mathbb{R}^3_+)$ and $\varphi \in BC^2(\mathbb{R}^2)$, we find that $\Psi_A \in W^{1,2} (\mathbb{R}^3_+)$. Moreover, we set for almost all $x \in \Omega_A$,
\begin{equation*}
F_A = F_A(x) = \mathcal{P}_A[\Psi_A].
\end{equation*}
By definition, we see that $F_A \in W^{1,2} (\Omega_A)$. From assertion $(\rm{i})$ in Remark \ref{rem76} and \eqref{eq7041}, we find that
\begin{multline*}
\int_{\Omega_A} \bigg( - \frac{\varphi_1}{ \sqrt{G_S} } \frac{\partial f_A}{\partial x_1} - \frac{\varphi_2}{ \sqrt{G_S} } \frac{\partial f_A}{\partial x_2} + \frac{1}{ \sqrt{G_S} } \frac{\partial f_A}{\partial x_3} \bigg) { \ }dx\\ = \int_{\mathbb{R}^3_+}  \bigg(   - \frac{\varphi_1}{ \sqrt{G_S} } \frac{\partial \psi_A}{\partial y_1} - \frac{\varphi_2}{ \sqrt{G_S} } \frac{\partial \psi_A}{\partial y_2} + \frac{1 + \varphi_1^2 + \varphi_2^2}{ \sqrt{G_S} } \frac{\partial \psi_A}{\partial y_3}   \bigg) { \ }dy.
\end{multline*}
This implies that
\begin{equation*}
F_A = F_A (x) = - \frac{\varphi_1}{ \sqrt{G_S} } \frac{\partial f_A}{\partial x_1} - \frac{\varphi_2}{ \sqrt{G_S} } \frac{\partial f_A}{\partial x_2} + \frac{1}{ \sqrt{G_S} } \frac{\partial f_A}{\partial x_3},
\end{equation*}
where $G_S = G_S (x_h) = 1 + \varphi_1^2 + \varphi_2$, $\varphi_1 = \partial \varphi/{\partial x_1}$, $\varphi_2 = \partial \varphi/{\partial x_2}$. From
\begin{equation*}
n(x_1,x_2,x_3) = \frac{1}{\sqrt{G_S}} \begin{pmatrix}
- \varphi_1\\
- \varphi_2\\
1
\end{pmatrix} \text{ at } x = { }^t (x_1,x_2,x_3) \in \Gamma, 
\end{equation*}
we see that $(n \cdot \nabla ) f_A = F_A $ for $x \in \Gamma$. Since $n(x)$ does not depend on $x_3$ and $\varphi \in BC^2(\mathbb{R}^2)$, we can consider $(n \cdot \nabla)f_A$ as a $W^{1,2}$-function in $\Omega_A$. By the definition of $\gamma_A$, we observe that
\begin{equation*}
\gamma_A [(n \cdot \nabla ) f_A] = \mathcal{P}_S \bigg[ - \frac{\varphi_1}{\sqrt{G_S}} \gamma_+ \bigg[ \frac{\partial \psi_A }{\partial y_1} \bigg] - \frac{\varphi_2}{\sqrt{G_S}} \gamma_+ \bigg[ \frac{\partial \psi_A }{\partial y_2} \bigg] +  \frac{1 + \varphi_1^2 + \varphi_2^2 }{\sqrt{G_S}} \gamma_+ \bigg[ \frac{\partial \psi_A }{\partial y_3} \bigg] \bigg].
\end{equation*}
Using \eqref{eq36}, fundamental properties of $\gamma_+$, and  \eqref{eq3010}, we check that
\begin{align*}
\Vert \gamma_A [(n \cdot \nabla ) f_A] \Vert_{L^2(\Gamma)} & \leq C \sum_{j=1}^3 \Vert \gamma_+[\partial_j \psi_A ] \Vert_{L^2(\mathbb{R}^2)}\\
& \leq C \Vert \psi_A \Vert_{W^{2,2} (\mathbb{R}^3_+)} \leq C ( \Vert \nabla_X^2 \varphi \Vert_{L^\infty (\mathbb{R}^2)} ) \Vert f_A \Vert_{W^{2,2} (\Omega_A)}. 
\end{align*}
Similarly, we observe that
\begin{equation}\label{eq3028}
\Vert \gamma_A [(n \cdot \nabla ) \{ f_A - f_A^m \}] \Vert_{L^2(\Gamma)} \leq C ( \Vert \nabla_X^2 \varphi \Vert_{L^\infty (\mathbb{R}^2)} ) \Vert f_A - f_A^m \Vert_{W^{2,2} (\Omega_A)},
\end{equation}
and that
\begin{equation}\label{eq3029}
\Vert \gamma_B [(n \cdot \nabla ) \{ f_B - f_B^m \}] \Vert_{L^2(\Gamma)} \leq C ( \Vert \nabla_X^2 \varphi \Vert_{L^\infty (\mathbb{R}^2)} ) \Vert f_B - f_B^m \Vert_{W^{2,2} (\Omega_B)}. 
\end{equation}
By \eqref{eq3028} and $\eqref{eq3029}$, we easily check that
\begin{multline*}
\Vert (\mu_A \gamma_A [(n \cdot \nabla )f_A] + \mu_B \gamma_B [(n \cdot \nabla)f_B]) - f_S \Vert_{L^2(\Gamma)}\\
 \leq \Vert (\mu_A \gamma_A [(n \cdot \nabla) f_A] + \mu_B \gamma_B [(n \cdot \nabla) f_B] )-  (\mu_A \gamma_A [(n \cdot \nabla) f_A^m] + \mu_B \gamma_B [(n \cdot \nabla) f_B^m] ) \Vert_{L^2(\Gamma)}\\ +  \Vert ( \mu_A \gamma_A [ (n \cdot \nabla) f_A^m] + \mu_B \gamma_B [(n \cdot \nabla) f_B^m]) - f_S \Vert_{L^2(\Gamma)}\\
\leq C(\mu_A) ( \Vert f_A^m - f_A \Vert_{W^{2,2} (\Omega_A)} +  C (\mu_B) \Vert f_B^m - f_B \Vert_{W^{2,2} (\Omega_B)} \\
+ \Vert ( \mu_A \gamma_A [(n \cdot \nabla ) f_A^m] + \mu_B \gamma_B [ (n \cdot \nabla ) f_B^m]) - f_S \Vert_{L^2(\Gamma)} \to 0 \text{ as } m \to \infty.
\end{multline*}
Therefore, we see \eqref{eq3027}.
\end{proof}

Applying an argument as in the proof of Proposition \ref{prop3017}, we have the following corollary.
\begin{corollary}\label{cor3018}
Let $\mu_A , \mu_B \in \mathbb{R}$ and $j \in \{ 1 , 2, 3 \}$. Let $\{ \psi_A^m \}_{m \in \mathbb{N}} \subset W^{2,2}(\mathbb{R}^3_+)$, $\psi_A \in W^{2,2} (\mathbb{R}^3_+)$, $\{ \psi_B^m \}_{m \in \mathbb{N}} \subset W^{2,2} (\mathbb{R}^3_-)$, $\psi_B \in W^{2,2} (\mathbb{R}^3_-)$, and $\psi_S \in L^2(\mathbb{R}^2)$. Assume that
\begin{align*}
&\lim_{m \to \infty} \Vert \psi_A^m- \psi_A \Vert_{W^{2,2} (\mathbb{R}^3_+)} =0,\\
&\lim_{m \to \infty} \Vert \psi_B^m - \psi_B \Vert_{W^{2,2} (\mathbb{R}^3_-)} =0,\\
&\lim_{m \to \infty} \Vert (\mu_A \gamma_+ [\partial_{y_j}\psi_A^m] + \mu_B \gamma_- [ \partial_{z_j} \psi_B^m]) - \psi_S \Vert_{L^2(\mathbb{R}^2)} = 0.
\end{align*}
Then
\begin{equation*}
\psi_S = \mu_A \gamma_+ [ \partial_{y_j} \psi_A] + \mu_B \gamma_- [\partial_{z_j} \psi_B].
\end{equation*}
\end{corollary}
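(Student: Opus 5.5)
The plan follows the proof of Proposition \ref{prop3017}, but works directly on the half spaces, where no pullback is needed. The whole argument rests on the trace inequality for $\gamma_\pm$ already used there, namely $\Vert \gamma_+[w] \Vert_{L^2(\mathbb{R}^2)} \leq 2 \Vert w \Vert_{W^{1,2}(\mathbb{R}^3_+)}$ for $w \in W^{1,2}(\mathbb{R}^3_+)$, together with its analogue for $\gamma_-$.

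First, fix $j$. Since $\psi_A^m, \psi_A \in W^{2,2}(\mathbb{R}^3_+)$, the derivatives $\partial_{y_j}\psi_A^m$ and $\partial_{y_j}\psi_A$ lie in $W^{1,2}(\mathbb{R}^3_+)$. Moreover,
\begin{equation*}
\Vert \partial_{y_j}(\psi_A^m - \psi_A) \Vert_{W^{1,2}(\mathbb{R}^3_+)} \leq \Vert \psi_A^m - \psi_A \Vert_{W^{2,2}(\mathbb{R}^3_+)}.
\end{equation*}
Applying the trace inequality gives
\begin{equation*}
\Vert \gamma_+[\partial_{y_j}\psi_A^m] - \gamma_+[\partial_{y_j}\psi_A] \Vert_{L^2(\mathbb{R}^2)} \leq 2 \Vert \psi_A^m - \psi_A \Vert_{W^{2,2}(\mathbb{R}^3_+)} \to 0.
\end{equation*}
This step uses linearity of $\gamma_+$, which holds because $\gamma_+$ is defined as the continuous extension of restriction from $C_0^2(\overline{\mathbb{R}^3_+})$. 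The same argument gives the analogous convergence for $\gamma_-[\partial_{z_j}\psi_B^m]$.

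Second, apply the triangle inequality:
\begin{multline*}
\Vert \mu_A \gamma_+[\partial_{y_j}\psi_A] + \mu_B \gamma_-[\partial_{z_j}\psi_B] - \psi_S \Vert_{L^2(\mathbb{R}^2)} \leq 2|\mu_A| \Vert \psi_A^m - \psi_A \Vert_{W^{2,2}(\mathbb{R}^3_+)} + 2|\mu_B| \Vert \psi_B^m - \psi_B \Vert_{W^{2,2}(\mathbb{R}^3_-)}\\
 + \Vert \mu_A \gamma_+[\partial_{y_j}\psi_A^m] + \mu_B \gamma_-[\partial_{z_j}\psi_B^m] - \psi_S \Vert_{L^2(\mathbb{R}^2)}.
\end{multline*}
Each term on the right tends to $0$ as $m \to \infty$ by the assumptions. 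The left side does not depend on $m$, so it is zero, and by uniqueness of $L^2$ limits the stated identity holds almost everywhere.

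There is no real obstacle. The one point to state explicitly is that the hypothesis of $W^{2,2}$ convergence, rather than mere $W^{1,2}$ convergence, is exactly what makes the traces of first derivatives continuous. This is why the non-closedness of $\gamma_\pm$ does not interfere here.
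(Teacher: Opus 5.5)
Your proof is correct and is essentially the argument the paper intends: the paper obtains Corollary~\ref{cor3018} by repeating the proof of Proposition~\ref{prop3017}, which bounds the traces of first derivatives by the $W^{2,2}$ norm via the trace inequality for $\gamma_\pm$ and then concludes with the triangle inequality. Working directly on the half spaces, as you do, just removes the pullback and normal-vector bookkeeping that Proposition~\ref{prop3017} needs.
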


Now, we derive divergence theorems and integration by parts formulas. To this end, we introduce trace operators on $W^{1,1}$-spaces. 
\begin{definition}[Trace Operators (II)]\label{lem3019}{ \ }\\
$(\rm{i})$ Let $f_A \in W^{1,1} (\Omega_A)$. By definition, there is $\psi_A \in W^{1,1} (\mathbb{R}^3_+)$ such that
\begin{equation*}
f_A = \mathcal{P}_A[\psi_A].
\end{equation*}
We define the operator $\mathring{\gamma}_A: W^{1,1} (\Omega_A) \to L^1(\Gamma)$ by
\begin{equation*}
\mathring{\gamma_A}[f_A] = \mathcal{P}_S[ \mathring{\gamma}_+[\psi_A]].
\end{equation*}
Here $\mathring{\gamma}_+$ is the trace operator such that $\mathring{\gamma}_+: W^{1,1} (\mathbb{R}^3_+) \to L^1(\mathbb{R}^2)$.\\
$(\rm{ii})$ Let $f_B \in W^{1,1} (\Omega_B)$. By definition, there is $\psi_B \in W^{1,1} (\mathbb{R}^3_-)$ such that
\begin{equation*}
f_B = \mathcal{P}_B[\psi_B].
\end{equation*}
We define the operator $\mathring{\gamma}_B: W^{1,1} (\Omega_B) \to L^1(\Gamma)$ by
\begin{equation*}
\mathring{\gamma}_B[f_B] = \mathcal{P}_S[ \mathring{\gamma}_-[\psi_B]].
\end{equation*}
Here $\mathring{\gamma}_-$ is the trace operator such that $\mathring{\gamma}_-: W^{1,1} (\mathbb{R}^3_-) \to L^1(\mathbb{R}^2)$.
\end{definition}

By the same arguments as in the proof of Proposition \ref{prop3016}, we have the lemma.
\begin{lemma}\label{lem3020}$(\rm{i})$ There is $C >0$ such that for each $f_A \in W^{1,1} (\Omega_A)$,
\begin{equation*}
\Vert \mathring{\gamma}_A[f_A] \Vert_{L^1(\Gamma)} \leq C \Vert f_A \Vert_{W^{1,1} (\Omega_A)}.
\end{equation*}
$(\rm{ii})$ There is $C >0$ such that for each $f_B \in W^{1,1} (\Omega_B)$,
\begin{equation*}
\Vert \mathring{\gamma}_B[f_B] \Vert_{L^1(\Gamma)} \leq C \Vert f_B \Vert_{W^{1,1} (\Omega_B)}.
\end{equation*}
\end{lemma}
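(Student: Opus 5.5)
We prove only $(\rm{i})$; $(\rm{ii})$ is identical with $\mathbb{R}^3_-$, $\mathcal{P}_B$, $\mathring{\gamma}_-$ and Lemma \ref{lem38} for $\Omega_B$. The plan follows the proof of Proposition \ref{prop3016}: pull everything back to the half space, apply the classical $W^{1,1}$ trace inequality there, and then return to $\Gamma$ using the norm equivalences of Lemma \ref{lem38}.

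Let $f_A \in W^{1,1}(\Omega_A)$ and choose $\psi_A \in W^{1,1}(\mathbb{R}^3_+)$ with $f_A = \mathcal{P}_A[\psi_A]$. By Definition \ref{lem3019}, $\mathring{\gamma}_A[f_A] = \mathcal{P}_S[\mathring{\gamma}_+[\psi_A]]$. The definition of the $L^1(\Gamma)$-norm and Lemma \ref{lem38} $(\rm{i})$ give
\begin{equation*}
\Vert \mathring{\gamma}_A[f_A] \Vert_{L^1(\Gamma)} = \int_{\mathbb{R}^2} \vert \mathring{\gamma}_+[\psi_A](X) \vert \sqrt{G_S} { \ }dX \leq 2 \Vert \mathring{\gamma}_+[\psi_A] \Vert_{L^1(\mathbb{R}^2)}.
\end{equation*}
This uses $\sqrt{G_S} \leq \sqrt{1 + 1/4} \leq 2$, which follows from $\Vert \nabla_X \varphi \Vert_{L^\infty} \leq 1/2$.

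Next we use the classical trace estimate $\Vert \mathring{\gamma}_+[\psi] \Vert_{L^1(\mathbb{R}^2)} \leq \Vert \psi \Vert_{W^{1,1}(\mathbb{R}^3_+)}$. For $\psi \in C_0^1(\overline{\mathbb{R}^3_+})$ this is immediate from
\begin{equation*}
\vert \psi(y_h,0) \vert \leq \int_0^\infty \vert \partial_{y_3} \psi(y_h,y_3) \vert { \ }dy_3,
\end{equation*}
integrated over $y_h \in \mathbb{R}^2$. It extends to all of $W^{1,1}(\mathbb{R}^3_+)$ by density, since $\mathring{\gamma}_+$ is defined as the continuous extension.

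Finally, Lemma \ref{lem38} $(\rm{ii})$ gives $\Vert \psi_A \Vert_{W^{1,1}(\mathbb{R}^3_+)} \leq 2 \Vert \psi_A \Vert_{\mathcal{W}^{1,1}(\Omega_A)} = 2 \Vert f_A \Vert_{W^{1,1}(\Omega_A)}$. Combining the three estimates yields the claim with $C = 4$.

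The only point that needs care is well-definedness: the bound should not depend on the choice of $\psi_A$. This is automatic. The relation $f_A = \mathcal{P}_A[\psi_A]$ determines $\psi_A$ almost everywhere, because $\tilde{x}_A$ is a bijection with unit Jacobian; hence $\mathring{\gamma}_A[f_A]$ is well defined and the estimate holds. The inequality can also be viewed as approximating by $\{ f_A^m \} \subset C_0^2(\overline{\Omega_A})$, using Proposition \ref{prop3014} $(\rm{xiii})$, and passing to the limit as in \eqref{eq3023}--\eqref{eq3026}. The direct pullback argument above avoids this.
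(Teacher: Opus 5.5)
Your proof is correct and is essentially the paper's argument; the paper just says it follows ``by the same arguments as in the proof of Proposition \ref{prop3016}'', and you use the same three ingredients: the weight bound $\sqrt{G_S}\leq 2$ (Lemma \ref{lem38}$(\rm{i})$), the half-space $W^{1,1}$ trace inequality, and the norm equivalence of Lemma \ref{lem38}$(\rm{ii})$. The only difference is that you argue directly from the pullback definitions instead of approximating by $C_0^2(\overline{\Omega_A})$ functions and passing to the limit as in \eqref{eq3023}--\eqref{eq3026}; this is legitimate because $\Vert \cdot \Vert_{L^1(\Gamma)}$ is defined through $\mathcal{P}_S$, and it also gives the explicit constant $C=4$.
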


\begin{proposition}[Divergence theorems and integration by parts formulas]\label{prop3021}{ \ }\\
$(\rm{i})$ For all $F_A = { }^t (F^A_1,F^A_2,F^A_3) \in W^{1,1}(\Omega_A)$, $F_B = { }^t (F^B_1,F^B_2,F^B_3) \in W^{1,1}(\Omega_B)$, and $F_S = { }^t (F^S_1,F^S_2,F^S_3) \in W^{1,1}(\Gamma)$,
\begin{align*}
\int_{\Omega_A } \nabla \cdot F_A { \ }d x &= - \int_\Gamma F_A \cdot n { \ } d\mathcal{H}_x^2,\\
\int_{\Omega_B } \nabla \cdot F_B { \ }d x &= \int_\Gamma F_B \cdot n { \ } d\mathcal{H}_x^2,\\
\int_{\Gamma } \nabla_\Gamma \cdot F_S { \ }d \mathcal{H}_x^2 & = - \int_\Gamma H_\Gamma (F_S \cdot n) { \ } d\mathcal{H}_x^2.
\end{align*}
Here $H_\Gamma (= - {\rm{div}}_\Gamma n)$ is the mean curvature.\\ 
$(\rm{ii})$ For all $f_A, \phi_A \in W^{1,2}(\Omega_A)$, $f_B, \phi_B \in W^{1,2}(\Omega_B)$, and $f_S, \phi_S \in W^{1,2}(\Gamma )$, 
\begin{align*}
\int_{\Omega_A } f_A (\partial_j \phi_A) { \ }d x & = - \int_{\Omega_A } (\partial_j f_A) \phi_A { \ }d x - \int_\Gamma f_A \phi_A n_j { \ } d\mathcal{H}_x^2,\\
\int_{\Omega_B } f_B (\partial_j \phi_B) { \ }d x & = - \int_{\Omega_B } (\partial_j f_B) \phi_B { \ }d x + \int_\Gamma f_B \phi_B n_j { \ } d\mathcal{H}_x^2,\\
\int_{\Gamma } f_S (\partial_j^\Gamma \phi_S) { \ }d \mathcal{H}_x^2 &= - \int_{\Gamma } (\partial_j^\Gamma f_S) \phi_S { \ }d \mathcal{H}_x^2 - \int_\Gamma H_\Gamma f_S \phi_S n_j { \ } d\mathcal{H}_x^2.
\end{align*}
$(\rm{iii})$ For all $f_A \in W^{2,2}(\Omega_A)$, $\phi_A \in W^{1,2}(\Omega_A)$, $f_B \in W^{2,2}( \Omega_B )$, $\phi_B \in W^{1,2}( \Omega_B )$, $f_S \in W^{2,2}(\Gamma )$, $\phi_S \in W^{1,2}(\Gamma )$,
\begin{align*}
-  \int_{\Omega_A} \nabla f_A \cdot \nabla \phi_A { \ }d x & = \int_{\Omega_A} ( \Delta f_A ) \phi_A { \ }d x +  \int_{\Gamma} \frac{\partial f_A}{\partial n} \phi_A { \ }d \mathcal{H}_x^2,\\
-  \int_{\Omega_B}  \nabla f_B \cdot \nabla \phi_B { \ }d x & = \int_{\Omega_B} ( \Delta f_B ) \phi_B { \ }d x -  \int_{\Gamma} \frac{\partial f_B}{\partial n} \phi_B { \ }d \mathcal{H}_x^2,\\
-  \int_{\Gamma} \nabla_\Gamma f_S \cdot \nabla_\Gamma \phi_S { \ }d \mathcal{H}_x^2 &= \int_{\Gamma} ( \Delta_\Gamma f_S ) \phi_S { \ }d \mathcal{H}_x^2.
\end{align*}
Here $\partial f/{\partial n} := (n \cdot \nabla)f $.
\end{proposition}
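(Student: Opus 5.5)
The plan is to prove every identity first for smooth compactly supported representatives and then pass to the limit by density. The density comes from Proposition \ref{prop3014}, parts (v), (vii), (ii), (iii), (xii)–(xiv). Boundedness of the trace is given by Proposition \ref{prop3016} and Lemma \ref{lem3020}, and the norm equivalences by Lemmas \ref{lem37} and \ref{lem38}.

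\textbf{Part (i).} Take $F_A=\mathcal{P}_A[\Psi_A]$ with $\Psi_A\in C_0^2(\overline{\mathbb{R}^3_+})^3$. Then $F_A$ is $C^2$ with compact support in $\overline{\Omega_A}$. The classical Gauss theorem on the Lipschitz (indeed $C^2$) graph domain $\Omega_A$ applies, since only a bounded piece of $\Gamma$ meets the support. The outer normal of $\Omega_A$ is $n_A=-n$, which gives $\int_{\Omega_A}\nabla\cdot F_A\,dx=-\int_\Gamma F_A\cdot n\,d\mathcal{H}^2_x$. The same argument gives the $\Omega_B$ identity with outer normal $n_B=n$.

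To extend to $W^{1,1}$, take $F_A^m\to F_A$ in $W^{1,1}(\Omega_A)$. The left side converges because $\|\nabla\cdot(F_A^m-F_A)\|_{L^1(\Omega_A)}$ is controlled by the $W^{1,1}$ norm, using Remark \ref{rem36}(ii). The right side converges by Lemma \ref{lem3020}, because $|n|=1$ and the surface integral is interpreted through $\mathring{\gamma}_A$.

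For the surface identity, again take $F_S\in C_0^2(\Gamma)$ first. Decompose $F_S=F_S^\tau+(F_S\cdot n)n$ into tangential and normal parts. Then
\[
\nabla_\Gamma\cdot F_S=\nabla_\Gamma\cdot F_S^\tau+(F_S\cdot n)\,{\rm div}_\Gamma n ,
\]
since $\nabla_\Gamma$ annihilates normal derivatives and $n\cdot\nabla_\Gamma(F_S\cdot n)=0$. The tangential part integrates to zero. To see this, write it in the chart $\hat{x}_S$ using the representation formulas of Section \ref{sect7}:
\[
\int_\Gamma \nabla_\Gamma\cdot F^\tau_S\,d\mathcal{H}^2_x=\int_{\mathbb{R}^2}\partial_{X_\alpha}\bigl(\sqrt{G_S}\,g_S^{\alpha\beta}\,\partial_{X_\beta}\hat{x}_S\cdot F^\tau_S\bigr)\,dX=0,
\]
by compact support. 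This leaves $\int_\Gamma(F_S\cdot n){\rm div}_\Gamma n=-\int_\Gamma H_\Gamma(F_S\cdot n)$. Density in $W^{1,1}(\Gamma)$ then finishes the argument. The curvature $H_\Gamma$ involves $\varphi_{\alpha\beta}$ and is bounded because $\varphi\in BC^2$, so $\int H_\Gamma F_S\cdot n$ is continuous in $L^1(\Gamma)$.

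\textbf{Parts (ii) and (iii).} Part (ii) follows from (i) applied to $F=f\phi\,e_j$ in $\Omega_A,\Omega_B$ and to $F_S=f_S\phi_S e_j$ on $\Gamma$. If $f,\phi\in W^{1,2}$, then $f\phi\in W^{1,1}$ by Cauchy–Schwarz. On $\Gamma$ one uses $\nabla_\Gamma\cdot(f_S\phi_S e_j)=\partial^\Gamma_j(f_S\phi_S)$ and the product rule for $\partial_j^\Gamma$. Here one must check that the $L^1$-trace of the product equals the product of the $L^2$-traces; this holds by approximation with smooth functions.

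Part (iii) follows from (ii) with $f$ replaced by $\partial_j f$ and $\phi$, summed over $j$; for this, $\partial_j f_A\in W^{1,2}$ is needed. On $\Gamma$, take $F_S=\phi_S\nabla_\Gamma f_S$. This field is tangential, $F_S\cdot n=0$, so the curvature term vanishes. This leaves $\int_\Gamma\nabla_\Gamma\cdot(\phi_S\nabla_\Gamma f_S)=0$, which is exactly the stated formula.

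\textbf{Main obstacle.} I expect the main difficulty to be the regularity bookkeeping: verifying that $\phi_S\nabla_\Gamma f_S\in W^{1,1}(\Gamma)$ and $\phi_A\nabla f_A\in W^{1,1}(\Omega_A)$ in the paper's pulled-back sense. In the chart these products contain first derivatives of the metric coefficients, i.e.\ $\varphi_{\alpha\beta}$. This is handled by $\varphi\in BC^2$ together with Lemma \ref{lem37}(iii) and Remark \ref{rem34}, which control the second derivatives.
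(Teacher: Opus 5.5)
Your proof is correct and has the same overall architecture as the paper's: establish the identities for $C_0^2$ representatives, then extend by density (Proposition~\ref{prop3014}) using the trace bounds of Proposition~\ref{prop3016} and Lemma~\ref{lem3020}. The differences are in the sub-steps.

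\textbf{Smooth domain identities.} You invoke the classical Gauss theorem on the $C^2$ graph domain. The paper (Lemma~\ref{lem7013}) instead pulls back by $\tilde{x}_A$ and applies the half-space divergence theorem to $g_A^i\cdot\partial_{y_i}\Psi_A$. This keeps everything inside the paper's pullback framework, and the boundary term appears directly as an integral over $\{y_3=0\}$ weighted by $\sqrt{G_S}$, the same form in which $\gamma_A$ and $\mathring{\gamma}_A$ are defined.

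\textbf{Surface identity.} Your tangential/normal splitting needs only that the surface divergence of a tangential field is a coordinate divergence, i.e.\ that $\partial_{X_\alpha}(\sqrt{G_S}\,g_S^\alpha)$ is normal. The curvature then enters through the definition $H_\Gamma=-{\rm div}_\Gamma n$. The paper integrates by parts in the chart and identifies $\frac{1}{\sqrt{G_S}}\partial_{X_\alpha}(\sqrt{G_S}\,g_S^\alpha)=\hat{H}_\Gamma n$ in one computation, which is the same geometric fact.

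\textbf{Order of operations for (ii) and (iii).} The paper proves these for smooth functions first and only then passes to the limit. Lemmas~\ref{lem7014} and~\ref{lem7015} are exactly (i) applied to $f\phi e_j$ and $\phi\nabla f$. This route requires density of $C_0^2$ in the $W^{2,2}$ spaces and continuity of the normal-derivative trace on $W^{2,2}$, as in the proof of Proposition~\ref{prop3017}. You apply (i) directly at the Sobolev level. That costs the bookkeeping you flagged: $f\phi$, $(\partial_j f_A)\phi_A$ and $\phi_S\nabla_\Gamma f_S$ must lie in the pulled-back $W^{1,1}$ spaces, and you need $\mathring{\gamma}_A[f\phi]=\gamma_A[f]\,\gamma_A[\phi]$.

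Both routes are routine once $\varphi\in BC^2(\mathbb{R}^2)$ and $\Vert\nabla_X\varphi\Vert_{L^\infty(\mathbb{R}^2)}\le 1/2$.
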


\begin{proof}[Proof of Proposition \ref{prop3021}]
Since $C_0^2(\overline{\Omega_A})$ is dense in $W^{1,1} (\Omega_A)$, $C_0^2(\overline{\Omega_B})$ is dense in $W^{1,1} (\Omega_B)$, $C_0^2(\Gamma)$ is dense in $W^{1,1} (\Gamma)$ from Proposition \ref{prop3014}, we apply Lemmas \ref{lem38}, \ref{lem3020}, \ref{lem7013} to derive $(\rm{i})$. In the same manner, we see $(\rm{ii})$ and $(\rm{iii})$ from Lemmas \ref{lem37}, \ref{lem7014}, \ref{lem7015}, and Propositions \ref{prop3014}, \ref{prop3016}. Therefore, Proposition \ref{prop3021} is proved.
\end{proof}

Finally, we introduce Banach-space-valued functions. Write
\begin{equation*}
\Omega_S = \Gamma,{ \ }U_A = \mathbb{R}^3_+, {  \ }U_B = \mathbb{R}^3_-,{ \ }U_S = \mathbb{R}^2, \text{ and }[0, \infty ] = [0 , \infty ).
\end{equation*}

\begin{definition}\label{def3022}
For $T \in (0, \infty]$, $\sharp \in \{ A,B,S \}$, and $p \in \{ 1 , 2 \}$, we define
\begin{equation*}
L^p_{loc}(\Omega_\sharp \times (0,T)) = \{ \Phi_\sharp : \Phi_\sharp = \mathcal{P}_\sharp [w_\sharp], w_\sharp \in L^p_{loc}(U_\sharp \times (0,T)) \},
\end{equation*}
\begin{align*}
C([0,T]; L^2 (\Omega_\sharp) ) = \{ \Phi_\sharp ; \Phi_\sharp = \mathcal{P}_\sharp [w_\sharp], w_\sharp \in C([0,T]; L^2 (U_\sharp)) \cap L^1_{loc}(U_\sharp \times (0,T)) \},
\end{align*}
\begin{multline*}
L^2(0,T; L^2(\Omega_\sharp) ) = \{ \Phi_\sharp ; \Vert \Phi_\sharp \Vert_{L^2(0,T;L^2(\Omega_\sharp))} < \infty,\\ \Phi_\sharp = \mathcal{P}_\sharp [w_\sharp], w_\sharp \in L^2(0,T;L^2(U_\sharp)) \cap  L_{loc}^1(U_\sharp \times (0,T))  \},
\end{multline*}
\begin{multline*}
L^2(0,T; W^{1,2}(\Omega_\sharp) ) = \{ \Phi_\sharp ; \Vert \Phi_\sharp \Vert_{L^2(0,T;W^{1,2}(\Omega_\sharp))} < \infty,\\ \Phi_\sharp = \mathcal{P}_\sharp [w_\sharp], w_\sharp \in L^2(0,T;W^{1,2}(U_\sharp)) \cap  L_{loc}^1(U_\sharp \times (0,T))  \},
\end{multline*}
\begin{multline*}
L^2(0,T; W^{2,2}(\Omega_\sharp) ) = \{ \Phi_\sharp ; \Vert \Phi_\sharp \Vert_{L^2(0,T;W^{2,2}(\Omega_\sharp))} < \infty,\\ \Phi_\sharp = \mathcal{P}_\sharp [w_\sharp], w_\sharp \in L^2(0,T;W^{2,2}(U_\sharp)) \cap  L_{loc}^1(U_\sharp \times (0,T))  \},
\end{multline*}
\begin{multline*}
W^{1,2}(0,T; L^2(\Omega_\sharp) ) = \{ \Phi_\sharp ; \Vert \Phi_\sharp \Vert_{W^{1,2}(0,T;L^2(\Omega_\sharp))} < \infty,\\ \Phi_\sharp = \mathcal{P}_\sharp [w_\sharp], w_\sharp \in W^{1,2}(0,T;L^2(U_\sharp)) \cap  L_{loc}^1(U_\sharp \times (0,T))  \}.
\end{multline*}
Here
\begin{align*}
\Vert \Phi_\sharp \Vert_{L^2(0,T;L^2(\Omega_\sharp))} & := \bigg( \int_0^T \Vert w_\sharp \Vert_{\mathcal{L}^2(\Omega_\sharp)}^2  { \ }dt \bigg)^{1/2},\\
\Vert \Phi_\sharp \Vert_{L^2(0,T;W^{1,2}(\Omega_\sharp))} & := \bigg( \int_0^T \Vert w_\sharp \Vert_{\mathcal{W}^{1,2}(\Omega_\sharp)}^2  { \ }dt \bigg)^{1/2},\\
\Vert \Phi_\sharp \Vert_{L^2(0,T;W^{2,2}(\Omega_\sharp))} & := \bigg( \int_0^T \Vert w_\sharp \Vert_{\mathcal{W}^{2,2}(\Omega_\sharp)}^2  { \ }dt \bigg)^{1/2},\\
\Vert \Phi_\sharp \Vert_{W^{1,2}(0,T;L^2(\Omega_\sharp))} & := \bigg( \int_0^T \Vert w_\sharp \Vert_{\mathcal{L}^2(\Omega_\sharp)}^2  { \ }dt + \int_0^T \Vert \partial_t w_\sharp \Vert_{\mathcal{L}^2(\Omega_\sharp)}^2  { \ }dt\bigg)^{1/2}.
\end{align*}
\end{definition}

\begin{remark}\label{rem3023}
Let $\sharp \in \{ A,B,S \}$ and $T \in (0, \infty]$. Let $\Phi_\sharp \in L^2(0,T; W^{2,2}(\Omega_\sharp) )$ and $w_\sharp \in L^2(0,T;W^{2,2} (U_\sharp ))$ such that $\Psi_\sharp = \mathcal{P}_\sharp [w_\sharp]$. From Lemma \ref{lem37}, we see that
\begin{align*}
\Vert w_\sharp \Vert_{L^2(0,T; L^2(U_\sharp))} & \leq \Vert \Phi_\sharp \Vert_{L^2(0,T; L^2(\Omega_\sharp))} \leq \sqrt{2} \Vert w_\sharp \Vert_{L^2(0,T; L^2(U_\sharp))},\\
(1/3)\Vert w_\sharp \Vert_{L^2(0,T; W^{1,2}(U_\sharp))} & \leq \Vert \Phi_\sharp \Vert_{L^2(0,T; W^{1,2}(\Omega_\sharp))} \leq 3 \Vert w_\sharp \Vert_{L^2(0,T; W^{1,2}(U_\sharp))},\\
(1/C)\Vert w_\sharp \Vert_{L^2(0,T; W^{2,2}(U_\sharp))} & \leq \Vert \Phi_\sharp \Vert_{L^2(0,T; W^{2,2}(\Omega_\sharp))} \leq C \Vert w_\sharp \Vert_{L^2(0,T; W^{2,2}(U_\sharp))}.
\end{align*}
Here $C = C ( \Vert \nabla_X^2 \varphi \Vert_{L^\infty (\mathbb{R}^2)} ) >0$.
\end{remark}

\section{On the Uniqueness and Existence of our Systems}\label{sect4}

In this section, we discuss the uniqueness and the existence of strong solutions to systems \eqref{eq15} and \eqref{eq29}. In Section \ref{sect5}, we prove the existence of strong solutions to these systems. Let $\varphi \in BC^2(\mathbb{R}^2)$ and $n= n(x) = { }^t (n_1,n_2,n_3)$ be the unit outer normal vector at $x \in \Gamma$ defined by \eqref{eq11}. Let $\rho_A , \rho_B, \rho_S , \kappa_A , \kappa_B, \kappa_S >0$ and $\delta >0$. Let $\mathcal{P}_A$, $\mathcal{P}_B$, and $\mathcal{P}_S$ be the three pullback operators defined by Definition \ref{def32}. Let $\gamma_A$ and $\gamma_B$ be the two trace operators defined by Definition \ref{def3015}, and $\gamma_+$ and $\gamma_-$ be the two trace operators such that $\gamma_+:W^{1,2} (\mathbb{R}^3_+) \to L^2(\mathbb{R}^2)$ and $\gamma_-:W^{1,2} (\mathbb{R}^3_-) \to L^2(\mathbb{R}^2)$. Let $\mathcal{F}_A$, $\mathcal{F}_B$, $\mathcal{F}_S$, $\mathring{\mathcal{F}}_1$, $\mathring{\mathcal{F}}_2$, $\mathring{\mathcal{F}}_3$, $\mathcal{F}_1$, $\mathcal{F}_2, \mathcal{F}_3$, and $\mathcal{A}_1$, $\mathcal{A}_2$, $\mathcal{A}_3$, $\mathcal{B}_1$, $\mathcal{B}_2$, $\mathcal{B}_3$ be the nine mappings and the six operators defined by Section \ref{sect2} (see also Sections \ref{sect5} and \ref{sect7}).

We first give the definition of a strong solution to systems \eqref{eq15} and \eqref{eq29}. Then we investigate the relationship between the strong solutions of the two systems. In particular, we apply the uniqueness of the strong solutions to \eqref{eq15} to derive the uniqueness of the strong solutions to \eqref{eq29} (see Proposition \ref{prop43} for details). This is one of the key ideas of this paper.

\begin{definition}[Strong solutions to system \eqref{eq15}]\label{def41}
Let ${ }^t (\theta_0^A , \theta_0^B , \theta_0^S ) \in W^{1,2} (\Omega_A) \times W^{1,2} (\Omega_B) \times W^{1,2} (\Gamma)$ such that $\gamma_A [\theta_0^A] = \theta_0^S$ and $\gamma_B [\theta_0^B] = \theta_0^S$.\\
$(\rm{i})$ {\rm{[Local-in-time strong solutions]}} Let $T >0$. Let $\theta_A \in L^1_{loc}( \Omega_A \times (0,T)) \cap C([0,T];L^2(\Omega_A))$, $\theta_B \in L^1_{loc}( \Omega_B \times (0,T)) \cap C([0,T];L^2(\Omega_B))$, and $\theta_S \in L^1_{loc}( \Gamma \times (0,T)) \cap C([0,T];L^2(\Gamma))$. We call ${ }^t (\theta_A , \theta_B , \theta_S)$ a \emph{(local-in-time) strong solution} to system \eqref{eq15} with initial data ${ }^t (\theta_0^A , \theta_0^B , \theta_0^S )$ if ${ }^t (\theta_A , \theta_B , \theta_S)$ satisfies the following properties:
\begin{align}
\theta_A \in L^2 (0,T; W^{2,2} (\Omega_A)) \cap W^{1,2} (0,T ; L^2(\Omega_A)),\label{eq41}\\
\theta_B \in L^2 (0,T; W^{2,2} (\Omega_B)) \cap W^{1,2} (0,T ; L^2(\Omega_B)),\\
\theta_S \in L^2 (0,T; W^{2,2} (\Gamma)) \cap W^{1,2} (0,T ; L^2(\Gamma)),
\end{align}
\begin{align}
& \Vert \partial_t \theta_A - \kappa_A \Delta \theta_A \Vert_{L^2(0,T;L^2(\Omega_A))} = 0 ,\\
& \Vert \partial_t \theta_B - \kappa_B \Delta \theta_B \Vert_{L^2(0,T; L^2(\Omega_B))} = 0 ,
\end{align}
\begin{equation}
\bigg\Vert \partial_t \theta_S - \kappa_S \Delta_\Gamma \theta_S - \frac{ \rho_A \kappa_A}{\rho_S} \gamma_A[ (n \cdot \nabla) \theta_A] + \frac{\rho_B \kappa_B}{\rho_S} \gamma_B[ (n \cdot \nabla) \theta_B] \bigg\Vert_{L^2(0,T;L^2(\Gamma ))} = 0 ,
\end{equation}
\begin{align}
& \Vert \gamma_A [\theta_A] - \theta_S \Vert_{L^2(0,T;L^2(\Gamma))} = 0,\\
& \Vert \gamma_B [ \theta_B] - \theta_S \Vert_{L^2(0,T;L^2(\Gamma))} =0,
\end{align}
and
\begin{multline}\label{eq49}
\lim_{t \to  0 + 0 }\Vert \theta_A (t) - \theta_0^A \Vert_{L^2(\Omega_A)} = 0,{ \ }\lim_{t \to  0 + 0 }\Vert \theta_B (t) - \theta_0^B \Vert_{L^2(\Omega_B)} = 0,\\
\lim_{t \to  0 + 0 }\Vert \theta_S (t) - \theta_0^S \Vert_{L^2(\Gamma)} = 0.
\end{multline}
$(\rm{ii})$ {\rm{[Global-in-time strong solutions]}} Let $\theta_A \in L^1_{loc}( \Omega_A \times (0,\infty)) \cap C([0, \infty);L^2(\Omega_A))$, $\theta_B \in L^1_{loc}( \Omega_B \times (0,\infty)) \cap C([0,\infty);L^2(\Omega_B))$, and $\theta_S \in L^1_{loc}( \Gamma \times (0,\infty)) \cap C([0,\infty);L^2(\Gamma))$. We call ${ }^t (\theta_A , \theta_B , \theta_S)$ a \emph{global-in-time strong solution} to system \eqref{eq15} with initial data ${ }^t (\theta_0^A , \theta_0^B , \theta_0^S )$ if ${ }^t (\theta_A , \theta_B , \theta_S)$ satisfies that for each fixed $T >0$, \eqref{eq41}-\eqref{eq49} hold.
\end{definition}

\begin{definition}[Strong solutions to system \eqref{eq29}]\label{def42}
Let ${ }^t (v_0^A , v_0^B , v_0^S ) \in W_0^{1,2} (\mathbb{R}^3_+) \times W_0^{1,2} (\mathbb{R}^3_-) \times W^{1,2} (\mathbb{R}^2)$.\\
$(\rm{i})$ {\rm{[Local-in-time strong solutions]}} Let $T >0$ and ${ }^t (v_A , v_B , v_S) \in C([0,T]; L^2(\mathbb{R}^3_+) ) \times C([0,T]; L^2(\mathbb{R}^3_-) ) \times C([0,T]; L^2(\mathbb{R}^2) )$. We call ${ }^t (v_A , v_B , v_S)$ a \emph{(local-in-time) strong solution} to system \eqref{eq29} with initial data ${ }^t (v_0^A , v_0^B , v_0^S )$ if ${ }^t (v_A , v_B , v_S)$ satisfies the following properties:
\begin{align}
v_A \in L^2 (0,T; W_0^{1,2} (\mathbb{R}^3_+ ) \cap W^{2,2} (\mathbb{R}^3_+)) \cap W^{1,2} (0,T ; L^2(\mathbb{R}^3_+)),\label{eq4010}\\
v_B \in L^2 (0,T; W_0^{1,2} (\mathbb{R}^3_-) \cap W^{2,2} (\mathbb{R}^3_-)) \cap W^{1,2} (0,T ; L^2(\mathbb{R}^3_-)),\\
v_S \in L^2 (0,T; W^{2,2} (\mathbb{R}^2)) \cap W^{1,2} (0,T ; L^2(\mathbb{R}^2)),
\end{align}
\begin{align}
\Vert dv_A/{dt} + \mathcal{A}_1 v_A + \mathcal{B}_1 v_A - \mathcal{F}_1(v_A,v_B,v_S) \Vert_{L^2(0,T;L^2(\mathbb{R}^3_+))} = 0,\\
\Vert dv_B/{dt} + \mathcal{A}_2 v_B + \mathcal{B}_2 v_B - \mathcal{F}_2(v_A,v_B,v_S) \Vert_{L^2(0,T;L^2(\mathbb{R}^3_-))} = 0,\\
\Vert dv_S/{dt} + \mathcal{A}_3 v_S + \mathcal{B}_3 v_S - \mathcal{F}_3(v_A,v_B,v_S) \Vert_{L^2(0,T;L^2(\mathbb{R}^2))} = 0,
\end{align}
and
\begin{multline}\label{eq4016}
\lim_{t \to  0 + 0 }\Vert v_A (t) - v_0^A \Vert_{L^2(\mathbb{R}^3_+)} = 0,{ \ }\lim_{t \to  0 + 0 }\Vert v_B (t) - v_0^B \Vert_{L^2(\mathbb{R}^3_-)} = 0,\\
\lim_{t \to  0 + 0 }\Vert v_S (t) - v_0^S \Vert_{L^2(\mathbb{R}^2)} = 0.
\end{multline}
Here
\begin{align*}
L^2 (0,T; W_0^{1,2} \cap W^{2,2} (\mathbb{R}^3_+)) := \{ v_A \in L^2 (0,T; W^{2,2} (\mathbb{R}^3_+))   ;  \Vert \gamma_+ [v_A] \Vert_{L^2(0,T ; L^2(\mathbb{R}^2))} = 0   \},\\
L^2 (0,T; W_0^{1,2} \cap W^{2,2} (\mathbb{R}^3_-)) := \{ v_B \in L^2 (0,T; W^{2,2} (\mathbb{R}^3_-))   ;  \Vert \gamma_- [v_B] \Vert_{L^2(0,T ; L^2(\mathbb{R}^2))} = 0   \}.
\end{align*}
$(\rm{ii})$ {\rm{[Global-in-time strong solutions]}} Let ${ }^t (v_A , v_B , v_S) \in C([0, \infty); L^2(\mathbb{R}^3_+) ) \times C([0,\infty); L^2(\mathbb{R}^3_-) ) \times C([0, \infty ); L^2(\mathbb{R}^2) )$. We call ${ }^t (v_A , v_B , v_S)$ a \emph{global-in-time strong solution} to system \eqref{eq29} with initial data ${ }^t (v_0^A , v_0^B , v_0^S )$ if ${ }^t (v_A , v_B , v_S)$ satisfies that for each fixed $T >0$, \eqref{eq4010}-\eqref{eq4016} hold.
\end{definition}
\noindent Remark that we define the solutions of systems \eqref{eq24}, \eqref{eq28}, \eqref{eq2010} in the same way as Definitions \ref{def41}, \ref{def42}.

Let us investigate the relationship between the strong solutions of systems \eqref{eq15} and \eqref{eq29}.
\begin{proposition}[Uniqueness and existence]\label{prop43}Let $v_0^A \in W_0^{1,2}(\mathbb{R}^3_+)$, $v_0^B \in W_0^{1,2} (\mathbb{R}^3_-)$, $v_0^S \in W^{1,2}(\mathbb{R}^2)$, and $T \in (0,\infty)$. Let $v_A \in C([0,T]; L^2(\mathbb{R}^3_+)) \cap L_{loc}^1 ( \mathbb{R}^3_+ \times (0,T))$, $v_B \in C([0,T]; L^2(\mathbb{R}^3_-)) \cap L_{loc}^1 ( \mathbb{R}^3_- \times (0,T))$, and $v_S \in C([0,T]; L^2(\mathbb{R}^2)) \cap L_{loc}^1 ( \mathbb{R}^2 \times (0,T))$. For $t \in (0,T]$, we set
\begin{align}
\label{eq4017}\begin{cases}
\theta_A = \theta_A (x,t) = \mathcal{P}_A [v_A (y,t) + {\rm{e}}^{- \delta y_3} v_S(y_h,t)](x,t),\\
\theta_B = \theta_B (x,t) = \mathcal{P}_B [v_B (z,t) + {\rm{e}}^{\delta z_3} v_S(z_h,t)](x,t),\\
\theta_S = \theta_S (x,t) = \mathcal{P}_S [v_S(X,t)](x,t),
\end{cases}\\
\label{eq4018}\begin{cases}
\theta_0^A = \theta_0^A (x) = \mathcal{P}_A [v_0^A (y) + {\rm{e}}^{- \delta y_3} v_0^S(y_h)](x),\\
\theta_0^B = \theta_0^B (x) = \mathcal{P}_B [v_0^B (z) + {\rm{e}}^{\delta z_3} v_0^S(z_h)](x),\\
\theta_0^S = \theta_0^S (x) = \mathcal{P}_S [v_0^S(X)](x).
\end{cases}
\end{align}
Assume that ${ }^t(v_A , v_B, v_S)$ is a strong solution to \eqref{eq29} with initial data ${ }^t(v_0^A , v_0^B , v_0^S)$. Then, the three assertions hold:\\
$(\rm{i})$ ${ }^t (\theta_0^A , \theta_0^B , \theta_0^S ) \in W^{1,2} (\Omega_A) \times W^{1,2} (\Omega_B) \times W^{1,2} (\Gamma)$, $\gamma_A [\theta_0^A] = \theta_0^S$, and $\gamma_B[\theta_0^B] = \theta_0^S$.\\
$(\rm{ii})$ ${ }^t(\theta_A , \theta_B , \theta_S)$ is a unique strong solution to \eqref{eq15} with initial data ${ }^t(\theta_0^A , \theta_0^B , \theta_0^S)$.\\
$(\rm{iii})$ ${ }^t (v_A , v_B, v_S)$ is a unique strong solution to \eqref{eq29} with initial data ${ }^t(v_0^A , v_0^B , v_0^S)$.
\end{proposition}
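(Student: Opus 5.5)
For (i), I would repeat the computation in the proof of Proposition \ref{prop31}, read backwards. Since $v_0^S \in W^{1,2}(\mathbb{R}^2)$, the function ${\rm{e}}^{-\delta y_3} v_0^S(y_h)$ lies in $W^{1,2}(\mathbb{R}^3_+)$; the explicit integrals there give norms of order $\delta^{-1/2}$ and $\delta^{1/2}$ times $\Vert v_0^S \Vert_{W^{1,2}}$. Hence $v_0^A + {\rm{e}}^{-\delta y_3}v_0^S \in W^{1,2}(\mathbb{R}^3_+)$, and Definition \ref{def3010} gives $\theta_0^A \in W^{1,2}(\Omega_A)$. Because $\gamma_+[v_0^A]=0$ ($v_0^A \in W_0^{1,2}$) and $\gamma_+[{\rm{e}}^{-\delta y_3}v_0^S] = v_0^S$, Definition \ref{def3015} yields $\gamma_A[\theta_0^A] = \mathcal{P}_S[v_0^S] = \theta_0^S$. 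The $B$ and $S$ parts are identical.

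For the existence part of (ii), I would take $w_A := v_A + {\rm{e}}^{-\delta y_3}v_S$, and analogously $w_B$, and check the regularity required by Definition \ref{def41}. From $v_S \in L^2(0,T;W^{2,2}(\mathbb{R}^2))\cap W^{1,2}(0,T;L^2(\mathbb{R}^2))$, the same explicit integrals give $w_A \in L^2(0,T;W^{2,2}(\mathbb{R}^3_+))\cap W^{1,2}(0,T;L^2(\mathbb{R}^3_+))$. Remark \ref{rem3023} then transfers this to $\theta_A$ in the $\Omega_A$-spaces. Continuity in $C([0,T];L^2)$ and the initial limits \eqref{eq49} follow from Lemma \ref{lem37}. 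The trace conditions follow as in (i), using $\Vert \gamma_+[v_A] \Vert_{L^2(0,T;L^2)}=0$ and \eqref{eq3020}.

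The equations are the computational heart. System \eqref{eq29} is, by construction in Section \ref{sect2}, the pull-back of \eqref{eq24} via Lemma \ref{lem22} and the formulas of Section \ref{sect7}. The subtraction \eqref{eq22} then converts \eqref{eq24} into \eqref{eq15}. Both computations were done in Section \ref{sect2} for smooth functions, so for strong solutions I would argue by density. Approximate $v$ in $L^2(0,T;H^2)\cap W^{1,2}(0,T;H)$ by smooth functions, using Proposition \ref{prop3014}. The pointwise identities then hold for the approximants, and the interior equations pass to the limit in $L^2(0,T;L^2)$ by Remark \ref{rem3023}. The only delicate term is the interface flux $\gamma_A[(n\cdot\nabla)\theta_A] - \gamma_B[(n\cdot\nabla)\theta_B]$, which is not closed. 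I would handle it exactly via Proposition \ref{prop3017} and Corollary \ref{cor3018}: because the approximants converge in $W^{2,2}$, the traces of normal derivatives converge in $L^2(\Gamma)$. The contributions of the exponential terms, computed in Remark \ref{rem23}, reproduce the last two lines of $\mathcal{F}_3$. I expect this bookkeeping, matching $\mathcal{F}_3$ term by term with $\mathcal{F}_S$ after the change of variables, to be the main obstacle.

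For uniqueness in (ii), I would use energy. Let $\Theta$ be the difference of two strong solutions of \eqref{eq15} with the same data. Test each equation with $\rho_\sharp\Theta_\sharp$ and integrate by parts with Proposition \ref{prop3021}(iii). Since the traces coincide on $\Gamma$, the boundary flux terms cancel exactly against the $\rho_S$-weighted interface equation; this is where the factors $\rho_A\kappa_A/\rho_S$ are essential. This gives $\frac{d}{dt}\big(\sum_\sharp \rho_\sharp \Vert\Theta_\sharp\Vert^2\big) \le 0$ in the integrated sense, justified because $\Theta_\sharp \in W^{1,2}(0,T;L^2)\cap L^2(0,T;W^{2,2})$. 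Hence $\Theta(t)=0$ in $L^2$ for all $t$, and Lemma \ref{lem3013} gives $\Theta=0$.

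For (iii), suppose $v, v'$ are two strong solutions of \eqref{eq29} with the same data. By (ii) they produce strong solutions $\theta, \theta'$ of \eqref{eq15} with the same data, so $\theta=\theta'$. Then $\mathcal{P}_S[v_S]=\mathcal{P}_S[v_S']$ gives $v_S=v_S'$ by \eqref{eq36}. Subtracting the common exponential term and using \eqref{eq34} gives $v_A=v_A'$, and similarly $v_B=v_B'$.
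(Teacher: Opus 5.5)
Your proposal is correct and follows the paper's proof. You use the same two-step transfer: pull back $v$ to get a solution of \eqref{eq24}, then add the exponential lift of $v_S$ to get \eqref{eq15}, with (i) being Proposition \ref{prop31} read in reverse. Uniqueness comes from the same $\rho$-weighted energy identity (the paper's Lemma \ref{lem44}), and (iii) is proved the same way, recovering $v_S$ through \eqref{eq36} and then $v_A, v_B$. Your density argument for the interface flux via Proposition \ref{prop3017} and Corollary \ref{cor3018} only spells out what the paper asserts directly in \eqref{eq4026}--\eqref{eq4028}.
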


To prove Proposition \ref{prop43}, we prepare the lemma.
\begin{lemma}[Energy equality and uniqueness]\label{lem44}
Let $\theta_0^A \in W^{1,2}(\Omega_A)$, $\theta_0^B \in W^{1,2} (\Omega_B)$, $\theta_0^S \in W^{1,2}(\Gamma)$ such that $\gamma_A [\theta_0^A] = \theta_0^S$ and $\gamma_B[\theta_0^B] = \theta_0^S$, and $T \in (0,\infty)$. Let $\theta_A,\theta_A^\natural \in L^1_{loc}( \Omega_A \times (0,T)) \cap C([0,T];L^2(\Omega_A))$, $\theta_B,\theta_B^\natural \in L^1_{loc}( \Omega_B \times (0,T)) \cap C([0,T];L^2(\Omega_B))$, and $\theta_S, \theta_S^\natural \in L^1_{loc}( \Gamma \times (0,T)) \cap C([0,T];L^2(\Gamma))$. Assume that ${ }^t(\theta_A , \theta_B , \theta_S)$ and ${ }^t(\theta_A^\natural , \theta_B^\natural , \theta_S^\natural)$ are two strong solutions to system \eqref{eq15} with initial data ${ }^t(\theta_0^A, \theta_0^B , \theta_0^S)$. Then the two assertions hold:\\
$(\rm{i})$ The solution ${ }^t(\theta_A, \theta_B , \theta_S )$ satisfies that for each $0 \leq t_1 \leq t_2 \leq T$,
\begin{multline}\label{eq4019}
\rho_A \Vert \theta_A (t_2) \Vert_{L^2(\Omega_A)}^2 + \rho_B \Vert \theta_B(t_2) \Vert_{L^2(\Omega_B)}^2 + \rho_S \Vert \theta_S(t_2) \Vert_{L^2(\Gamma)}^2 \\
+ 2 \rho_A \kappa_A \int_{t_1}^{t_2} \Vert \nabla \theta_A (\tau) \Vert_{L^2(\Omega_A)}^2 { \ }d \tau + 2 \rho_B \kappa_B \int_{t_1}^{t_2} \Vert \nabla \theta_B (\tau) \Vert_{L^2(\Omega_B)}^2 { \ } d \tau\\
 + 2 \rho_S \kappa_S \int_{t_1}^{t_2} \Vert \nabla_\Gamma \theta_S (\tau) \Vert_{L^2(\Gamma)}^2{ \ }d \tau\\
= \rho_A \Vert \theta_A (t_1) \Vert_{L^2(\Omega_A)}^2 + \rho_B \Vert \theta_B(t_1) \Vert_{L^2(\Omega_B)}^2 + \rho_S \Vert \theta_S(t_1) \Vert_{L^2(\Gamma)}^2.
\end{multline}
$(\rm{ii})$ For all $t \in [0,T]$,
\begin{equation}\label{eq4020}
\Vert \theta_A (t) -  \theta_A^\natural (t) \Vert_{L^2(\Omega_A)} = 0, \Vert \theta_B (t) - \theta_B^\natural (t) \Vert_{L^2(\Omega_B)} = 0, \Vert \theta_S (t) - \theta_S^\natural (t) \Vert_{L^2(\Gamma)} = 0.
\end{equation}
\end{lemma}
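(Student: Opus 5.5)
The plan for (i) is the classical energy method, using the weighted pairing with weights $(\rho_A,\rho_B,\rho_S)$. For a.e.\ $\tau\in(0,T)$ the regularity \eqref{eq41} gives $\theta_A(\tau)\in W^{2,2}(\Omega_A)$, $\theta_B(\tau)\in W^{2,2}(\Omega_B)$, $\theta_S(\tau)\in W^{2,2}(\Gamma)$, and the time derivatives lie in $L^2$.

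First, multiply the equation for $\theta_A$ by $\rho_A\theta_A$ and integrate over $\Omega_A$. Apply the integration by parts formula $(\rm{iii})$ of Proposition \ref{prop3021} with $f_A=\phi_A=\theta_A(\tau)$; this gives
\begin{equation*}
\rho_A\dual{\partial_t\theta_A,\theta_A}_{L^2(\Omega_A)}+\rho_A\kappa_A\Vert\nabla\theta_A\Vert_{L^2(\Omega_A)}^2=-\rho_A\kappa_A\int_\Gamma \gamma_A[(n\cdot\nabla)\theta_A]\,\gamma_A[\theta_A]\,d\mathcal{H}^2_x .
\end{equation*}
Do the same for $\theta_B$; the boundary term there has the opposite sign, $+\rho_B\kappa_B\int_\Gamma\gamma_B[(n\cdot\nabla)\theta_B]\gamma_B[\theta_B]$. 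For the surface equation, multiply by $\rho_S\theta_S$ and use the last formula of Proposition \ref{prop3021}$(\rm{iii})$. The coupling terms then contribute $+\rho_A\kappa_A\int_\Gamma\gamma_A[(n\cdot\nabla)\theta_A]\theta_S-\rho_B\kappa_B\int_\Gamma\gamma_B[(n\cdot\nabla)\theta_B]\theta_S$.

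Summing the three identities and using $\gamma_A[\theta_A]=\gamma_B[\theta_B]=\theta_S$ for a.e.\ $\tau$, all boundary terms cancel exactly. This leaves
\begin{equation*}
\tfrac12\tfrac{d}{d\tau}E(\tau)+\rho_A\kappa_A\Vert\nabla\theta_A\Vert^2+\rho_B\kappa_B\Vert\nabla\theta_B\Vert^2+\rho_S\kappa_S\Vert\nabla_\Gamma\theta_S\Vert^2=0,
\end{equation*}
where $E$ is the weighted energy appearing in \eqref{eq4019}.

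To justify $\dual{\partial_t\theta_\sharp,\theta_\sharp}=\tfrac12\frac{d}{d\tau}\Vert\theta_\sharp\Vert^2$, pass to the pulled-back functions $w_\sharp$ on $U_\sharp$ (Definition \ref{def3022}). The inner products of Definition \ref{def3012} are fixed, time-independent weighted $L^2$ products there, and $w_\sharp\in W^{1,2}(0,T;L^2)$. Hence $\Vert w_\sharp(\tau)\Vert^2$ is absolutely continuous with the stated derivative, by the standard Lions–Magenes type lemma in the Hilbert space $L^2(U_\sharp)$ with weight $\sqrt{G_S}$ for $\sharp=S$.

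Integrating from $t_1$ to $t_2$ gives \eqref{eq4019}. Here $t_1=0$ is allowed thanks to $C([0,T];L^2)$ continuity and \eqref{eq49}.

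For (ii), the system is linear, so set $\Theta_\sharp=\theta_\sharp-\theta_\sharp^\natural$. Then $(\Theta_A,\Theta_B,\Theta_S)$ satisfies the same regularity, equations, and transmission conditions, all in the $L^2$-a.e.\ sense, with zero initial data. Since $\gamma_A,\gamma_B$ and $(n\cdot\nabla)$ are linear, the a.e.\ identities subtract. The argument of (i) never used the compatibility of nonzero data, so it applies with $t_1=0$ and gives $E_\Theta(t)\le E_\Theta(0)=0$. As $\rho_\sharp>0$, each norm vanishes, which is \eqref{eq4020}; Lemma \ref{lem3013} then identifies the functions.

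I expect the main obstacle to be the rigorous cancellation of the boundary terms. The trace identities hold only as $L^2(0,T;L^2(\Gamma))$ equalities, and the traces $\gamma_A[(n\cdot\nabla)\theta_A]$ must be the ones produced by Proposition \ref{prop3021}$(\rm{iii})$. I would handle this by working at a.e.\ fixed $\tau$, where $\theta_A(\tau)\in W^{2,2}$, so the trace of $(n\cdot\nabla)\theta_A$ is well defined as in the proof of Proposition \ref{prop3017}. The products of traces are then integrable by \eqref{eq3019}, and integrability in $\tau$ follows from Cauchy–Schwarz together with the $L^2(0,T;W^{2,2})$ bounds.
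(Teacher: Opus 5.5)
Your proposal is correct and follows essentially the same route as the paper. You test each equation with $\rho_\sharp\theta_\sharp$ and use the integration by parts formulas of Proposition \ref{prop3021}(iii), so the interface terms cancel via $\gamma_A[\theta_A]=\gamma_B[\theta_B]=\theta_S$. You then integrate in time, and for (ii) apply the same identity to the difference of two solutions, which has zero data.

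The only minor difference is how you reach \eqref{eq4019} for all $t_1\le t_2$. You use the absolute continuity of $\tau\mapsto\Vert w_\sharp(\tau)\Vert^2$ for $W^{1,2}(0,T;L^2)$-functions. The paper instead integrates between times in a full-measure set and then passes to the limit using $C([0,T];L^2)$-continuity.
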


\begin{proof}[Proof of Lemma \ref{lem44}]
 We first show $(\rm{i})$. Since ${ }^t(\theta_A , \theta_B , \theta_S)$ is a strong solution to system \eqref{eq15} with initial data ${ }^t(\theta_0^A, \theta_0^B , \theta_0^S)$, we use assertion $(\rm{iii})$ in Proposition \ref{prop3021} to check that for almost all $\tau \in (0,T)$,
\begin{multline}\label{eq4021}
\frac{1}{2}\frac{d}{d\tau} ( \rho_A \Vert \theta_A (\tau) \Vert_{L^2(\Omega_A)}^2 + \rho_B \Vert \theta_B(\tau) \Vert_{L^2(\Omega_B)}^2 + \rho_S \Vert \theta_S(\tau) \Vert_{L^2(\Gamma)}^2 )\\
= \dual{\rho_A \partial_\tau \theta_A, \theta_A }_{L^2(\Omega_A)} + \dual{\rho_B \partial_\tau \theta_B, \theta_B }_{L^2(\Omega_B)} + \dual{\rho_S \partial_\tau \theta_S, \theta_S }_{L^2(\Gamma)}\\
= \dual{\rho_A \kappa_A \Delta \theta_A, \theta_A }_{L^2(\Omega_A)} + \dual{\rho_B \kappa_B \Delta \theta_B, \theta_B }_{L^2(\Omega_B)}\\
 + \dual{\rho_S \kappa_S \Delta_\Gamma \theta_S + \rho_A \kappa_A \gamma_A[ (n \cdot \nabla) \theta_A] - \rho_B \kappa_B \gamma_B[ (n \cdot \nabla) \theta_B], \theta_S }_{L^2(\Gamma)}\\
 = - \rho_A \kappa_A \Vert \nabla \theta_A (\tau) \Vert_{L^2(\Omega_A)}^2 - \rho_B \kappa_B \Vert \nabla \theta_B (\tau) \Vert_{L^2(\Omega_B)}^2 - \rho_S \kappa_S \Vert \nabla \theta_S (\tau) \Vert_{L^2(\Gamma)}^2.
\end{multline}
Let $N_*$ be the null set of $[0,T]$. Fix $t_1, t_2 \in [0,T]$ such that $t_1 \leq t_2$. Since $[0,T] \setminus N_*$ is dense in $[0,T]$, there are $\{ t_1^m \}$, $\{ t_2^{m'} \} \subset [0,T] \setminus N_*$ such that $t_1^m \leq t_2^{m'}$, $t_1^m \to t_1$, $t_2^{m'} \to t_2$ as $m \to \infty$ and $m' \to \infty$. Integrating both sides of \eqref{eq4021} with respect to time $\tau$, we obtain
\begin{multline*}
\rho_A \Vert \theta_A (t_2^{m'}) \Vert_{L^2(\Omega_A)}^2 + \rho_B \Vert \theta_B(t_2^{m'}) \Vert_{L^2(\Omega_B)}^2 + \rho_S \Vert \theta_S(t_2^{m'}) \Vert_{L^2(\Gamma)}^2 \\
+ 2 \rho_A \kappa_A \int_{t_1^m}^{t_2^{m'}} \Vert \nabla \theta_A (\tau) \Vert_{L^2(\Omega_A)}^2 { \ }d\tau + 2 \rho_B \kappa_B \int_{t_1^m}^{t_2^{m'}} \Vert \nabla \theta_B (\tau) \Vert_{L^2(\Omega_B)}^2 { \ } d\tau\\
 + 2 \rho_S \kappa_S \int_{t_1^m}^{t_2^{m'}} \Vert \nabla_\Gamma \theta_S (\tau) \Vert_{L^2(\Gamma)}^2{ \ }d \tau\\
= \rho_A \Vert \theta_A (t_1^m) \Vert_{L^2(\Omega_A)}^2 + \rho_B \Vert \theta_B(t_1^m) \Vert_{L^2(\Omega_B)}^2 + \rho_S \Vert \theta_S(t_1^m) \Vert_{L^2(\Gamma)}^2.
\end{multline*}
Since $\theta_A \in C ([0,T];L^2(\Omega_A)) \cap L^2(0,T; W^{2,2} (\Omega_A) )$, $\theta_B \in C ([0,T];L^2(\Omega_B)) \cap L^2(0,T; W^{2,2} (\Omega_B) )$, and $\theta_S \in C ([0,T];L^2(\Gamma)) \cap L^2(0,T; W^{2,2} (\Gamma) )$, we let $m \to \infty$ and $m' \to \infty$ to have \eqref{eq4019}.

Next, we show $(\rm{ii})$. Set $\theta_A^\star = \theta_A - \theta_A^\natural$, $\theta_B^\star = \theta_B - \theta_B^\natural$, and $\theta_S^\star = \theta_S - \theta_S^\natural$. We easily check that $\theta_A^\star \in L^1_{loc}( \Omega_A \times (0,T)) \cap C([0,T];L^2(\Omega_A))$, $\theta_B^\star \in L^1_{loc}( \Omega_B \times (0,T)) \cap C([0,T];L^2(\Omega_B))$, $\theta_S^\star \in L^1_{loc}( \Gamma \times (0,T)) \cap C([0,T];L^2(\Gamma))$, and that ${ }^t(\theta_A^\star , \theta_B^\star, \theta_S^\star)$ is a strong solution to the following system:
\begin{equation*}
\begin{cases}
\rho_A \partial_t \theta^\star_A = \rho_A \kappa_A \Delta \theta^\star_A & \text{ in } \Omega_{A.T}  ,\\
\rho_B \partial_t \theta^\star_B = \rho_B \kappa_B \Delta \theta^\star_B & \text{ in } \Omega_{B,T},\\
\rho_S \partial_t \theta^\star_S = \rho_S \kappa_S \Delta_\Gamma \theta^\star_S + \rho_A \kappa_A \gamma_A[ (n \cdot \nabla) \theta^\star_A] - \rho_B \kappa_B \gamma_B[ (n \cdot \nabla) \theta^\star_B] & \text{ in } \Gamma_T,\\
\gamma_A [\theta^\star_A] = \gamma_B [\theta^\star_B] = \theta^\star_S & \text{ in } \Gamma_T,\\
\theta^\star_A \vert_{t=0} = 0{ \ } \text{ in } \Omega_A, { \ }\theta^\star_B \vert_{t = 0} = 0{ \ } \text{ in }\Omega_B, { \ }\theta^\star_S \vert_{t=0} = 0{ \ } \text{ in } \Gamma .&
\end{cases}
\end{equation*}
By the same argument to derive \eqref{eq4019}, we find that for $0 \leq t \leq T$,
\begin{multline*}
\rho_A \Vert \theta^\star_A (t) \Vert_{L^2(\Omega_A)}^2 + \rho_B \Vert \theta^\star_B(t) \Vert_{L^2(\Omega_B)}^2 + \rho_S \Vert \theta^\star_S(t) \Vert_{L^2(\Gamma)}^2\\
+ 2 \rho_A \kappa_A \int_{0}^{t} \Vert \nabla \theta^\star_A (\tau) \Vert_{L^2(\Omega_A)}^2 { \ }d\tau + 2 \rho_B \kappa_B \int_{0}^{t} \Vert \nabla \theta^\star_B (\tau) \Vert_{L^2(\Omega_B)}^2 { \ } d\tau\\
 + 2 \rho_S \kappa_S \int_{0}^{t} \Vert \nabla \theta^\star_S (\tau) \Vert_{L^2(\Gamma)}^2{ \ }d \tau = 0.
\end{multline*}
This implies that for all $t \in [0,T]$,
\begin{equation*}
\Vert \theta^\star_A (t) \Vert_{L^2(\Omega_A)}^2 + \Vert \theta^\star_B(t) \Vert_{L^2(\Omega_B)}^2 + \Vert \theta^\star_S(t) \Vert_{L^2(\Gamma)}^2 = 0,
\end{equation*}
which is \eqref{eq4020}. Therefore, Lemma \ref{lem44} is proved.
\end{proof}

Now, we prove Proposition \ref{prop43}.
\begin{proof}[Proof of Proposition \ref{prop43}]
Fix $v_0^A \in W_0^{1,2}(\mathbb{R}^3_+)$, $v_0^B \in W_0^{1,2} (\mathbb{R}^3_-)$, $v_0^S \in W^{1,2}(\mathbb{R}^2)$, and $T \in (0,\infty)$. Let $v_A \in C([0,T]; L^2(\mathbb{R}^3_+)) \cap L_{loc}^1 ( \mathbb{R}^3_+ \times (0,T))$, $v_B \in C([0,T]; L^2(\mathbb{R}^3_-)) \cap L_{loc}^1 ( \mathbb{R}^3_- \times (0,T))$, and $v_S \in C([0,T]; L^2(\mathbb{R}^2)) \cap L_{loc}^1 ( \mathbb{R}^2 \times (0,T))$. Assume that ${ }^t(v_A , v_B, v_S)$ is a strong solution to system \eqref{eq29} with initial data ${ }^t(v_0^A , v_0^B , v_0^S)$. Since $v_A \in L_{loc}^1 ( \mathbb{R}^3_+ \times (0,T))$, $v_B \in L_{loc}^1 ( \mathbb{R}^3_- \times (0,T))$, $v_S \in L_{loc}^1 ( \mathbb{R}^2 \times (0,T))$, and ${ }^t(v_A , v_B, v_S)$ is a strong solution to \eqref{eq29}, we find that ${ }^t(v_A , v_B, v_S)$ satisfies that
\begin{align}
\Vert \partial_t v_A + \mathcal{A}_1 v_A + \mathcal{B}_1 v_A - \mathring{\mathcal{F}}_1(v_A,v_B,v_S) \Vert_{L^2(0,T;L^2(\mathbb{R}^3_+))} = 0,\label{eq4022}\\
\Vert \partial_t v_B + \mathcal{A}_2 v_B + \mathcal{B}_2 v_B - \mathring{\mathcal{F}}_2(v_A,v_B,v_S) \Vert_{L^2(0,T;L^2(\mathbb{R}^3_-))} = 0,\\
\Vert \partial_t v_S + \mathcal{A}_3 v_S + \mathcal{B}_3 v_S - \mathring{\mathcal{F}}_3(v_A,v_B,v_S) \Vert_{L^2(0,T;L^2(\mathbb{R}^2))} = 0.\label{eq4024}
\end{align}
That is, ${ }^t (v_A, v_B, v_S)$ is a solution to system \eqref{eq28}. For $t >0$ we set
\begin{equation}\label{eq4025}
\begin{cases}
u_A  = u_A (x,t) = \mathcal{P}_A [v_A(y,t)](x,t),\\
u_B  = u_B (x,t) = \mathcal{P}_B [v_B(z,t)](x,t),\\
u_S  = u_S (x,t) = \mathcal{P}_S [v_S(X,t)](x,t),
\end{cases}{ \ }
\begin{cases}
u_0^A  = u_0^A (x) = \mathcal{P}_A [v_0^A(y)](x),\\
u_0^B  = u_0^B (x) = \mathcal{P}_B [v_0^B(z)](x),\\
u_0^S  = u_0^S (x) = \mathcal{P}_S [v_0^S(X)](x).
\end{cases}
\end{equation}
From Definitions \ref{def3022}, \ref{def39}, and \ref{def3010}, we check that
\begin{align*}
u_A \in L^1_{loc} ((0,T) \times \Omega_A) \cap C([0,T]; L^2(\Omega_A)) \cap L^2 (0,T;  W^{2,2} (\Omega_A)) \cap W^{1,2} (0,T ; L^2(\Omega_A)),\\
u_B \in L^1_{loc} ((0,T) \times \Omega_B) \cap C([0,T]; L^2(\Omega_B)) \cap L^2 (0,T;  W^{2,2} (\Omega_B)) \cap W^{1,2} (0,T ; L^2(\Omega_B)),\\
u_S \in L^1_{loc} ((0,T) \times \Gamma) \cap C([0,T]; L^2(\Gamma)) \cap L^2 (0,T; W^{2,2} (\Gamma)) \cap W^{1,2} (0,T ; L^2(\Gamma)),
\end{align*}
and that
\begin{equation*}
{ }^t(u_0^A , u_0^B , u_0^S ) \in W_0^{1,2} (\Omega_A) \times W_0^{1,2} (\Omega_B) \times W^{1,2} (\Gamma).
\end{equation*}
From Lemmas \ref{lem71}, \ref{lem75}, \ref{lem79}, Remarks \ref{rem72}, \ref{rem76}, \ref{rem7010} (see also Section \ref{sect2}), and \eqref{eq4022}-\eqref{eq4024}, we find that
\begin{multline}\label{eq4026}
\Vert \partial_t u_A - \kappa_A \Delta u_A - \mathcal{F}_A(u_A, u_B,v_S) \Vert_{L^2(0,T;L^2(\Omega_A))}\\
 = \Vert \partial_t v_A + \mathcal{A}_1 v_A + \mathcal{B}_1 v_A - \mathring{\mathcal{F}}_1(v_A,v_B,v_S) \Vert_{L^2(0,T;L^2(\mathbb{R}^3_+))} = 0,
\end{multline}
\begin{multline}\label{eq4027}
\Vert \partial_t u_B - \kappa_B \Delta u_B - \mathcal{F}_B(u_A, u_B,v_S) \Vert_{L^2(0,T;L^2(\Omega_B))}\\
 = \Vert \partial v_B + \mathcal{A}_2 v_B + \mathcal{B}_2 v_B - \mathring{\mathcal{F}}_2(v_A,v_B,v_S) \Vert_{L^2(0,T;L^2(\mathbb{R}^3_-))} = 0,
\end{multline}
\begin{multline}\label{eq4028}
\Vert \partial_t u_S - \kappa_S \Delta_\Gamma u_S - \mathcal{F}_S(u_A, u_B, v_S) \Vert_{L^2(0,T;L^2(\Gamma))}\\
 = \Vert \{ \partial_t v_S + \mathcal{A}_3 v_S + \mathcal{B}_3 v_S - \mathring{\mathcal{F}}_3(v_A,v_B,v_S) \} G_S^{1/4} \Vert_{L^2(0,T;L^2(\mathbb{R}^2))} = 0.
\end{multline}
Here $G_S = 1 + \varphi_1^2 + \varphi_2^2$. From Proposition \ref{prop3016}, we see that
\begin{align*}
& \Vert \gamma_A [u_A] \Vert_{L^2(0,T;L^2(\Gamma))} \leq 2 \Vert \gamma_+ [v_A] \Vert_{L^2(0,T;L^2(\mathbb{R}^2))} = 0,\\ 
& \Vert \gamma_B [u_B] \Vert_{L^2(0,T;L^2(\Gamma))} \leq 2 \Vert \gamma_- [v_B] \Vert_{L^2(0,T;L^2(\mathbb{R}^2))} = 0.
\end{align*}
Using Lemma \ref{lem37} and the definitions of our norms (Definitions \ref{def39}, \ref{def3010}), we observe that
\begin{align}
\Vert u_A (t) - u_0^A \Vert_{L^2(\Omega_A)} = \Vert v_A (t) - v_0^A \Vert_{L^2(\mathbb{R}^3_+)} \to 0 { \ }(\text{as }t \to 0 + 0),\label{eq4029}\\
\Vert u_B (t) - u_0^B \Vert_{L^2(\Omega_B)} = \Vert v_B (t) - v_0^B \Vert_{L^2(\mathbb{R}^3_-)} \to 0 { \ }(\text{as }t \to 0 + 0),\\
\Vert u_S (t) - u_0^S \Vert_{L^2(\Gamma)} \leq \sqrt{2} \Vert v_S (t) - v_0^S \Vert_{L^2(\mathbb{R}^2)} \to 0 { \ }(\text{as }t \to 0 + 0).\label{eq4031}
\end{align}
Thus, we find that ${ }^t (u_A, u_B, u_S)$ is a solution to system \eqref{eq24} with $\hat{\theta}_S = v_S$. Remark that $v_S$ is a function in $\mathbb{R}^2$, $u_S$ is a function in $\Gamma$, and that $u_S = v_S$ for $ x \in \Gamma$.

Now we show $(\rm{i})$ and $(\rm{ii})$. By the definitions of $\mathcal{P}_A$, $\mathcal{P}_B$, $\mathcal{P}_S$, \eqref{eq4017}, and \eqref{eq4018}, we find that
\begin{align}
\label{eq4032}\begin{cases}
\theta_A = \theta_A (x,t) = u_A (x,t) + {\rm{e}}^{\delta \{ \varphi(x_h) - x_3 \}}v_S (x_h,t),\\
\theta_B = \theta_B (x,t) = u_B (x,t) + {\rm{e}}^{\delta \{ x_3 -\varphi(x_h) \}}v_S (x_h,t),\\
\theta_S = \theta_S (x,t) = u_S (x,t),
\end{cases}\\
\label{eq4033}\begin{cases}
\theta_0^A = \theta_0^A (x) = u_0^A (x) + {\rm{e}}^{\delta \{ \varphi(x_h) - x_3 \}}v_0^S (x_h),\\
\theta_0^B = \theta_0^B (x) = u_0^B (x) + {\rm{e}}^{\delta \{ x_3 -\varphi(x_h) \}}v_0^S (x_h),\\
\theta_0^S = \theta_0^S (x) = u_0^S (x).
\end{cases}
\end{align}
We easily check that
\begin{equation*}
{ }^t(\theta_0^A , \theta_0^B , \theta_0^S ) \in W^{1,2} (\Omega_A) \times W^{1,2} (\Omega_B) \times W^{1,2} (\Gamma)
\end{equation*}
such that $\gamma_A [\theta_0^A] = \theta_0^S$ and $\gamma_B [\theta_0^B] = \theta_0^S$ (see Proposition \ref{prop31}). By \eqref{eq4032}, \eqref{eq4026}-\eqref{eq4028}, we find that
\begin{multline*}
\Vert \partial_t \theta_A - \kappa_A \Delta \theta_A \Vert_{L^2(0,T;L^2(\Omega_A))}\\
= \Vert \partial_t u_A - \kappa_A \Delta u_A - \mathcal{F}_A(u_A, u_B,v_S) \Vert_{L^2(0,T;L^2(\Omega_A))} =0,
\end{multline*}
\begin{multline*}
\Vert \partial_t \theta_B - \kappa_B \Delta \theta_B \Vert_{L^2(0,T;L^2(\Omega_B))}\\
= \Vert \partial_t u_B - \kappa_B \Delta u_B - \mathcal{F}_B(u_A, u_B,v_S) \Vert_{L^2(0,T;L^2(\Omega_B))} = 0,
\end{multline*}
\begin{multline*}
\bigg\Vert \partial_t \theta_S - \kappa_S \Delta_\Gamma \theta_S - \frac{ \rho_A \kappa_A}{\rho_S} \gamma_A[ (n \cdot \nabla) \theta_A] + \frac{\rho_B \kappa_B}{\rho_S} \gamma_B[ (n \cdot \nabla) \theta_B] \bigg\Vert_{L^2(0,T;L^2(\Gamma ))}\\
= \Vert \partial_t u_S - \kappa_S \Delta_\Gamma u_S - \mathcal{F}_S(u_A, u_B,v_S) \Vert_{L^2(0,T;L^2(\Gamma))} =0.
\end{multline*}
We also find that
\begin{align*}
\Vert \gamma_A [\theta_A] - \theta_S \Vert_{L^2(0,T; L^2(\Gamma))} = \Vert \gamma_A [u_A] \Vert_{L^2(0,T; L^2(\Gamma))} =0,\\
\Vert \gamma_B [\theta_B] - \theta_S \Vert_{L^2(0,T; L^2(\Gamma))} = \Vert \gamma_B [u_B] \Vert_{L^2(0,T; L^2(\Gamma))} =0.
\end{align*}
From \eqref{eq4032}, \eqref{eq4033}, \eqref{eq4029}, \eqref{eq4031}, we check that
\begin{multline*}
\Vert \theta_A (t) - \theta_0^A \Vert_{L^2(\Omega_A)} \leq \Vert u_A (t) - u_0^A \Vert_{L^2(\Omega_A)} + \Vert {\rm{e}}^{\delta \{ \varphi(x_h) - x_3 \}} (v_S(x_h,t) - v_0^S )  \Vert_{L^2(\Omega_A)}\\
= \Vert u_A (t) - u_0^A \Vert_{L^2(\Omega_A)} + \Vert {\rm{e}}^{\delta \{ \varphi(x_h) - x_3 \}} (v_S(y_h,t) - v_0^S(y_h) ) \Vert_{L^2(\mathbb{R}^3_+)}\\
\leq \Vert u_A (t) - u_0^A \Vert_{L^2(\Omega_A)} + C(\delta) \Vert v_S(t) - v_0^S \Vert_{L^2(\mathbb{R}^2)} \to 0 { \ }(\text{as }t \to 0 + 0).
\end{multline*}
Similarly, we see that
\begin{equation*}
\lim_{ t \to 0 + 0 } \Vert \theta_B - \theta_0^B \Vert_{L^2(\Omega_B)} = 0.
\end{equation*}
By \eqref{eq4031}, we observe that
\begin{align*}
\Vert \theta_S (t) - \theta_0^S \Vert_{L^2(\Gamma)} = \Vert u_S (t) - u_0^S \Vert_{L^2(\Gamma)} \to 0 { \ }(\text{as }t \to 0 + 0).
\end{align*}
Thus, we conclude that ${ }^t(\theta_A , \theta_B , \theta_S)$ is a strong solution to \eqref{eq15} with initial data ${ }^t(\theta_0^A , \theta_0^B , \theta_0^S)$. From assertion $(\rm{ii})$ in Lemma \ref{lem44}, we find that ${ }^t(\theta_A , \theta_B , \theta_S)$ is a unique strong solution to \eqref{eq15} with ${ }^t(\theta_0^A , \theta_0^B , \theta_0^S)$. Therefore, we see $(\rm{i})$ and $(\rm{ii})$.

Next, we show $(\rm{iii})$. Assume that ${ }^t(v^\natural_A , v^\natural_B, v^\natural_S)$ is a strong solution to system \eqref{eq29} with initial data ${ }^t(v_0^A , v_0^B , v_0^S)$. For $t>0$ we set
\begin{align*}
\theta_A^\natural = \theta_A^\natural (x,t) &= \mathcal{P}_A [v_A^\natural (y,t) + {\rm{e}}^{- \delta y_3} v^\natural_S(y_h,t)](x,t),\\
\theta_B^\natural = \theta_B^\natural (x,t) &= \mathcal{P}_B [v_B^\natural (z,t) + {\rm{e}}^{ \delta z_3} v_S^\natural (z_h,t)](x,t),\\
\theta_S^\natural = \theta_S^\natural (x,t) &= \mathcal{P}_S [v_S^\natural (X,t)](x,t).
\end{align*}
From assertions $(\rm{i})$ and $(\rm{ii})$, we see that ${ }^t(\theta_A^\natural , \theta_B^\natural , \theta_S^\natural)$ is a unique strong solution to \eqref{eq15} with initial data ${ }^t(\theta_0^A , \theta_0^B , \theta_0^S)$. Since ${ }^t(\theta_A , \theta_B , \theta_S)$ are ${ }^t(\theta_A^\natural , \theta_B^\natural , \theta_S^\natural)$ is are two strong solutions to \eqref{eq15} with initial data ${ }^t(\theta_0^A , \theta_0^B , \theta_0^S)$, it follow from assertion $(\rm{ii})$ in Lemma \ref{lem44} to see that for all $t \in [0,T]$,
\begin{align}
\Vert \theta_A (t) - \theta_A^\natural (t) \Vert_{L^2(\Omega_A)} = 0,\label{eq4034}\\
\Vert \theta_B (t) - \theta_B^\natural (t) \Vert_{L^2(\Omega_B)} = 0,\label{eq4035}\\
\Vert \theta_S (t) - \theta_S^\natural (t) \Vert_{L^2(\Gamma)} = 0.\label{eq4036}
\end{align}
From the fact that $G_S = 1 + \varphi_1^2 + \varphi_2^2 \geq 1$ and \eqref{eq4036}, we see that for $t \in (0,T]$
\begin{align*}
0 = \Vert \theta_S (t) - \theta_S^\natural (t) \Vert_{L^2(\Gamma)} & = \Vert \mathcal{P}_S[ v_S ](t) - \mathcal{P}_S[v_S^\natural] (t) \Vert_{L^2(\Gamma)}\\
& = \Vert (v_S (t) - v_S^\natural (t)) G_S^{1/4} \Vert_{L^2(\mathbb{R}^2)}\\
& \geq \Vert v_S (t) - v_S^\natural (t) \Vert_{L^2(\mathbb{R}^2)}.
\end{align*}
This implies that $v_S (t) = v_S^\natural (t)$ for $t \in (0,T]$. Using Lemma \ref{lem37}, \eqref{eq4034}, and $v_S (t) = v_S^\natural (t)$ on $(0,T]$, we see that
\begin{multline*}
\Vert v_A (t) - v_A^\natural (t) \Vert_{L^2(\mathbb{R}^3_+)}  = \Vert \mathcal{P}_A [v_A (y,t) - v_A^\natural (y,t)] \Vert_{L^2(\Omega_A)}\\
 \leq  \Vert \mathcal{P}_A [v_A (y,t) + {\rm{e}}^{- \delta y_3} v_S(y_h,t)] - \mathcal{P}_A[ v_A^\natural (y,t) + {\rm{e}}^{- \delta y_3} v^\natural_S(y_h,t)] \Vert_{L^2(\Omega_A)}\\
 + \Vert \mathcal{P}_A [ {\rm{e}}^{- \delta y_3} v_S(y_h,t) - {\rm{e}}^{- \delta y_3} v^\natural_S(y_h,t)] \Vert_{L^2(\Omega_A)}\\
 = \Vert \theta_A (t) - \theta_A^\natural (t) \Vert_{L^2(\Omega_A} + \Vert {\rm{e}}^{- \delta y_3} v_S(y_h,t) - {\rm{e}}^{- \delta y_3} v^\natural_S(y_h,t) \Vert_{L^2(\mathbb{R}^3_+)}\\
 \leq 0 + C (\delta) \Vert v_S(t) - v^\natural_S(t) \Vert_{L^2(\mathbb{R}^2)} = 0.
\end{multline*}
Thus, we find that $v_A (t) = v_A^\natural (t)$ for $t \in (0,T]$. Similarly, we see that $v_B (t) = v_B^\natural (t)$ for $t \in (0,T]$. Therefore, we find that ${ }^t(v_A , v_B, v_S)$ is a unique strong solution to \eqref{eq29} with initial data ${ }^t(v_0^A , v_0^B , v_0^S)$. Therefore, Proposition \ref{prop43} is proved.
\end{proof}

\section{Existence of Strong Solutions}\label{sect5}

In this section, we construct strong solutions to system \eqref{eq2010} to show the existence of strong solutions to systems \eqref{eq29} and \eqref{eq15}. In subsection \ref{subsec51}, we study fundamental properties of our Laplace operators and the elliptic regularity theorem for these operators. In subsection \ref{subsec52}, we derive the key estimates necessary to show the existence of strong solutions to our systems. In subsection \ref{subsec53}, we first apply both nice properties of our Laplace operators (Proposition \ref{prop51}) and the key estimates (Proposition \ref{prop53}) to construct strong solutions to system \eqref{eq2010} to prove Theorem \ref{thm24}. Then, we show the existence of a unique global-in-time strong solution to system \eqref{eq15} to prove Theorem \ref{thm11}.

Let $\varphi \in BC^2(\mathbb{R}^2)$ and $n= n(x) = { }^t (n_1,n_2,n_3)$ be the unit outer normal vector at $x \in \Gamma$ defined by \eqref{eq11}. Let $ \rho_A , \rho_B , \rho_S, \kappa_A, \kappa_B ,\kappa_S >0$ and $\delta >0$. Let $\mathcal{A}$ be the operator defined by Section \ref{sect1} (see also subsection \ref{subsec51}). Throughout this section, we assume that
\begin{equation*}
\Vert \nabla_X \varphi \Vert_{L^\infty (\mathbb{R}^2)} \leq \frac{1}{2}.
\end{equation*}
To construct a strong solution to system \eqref{eq2010}, we use the function spaces $H$ and $X_T$. Define
\begin{equation*}
H := \{ \psi = { }^t(\psi_A , \psi_B , \psi_S) \in L^2(\mathbb{R}^3_+) \times L^2(\mathbb{R}^3_-) \times L^2(\mathbb{R}^2); {  \ } \Vert \psi \Vert_H < \infty \}
\end{equation*}
with
\begin{equation*}
\Vert \psi \Vert_H : = ( \Vert \psi_A \Vert_{L^2(\mathbb{R}^3_+)}^2 + \Vert \psi_B \Vert_{L^2(\mathbb{R}^3_-)}^2 + \Vert \psi_S \Vert_{L^2(\mathbb{R}^2)}^2 )^{1/2}.
\end{equation*}
For $\psi^\natural = { }^t(\psi_A^\natural, \psi_B^\natural, \psi_S^\natural), \psi^\flat = { }^t(\psi_A^\flat, \psi_B^\flat , \psi_S^\flat) \in H$,
\begin{equation*}
\dual{\psi^\natural , \psi^\flat}_H := \dual{\psi_A^\natural , \psi_A^\flat}_{L^2(\mathbb{R}^3_+)} + \dual{\psi_B^\natural , \psi_B^\flat}_{L^2(\mathbb{R}^3_-)} + \dual{\psi_S^\natural , \psi_S^\flat}_{L^2(\mathbb{R}^2)}.
\end{equation*}
We easily check that $\dual{ \cdot , \cdot}_H$ is an inner product on $H$, and that $H$ is a Hilbert space. For $T >0$, we set
\begin{equation*}
X_T := \{ w \in C([0,T] ; H)  ; { \ } \Vert w \Vert_{X_T} < \infty \}
\end{equation*}
with
\begin{align}
& \Vert w \Vert_{X_T}  := \sup_{0 \leq t \leq T} \Vert w \Vert_H + \Vert dw/{dt} \Vert_{L^2(0,T ; H)} + \Vert \mathcal{A} w \Vert_{L^2(0,T;H)},\label{eq51}\\
& \Vert w \Vert_{L^2(0,T;H)} := \bigg( \int_0^T \Vert w(t) \Vert_H^2 { \ }dt \bigg)^{1/2}. \notag
\end{align}
See subsection \ref{subsec51} for $\mathcal{A}$ and $D (\mathcal{A})$.

\subsection{Operator $\mathcal{A}$}\label{subsec51}

Let us recall and study our Laplace operators. Define the four operators $\mathcal{A}_1$, $\mathcal{A}_2$, $\mathcal{A}_3$, and $\mathcal{A}$ on $L^2(\mathbb{R}^3_+)$, $L^2(\mathbb{R}^3_-)$, $L^2(\mathbb{R}^2)$, and $H$ as follows:
\begin{multline*}
\begin{cases}
\mathcal{A}_1 \psi_A = - \kappa_A \Delta_y \psi_A,\\
D ( \mathcal{A}_1 ) = W_0^{1,2} (\mathbb{R}^3_+) \cap W^{2,2} (\mathbb{R}^3_+),
\end{cases}
\begin{cases}
\mathcal{A}_2 \psi_B = - \kappa_B \Delta_z \psi_B,\\
D ( \mathcal{A}_2 ) = W_0^{1,2} (\mathbb{R}^3_-) \cap W^{2,2} (\mathbb{R}^3_-),
\end{cases}\\
\begin{cases}
\mathcal{A}_3 \psi_S = - \kappa_S \Delta_X \psi_S,\\
D ( \mathcal{A}_3 ) = W^{2,2} (\mathbb{R}^2),
\end{cases}{ \ }
\begin{cases}
\mathcal{A} \psi  = { }^t (\mathcal{A}_1 \psi_A, \mathcal{A}_2 \psi_B, \mathcal{A}_3 \psi_S),\\
D ( \mathcal{A} ) = D (\mathcal{A}_1) \times  D (\mathcal{A}_2) \times D (\mathcal{A}_3) .
\end{cases}
\end{multline*}
From \cite{Paz83}, \cite{DHP01}, we see that each operator $- \mathcal{A}_1$, $- \mathcal{A}_2$, and $- \mathcal{A}_3$ generates a bounded analytic semigroup on $L^2(\mathbb{R}^3_+)$, $L^2(\mathbb{R}^3_-)$, and $L^2(\mathbb{R}^2)$, respectively. We also see that each operator $- \mathcal{A}_1$, $- \mathcal{A}_2$, and $- \mathcal{A}_3$ generates a contraction $C_0$-semigroup on $L^2(\mathbb{R}^3_+)$, $L^2(\mathbb{R}^3_-)$, and $L^2(\mathbb{R}^2)$, respectively. From \cite{Des64}, we see that if a linear operator on a Hilbert space generates a bounded analytic semigroup on the Hilbert space then the linear operator has maximal $L^p$-regularity. This leads to the conclusion that each operator $\mathcal{A}_1$, $\mathcal{A}_2$, and $\mathcal{A}_3$ has maximal $L^p$-regularity (see also \cite{DHP01}, \cite{DHP03} and \cite{KW04}). Let ${\rm{e}}^{ - t \mathcal{A}_1}$, ${\rm{e}}^{ - t \mathcal{A}_2}$, and ${\rm{e}}^{ - t \mathcal{A}_3}$ be the three semigroups generated by operators $\mathcal{A}_1$, $\mathcal{A}_2$, and $\mathcal{A}_3$, respectively. It is well-known that for $\psi_A \in L^2(\mathbb{R}^3_+)$, $\psi_B \in L^2(\mathbb{R}^3_-)$, $\psi_S \in L^2(\mathbb{R}^2)$,
\begin{equation*}
{\rm{e}}^{ - t \mathcal{A}_1} \psi_A = \mathcal{G}_A *\psi_A{ \ },{\rm{e}}^{ - t \mathcal{A}_2} \psi_B = \mathcal{G}_B *\psi_B,{ \ }{\rm{e}}^{ - t \mathcal{A}_3} \psi_S = \mathcal{G}_S *\psi_S,
\end{equation*}
where $\mathcal{G}_A$, $\mathcal{G}_B$ denote two heat kernels for two half spaces $\mathbb{R}^3_+ , \mathbb{R}^3_-$ (see \cite{Uka87} for the representation formula of $\mathcal{G}_A$ and $\mathcal{G}_B$) and $\mathcal{G}_S$ is the heat kernel for the whole space $\mathbb{R}^2$ defined by
\begin{equation*}
\mathcal{G}_S = \mathcal{G}_S(X, t) = \frac{1}{4 \pi t} {\rm{e}}^{- \frac{\vert X \vert^2}{4 t \kappa_S} }.
\end{equation*}
From \cite[Chapter II.3]{Soh01}, we see that each operator $\mathcal{A}_1$, $\mathcal{A}_2$, and $\mathcal{A}_3$ is a positive selfadjoint operator. We also see that
\begin{equation*}
D(\mathcal{A}_1^{1/2}) = W_0^{1,2} (\mathbb{R}^3_+), { \ }D(\mathcal{A}_2^{1/2}) = W_0^{1,2} (\mathbb{R}^3_-), { \ }D(\mathcal{A}_3^{1/2}) = W^{1,2} (\mathbb{R}^2),
\end{equation*}
and that for each $\psi_A \in D (\mathcal{A}_1^{1/2})$, $\psi_B \in D (\mathcal{A}_2^{1/2})$, $\psi_S \in D (\mathcal{A}_3^{1/2})$,
\begin{multline*}
\Vert \mathcal{A}_1^{1/2} \psi_A \Vert_{L^2(\mathbb{R}^3_+)} = \sqrt{\kappa_A} \Vert \nabla_y \psi_A \Vert_{L^2(\mathbb{R}^3_+)},{ \ }\Vert \mathcal{A}_2^{1/2} \psi_B \Vert_{L^2(\mathbb{R}^3_-)} = \sqrt{\kappa_B} \Vert \nabla_z \psi_B \Vert_{L^2(\mathbb{R}^3_+)},\\
\Vert \mathcal{A}_3^{1/2} \psi_S \Vert_{L^2(\mathbb{R}^2)} = \sqrt{\kappa_S} \Vert \nabla_X \psi_S \Vert_{L^2(\mathbb{R}^2)}.
\end{multline*} 
We define $\mathcal{A}^{1/2}$ as follows:
\begin{equation*}
\begin{cases}
\mathcal{A}^{1/2} \psi = { }^t (\mathcal{A}_1^{1/2} \psi_A , \mathcal{A}_2^{1/2} \psi_B , \mathcal{A}_3^{1/2} \psi_S),\\
D (\mathcal{A}^{1/2}) = D (\mathcal{A}_1^{1/2}) \times D (\mathcal{A}_2^{1/2}) \times D (\mathcal{A}_3^{1/2}),
\end{cases}
\end{equation*}
that is, for $\psi = { }^t(\psi_A , \psi_B , \psi_S) \in D (\mathcal{A}^{1/2})$
\begin{equation*}
\Vert \mathcal{A}^{1/2} \psi \Vert_H = ( \kappa_A \Vert \nabla_y \psi_A \Vert_{L^2(\mathbb{R}^3_+)}^2 +  \kappa_B \Vert \nabla_z \psi_B \Vert_{L^2(\mathbb{R}^3_-)}^2 + \kappa_S \Vert \nabla_X \psi_S \Vert_{L^2(\mathbb{R}^2)}^2  )^{1/2} .
\end{equation*}
Therefore, we see that $\mathcal{A}$ is a positive selfadjoint operator, that $\mathcal{A}$ is a closed operator, and that $\mathcal{A}$ has the following properties.
\begin{proposition}\label{prop51}
Let $V_0 = { }^t (V_0^A , V_0^B , V_0^S) \in H$ and $\mathscr{F} \in L^2(0,T;H) \cap C^{\eta}_{loc} ((0,T); H)$ for some $0 < \eta <1$ and $T \in (0,\infty)$. Then system
\begin{equation*}
\begin{cases}
\frac{d}{dt} V + \mathcal{A} V = \mathscr{F} \text{ on } (0,T),\\
V \vert_{t =0} = V_0,
\end{cases}
\end{equation*}
admits a unique strong solution $V = { }^t (V_A, V_B ,V_S)$ in 
\begin{equation*}
C ([0, T] ; H ) \cap C((0,T);D (\mathcal{A})) \cap C^1 ((0,T);H).
\end{equation*}
Moreover, $v$ satisfies the following properties:\\
$(\rm{i})$ {\rm{[Representation formula]}} For all $0 < t \leq T$, $V(t)$ is written by
\begin{equation}\label{eq52}
V (t) = {\rm{e}}^{- t \mathcal{A}} V_0 + \int_0^t {\rm{e}}^{- ( t - \tau ) \mathcal{A}} \mathscr{F} ( \tau ) { \ }d \tau.
\end{equation}
Here ${\rm{e}}^{ - t \mathcal{A}}$ is the semigroup generated by operator $\mathcal{A}$.\\ 
$(\rm{ii})$ {\rm{[Initial condition]}}
\begin{equation*}
\lim_{t \to 0 +0} \Vert V(t) - V_0 \Vert_H = 0.
\end{equation*}
$(\rm{iii})$ {\rm{[H\"{o}lder continuity]}}
\begin{equation*}
V, dV/{dt}, \mathcal{A} V \in C_{loc}^{\eta} ((0,T); H).
\end{equation*}
$(\rm{iv})$ {\rm{[Contraction semigroup]}} For all $\psi \in H$,
\begin{equation}\label{eq53}
\Vert {\rm{e}}^{-t \mathcal{A}} \psi \Vert_H \leq \Vert \psi \Vert_H,
\end{equation}
and
\begin{equation}\label{eq54}
\sup_{0 \leq t \leq T} \Vert V (t) \Vert_{H} \leq \Vert V_0 \Vert_H + T^{1/2} \Vert \mathscr{F} \Vert_{L^2(0,T;H)}. 
\end{equation}
$(\rm{v})$ {\rm{[Heat semigroups and kernels]}}
For $\psi = { }^t (\psi_A , \psi_B , \psi_S) \in H$,
\begin{equation*}
 {\rm{e}}^{-t \mathcal{A}} \psi = { }^t ( {\rm{e}}^{-t \mathcal{A}_1} \psi_A,  {\rm{e}}^{-t \mathcal{A}_2} \psi_B , {\rm{e}}^{-t \mathcal{A}_3} \psi_S ) = { }^t (\mathcal{G}_A * \psi_A , \mathcal{G}_B * \psi_B , \mathcal{G}_S * \psi_S) =: \mathcal{G} * \psi.
\end{equation*}
$(\rm{vi})$ {\rm{[Maximal regularity]}} Assume in addition that $V_0 = { }^t (V_0^A , V_0^B , V_0^S) \in D (\mathcal{A}^{1/2})$. Then there is $C_\star = C_\star (\kappa_A , \kappa_B , \kappa_S) >0$ such that
\begin{equation}\label{eq55}
\bigg\Vert \frac{d}{dt} V \bigg\Vert_{L^2(0,T; H)} +  \Vert \mathcal{A} V \Vert_{L^2(0,T; H)} \leq 2 \Vert \mathcal{A}^{1/2} V_0 \Vert_H + C_\star \Vert \mathscr{F} \Vert_{L^2(0,T;H)}. 
\end{equation}
Moreover,
\begin{equation}\label{eq56}
\Vert V \Vert_{X_T} \leq \Vert V_0 \Vert_H + 2 \Vert \mathcal{A}^{1/2} V_0 \Vert_H + (T^{1/2} + C_\star ) \Vert \mathscr{F} \Vert_{L^2(0,T;H)}.
\end{equation}
Here $\Vert \cdot \Vert_{X_T}$ is defined by \eqref{eq51}.
\end{proposition}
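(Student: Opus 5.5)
The plan is to reduce Proposition \ref{prop51} to standard facts about the positive selfadjoint operator $\mathcal{A}$ on the Hilbert space $H$, working componentwise. Since $\mathcal{A}=\mathrm{diag}(\mathcal{A}_1,\mathcal{A}_2,\mathcal{A}_3)$ and each $\mathcal{A}_k$ is positive selfadjoint, $\mathcal{A}$ is positive selfadjoint on $H$. Its spectral resolution gives the analytic contraction semigroup ${\rm{e}}^{-t\mathcal{A}}={}^t({\rm{e}}^{-t\mathcal{A}_1},{\rm{e}}^{-t\mathcal{A}_2},{\rm{e}}^{-t\mathcal{A}_3})$. Assertion $(\rm{v})$ then follows directly from the kernel representations recalled above. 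Assertion $(\rm{iv})$, estimate \eqref{eq53}, holds because $\Vert {\rm{e}}^{-t\mathcal{A}}\Vert\le \sup_{\lambda\ge0}{\rm{e}}^{-t\lambda}=1$.

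For existence, uniqueness and regularity I would invoke the classical theory of inhomogeneous Cauchy problems for generators of analytic semigroups with H\"older continuous forcing, as in \cite[Chapter 4.3]{Paz83}. This gives:
\begin{itemize}
\item the unique strong solution in $C([0,T];H)\cap C((0,T);D(\mathcal{A}))\cap C^1((0,T);H)$;
\item the variation of constants formula \eqref{eq52}, which is $(\rm{i})$;
\item the H\"older regularity $(\rm{iii})$ of $V$, $dV/dt$ and $\mathcal{A}V$.
\end{itemize}
For uniqueness, the difference $W$ of two solutions satisfies $\frac{d}{dt}\Vert W\Vert_H^2=-2\langle \mathcal{A}W,W\rangle_H\le0$ with $W(0)=0$, so $W\equiv0$.

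For $(\rm{ii})$, strong continuity gives ${\rm{e}}^{-t\mathcal{A}}V_0\to V_0$. The Duhamel term is bounded, using \eqref{eq53} and Cauchy--Schwarz, by $\int_0^t\Vert\mathscr{F}\Vert_H\,d\tau\le t^{1/2}\Vert\mathscr{F}\Vert_{L^2(0,T;H)}\to0$. The same bound yields \eqref{eq54}.

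The main step is the maximal regularity estimate \eqref{eq55}. I would split $V=V^{(1)}+V^{(2)}$, where $V^{(1)}={\rm{e}}^{-t\mathcal{A}}V_0$ and $V^{(2)}$ is the Duhamel part.

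For the homogeneous part I use the spectral theorem with spectral measure $dE_\lambda$:
\begin{equation*}
\int_0^T\Vert\mathcal{A}{\rm{e}}^{-t\mathcal{A}}V_0\Vert_H^2\,dt\le\int_0^\infty\!\!\int_0^\infty\lambda^2{\rm{e}}^{-2t\lambda}\,dt\,d\Vert E_\lambda V_0\Vert^2=\tfrac12\Vert\mathcal{A}^{1/2}V_0\Vert_H^2 .
\end{equation*}
Since $\frac{d}{dt}V^{(1)}=-\mathcal{A}V^{(1)}$, each of the two terms is at most $\Vert\mathcal{A}^{1/2}V_0\Vert_H$, giving the factor $2$.

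For $V^{(2)}$ I would apply de Simon's theorem \cite{Des64}: a generator of a bounded analytic semigroup on a Hilbert space has maximal $L^2$-regularity. This yields $\Vert \mathcal{A}V^{(2)}\Vert_{L^2(0,T;H)}\le C\Vert\mathscr{F}\Vert_{L^2(0,T;H)}$. In fact, for selfadjoint $\mathcal{A}\ge0$ one can take $C=1$, via the Fourier transform in time or the energy identity obtained by testing the equation against $\mathcal{A}V^{(2)}$. The bound $\frac{d}{dt}V^{(2)}=\mathscr{F}-\mathcal{A}V^{(2)}$ then finishes the estimate. The constant $C_\star$ may depend on $\kappa$'s only through the componentwise constants.

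The one subtlety I expect is justifying these manipulations on $(0,T)$ when $V$ is only known to be regular in the interior. The energy identity would be applied on $[\epsilon,T-\epsilon]$ and then $\epsilon\to0$, using $V_0\in D(\mathcal{A}^{1/2})$. Finally, \eqref{eq56} follows by adding \eqref{eq54} and \eqref{eq55} according to the definition \eqref{eq51}.
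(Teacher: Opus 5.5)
Your proposal is correct and follows essentially the same route as the paper. The paper likewise treats $\mathcal{A}$ as a positive selfadjoint, contraction and bounded-analytic generator on $H$. It cites \cite{Paz83} and \cite{Lun95} for existence, uniqueness, the representation formula and the H\"older regularity. It obtains \eqref{eq54} from \eqref{eq53} and the H\"older inequality, invokes de Simon \cite{Des64} for maximal $L^2$-regularity, and adds \eqref{eq54} and \eqref{eq55} to get \eqref{eq56}. Your spectral computation for ${\rm{e}}^{-t\mathcal{A}}V_0$ and your energy/Fourier bound for the Duhamel part go slightly further: they make explicit the factor $2$ in \eqref{eq55} and an admissible $C_\star$ independent of $\kappa_A,\kappa_B,\kappa_S$ (e.g.\ $C_\star=3$), which the paper only asserts.
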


\noindent Using the same arguments as in \cite[Section 4.3]{Paz83} and \cite[Section 4.3.]{Lun95}, we derive $(\rm{iii})$. Applying the H\"{o}lder inequality and \eqref{eq53} into \eqref{eq52}, we have \eqref{eq54}. By \eqref{eq51}, \eqref{eq54}, and \eqref{eq55}, we obtain \eqref{eq56}. See also \cite[Section 3]{Kob26}.

Next we introduce elliptic regularity theorem for our Laplace operators. From \cite[Chapters 8,9]{GT98} and an interpolation theory, we have the lemma.
\begin{lemma}\label{lem52}
$(\rm{i})$ For each $\zeta >0$ there is $C(\zeta)>0$ independent of $(\kappa_A,\kappa_B, \kappa_S)$ such that for all $\psi_A \in D (\mathcal{A}_1)$, $\psi_B \in D (\mathcal{A}_2)$, and $\psi_S \in D (\mathcal{A}_3)$,
\begin{align}
\Vert \nabla_y \psi_A \Vert_{L^2(\mathbb{R}^3_+)} & \leq \frac{\zeta}{\kappa_A} \Vert \mathcal{A}_1 \psi_A \Vert_{L^2 (\mathbb{R}^3_+)} + C (\zeta) \Vert \psi_A \Vert_{L^2(\mathbb{R}^3_+)},\label{eq57}\\
\Vert \nabla_z \psi_B \Vert_{L^2(\mathbb{R}^3_-)} & \leq \frac{\zeta}{\kappa_B} \Vert \mathcal{A}_2 \psi_B \Vert_{L^2 (\mathbb{R}^3_-)} + C(\zeta) \Vert \psi_B \Vert_{L^2(\mathbb{R}^3_-)},\label{eq58}\\
\Vert \nabla_X \psi_S \Vert_{L^2(\mathbb{R}^2)} & \leq \frac{\zeta}{\kappa_S} \Vert \mathcal{A}_3 \psi_S \Vert_{L^2 (\mathbb{R}^2)} + C(\zeta) \Vert \psi_S \Vert_{L^2(\mathbb{R}^2)}.\label{eq59}
\end{align}
$(\rm{ii})$ There is $C_\triangle>0$ independent of $(\kappa_A,\kappa_B, \kappa_S)$ such that for all $\psi_A \in D (\mathcal{A}_1)$, $\psi_B \in D (\mathcal{A}_2)$, $\psi_S \in D (\mathcal{A}_3)$,
\begin{align}
\Vert \nabla_y \psi_A \Vert_{L^2(\mathbb{R}^3_+)} + \Vert \nabla_y^2 \psi_A \Vert_{L^2(\mathbb{R}^3_+)} & \leq \frac{C_\triangle}{\kappa_A} \Vert \mathcal{A}_1 \psi_A \Vert_{L^2 (\mathbb{R}^3_+)} + C_\triangle \Vert \psi_A \Vert_{L^2(\mathbb{R}^3_+)},\label{eq5010}\\
\sup_{j \in \{ 1,2,3 \} }\Vert \gamma_+ [\partial_{y_j} \psi_A] \Vert_{L^2(\mathbb{R}^2)} & \leq \frac{C_\triangle}{\kappa_A} \Vert \mathcal{A}_1 \psi_A \Vert_{L^2 (\mathbb{R}^3_+)} + C_\triangle \Vert \psi_A \Vert_{L^2(\mathbb{R}^3_+)},\label{eq5011}\\
\Vert \nabla_z \psi_B \Vert_{L^2(\mathbb{R}^3_-)} + \Vert \nabla_z^2 \psi_B \Vert_{L^2(\mathbb{R}^3_-)} & \leq \frac{C_\triangle}{\kappa_B} \Vert \mathcal{A}_2 \psi_B \Vert_{L^2 (\mathbb{R}^3_-)} + C_\triangle \Vert \psi_B \Vert_{L^2(\mathbb{R}^3_-)},\label{eq5012}\\
\sup_{j \in \{ 1,2,3 \} } \Vert \gamma_- [\partial_{z_j} \psi_B] \Vert_{L^2(\mathbb{R}^2)} & \leq \frac{C_\triangle}{\kappa_B} \Vert \mathcal{A}_2 \psi_B \Vert_{L^2 (\mathbb{R}^3_-)} + C_\triangle \Vert \psi_B \Vert_{L^2(\mathbb{R}^3_-)},\label{eq5013}\\
\Vert \nabla_X \psi_S \Vert_{L^2(\mathbb{R}^2)} + \Vert \nabla_X^2 \psi_S \Vert_{L^2(\mathbb{R}^2)} & \leq \frac{C_\triangle}{\kappa_S} \Vert \mathcal{A}_3 \psi_S \Vert_{L^2 (\mathbb{R}^2)} + C_\triangle \Vert \psi_S \Vert_{L^2(\mathbb{R}^2)}.\label{eq5014}
\end{align}
Here $\gamma_+$ and $\gamma_-$ are the two trace operators such that $\gamma_+: W^{1,2}(\mathbb{R}^3_+) \to L^2(\mathbb{R}^2)$ and $\gamma_-: W^{1,2}(\mathbb{R}^3_-) \to L^2(\mathbb{R}^2)$.\\

\end{lemma}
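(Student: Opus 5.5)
We prove Lemma \ref{lem52} by reducing everything to two standard facts on $U\in\{\mathbb{R}^3_+,\mathbb{R}^3_-,\mathbb{R}^2\}$. The first is the $W^{2,2}$ elliptic estimate $\Vert \nabla^2 \psi\Vert_{L^2(U)} \leq C_0 \Vert \Delta \psi\Vert_{L^2(U)}$ for $\psi$ in the respective domains. The second is the interpolation inequality $\Vert \nabla\psi\Vert_{L^2}^2 \le \Vert\Delta\psi\Vert_{L^2}\Vert\psi\Vert_{L^2}$. Once these are in place, the factors $\kappa_\sharp$ enter only through the identity $\Vert \mathcal{A}_1\psi_A\Vert = \kappa_A \Vert\Delta_y\psi_A\Vert$, and similarly for $\mathcal{A}_2$ and $\mathcal{A}_3$. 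This is why all the constants come out independent of $(\kappa_A,\kappa_B,\kappa_S)$.

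\emph{Step 1 (gradient bound).} For $\psi_A\in D(\mathcal{A}_1)$ we have $\gamma_+[\psi_A]=0$. Proposition-type integration by parts on $\mathbb{R}^3_+$, justified by density of $C_0^\infty(\mathbb{R}^3_+)$ in $W_0^{1,2}\cap W^{2,2}$ for the relevant pairing, then gives
\begin{equation*}
\Vert\nabla_y\psi_A\Vert_{L^2}^2=-\langle \Delta_y\psi_A,\psi_A\rangle_{L^2(\mathbb{R}^3_+)}\le \kappa_A^{-1}\Vert\mathcal{A}_1\psi_A\Vert\,\Vert\psi_A\Vert .
\end{equation*}
Young's inequality $ab\le \zeta a+ (4\zeta)^{-1}b$, applied to $\sqrt{ab}$, yields \eqref{eq57} with $C(\zeta)=1/(4\zeta)$. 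The cases $\mathbb{R}^3_-$ and $\mathbb{R}^2$ are identical, and on $\mathbb{R}^2$ no boundary term arises at all. This proves (i).

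\emph{Step 2 (second derivatives).} On $\mathbb{R}^2$, Plancherel gives $\Vert\nabla_X^2\psi_S\Vert=\Vert\Delta_X\psi_S\Vert$ directly. On the half spaces we extend $\psi_A$ by odd reflection in $y_3$. Because $\gamma_+[\psi_A]=0$, the extension $\psi^{\rm odd}$ lies in $W^{2,2}(\mathbb{R}^3)$: the first derivatives match across the plane, and the odd reflection preserves $\partial_{y_3}$ continuity automatically. Moreover $\Delta\psi^{\rm odd}$ is the odd extension of $\Delta_y\psi_A$. Applying Plancherel on $\mathbb{R}^3$ then gives $\Vert\nabla_y^2\psi_A\Vert_{L^2(\mathbb{R}^3_+)}\le\Vert\Delta_y\psi_A\Vert$. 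Alternatively one can cite \cite[Chapters 8,9]{GT98} here. Combining this with Step 1 at $\zeta=1$ gives \eqref{eq5010}, \eqref{eq5012} and \eqref{eq5014} with a universal $C_\triangle$.

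\emph{Step 3 (traces).} We use the trace bound $\Vert\gamma_+[g]\Vert_{L^2(\mathbb{R}^2)}\le 2\Vert g\Vert_{W^{1,2}(\mathbb{R}^3_+)}$, already employed in Proposition \ref{prop3016}, with $g=\partial_{y_j}\psi_A$. This gives
\begin{equation*}
\Vert\gamma_+[\partial_{y_j}\psi_A]\Vert\le 2(\Vert\nabla_y\psi_A\Vert+\Vert\nabla_y^2\psi_A\Vert),
\end{equation*}
and then \eqref{eq5010} yields \eqref{eq5011}. The estimate \eqref{eq5013} follows in the same way. The main point needing care is Step 2: we must verify that the odd reflection indeed lies in $W^{2,2}(\mathbb{R}^3)$. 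We do this by approximating $\psi_A$ by functions in $C_0^2(\overline{\mathbb{R}^3_+})$ vanishing on $y_3=0$, since the second-derivative jump of the reflection is controlled only by $\psi_A|_{y_3=0}=0$.
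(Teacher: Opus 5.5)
Your proof is correct. Steps 1 and 3 are the paper's argument: integration by parts with Cauchy--Schwarz and Young for the gradient bound, then the trace inequality $\Vert\gamma_+[g]\Vert_{L^2(\mathbb{R}^2)}\le 2\Vert g\Vert_{W^{1,2}(\mathbb{R}^3_+)}$ applied to $g=\partial_{y_j}\psi_A$. There are two small slips in Step 1:
\begin{itemize}
\item The elementary inequality you need is $\sqrt{ab}\le\zeta a+(4\zeta)^{-1}b$, not $ab\le\zeta a+(4\zeta)^{-1}b$.
\item The density you need is that of $C_0^\infty(\mathbb{R}^3_+)$ in $W_0^{1,2}(\mathbb{R}^3_+)$ for the $W^{1,2}$-norm, applied to the second factor with $\Delta_y\psi_A\in L^2$ held fixed. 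As you state it, it is false: $C_0^\infty(\mathbb{R}^3_+)$ is not $W^{2,2}$-dense in $W_0^{1,2}\cap W^{2,2}$.
\end{itemize}

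The real difference is in Step 2. The paper cites elliptic regularity for $1-\Delta_y$, i.e. $\Vert\nabla_y^2\psi_A\Vert\le C\Vert(1-\Delta_y)\psi_A\Vert$, and then splits the right-hand side. You instead prove the exact identity $\Vert\nabla_y^2\psi_A\Vert_{L^2(\mathbb{R}^3_+)}=\Vert\Delta_y\psi_A\Vert_{L^2(\mathbb{R}^3_+)}$ by odd reflection and Plancherel. On $\mathbb{R}^2$ this is the paper's own \eqref{eq7032}.

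Your route has three advantages:
\begin{itemize}
\item It is self-contained.
\item It gives explicit constants; for example $C_\triangle=4$ works for all of (ii).
\item It avoids a gap the paper leaves implicit: the Gilbarg--Trudinger estimates are stated on bounded domains, so getting a uniform global constant on $\mathbb{R}^3_+$ needs a localization argument.
\end{itemize}
The price is that your argument depends on the flat boundary and the Dirichlet condition. The cited regularity theory extends to curved boundaries and variable coefficients; reflection does not.

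You also do not need the approximation by $C_0^2$ functions vanishing on $\{y_3=0\}$ to show $\psi^{\mathrm{odd}}\in W^{2,2}(\mathbb{R}^3)$. That density is true but not trivial. A direct argument works instead. For $\alpha=1,2$ we have $\gamma_+[\partial_{y_\alpha}\psi_A]=\partial_{y_\alpha}\gamma_+[\psi_A]=0$, so $\partial_{y_\alpha}\psi_A\in W_0^{1,2}(\mathbb{R}^3_+)$. Their odd extensions, which are $\partial_{y_\alpha}\psi^{\mathrm{odd}}$, therefore lie in $W^{1,2}(\mathbb{R}^3)$. Meanwhile $\partial_{y_3}\psi^{\mathrm{odd}}$ is the even extension of $\partial_{y_3}\psi_A\in W^{1,2}(\mathbb{R}^3_+)$, so it also lies in $W^{1,2}(\mathbb{R}^3)$.
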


\begin{proof}[Proof of Lemma \ref{lem52}]
We only derive \eqref{eq57}, \eqref{eq5010}, and \eqref{eq5011}. Fix $\psi_A \in D (\mathcal{A}_1)$.

We first show \eqref{eq57}. Fix $\zeta >0$. Using integration by parts the Cauchy-Shwarz inequality, we check that
\begin{align*}
\Vert \nabla_y \psi_A \Vert_{L^2 (\mathbb{R}^3_+)}^2 &= \vert \dual{\nabla_y \psi_A , \nabla_y \psi_A}_{L^2 (\mathbb{R}^3_+)} \vert = \vert \dual{ -\Delta_y \psi_A , \psi_A}_{L^2(\mathbb{R}^3_+)} \vert\\
& \leq \Vert - \Delta_y \psi_A \Vert_{L^2(\mathbb{R}^3_+)} \Vert \psi_A \Vert_{L^2(\mathbb{R}^3_+)}\\
& \leq \zeta^2 \Vert -\Delta_y \psi_A \Vert_{L^2(\mathbb{R}^3_+)}^2 + (1/4\zeta^2) \Vert \psi_A \Vert_{L^2(\mathbb{R}^3_+)}^2.
\end{align*}
This shows that
\begin{align*}
\Vert \nabla_y \psi_A \Vert_{L^2 (\mathbb{R}^3_+)} & \leq \frac{\zeta}{\kappa_A} \Vert - \kappa_A \Delta_y \psi_A \Vert_{L^2(\mathbb{R}^3_+)} + \frac{1}{2\zeta} \Vert \psi_A \Vert_{L^2(\mathbb{R}^3_+)}\\
& = \frac{\zeta}{\kappa_A} \Vert \mathcal{A}_1 \psi_A \Vert_{L^2(\mathbb{R}^3_+)} + \frac{1}{2\zeta} \Vert \psi_A\Vert_{L^2(\mathbb{R}^3_+)}.
\end{align*}
Therefore, we have \eqref{eq57}.

Next we derive \eqref{eq5010}. Set $\mathcal{A}_1^* \psi_A = (1 - \Delta_y)\psi_A$ and $D (\mathcal{A}_1^*) = W_0^{1,2} (\mathbb{R}^3_+) \cap W^{2,2} (\mathbb{R}^3_+) (= D (\mathcal{A}_1))$. Using the elliptic regularity theorem for operator $\mathcal{A}_1^*$, we find that there is $C>0$ such that
\begin{align*}
\Vert \nabla_y^2 \psi_A \Vert_{L^2(\mathbb{R}^3_+)} &\leq C \Vert (1 - \Delta_y ) \psi_A \Vert_{L^2(\mathbb{R}^3_+)}\\
& \leq \frac{C}{\kappa_A} \Vert - \kappa_A \Delta_y  \psi_A \Vert_{L^2(\mathbb{R}^3_+)} + C \Vert \psi_A \Vert_{L^2(\mathbb{R}^3_+)}\\
& \leq \frac{C}{\kappa_A}\Vert \mathcal{A}_1 \psi_A \Vert_{L^2(\mathbb{R}^3_+)} + C \Vert \psi_A \Vert_{L^2(\mathbb{R}^3_+)}.
\end{align*}
Combining the above inequalities and \eqref{eq57} with $\zeta =1$, we have \eqref{eq5010}.

Finally, we deduce \eqref{eq5011}. Using fundamental properties of trace operator $\gamma_+$ with \eqref{eq57} and \eqref{eq5010}, we find that
\begin{align*}
\Vert \gamma_+[ \partial_{y_j} \psi_A ] \Vert_{L^2(\mathbb{R}^2)} &\leq 2 \Vert \nabla_y \psi_A \Vert_{W^{1,2}(\mathbb{R}^3_+)}\\
& \leq 2 \Vert \nabla_y^2  \psi_A \Vert_{L^2(\mathbb{R}^3_+)} + 2 \Vert \nabla_y \psi_A \Vert_{L^2(\mathbb{R}^3_+)}\\
& \leq \frac{C}{\kappa_A}\Vert \mathcal{A}_1 \psi_A \Vert_{L^2(\mathbb{R}^3_+)} + C \Vert \psi_A \Vert_{L^2(\mathbb{R}^3_+)}.
\end{align*}
Thus, we obtain \eqref{eq5011}. Therefore, the lemma follows.
\end{proof}

\subsection{Properties of $\mathcal{B}$ and $\mathcal{F}$}\label{subsec52}

In this subsection, we investigate properties of $\mathcal{B}w$ and $\mathcal{F}(w)$ by using Proposition \ref{prop51} and Lemma \ref{lem52}. Let us first recall and redefine $\mathcal{B}w$ and $\mathcal{F}(w)$. Let $T>0$. For all $w = { }^t (w_A , w_B, w_S) \in X_T$, we define $\mathcal{B} w := { }^t (\mathcal{B}_1 w_A , \mathcal{B}_2 w_B, \mathcal{B}_3 w_S)$ and $\mathcal{F} (w) := { }^t (\mathcal{F}_1 ( w) , \mathcal{F}_2 (w), \mathcal{F}_3 (w))$, where 
\begin{multline}\label{eq5015}
\mathcal{B}_1 w_A\\
 := \kappa_A \bigg( 2 \varphi_1 \frac{\partial^2 w_A}{\partial y_1 \partial y_3} + 2 \varphi_2 \frac{\partial^2 w_A}{\partial y_2 \partial y_3} - (\varphi_1^2 + \varphi_2^2) \frac{\partial^2 w_A}{\partial y_3^2 } + (\varphi_{11} +\varphi_{22}) \frac{\partial w_A}{\partial y_3} \bigg),
\end{multline}
\begin{multline}\label{eq5016}
\mathcal{B}_2 w_B\\
 := \kappa_B \bigg(  2 \varphi_1 \frac{\partial^2 w_B}{\partial z_1 \partial z_3} + 2 \varphi_2 \frac{\partial^2 w_B}{\partial z_2 \partial z_3} - (\varphi_1^2 + \varphi_2^2) \frac{\partial^2 w_B}{\partial z_3^2 } + ( \varphi_{11} + \varphi_{22} ) \frac{\partial w_B}{\partial z_3} \bigg),
\end{multline}
\begin{multline}\label{eq5017}
\mathcal{B}_3 w_S := \kappa_S \bigg( \frac{\varphi_1^2}{G_S} \frac{\partial^2 w_S}{\partial X_1^2} + \frac{\varphi_2^2}{G_S}\frac{\partial^2 w_S}{\partial X_2^2} + \frac{2 \varphi_1 \varphi_2}{G_S}\frac{\partial^2 w_S}{\partial X_1 \partial X_2}\\
+ \frac{ \varphi_1 ( \varphi_{11} + \varphi_{22} + \varphi_2^2 \varphi_{11} +  \varphi_1^2 \varphi_{22} - 2 \varphi_1 \varphi_2 \varphi_{12})}{G^2_S} \frac{\partial w_S}{\partial X_1}\\
+ \frac{\varphi_2 ( \varphi_{11} + \varphi_{22} + \varphi_2^2 \varphi_{11} +  \varphi_1^2 \varphi_{22} - 2 \varphi_1 \varphi_2 \varphi_{12}) }{G^2_S}  \frac{\partial w_S}{\partial X_2} \bigg),
\end{multline}
\begin{multline}\label{eq5018}
\mathcal{F}_1 (w)  := - ( d w_S/{dt}) {\rm{e}}^{ - \delta y_3} + \kappa_A  (\Delta_{y_h} w_S) {\rm{e}}^{ - \delta y_3}\\ +  \kappa_A w_S \{ \delta^2 + \delta \varphi_{11} + \delta \varphi_{22} + (\delta \varphi_1)^2 + (\delta \varphi_2)^2 \}  {\rm{e}}^{ - \delta y_3}\\
+ 2 \kappa_A (\delta \varphi_1 \partial_{y_1} v_S + \delta \varphi_2 \partial_{y_2} v_S) {\rm{e}}^{ - \delta y_3},
\end{multline}
\begin{multline}\label{eq5019}
\mathcal{F}_2 (w)  := - (d w_S/{dt}) {\rm{e}}^{ \delta z_3} + \kappa_B  (\Delta_{z_h} w_S) {\rm{e}}^{ \delta z_3}\\ +  \kappa_B w_S \{ \delta^2 - \delta \varphi_{11} - \delta \varphi_{22} + (\delta \varphi_1)^2 + (\delta \varphi_2)^2 \}  {\rm{e}}^{ \delta z_3}\\
- 2 \kappa_B(\delta \varphi_1 \partial_{z_1} w_S + \delta \varphi_2 \partial_{z_2} w_S) {\rm{e}}^{ \delta z_3},
\end{multline}
and
\begin{multline}\label{eq5020}
\mathcal{F}_3 (w) := \frac{\rho_A \kappa_A}{\rho_S} \gamma_+ \bigg[   - \frac{\varphi_1}{\sqrt{G_S}} \frac{\partial w_A }{\partial y_1} - \frac{\varphi_2}{\sqrt{G_S}} \frac{\partial w_A }{\partial y_2} +  \frac{1 + \varphi_1^2 + \varphi_2^2 }{\sqrt{G_S}} \frac{\partial w_A }{\partial y_3}  \bigg]\\
 - \frac{\rho_B \kappa_B}{\rho_S} \gamma_- \bigg[  - \frac{\varphi_1}{\sqrt{G_S}} \frac{\partial w_B }{\partial z_1} - \frac{\varphi_2}{\sqrt{G_S}} \frac{\partial w_B }{\partial z_2} +  \frac{1 + \varphi_1^2 + \varphi_2^2 }{\sqrt{G_S}} \frac{\partial w_B }{\partial z_3}  \bigg]\\
- \bigg( \frac{ \rho_A \kappa_A -  \rho_B \kappa_B  }{\rho_S} \bigg) \bigg( \frac{\varphi_1}{\sqrt{G_S}} \frac{\partial w_S}{\partial X_1} + \frac{\varphi_2}{\sqrt{G_S}} \frac{\partial w_S}{\partial X_2} \bigg)\\
- \bigg( \frac{ \rho_A \kappa_A + \rho_B \kappa_B  }{\rho_S} \bigg) \bigg( \frac{\delta ( 1 + \varphi_1^2 + \varphi_2^2 )}{\sqrt{G_S}} w_S \bigg).
\end{multline}
Here $\varphi_\alpha = \partial \varphi/{\partial X_\alpha}$, $\varphi_{\alpha \beta} = \partial^2 \varphi/{\partial X_\alpha \partial X_\beta}$, $G_S = 1 + \varphi_1^2 + \varphi_2^2$ and $\gamma_+$, $\gamma_-$ are the two trace operators such that
\begin{align*}
\gamma_+[\cdot] : W^{1,2} (\mathbb{R}^3_+) \to L^2 (\mathbb{R}^2)(= L^2 (\partial \mathbb{R}^3_+)),\\
\gamma_-[\cdot] : W^{1,2} (\mathbb{R}^3_-) \to L^2 (\mathbb{R}^2)(= L^2(\partial \mathbb{R}^3_-)).
\end{align*}
See Sections \ref{sect2} and \ref{sect7} for $\mathcal{B}$ and $\mathcal{F}$.

The aim of this section is to prove Proposition \ref{prop53}.
\begin{proposition}[Properties of $\mathcal{F}$ and $\mathcal{B}$]\label{prop53}
$(\rm{i})$ Let $\epsilon >0$. Then there are $C_\triangledown > 0$ and $C_\Box = C_\Box (\epsilon , \delta, \Vert \nabla_X^2 \varphi \Vert_{L^\infty (\mathbb{R}^2)} ,  \rho_A, \rho_B,\rho_S, \kappa_A , \kappa_B,\kappa_S  ) >0$ such that for all $w \in X_T$
\begin{multline}\label{eq5021}
\Vert \mathcal{F} (w) - \mathcal{B} w \Vert_{L^2(0,T;H)}\\
\leq C_\triangledown \bigg(  \epsilon + \frac{1}{\sqrt{\delta }} \frac{\kappa_A + \kappa_B + \kappa_S}{\kappa_S} + \Vert \nabla_X \varphi \Vert_{L^\infty (\mathbb{R}^2)} + \frac{\rho_A + \rho_B}{\rho_S} +  C_\Box T^{1/2} \bigg) \Vert w \Vert_{X_T}.
\end{multline}
$(\rm{ii})$ Let $0 < \eta < 1$. Assume that $w \in X_T$ and that
\begin{equation}\label{eq5022}
w, \mathcal{A}w, dw/{dt} \in C_{loc}^\eta ((0,T) ; H).
\end{equation} 
Then
\begin{equation*}
F (w),  \mathcal{B}w,F (w) - \mathcal{B}w \in C_{loc}^\eta ((0,T) ; H).
\end{equation*} 
\end{proposition}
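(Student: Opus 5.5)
Part (i) is a term-by-term estimate of $\mathcal{F}_1,\mathcal{F}_2,\mathcal{F}_3$ and $\mathcal{B}_1,\mathcal{B}_2,\mathcal{B}_3$ in $L^2(0,T;H)$. The goal is to express every term through the two quantities in $\Vert w\Vert_{X_T}$: the maximal-regularity norms $\Vert dw/dt\Vert_{L^2(0,T;H)}$ and $\Vert \mathcal{A}w\Vert_{L^2(0,T;H)}$, and the lower-order quantity $\sup_t\Vert w\Vert_H$. Each lower-order contribution is converted by
\[
\Vert w\Vert_{L^2(0,T;H)}\le T^{1/2}\sup_{0\le t\le T}\Vert w(t)\Vert_H .
\]
This is the origin of the term $C_\Box T^{1/2}$. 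The sizes of the remaining coefficients give the small constants $\epsilon$, $\delta^{-1/2}$, $\Vert\nabla_X\varphi\Vert_{L^\infty}$ and $(\rho_A+\rho_B)/\rho_S$.

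\textbf{The terms in $\mathcal{B}$.} The second-order part of $\mathcal{B}_1w_A$ has coefficients $2\varphi_\alpha$ and $\varphi_1^2+\varphi_2^2$. Using \eqref{eq5010} and $\Vert\nabla_X\varphi\Vert_{L^\infty}\le 1/2$, this part is bounded by
\[
C\Vert\nabla_X\varphi\Vert_{L^\infty}\bigl(\Vert\mathcal{A}_1w_A\Vert+\kappa_A\Vert w_A\Vert\bigr).
\]
The first-order term $(\varphi_{11}+\varphi_{22})\partial_{y_3}w_A$ is handled by \eqref{eq57} with a small $\zeta$. This gives $\epsilon\Vert\mathcal{A}_1w_A\Vert+C(\epsilon,\Vert\nabla^2_X\varphi\Vert_{L^\infty},\kappa_A)\Vert w_A\Vert$. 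The operators $\mathcal{B}_2$ and $\mathcal{B}_3$ are treated the same way via \eqref{eq5012}, \eqref{eq5014} and \eqref{eq59}; for $\mathcal{B}_3$ the coefficients of the second derivatives are $O(\Vert\nabla_X\varphi\Vert^2_{L^\infty})$.

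\textbf{The terms in $\mathcal{F}_1$ and $\mathcal{F}_2$.} These are the key ones. Every term has the form $(\text{function of }X)\cdot\mathrm{e}^{-\delta y_3}$, and
\[
\Vert g(y_h)\mathrm{e}^{-\delta y_3}\Vert_{L^2(\mathbb{R}^3_+)}=(2\delta)^{-1/2}\Vert g\Vert_{L^2(\mathbb{R}^2)} .
\]
The term $-(dw_S/dt)\mathrm{e}^{-\delta y_3}$ therefore contributes $\delta^{-1/2}\Vert dw_S/dt\Vert$. The term $\kappa_A\Delta_{y_h}w_S\,\mathrm{e}^{-\delta y_3}$ contributes $\delta^{-1/2}(\kappa_A/\kappa_S)\Vert\mathcal{A}_3w_S\Vert$. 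This is the source of the factor $\delta^{-1/2}(\kappa_A+\kappa_B+\kappa_S)/\kappa_S$. The remaining terms carry powers of $\delta$: $\kappa_A\delta^2w_S$, $\kappa_A\delta\varphi_{\alpha\alpha}w_S$, and $2\kappa_A\delta\varphi_\alpha\partial_\alpha w_S$. These are of lower order. After using \eqref{eq59} to trade $\nabla_Xw_S$ for $\epsilon\Vert\mathcal{A}_3w_S\Vert+C(\epsilon)\Vert w_S\Vert$, they are absorbed into $\epsilon\Vert w\Vert_{X_T}+C_\Box T^{1/2}\Vert w\Vert_{X_T}$, with $C_\Box$ allowed to depend on $\delta$.

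\textbf{The terms in $\mathcal{F}_3$.} The trace terms carry the prefactor $\rho_A\kappa_A/\rho_S$. Bounded coefficients (since $G_S\ge1$) and \eqref{eq5011} give
\[
\frac{\rho_A}{\rho_S}\bigl(C_\triangle\Vert\mathcal{A}_1w_A\Vert+C\kappa_A\Vert w_A\Vert\bigr),
\]
and similarly for $B$ via \eqref{eq5013}. This produces $(\rho_A+\rho_B)/\rho_S$ times $\Vert w\Vert_{X_T}$, plus a $T^{1/2}$ term. The tangential term $\frac{\rho_A\kappa_A-\rho_B\kappa_B}{\rho_S}\frac{\varphi_\alpha}{\sqrt{G_S}}\partial_{X_\alpha}w_S$ is handled by \eqref{eq59}. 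The zero-order $\delta$-term goes into $C_\Box T^{1/2}$.

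\textbf{Collecting the bounds.} Summing all the estimates gives \eqref{eq5021}, with $C_\triangledown$ an absolute constant built from $C_\triangle$ and the factor $2$ bounds on the coefficients. The main obstacle is bookkeeping: one must make sure that every coefficient multiplying a top-order quantity is one of the small factors and does not involve $\kappa$'s in an uncontrolled way. In particular, the ratio $\kappa_A/\kappa_S$ from $\Delta_{y_h}w_S$ must appear only together with $\delta^{-1/2}$. I would check this explicitly.

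\textbf{Part (ii).} Every term in $\mathcal{F}(w)$ and $\mathcal{B}w$ is linear in $w$. Each is a bounded linear map applied to $w(t)$, $dw/dt(t)$ or $\mathcal{A}w(t)$, where the maps go from $D(\mathcal{A})$ with the graph norm, or from $H$, into $H$; the relevant bounds are again \eqref{eq5010}--\eqref{eq5014}. Applying the same estimates to the differences $w(t)-w(s)$ and using \eqref{eq5022} yields
\[
\Vert\mathcal{F}(w)(t)-\mathcal{F}(w)(s)\Vert_H\le C\bigl(\Vert w(t)-w(s)\Vert_H+\Vert\mathcal{A}w(t)-\mathcal{A}w(s)\Vert_H+\Vert w'(t)-w'(s)\Vert_H\bigr)\le C|t-s|^\eta
\]
on compact subintervals, and the same bound for $\mathcal{B}w$. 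Hence $\mathcal{F}(w)$, $\mathcal{B}w$ and $\mathcal{F}(w)-\mathcal{B}w$ lie in $C^\eta_{loc}((0,T);H)$.
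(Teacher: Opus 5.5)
Your proposal is correct and follows the paper's proof essentially step by step. It uses the same pointwise-in-time estimates via \eqref{eq5010}--\eqref{eq5014} and \eqref{eq57}--\eqref{eq59}, the identity $\Vert g(y_h)\mathrm{e}^{-\delta y_3}\Vert_{L^2(\mathbb{R}^3_+)}=(2\delta)^{-1/2}\Vert g\Vert_{L^2(\mathbb{R}^2)}$, the conversion $\Vert w\Vert_{L^2(0,T;H)}\le T^{1/2}\sup_t\Vert w(t)\Vert_H$ for lower-order terms, and, for (ii), the same estimates applied to differences via linearity. The only deviation is cosmetic: for $2\kappa_A\delta\varphi_\alpha\partial_{y_\alpha}w_S\,\mathrm{e}^{-\delta y_3}$ the paper takes $\zeta=1/(2\delta)$ in \eqref{eq59} and absorbs the term into the $\delta^{-1/2}\kappa_A/\kappa_S$ factor, while you pick a parameter-dependent $\zeta$ and absorb it into $\epsilon$; this is equally valid since $C_\Box$ may depend on $\delta$ and the $\kappa$'s.
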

Let us prove Proposition \ref{prop53} by applying Proposition \ref{prop51} and Lemma \ref{lem52}.

\begin{proof}[Proof of Proposition \ref{prop53}]
We first show $(\rm{i})$. Fix $w = { }^t (w_A, w_B , w_S) \in X_T$ and $\epsilon >0$. Since
\begin{multline}\label{eq5023}
\Vert \mathcal{B} w \Vert_{L^2(0,T;H)}\\
 \leq \Vert \mathcal{B}_1 w_A \Vert_{L^2(0,T;L^2(\mathbb{R}^3_+))} + \Vert \mathcal{B}_2 w_B \Vert_{L^2(0,T;L^2(\mathbb{R}^3_-))} + \Vert \mathcal{B}_3 w_S \Vert_{L^2(0,T;L^2(\mathbb{R}^2))}
\end{multline}
and
\begin{multline}\label{eq5024}
\Vert \mathcal{F} (w) \Vert_{L^2(0,T;H)}\\
 \leq \Vert \mathcal{F}_1 (w) \Vert_{L^2(0,T;L^2(\mathbb{R}^3_+))} + \Vert \mathcal{F}_2 (w) \Vert_{L^2(0,T;L^2(\mathbb{R}^3_-))} + \Vert \mathcal{F}_3 (w) \Vert_{L^2(0,T;L^2(\mathbb{R}^2))},
\end{multline}
we consider $\Vert \mathcal{B}_1 w_A \Vert_{L^2(0,T;L^2(\mathbb{R}^3_+))}$, $\Vert \mathcal{B}_2 w_B \Vert_{L^2(0,T;L^2(\mathbb{R}^3_-))}$, $\Vert \mathcal{B}_3 w_S \Vert_{L^2(0,T;L^2(\mathbb{R}^2))}$,\\
 $\Vert \mathcal{F}_1 (w) \Vert_{L^2(0,T;L^2(\mathbb{R}^3_+))}$, $\Vert \mathcal{F}_2 (w) \Vert_{L^2(0,T;L^2(\mathbb{R}^3_-))}$, and $\Vert \mathcal{F}_3 (w) \Vert_{L^2(0,T;L^2(\mathbb{R}^2))}$.

Now we study $\Vert \mathcal{B} w \Vert_{L^2(0,T;H)}$. Applying the H\"{o}lder inequality, \eqref{eq5010}, and \eqref{eq57} with $\zeta = \epsilon/(2 \Vert \nabla_X^2 \varphi \Vert_{L^\infty (\mathbb{R}^2)} )$ into \eqref{eq5015}, we check that
\begin{multline}\label{eq5025}
\Vert \mathcal{B}_1 w_A \Vert_{L^2(\mathbb{R}^3_+) }
 \leq 6 \kappa_A \Vert \nabla_X \varphi \Vert_{L^\infty (\mathbb{R}^2)} \Vert \nabla_y^2 w_A \Vert_{L^2(\mathbb{R}^3_+)} + 2 \kappa_A \Vert \nabla_X^2 \varphi \Vert_{L^\infty (\mathbb{R}^2)} \Vert \nabla_y w_A \Vert_{L^2(\mathbb{R}^3_+)}\\
\leq ( 6 C_\triangle \Vert \nabla_X \varphi \Vert_{L^\infty (\mathbb{R}^2)} + \epsilon ) \Vert \mathcal{A}_1 w_A \Vert_{L^2(\mathbb{R}^3_+)} + C( \epsilon , \Vert \nabla_X^2 \varphi \Vert_{L^\infty(\mathbb{R}^2)} ) \Vert w_A \Vert_{L^2(\mathbb{R}^3_+)}.
\end{multline}
Here $C_\triangle$ is the positive constant appearing in Lemma \ref{lem52}. Since
\begin{equation*}
\Vert w_A \Vert_{L^2(0,T ;L^2(\mathbb{R}^3_+)) } \leq \sup_{0 \leq t \leq T} \Vert w_A ( t ) \Vert_{L^2(\mathbb{R}^3_+)} T^{1/2} ,
\end{equation*}
it follows from \eqref{eq5025} to see that
\begin{multline}\label{eq5026}
\Vert \mathcal{B}_1 w_A \Vert_{L^2(0,T; L^2(\mathbb{R}^3_+) ) }\\
 \leq C ( \Vert \nabla_X \varphi \Vert_{L^\infty (\mathbb{R}^2)} + \epsilon ) \Vert \mathcal{A}_1 w_A \Vert_{L^2(0,T;L^2(\mathbb{R}^3_+))} + C(\epsilon , \Vert \nabla_X^2 \varphi \Vert_{L^\infty(\mathbb{R}^2)}) \Vert w_A \Vert_{L^2(0,T;L^2(\mathbb{R}^3_+))}\\
 \leq C ( \Vert \nabla_X \varphi \Vert_{L^\infty (\mathbb{R}^2)} + \epsilon ) \Vert w \Vert_{X_T} + T^{1/2}C( \epsilon , \Vert \nabla_X^2 \varphi \Vert_{L^\infty(\mathbb{R}^2)}) \Vert w \Vert_{X_T}.
\end{multline}
Similarly, we have
\begin{multline}\label{eq5027}
\Vert \mathcal{B}_2 w_B \Vert_{L^2(\mathbb{R}^3_-) }\\
 \leq ( 6 C_\triangle \Vert \nabla_X \varphi \Vert_{L^\infty (\mathbb{R}^2)} + \epsilon ) \Vert \mathcal{A}_2 w_B \Vert_{L^2(\mathbb{R}^3_-)} + C( \epsilon , \Vert \nabla_X^2 \varphi \Vert_{L^\infty(\mathbb{R}^2)}) \Vert w_B \Vert_{L^2(\mathbb{R}^3_-)}
\end{multline}
and
\begin{multline}\label{eq5028}
\Vert \mathcal{B}_2 w_B \Vert_{L^2(0,T; L^2(\mathbb{R}^3_-) ) }\\
 \leq C( \Vert \nabla_X \varphi \Vert_{L^\infty (\mathbb{R}^2)} + \epsilon ) \Vert w \Vert_{X_T} + T^{1/2}C( \epsilon , \Vert \nabla_X^2 \varphi \Vert_{L^\infty(\mathbb{R}^2)}) \Vert w \Vert_{X_T}.
\end{multline}
From $G_S =  1 + \varphi_1^2 + \varphi_2^2$ and $\Vert \nabla_X \varphi \Vert_{L^2(\mathbb{R}^2)} \leq 1/2$, we find that
\begin{equation*}
\bigg\Vert \frac{\varphi_1^2}{G_S} \bigg\Vert_{L^\infty (\mathbb{R}^2)} + \bigg\Vert \frac{\varphi_2^2}{G_S} \bigg\Vert_{L^\infty (\mathbb{R}^2)} + \bigg\Vert \frac{\varphi_1 \varphi_2}{G_S} \bigg\Vert_{L^\infty (\mathbb{R}^2)} \leq  3 \Vert \nabla_X \varphi \Vert_{L^\infty (\mathbb{R}^2)},
\end{equation*}
\begin{equation*}
\bigg\Vert \frac{ \varphi_1 ( \varphi_{11} + \varphi_{22} + \varphi_2^2 \varphi_{11} + \varphi_1^2 \varphi_{22} - 2 \varphi_1 \varphi_2 \varphi_{12} )  }{G^2_S} \bigg\Vert_{L^\infty (\mathbb{R}^2)} \leq 12 \Vert \nabla_X^2 \varphi \Vert_{L^\infty (\mathbb{R}^2)},
\end{equation*}
and that
\begin{equation*}
\bigg\Vert \frac{ \varphi_2 ( \varphi_{11} + \varphi_{22} + \varphi_2^2 \varphi_{11} + \varphi_1^2 \varphi_{22} - 2 \varphi_1 \varphi_2 \varphi_{12} )   }{G^2_S} \bigg\Vert_{L^\infty (\mathbb{R}^2)} \leq 12 \Vert \nabla_X^2 \varphi \Vert_{L^\infty (\mathbb{R}^2)}.
\end{equation*}
Applying the above inequalities, \eqref{eq5014}, and \eqref{eq59} with $\zeta = \epsilon/(24 \Vert \nabla_X^2 \varphi \Vert_{L^\infty (\mathbb{R}^2)})$ into \eqref{eq5017}, we see that
\begin{multline}\label{eq5029}
\Vert \mathcal{B}_3 w_S \Vert_{L^2(\mathbb{R}^2) } \leq 6 \kappa_S \Vert \nabla_X \varphi \Vert_{L^\infty (\mathbb{R}^2)} \Vert \nabla_X^2 w_S \Vert_{L^2(\mathbb{R}^2)} + 24 \kappa_S \Vert \nabla_X^2 \varphi \Vert_{L^\infty (\mathbb{R}^2)} \Vert \nabla_X w_S \Vert_{L^2(\mathbb{R}^2)}\\
\leq ( 6 C_\triangle \Vert \nabla_X \varphi \Vert_{L^\infty (\mathbb{R}^2)} + \epsilon ) \Vert \mathcal{A}_3 w_S \Vert_{L^2(\mathbb{R}^2)} + C( \epsilon , \Vert \nabla_X^2 \varphi \Vert_{L^\infty(\mathbb{R}^2)}) \Vert w_S \Vert_{L^2(\mathbb{R}^2)}.
\end{multline}
Using \eqref{eq5029} and
\begin{equation*}
\Vert w_S \Vert_{L^2(0,T ;L^2(\mathbb{R}^2)) } \leq \sup_{0 \leq t \leq T} \Vert w_S ( t ) \Vert_{L^2(\mathbb{R}^2)} T^{1/2},
\end{equation*}
we check that
\begin{multline}\label{eq5030}
\Vert \mathcal{B}_3 w_S \Vert_{L^2(0,T; L^2(\mathbb{R}^2) ) }\\
 \leq C( \Vert \nabla_X \varphi \Vert_{L^\infty (\mathbb{R}^2)} + \epsilon ) \Vert \mathcal{A}_3 w_S \Vert_{L^2(0,T;L^2(\mathbb{R}^2))} +  C( \epsilon , \Vert \nabla_X^2 \varphi \Vert_{L^\infty(\mathbb{R}^2)}) \Vert w_S \Vert_{L^2(0,T;L^2(\mathbb{R}^2))}\\
\leq C( \Vert \nabla_X \varphi \Vert_{L^\infty (\mathbb{R}^2)} + \epsilon ) \Vert w \Vert_{X_T} + C( \epsilon , \Vert \nabla_X^2 \varphi \Vert_{L^\infty(\mathbb{R}^2)}) T^{1/2} \Vert w \Vert_{X_T }.
\end{multline}
By \eqref{eq5023}, \eqref{eq5026}, \eqref{eq5028}, and \eqref{eq5030}, we see that there are $C_1 >0$ and $C_2 = C_2( \epsilon , \Vert \nabla_X^2 \varphi \Vert_{L^\infty(\mathbb{R}^2)}) >0$ such that
\begin{equation}\label{eq5031}
\Vert \mathcal{B} w \Vert_{L^2(0,T;H)} \leq  ( C_1 \epsilon + C_1 \Vert \nabla_X \varphi \Vert_{L^\infty (\mathbb{R}^2)} +  C_2 T^{1/2} ) \Vert w \Vert_{X_T}.
\end{equation}

Next, we consider $\Vert \mathcal{F} (w) \Vert_{L^2(0,T;H)}$. To this end, we use the following facts that for each $\psi_S \in L^2(\mathbb{R}^2)$,
\begin{align}
\Vert \psi_S(y) {\rm{e}}^{ - \delta y_3} \Vert_{L^2( \mathbb{R}^3_+)} & = \frac{1}{\sqrt{2 \delta}} \Vert \psi_S \Vert_{L^2(\mathbb{R}^2)},\label{eq5032}\\
\Vert \psi_S(z) {\rm{e}}^{ \delta z_3} \Vert_{L^2( \mathbb{R}^3_-)} & = \frac{1}{\sqrt{2 \delta}} \Vert \psi_S \Vert_{L^2(\mathbb{R}^2)}.\notag
\end{align}
Applying the H\"{o}lder inequality, \eqref{eq5032}, $\Vert \nabla_X \varphi \Vert_{L^\infty (\mathbb{R}^2)} \leq 1/2$, and \eqref{eq59} with $\zeta = 1/(2\delta)$ into \eqref{eq5018}, we observe that
\begin{multline}\label{eq5033}
\Vert \mathcal{F}_1 (w) \Vert_{L^2 (\mathbb{R}^3_+)}  \leq \Vert (d w_S/{dt}) {\rm{e}}^{ - \delta y_3} \Vert_{L^2(\mathbb{R}^3_+)} + \frac{\kappa_A}{\kappa_S} \Vert (\kappa_S \Delta_{y_h} w_S) {\rm{e}}^{ - \delta y_3} \Vert_{L^2(\mathbb{R}^3_+)}\\ +  \kappa_A ( \delta^2 + 2 \delta \Vert \nabla^2_X \varphi \Vert_{L^\infty(\mathbb{R}^2)} + 2 \delta^2 \Vert \nabla_X \varphi \Vert_{L^\infty (\mathbb{R}^2)} \}  \Vert w_S {\rm{e}}^{ - \delta y_3} \Vert_{L^2(\mathbb{R}^3_+)}\\
+2 \kappa_A \delta \Vert \nabla_X \varphi \Vert_{L^\infty(\mathbb{R}^2)} \sum_{\alpha=1}^2 \Vert (\partial_{y_\alpha}w_S){\rm{e}}^{-\delta y_3} \Vert_{L^2(\mathbb{R}^3_+)} \\
\leq \frac{1}{\sqrt{2\delta}} \Vert d w_S/{dt} \Vert_{L^2(\mathbb{R}^2)} + \frac{2}{\sqrt{2\delta}}\frac{\kappa_A}{\kappa_S} \Vert \mathcal{A}_3 w_S \Vert_{L^2(\mathbb{R}^2)}\\
 + C (\delta, \kappa_A)  \Vert w_S \Vert_{L^2(\mathbb{R}^2)} + \frac{\kappa_A}{\sqrt{2\delta}} ( 3 \delta^2 + 2 \delta \Vert \nabla^2_X \varphi \Vert_{L^\infty(\mathbb{R}^2)} )  \Vert w_S \Vert_{L^2(\mathbb{R}^2)}.
\end{multline}
This gives
\begin{multline}\label{eq5034}
\Vert \mathcal{F}_1 (w) \Vert_{L^2(0,T;L^2 (\mathbb{R}^3_+))} \leq \frac{1}{\sqrt{2\delta}} \Vert d w_S/{dt} \Vert_{L^2(0,T;L^2 (\mathbb{R}^2))}  + \frac{2}{\sqrt{2 \delta}}\frac{\kappa_A}{\kappa_S} \Vert \mathcal{A}_3 w_S \Vert_{L^2(0,T;L^2 (\mathbb{R}^2))}\\ + C ( \delta , \Vert \nabla_X^2 \varphi \Vert_{L^{\infty}(\mathbb{R}^2)} ,\kappa_A)  \Vert w_S \Vert_{L^2(0,T;L^2 (\mathbb{R}^2))}\\
\leq \bigg( \frac{1}{\sqrt{2 \delta }} + \frac{2}{\sqrt{2 \delta }} \frac{\kappa_A}{\kappa_S} + C (\delta , \Vert \nabla_X^2 \varphi \Vert_{L^\infty (\mathbb{R}^2)} , \kappa_A) T^{1/2} \bigg) \Vert w \Vert_{X_T}.
\end{multline}
Similarly, we see that
\begin{multline}\label{eq5035}
\Vert \mathcal{F}_2 (w) \Vert_{L^2 (\mathbb{R}^3_-)} \leq \frac{1}{\sqrt{2\delta}} \Vert d w_S/{dt} \Vert_{L^2(\mathbb{R}^2)} + \frac{2}{\sqrt{2\delta}}\frac{\kappa_B}{\kappa_S} \Vert \mathcal{A}_3 w_S \Vert_{L^2(\mathbb{R}^2)}\\ +  C(\delta , \Vert \nabla^2_X \varphi \Vert_{L^\infty(\mathbb{R}^2)} ,\kappa_B )  \Vert w_S \Vert_{L^2(\mathbb{R}^2)},
\end{multline}
and
\begin{equation}\label{eq5036}
\Vert \mathcal{F}_2 (v) \Vert_{L^2(0,T;L^2 (\mathbb{R}^3_-))} \leq \bigg( \frac{ 1}{\sqrt{2 \delta }} + \frac{2}{\sqrt{2 \delta }} \frac{\kappa_B}{\kappa_S} + C (\delta , \Vert \nabla_X^2 \varphi \Vert_{L^\infty (\mathbb{R}^2)} , \kappa_B) T^{1/2} \bigg) \Vert w \Vert_{X_T}.
\end{equation}
Since $G_S = 1 + \varphi_1^2 + \varphi_2^2$ and $\Vert \nabla_X \varphi \Vert_{L^\infty (\mathbb{R}^2)} \leq 1/2$, we see that
\begin{equation}\label{eq5037}
\bigg\Vert \frac{\varphi_1}{\sqrt{G_S}} \bigg\Vert_{L^\infty (\mathbb{R}^2)} \leq 1,{ \ }\bigg\Vert \frac{\varphi_2}{\sqrt{G_S}} \bigg\Vert_{L^\infty (\mathbb{R}^2)} \leq 1,{ \ }\bigg\Vert \frac{ 1 + \varphi_1^2 + \varphi_2^2 }{\sqrt{G_S}} \bigg\Vert_{L^\infty (\mathbb{R}^2)} \leq 2.
\end{equation}
Applying the H\"{o}lder inequality, \eqref{eq5037}, \eqref{eq5014}, \eqref{eq59} with $\zeta = \rho_S \kappa_S \epsilon/(\rho_A \kappa_A  + \rho_B \kappa_B)$, \eqref{eq5011}, and \eqref{eq5013} into \eqref{eq5020}, we check that
\begin{multline}\label{eq5038}
\Vert \mathcal{F}_3 (w) \Vert_{L^2(\mathbb{R}^2)}
 \leq \frac{2 \rho_A \kappa_A}{\rho_S} \sum_{j=1}^3 \Vert \gamma_+ [\partial_{y_j} w_A] \Vert_{L^2(\mathbb{R}^2)} + \frac{2 \rho_B \kappa_B}{\rho_S} \sum_{j=1}^3 \Vert \gamma_- [\partial_{z_j} w_B] \Vert_{L^2(\mathbb{R}^2)}\\
+ \bigg( \frac{ \rho_A \kappa_A  + \rho_B \kappa_B  }{\rho_S} \bigg) \Vert \nabla_X w_S \Vert_{L^2(\mathbb{R}^2)} + 2 \delta \bigg( \frac{ \rho_A \kappa_A  + \rho_B \kappa_B }{\rho_S} \bigg) \Vert w_S \Vert_{L^2(\mathbb{R}^2)}\\
\leq 6C_\triangle \frac{\rho_A}{\rho_S} \Vert \mathcal{A}_1 w_A \Vert_{L^2(\mathbb{R}^3_+)} + C (\rho_A, \rho_S , \kappa_A) \Vert w_A \Vert_{L^2(\mathbb{R}^3_+)}\\
 + 6C_\triangle \frac{\rho_B }{\rho_S} \Vert \mathcal{A}_2 w_B \Vert_{L^2(\mathbb{R}^3_-)} + C (\rho_B , \rho_S  , \kappa_B ) \Vert w_B \Vert_{L^2(\mathbb{R}^3_-)}\\
  + \epsilon \Vert \mathcal{A}_3 w_S \Vert_{L^2(\mathbb{R}^2)} + C (\epsilon , \delta , \rho_A, \rho_B,\rho_S, \kappa_A , \kappa_B , \kappa_S ) \Vert w_S \Vert_{L^2(\mathbb{R}^2)}.
\end{multline}
By \eqref{eq5038}, we see that
\begin{multline}\label{eq5039}
\Vert \mathcal{F}_3 (w) \Vert_{L^2(0,T;L^2(\mathbb{R}^2))}\\
\leq \bigg( 6C_\triangle \frac{\rho_A + \rho_B}{\rho_S} + \epsilon + C (\epsilon , \delta,  \rho_A, \rho_B,\rho_S, \kappa_A , \kappa_B,\kappa_S ) T^{1/2} \bigg) \Vert w \Vert_{X_T}.
\end{multline}
From \eqref{eq5024}, \eqref{eq5034}, \eqref{eq5036}, and \eqref{eq5039}, we see that $C_3>0$ and\\
 $C_4 = C_4(\epsilon , \delta, \Vert \nabla_X^2 \varphi \Vert_{L^\infty (\mathbb{R}^2)} ,  \rho_A, \rho_B,\rho_S, \kappa_A , \kappa_B , \kappa_S  ) >0$ such that
\begin{equation}\label{eq5040}
\Vert \mathcal{F} (w) \Vert_{L^2(0,T;H)} \leq \bigg( C_3 \epsilon + \frac{C_3}{\sqrt{\delta }} \frac{\kappa_A + \kappa_B + \kappa_S}{\kappa_S} + C_3 \frac{\rho_A + \rho_B}{\rho_S} +  C_4 T^{1/2} \bigg) \Vert w \Vert_{X_T}.
\end{equation}

Since
\begin{equation*}
\Vert \mathcal{F} (w) - \mathcal{B} w \Vert_{L^2(0,T;H)} \leq \Vert \mathcal{B} w \Vert_{L^2(0,T;H)} + \Vert \mathcal{F} (w) \Vert_{L^2(0,T;H)},
\end{equation*}
it follows from \eqref{eq5031} and \eqref{eq5040} to find that there are $C_\triangledown > 0$ and\\
 $C_\Box = C_\Box (\epsilon , \delta, \Vert \nabla_X^2 \varphi \Vert_{L^\infty (\mathbb{R}^2)} ,  \rho_A, \rho_B,\rho_S, \kappa_A , \kappa_B , \kappa_S ) >0$ such that
\begin{multline*}
\Vert \mathcal{F} (w) - \mathcal{B} w \Vert_{L^2(0,T;H)}\\
\leq C_\triangledown \bigg(   \epsilon  + \frac{1}{\sqrt{\delta }} \frac{\kappa_A + \kappa_B + \kappa_S}{\kappa_S} + \Vert \nabla_X \varphi \Vert_{L^\infty (\mathbb{R}^2)}+ \frac{\rho_A + \rho_B}{\rho_S} + C_\Box T^{1/2} \bigg) \Vert w \Vert_{X_T}.
\end{multline*}
Therefore, we see $(\rm{i})$.

Next, we show $(\rm{ii})$. Let $\eta >0$ and $w \in X_T$. Fix $0 < \varepsilon  <T$. By assumption \eqref{eq5022}, there is $C >0$ such that for all $t_1,t_2(\varepsilon \leq t_1 \leq t_2 < T)$
\begin{multline}\label{eq5041}
\Vert w (t_2) - w (t_1) \Vert_{H} + \Vert \mathcal{A} w (t_2) - \mathcal{A} w (t_1) \Vert_{H} + \Vert {d w}/{dt} (t_2) - {d w}/{dt} (t_1) \Vert_{H}\\
 \leq C (t_2 - t_1)^{\eta} .
\end{multline}
Fix $t_1,t_2$ such that $\varepsilon \leq t_1 \leq t_2 <T$. By the definitions of $\Vert \cdot \Vert_H$, $\mathcal{A}$ and \eqref{eq5041}, we find that
\begin{multline}\label{eq5042}
\Vert w_A (t_2) - w_A (t_1) \Vert_{L^2(\mathbb{R}^3_+)} + \Vert \mathcal{A}_1 w_A (t_2) - \mathcal{A}_1 w_A (t_1) \Vert_{L^2(\mathbb{R}^3_+)}\\
 + \Vert dw_A/{dt} (t_2) - dw_A/{dt} (t_1) \Vert_{L^2(\mathbb{R}^3_+)} + \Vert w_B (t_2) - w_B (t_1) \Vert_{L^2(\mathbb{R}^3_-)}\\
  + \Vert \mathcal{A}_2 w_B (t_2) - \mathcal{A}_2 w_B (t_1) \Vert_{L^2(\mathbb{R}^3_-)} + \Vert dw_B/{dt} (t_2) - dw_B/{dt} (t_1) \Vert_{L^2(\mathbb{R}^3_-)}\\ 
+ \Vert w_S (t_2) - w_S (t_1) \Vert_{L^2(\mathbb{R}^2)} + \Vert \mathcal{A}_3 w_S (t_2) - \mathcal{A}_3 w_S (t_1) \Vert_{L^2(\mathbb{R}^2)}\\
 + \Vert dw_S/{dt} (t_2) - dw_S/{dt} (t_1) \Vert_{L^2(\mathbb{R}^2)} \leq C (t_2 - t_1)^\eta. 
\end{multline}
Applying \eqref{eq5042} into \eqref{eq5025}, \eqref{eq5027}, \eqref{eq5030}, \eqref{eq5033}, \eqref{eq5035}, \eqref{eq5038} when $\epsilon =1$, we check that
\begin{equation*}
\Vert \mathcal{B} (w(t_2)) - \mathcal{B} (w(t_1)) \Vert_H + \Vert \mathcal{F} (w(t_2)) - \mathcal{F} (w(t_1)) \Vert_H  \leq C (t_2-t_1)^\eta.
\end{equation*}
Therefore, we find that $\mathcal{F} (w), \mathcal{B} w \in C^\eta ([\varepsilon , T);H)$ for each fixed $\varepsilon \in (0,T)$. This implies that $\mathcal{F}(w), \mathcal{B}w, \mathcal{F}(w) - \mathcal{B}w \in C^\eta_{loc}((0,T); H)$. Therefore, Proposition \ref{prop53} is proved.
 \end{proof}

\subsection{Existence of Strong Solutions}\label{subsec53}
In this section, we show the existence of strong solutions to systems \eqref{eq15}, \eqref{eq29}, and \eqref{eq2010}. We first introduce some function spaces and tools to construct strong solutions to our systems. Secondly, we construct local-in-time strong solutions to system \eqref{eq2010}. Thirdly, we show the existence of a unique global-in-time strong solution of the system to prove Theorem \ref{thm24}. Finally, we prove the existence of a unique global-in-time strong solution to system \eqref{eq15} to prove Theorem \ref{thm11}.

Let $\mathcal{A}$, $\mathcal{B}$, and $\mathcal{F}$ be the two operators and the mapping defined by subsections \ref{subsec51}, \ref{subsec52}. Let us introduce function spaces and tools.

\begin{definition}\label{def54}
For $T \in (0,\infty)$, we define
\begin{equation*}
W^{1,2}(0,T; H) = \{ w = { }^t (w_A, w_B , w_S ) \in L^2(0,T; H);{ \ }\Vert w \Vert_{W^{1,2}(0,T;H)} < \infty  \},
\end{equation*}
\begin{multline*}
L^2(0,T; H_0^1 \cap H^2) = \{ w = { }^t (w_A, w_B , w_S ) \in L^2(0,T; H);\\
\Vert \gamma_+[w_A] \Vert_{L^2(0,T;L^2(\mathbb{R}^2))} =0,{ \ }\Vert \gamma_-[w_B] \Vert_{L^2(0,T;L^2(\mathbb{R}^2))} =0, { \ }\Vert w \Vert_{L^2(0,T;H^2)} < \infty  \},
\end{multline*}
\begin{multline*}
L^2((0,T) \times \mathbb{R}^3_{+,-,0} ) := \{ w = { }^t (w_A , w_B , w_S);w_A \in L^2(\mathbb{R}^3_+ \times (0,T) ),\\
w_B \in L^2(\mathbb{R}^3_- \times (0,T) ),{ \ } w_S \in L^2(\mathbb{R}^2 \times (0,T) ), { \ } \Vert w \Vert_{L^2((0,T) \times \mathbb{R}^3_{+,-,0} )} < \infty \}.
\end{multline*}
Here
\begin{equation*}
\Vert w \Vert_{W^{1,2}(0,T;H)} = (\Vert w \Vert_{L^2(0,T; H)}^2 + \Vert dw/{dt} \Vert_{L^2(0,T; H)}^2)^{1/2}, 
\end{equation*}
\begin{equation*}
\Vert w \Vert_{L^2(0,T;H^2)} = (\Vert w_A \Vert_{L^2(0,T;W^{2,2} (\mathbb{R}^3_+))}^2 + \Vert w_B \Vert_{L^2(0,T;W^{2,2} (\mathbb{R}^3_-))}^2 + \Vert w_S \Vert_{L^2(0,T;L^2(\mathbb{R}^2))}^2)^{1/2},
\end{equation*}
\begin{equation*}
\Vert w \Vert_{L^2((0,T) \times \mathbb{R}^3_{+,-,0} )} = \Vert w_A \Vert_{L^2( \mathbb{R}^3_+ \times (0,T))} + \Vert w_B \Vert_{L^2( \mathbb{R}^3_- \times (0,T))} + \Vert w_S \Vert_{L^2( \mathbb{R}^2 \times (0,T))}.
\end{equation*}
\end{definition}

Combining Propositions \ref{prop51} and \ref{prop53}, we have the following proposition.
\begin{proposition}\label{prop55}
Let $T>0$. Let $v_0 = { }^t (v_0^A , v_0^B , v_0^S) \in D (\mathcal{A}^{1/2})$ and $w = { }^t (w_A , w_B , w_S ) \in X_T$. Assume that
\begin{equation*}
\mathcal{F} (w) - \mathcal{B} w \in C^{\eta}_{loc} ((0,T); H) \text{ for some }0 < \eta < 1.
\end{equation*}
Then system
\begin{equation*}
\begin{cases}
\frac{d}{dt} v + \mathcal{A} v = \mathcal{F} (w) -\mathcal{B} w \text{ on } (0,T),\\
v \vert_{t =0} = v_0,
\end{cases}
\end{equation*}
admits a unique strong solution $v$ in 
\begin{equation*}
C ([0, T] ; H ) \cap C((0,T);D (\mathcal{A})) \cap C^1 ((0,T);H).
\end{equation*}
Moreover, $v$ satisfies the following properties:\\
$(\rm{i})$ {\rm{[Representation formula]}} For each $0 < t \leq T$, $v$ is written by
\begin{equation*}
v (t) = {\rm{e}}^{- t \mathcal{A}} v_0 + \int_0^t {\rm{e}}^{- ( t - \tau ) \mathcal{A}} \{ \mathcal{F} ( w ( \tau )) - \mathcal{B} w(\tau) \} { \ }d \tau.
\end{equation*}
$(\rm{ii})$ {\rm{[Initial condition]}}
\begin{equation*}
\lim_{t \to 0 +0} \Vert v(t) - v_0 \Vert_H = 0.
\end{equation*}
$(\rm{iii})$ {\rm{[H\"{o}lder continuity]}}
\begin{equation*}
v, dv/{dt}, \mathcal{A} v \in C_{loc}^{\eta} ((0,T); H).
\end{equation*}
$(\rm{iv})$ {\rm{[Estimates (I)]}} Let $\epsilon >0$. If $T \leq 1$, then there is\\ $C_\diamondsuit = C_\diamondsuit (\epsilon , \delta, \Vert \nabla_X^2 \varphi \Vert_{L^\infty (\mathbb{R}^2)} ,  \rho_A, \rho_B,\rho_S, \kappa_A , \kappa_B , \kappa_S  ) >0$ such that 
\begin{multline}\label{eq5043}
\Vert v \Vert_{X_T} \leq \Vert v_0 \Vert_{H} + 2 \Vert \mathcal{A}^{1/2} v_0 \Vert_{H} + C_\diamondsuit T^{1/2} \Vert w \Vert_{X_T}\\
 + C_\star C_\triangledown \bigg( \epsilon + \frac{1}{\sqrt{\delta }} \frac{\kappa_A + \kappa_B + \kappa_S}{\kappa_S} + \Vert \nabla_X \varphi \Vert_{L^\infty (\mathbb{R}^2)}  + \frac{\rho_A + \rho_B}{\rho_S} \bigg) \Vert w \Vert_{X_T}.
\end{multline}
Here $C_\star$ and $C_\triangledown$ are the two positive constants appearing in Propositions \ref{prop51} and \ref{prop53}, respectively.\\
$(\rm{v})$ {\rm{[Estimates (II)]}} Write
\begin{align}
\mathcal{M}_0 &= \mathcal{M}_0 ( \kappa_A , \kappa_B , \kappa_S) = \frac{1}{10 C_\star C_\triangledown},\label{eq5044}\\
\delta_0 & = \delta_0 ( \kappa_A , \kappa_B , \kappa_S) = \frac{400 C_\star^2 C_\triangledown^2 ( \kappa_A^2 + \kappa_B^2 + \kappa_S^2)}{\kappa_S^2}.\label{eq5045}
\end{align}
Set $\delta = \delta_0$. Assume that
\begin{equation}\label{eq5046}
\Vert \nabla_X \varphi \Vert_{L^\infty (\mathbb{R}^2)} + \frac{\rho_A + \rho_B}{\rho_S} \leq \mathcal{M}_0.
\end{equation}
If $T \leq 1$, then there is $C_* = C_* ( \Vert \nabla_X^2 \varphi \Vert_{L^\infty (\mathbb{R}^2)} ,  \rho_A, \rho_B,\rho_S, \kappa_A , \kappa_B, \kappa_S ) >0$ such that
\begin{equation}\label{eq5047}
\Vert v \Vert_{X_T} \leq \Vert v_0 \Vert_{H} + 2 \Vert \mathcal{A}^{1/2} v_0 \Vert_{H} + \bigg( \frac{1}{5} + C_* T^{1/2} \bigg) \Vert w \Vert_{X_T}.
\end{equation}

\end{proposition}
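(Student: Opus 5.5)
The plan is to treat Proposition \ref{prop55} as a direct combination of Proposition \ref{prop51} and Proposition \ref{prop53}, applied with the forcing term $\mathscr{F} := \mathcal{F}(w) - \mathcal{B}w$.

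\emph{Step 1: applicability of Proposition \ref{prop51}.} By Proposition \ref{prop53}(i), $\mathscr{F} \in L^2(0,T;H)$, since its norm is bounded by a constant times $\Vert w \Vert_{X_T} < \infty$. By hypothesis, $\mathscr{F} \in C^\eta_{loc}((0,T);H)$. Moreover $v_0 \in D(\mathcal{A}^{1/2}) \subset H$. Proposition \ref{prop51} therefore yields a unique strong solution $v \in C([0,T];H) \cap C((0,T);D(\mathcal{A})) \cap C^1((0,T);H)$, together with the representation formula (i), the initial condition (ii) and the H\"older regularity (iii). These are exactly \eqref{eq52} and assertions (ii) and (iii) of Proposition \ref{prop51} with $V = v$ and $V_0 = v_0$.

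\emph{Step 2: estimate (iv).} Insert \eqref{eq5021} into \eqref{eq56}:
\begin{equation*}
\Vert v \Vert_{X_T} \leq \Vert v_0 \Vert_H + 2\Vert \mathcal{A}^{1/2} v_0 \Vert_H + (T^{1/2} + C_\star) C_\triangledown \big( \epsilon + \delta^{-1/2}\tfrac{\kappa_A+\kappa_B+\kappa_S}{\kappa_S} + \Vert \nabla_X\varphi\Vert_{L^\infty(\mathbb{R}^2)} + \tfrac{\rho_A+\rho_B}{\rho_S} + C_\Box T^{1/2} \big)\Vert w \Vert_{X_T}.
\end{equation*}
Expanding the product gives the term $C_\star C_\triangledown(\cdots)$ of \eqref{eq5043}. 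Every remaining term carries a factor $T^{1/2}$. Because $T \leq 1$, we have $T \leq T^{1/2}$, so these terms collect into $C_\diamondsuit T^{1/2}\Vert w \Vert_{X_T}$. One also uses $\Vert\nabla_X\varphi\Vert_{L^\infty(\mathbb{R}^2)} \leq 1/2$ and bounds the parenthesis by a constant depending on $\epsilon,\delta,\rho,\kappa$. This gives
\begin{equation*}
C_\diamondsuit = C_\triangledown\big(\epsilon + \delta^{-1/2}\tfrac{\kappa_A+\kappa_B+\kappa_S}{\kappa_S} + \tfrac12 + \tfrac{\rho_A+\rho_B}{\rho_S} + C_\Box\big) + C_\star C_\triangledown C_\Box.
\end{equation*}

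\emph{Step 3: estimate (v).} This is a choice of parameters in \eqref{eq5043}; it is the only step needing care, and it is arithmetic. Take $\epsilon = \mathcal{M}_0 = 1/(10C_\star C_\triangledown)$. Then $C_\star C_\triangledown \epsilon = 1/10$. By \eqref{eq5046}, the term $C_\star C_\triangledown(\Vert\nabla_X\varphi\Vert_{L^\infty(\mathbb{R}^2)} + (\rho_A+\rho_B)/\rho_S)$ is at most $1/10$. For the $\delta$-term, use $(\kappa_A+\kappa_B+\kappa_S)^2 \leq 3(\kappa_A^2+\kappa_B^2+\kappa_S^2)$. With $\delta = \delta_0$ this gives
\begin{equation*}
C_\star C_\triangledown\, \delta_0^{-1/2}\,\frac{\kappa_A+\kappa_B+\kappa_S}{\kappa_S} \leq C_\star C_\triangledown \cdot \frac{\kappa_S}{20 C_\star C_\triangledown (\kappa_A^2+\kappa_B^2+\kappa_S^2)^{1/2}} \cdot \frac{\sqrt{3}\,(\kappa_A^2+\kappa_B^2+\kappa_S^2)^{1/2}}{\kappa_S} = \frac{\sqrt{3}}{20}.
\end{equation*}
Summing the three contributions gives $1/10 + 1/10 + \sqrt{3}/20 < 1/5 + 1/10$. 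This slightly exceeds $1/5$, so the constants must be tightened to reach the stated $1/5$.

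Concretely, I would choose $\epsilon$ smaller, for instance $\epsilon = 1/(40C_\star C_\triangledown)$, which contributes $1/40$. The bound on the $\varphi$ and $\rho$ terms stays at $1/10$ by \eqref{eq5046}. For the $\delta$-term, keep the sharper bound $\sqrt{3}/20 \approx 0.087$. The total is then $0.025 + 0.1 + 0.087 < 1/5$. Since $\epsilon$ is now a fixed function of $C_\star C_\triangledown$, the constant $C_\diamondsuit$ with $\epsilon$ and $\delta_0$ inserted depends only on $\Vert\nabla_X^2\varphi\Vert_{L^\infty(\mathbb{R}^2)}$, the $\rho$'s and the $\kappa$'s; call it $C_*$. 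This yields \eqref{eq5047}.
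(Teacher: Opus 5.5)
Your Steps 1 and 2 follow the paper's own route. You apply Proposition \ref{prop51} with $\mathscr{F} = \mathcal{F}(w) - \mathcal{B}w$, insert \eqref{eq5021} into \eqref{eq56}, and absorb the $T$-terms into $C_\diamondsuit T^{1/2}$ using $T \le T^{1/2}$. Replacing $\Vert\nabla_X\varphi\Vert_{L^\infty(\mathbb{R}^2)}$ by $1/2$ inside $C_\diamondsuit$ is harmless and even removes a dependence the paper leaves in.

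In Step 3 you correctly observe that the $\delta_0$-term equals $\frac{\kappa_A+\kappa_B+\kappa_S}{20(\kappa_A^2+\kappa_B^2+\kappa_S^2)^{1/2}}$, which lies in $(1/20,\sqrt{3}/20]$ for positive $\kappa$'s. The paper's \eqref{eq5049} bounds this term by $1/20$, which is never true here. So the paper's choice $\epsilon = 1/(20C_\star C_\triangledown)$ does not give $1/5$ when \eqref{eq5046} is an equality.

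Your repair is also wrong, because the arithmetic fails. With $\epsilon = 1/(40C_\star C_\triangledown)$ the total is
\[
\tfrac{1}{40} + \tfrac{1}{10} + \tfrac{\sqrt{3}}{20} \approx 0.025 + 0.1 + 0.0866 = 0.2116 > \tfrac{1}{5}.
\]
Your claimed inequality ``$0.025 + 0.1 + 0.087 < 1/5$'' is false, so \eqref{eq5047} is not established as written.

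The statement fixes $\delta = \delta_0$, and \eqref{eq5046} only caps the $\varphi,\rho$-term at $1/10$. The only slack is therefore in $\epsilon$, and you need
\[
C_\star C_\triangledown\,\epsilon \le \tfrac{1}{5} - \tfrac{1}{10} - \tfrac{\sqrt{3}}{20} = \tfrac{2-\sqrt{3}}{20} \approx 0.0134.
\]
For example, $\epsilon = 1/(100C_\star C_\triangledown)$ gives $0.01 + 0.1 + 0.0866 < 0.2$. The rest of your argument then goes through unchanged: define $C_*$ as $C_\diamondsuit$ evaluated at this $\epsilon$ and at $\delta_0$. It depends only on $\Vert\nabla_X^2\varphi\Vert_{L^\infty(\mathbb{R}^2)}$, the $\rho$'s and the $\kappa$'s, since $C_\star = C_\star(\kappa_A,\kappa_B,\kappa_S)$ and $C_\triangledown$ is absolute.
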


\begin{proof}[Proof of Proposition \ref{prop55}]
Let $T>0$. Fix $v_0 = { }^t (v_0^A , v_0^B , v_0^S) \in D (\mathcal{A}^{1/2})$ and $w = { }^t (w_A , w_B , w_S ) \in X_T$. From Proposition \ref{prop53}, we find that $\mathcal{F} ( w) - \mathcal{B} w \in L^2(0,T;H)$. Since $\mathcal{F} ( w) - \mathcal{B} w \in L^2(0,T;H) \cap C_{loc}^\eta ((0,T);H )$, we apply Proposition \ref{prop51} to deduce $(\rm{i})$-$(\rm{iii})$.

We now prove $(\rm{iv})$. Assume that $T \leq 1$. From \eqref{eq56} and \eqref{eq5021}, we observe that
\begin{multline*}
\Vert v \Vert_{X_T} \leq \Vert v_0 \Vert_H + 2 \Vert \mathcal{A}^{1/2} v_0 \Vert_H + (T^{1/2} + C_\star ) \Vert \mathcal{F}(w) - \mathcal{B} w \Vert_{L^2(0,T;H)}\\
\leq \Vert v_0 \Vert_H + 2 \Vert \mathcal{A}^{1/2} v_0 \Vert_H\\
 + (T^{1/2} + C_\star ) C_\triangledown \bigg( \epsilon +  \frac{1}{\sqrt{\delta }} \frac{\kappa_A + \kappa_B + \kappa_S}{\kappa_S} + \Vert \nabla_X \varphi \Vert_{L^\infty (\mathbb{R}^2)} + \frac{\rho_A + \rho_B}{\rho_S} + C_\Box T^{1/2} \bigg) \Vert w \Vert_{X_T}\\\leq \Vert v_0 \Vert_H + 2 \Vert \mathcal{A}^{1/2} v_0 \Vert_H + C_\diamondsuit T^{1/2} \Vert w \Vert_{X_T}\\
  + C_\star  C_\triangledown \bigg( \epsilon + \frac{1}{\sqrt{\delta }} \frac{\kappa_A + \kappa_B + \kappa_S}{\kappa_S} + \Vert \nabla_X \varphi \Vert_{L^\infty (\mathbb{R}^2)} + \frac{\rho_A + \rho_B}{\rho_S} \bigg) \Vert w \Vert_{X_T}.
\end{multline*}
Here 
\begin{multline*}
C_\diamondsuit = C_\diamondsuit (\epsilon , \delta, \Vert \nabla_X^2 \varphi \Vert_{L^\infty (\mathbb{R}^2)} ,  \rho_A, \rho_B,\rho_S, \kappa_A , \kappa_B,\kappa_S  )\\
:= C_\star C_\triangledown C_\Box + C_\triangledown \bigg( \epsilon +  \frac{1}{\sqrt{\delta }} \frac{\kappa_A + \kappa_B + \kappa_S}{\kappa_S} + \Vert \nabla_X \varphi \Vert_{L^\infty (\mathbb{R}^2)} + \frac{\rho_A + \rho_B}{\rho_S} + C_\Box \bigg).
\end{multline*}
Thus, we see $(\rm{iv})$.

Finally, we show $(\rm{v})$. Assume that $T \leq 1$ and that \eqref{eq5046} holds. Set
\begin{align}
\epsilon &= \frac{1}{20 C_\star C_\triangledown},\label{eq5048}\\ 
\delta & = \delta_0 = \frac{400 C_\star^2 C_\triangledown^2 ( \kappa_A^2 + \kappa_B^2 + \kappa_S^2)}{\kappa_S^2}.\notag
\end{align}
By \eqref{eq5046}, \eqref{eq5048}, and \eqref{eq5045}, we see that
\begin{equation}\label{eq5049}
C_\star C_\triangledown \bigg( \epsilon + \frac{1}{\sqrt{\delta }} \frac{\kappa_A + \kappa_B + \kappa_S}{\kappa_S} + \Vert \nabla_X \varphi \Vert_{L^\infty (\mathbb{R}^2)} + \frac{\rho_A + \rho_B}{\rho_S} \bigg)
\leq \frac{1}{20} + \frac{1}{20} + \frac{1}{10} = \frac{1}{5}. 
\end{equation}
From \eqref{eq5043} and \eqref{eq5049}, there is $C_* = C_* ( \Vert \nabla_X^2 \varphi \Vert_{L^\infty (\mathbb{R}^2)} ,  \rho_A, \rho_B,\rho_S, \kappa_A , \kappa_B, \kappa_S ) >0$ such that
\begin{multline*}
\Vert v \Vert_{X_T} \leq \Vert v_0 \Vert_{H} + 2 \Vert \mathcal{A}^{1/2} v_0 \Vert_{H} + C_* T^{1/2} \Vert w \Vert_{X_T}\\
 + C_\star C_\triangledown \bigg( \epsilon + \frac{1}{\sqrt{\delta }} \frac{\kappa_A + \kappa_B + \kappa_S}{\kappa_S} + \Vert \nabla_X \varphi \Vert_{L^\infty (\mathbb{R}^2)} + \frac{\rho_A + \rho_B}{\rho_S} \bigg) \Vert w \Vert_{X_T}\\
 \leq \Vert v_0 \Vert_{H} + 2 \Vert \mathcal{A}^{1/2} v_0 \Vert_{H} + (1/5 + C_* T^{1/2} ) \Vert w \Vert_{X_T}.
\end{multline*}
Thus, we have \eqref{eq5047}. Note that $C_* = C_\diamondsuit$ when $\epsilon$ and $\delta$ satisfy \eqref{eq5048} and \eqref{eq5045}. Therefore, Proposition \ref{prop55} is proved.
\end{proof}

Let us now construct a local-in-time strong solution to system \eqref{eq2010}.

\begin{proposition}[Existence of a local-in-time strong solution]\label{prop56}
Let $\mathcal{M}_0 = \mathcal{M}_0(\kappa_A , \kappa_B ,\kappa_S)$ and $\delta_0 = \delta_0(\kappa_A , \kappa_B, \kappa_S)$ be the two positive constants defined by \eqref{eq5044} and \eqref{eq5045}, respectively. Set $\delta = \delta_0$. Assume that 
\begin{equation}\label{eq5050}
\Vert \nabla_X \varphi \Vert_{L^\infty (\mathbb{R}^2)} + \frac{\rho_A + \rho_B}{\rho_S} \leq \mathcal{M}_0.
\end{equation}
Then for each $v_0 = { }^t (v_0^A , v_0^B , v_0^S) \in D (\mathcal{A}^{1/2})$, system \eqref{eq2010} admits a unique local-in-time strong solution $v$ in 
\begin{equation*}
C ([0, T_*] ; H ) \cap L^2 (0, T_*; H_0^1 \cap H^2 ) \cap W^{1,2} (0, T_* ; H) \cap L^2((0,T_*) \times \mathbb{R}^3_{+,-,0} ),
\end{equation*}
satisfying
\begin{equation*}
\lim_{t \to 0 + 0} v (t) = v_0 \text{ in }H.
\end{equation*}
Here $T_* = T_*( \Vert \nabla_X^2 \varphi \Vert_{L^\infty (\mathbb{R}^2)} ,  \rho_A, \rho_B,\rho_S, \kappa_A , \kappa_B, \kappa_S ) >0$.
\end{proposition}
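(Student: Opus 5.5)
We construct the solution by successive approximation in $X_T$, using Proposition \ref{prop55} at each step. Set $v^{(0)}(t) := {\rm{e}}^{-t\mathcal{A}} v_0$. By Proposition \ref{prop51} with $\mathscr{F}=0$, we have $v^{(0)} \in X_T$ for every $T \le 1$, with $\Vert v^{(0)}\Vert_{X_T} \le \Vert v_0\Vert_H + 2\Vert \mathcal{A}^{1/2} v_0\Vert_H =: K_0$, and $v^{(0)}, dv^{(0)}/dt, \mathcal{A}v^{(0)} \in C^\eta_{loc}((0,T);H)$ for any $\eta\in(0,1)$. Given $v^{(m)} \in X_T$ with this Hölder regularity, Proposition \ref{prop53}(ii) gives $\mathcal{F}(v^{(m)})-\mathcal{B}v^{(m)} \in C^\eta_{loc}((0,T);H)$. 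Proposition \ref{prop55} then produces $v^{(m+1)}$ solving $dv^{(m+1)}/dt + \mathcal{A}v^{(m+1)} = \mathcal{F}(v^{(m)})-\mathcal{B}v^{(m)}$ with $v^{(m+1)}(0)=v_0$, and item (iii) there shows the Hölder property propagates. Fix $\delta=\delta_0$ and assume \eqref{eq5050}. By \eqref{eq5047}, $\Vert v^{(m+1)}\Vert_{X_T} \le K_0 + (1/5 + C_*T^{1/2})\Vert v^{(m)}\Vert_{X_T}$. Choose $T_* \le 1$ with $C_* T_*^{1/2} \le 1/5$, so that $T_*$ depends only on $\Vert\nabla_X^2\varphi\Vert_{L^\infty}$, $\rho$'s and $\kappa$'s. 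Induction then gives $\Vert v^{(m)}\Vert_{X_{T_*}} \le (5/3)K_0$ for all $m$.

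Next we prove contraction. The map $w \mapsto \mathcal{F}(w)-\mathcal{B}w$ is linear in $w$, so the difference $d^{(m)} := v^{(m+1)}-v^{(m)}$ solves the same linear problem with zero initial data and right-hand side $\mathcal{F}(d^{(m-1)})-\mathcal{B}d^{(m-1)}$. Applying \eqref{eq5047} with $v_0=0$ gives $\Vert d^{(m)}\Vert_{X_{T_*}} \le (2/5)\Vert d^{(m-1)}\Vert_{X_{T_*}}$. Hence $\{v^{(m)}\}$ is Cauchy in $X_{T_*}$, which is a Banach space: $\mathcal{A}$ is closed, and the norms on $d/dt$ and $\mathcal{A}$ are $L^2$ norms. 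The sequence therefore converges to some $v \in X_{T_*}$. By \eqref{eq5021}, $\mathcal{F}(v^{(m)})-\mathcal{B}v^{(m)} \to \mathcal{F}(v)-\mathcal{B}v$ in $L^2(0,T_*;H)$, and passing to the limit in the equation gives $dv/dt + \mathcal{A}v = \mathcal{F}(v)-\mathcal{B}v$ in $L^2(0,T_*;H)$. From \eqref{eq54} and uniform convergence in $C([0,T_*];H)$ we also get $v(t)\to v_0$ in $H$.

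The main obstacle is identifying the regularity class and the trace terms in the limit. First, Lemma \ref{lem52}(ii) converts $\Vert \mathcal{A}v\Vert_{L^2(0,T_*;H)}$ and $\sup_t\Vert v\Vert_H$ into control of $v$ in $L^2(0,T_*;H^2)$. Since $v(t)\in D(\mathcal{A})$ for a.e. $t$, we get $\gamma_+[v_A]=\gamma_-[v_B]=0$, so $v \in L^2(0,T_*;H_0^1\cap H^2)$. Membership in $W^{1,2}(0,T_*;H)$ and in $L^2((0,T_*)\times\mathbb{R}^3_{+,-,0})$ follows from the $X_{T_*}$ norm. To pass to the limit in the nonclosed trace terms of $\mathcal{F}_3$, we will use \eqref{eq5011}, \eqref{eq5013} (continuity of the traces of first derivatives with respect to the graph norm of $\mathcal{A}$), together with Corollary \ref{cor3018} applied for a.e. $t$ along a subsequence. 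This yields exactly $\mathcal{F}_3(v)$ as defined in \eqref{eq5020}.

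Finally, uniqueness: two strong solutions in the stated class have a difference $d$ in $X_{T_*}$ solving the linear problem with zero data. The estimate \eqref{eq5021}, combined with maximal regularity \eqref{eq56}, gives $\Vert d\Vert_{X_{T_*}} \le (2/5)\Vert d\Vert_{X_{T_*}}$, so $d=0$. Uniqueness also follows directly from Proposition \ref{prop43}(iii).
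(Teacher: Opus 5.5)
Your existence argument is the paper's own. It runs Picard iteration through Proposition~\ref{prop55}, gets the uniform bound and contraction from \eqref{eq5047} with a data-independent $T_*$ (your $C_*T_*^{1/2}\le 1/5$ in place of the paper's $3/10$ is immaterial), uses completeness of $X_{T_*}$, and passes to the limit via \eqref{eq5021} and the trace bounds \eqref{eq5011}, \eqref{eq5013}.

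For uniqueness the paper uses only Proposition~\ref{prop43}, which rests on the energy equality for \eqref{eq15}. That argument needs no smallness, works on any interval, and is what the gluing in Theorem~\ref{thm24} relies on; you correctly cite it as a fallback.

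Your primary direct-contraction argument is also valid, with one point to add. Proposition~\ref{prop51} states \eqref{eq56} only for H\"older-continuous forcing, but for the difference $d$ of two arbitrary strong solutions, $\mathcal{F}(d)-\mathcal{B}d$ is only known to lie in $L^2(0,T_*;H)$. You therefore need maximal $L^2$-regularity for general $L^2$ data (de Simon), together with uniqueness of the linear problem in $W^{1,2}(0,T_*;H)\cap L^2(0,T_*;D(\mathcal{A}))$.
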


\begin{proof}[Proof of Proposition \ref{prop56}]
Assume that $T \leq 1$ and \eqref{eq5050} holds. Fix $v_0 = { }^t (v_0^A, v_0^B, v_0^S) \in D (\mathcal{A}^{1/2})$. For each $m \in \mathbb{N}$, set $v_1 =v_1 (t)= { }^t (v_1^A, v_1^B, v_1^S)$ and $v_{m+1} = v_{m+1}(t) = { }^t (v_{m+1}^A , v_{m+1}^B , v_{m+1}^S)$ as follows:
\begin{equation}\label{eq5051}
\begin{cases}
\frac{d}{d t} v_1 + \mathcal{A} v_1 = 0 \text{ on }(0,T),\\
v_1 \vert_{t = 0 } = v_0,
\end{cases}
\end{equation}
\begin{equation}\label{eq5052}
\begin{cases}
\frac{d}{d t} v_{m+1} + \mathcal{A} v_{m+1} = \mathcal{F} (v_m) - \mathcal{B} v_m \text{ on }(0,T),\\
v_{m + 1 } \vert_{t = 0 } = v_0.
\end{cases}
\end{equation}

We first consider system \eqref{eq5051}. From Proposition \ref{prop55}, we see that system \eqref{eq5051} admits a unique strong solution $v_1$ such that
\begin{align*}
& v_1 \in C ([0,T];H) \cap C((0,T);D(\mathcal{A})) \cap C^1((0,T);H),\\
& v_1 (t) = {\rm{e}}^{ - t \mathcal{A} } v_0 { \ }(0 < t \leq T),\\
& \lim_{t \to 0 + 0} v_1 (t) = v_0 \text{ in }H,\\
&\Vert v_1 \Vert_{X_T} \leq \Vert v_0 \Vert_H + 2 \Vert \mathcal{A}^{1/2} v_0 \Vert_H,
\end{align*}
and
\begin{equation}\label{eq5053}
 v_1 ,{ \ } d v_1/{dt},{ \ }\mathcal{A} v_1 \in C^{1/2}_{loc}((0,T);H).
\end{equation}
For the readers, we give a sketch of the proof to derive \eqref{eq5053}. Fix $\varepsilon \in (0,T)$. Let $t_1,t_2 >0 $ such that $\varepsilon \leq t_1 \leq t_2 <T$. Since $- \mathcal{A}$ generates a bounded analytic semigroup on $H$ and $\mathcal{A}$ is a positive selfadjoint operator, we check that
\begin{multline*}
\Vert v_1 (t_2) - v_1(t_1) \Vert_H + \Vert \mathcal{A} v_1(t_2) - \mathcal{A}v_1(t_1) \Vert_H\\
 = \Vert {\rm{e}}^{- t_2 \mathcal{A}} v_0 -  {\rm{e}}^{- t_1 \mathcal{A}} v_0 \Vert_H + \Vert \mathcal{A} {\rm{e}}^{- t_2 \mathcal{A}} v_0 - \mathcal{A} {\rm{e}}^{- t_1 \mathcal{A}} v_0 \Vert_H\\ 
 = \Vert ({\rm{e}}^{- (t_2 - t_1) \mathcal{A}} - 1 ) {\rm{e}}^{- t_1 \mathcal{A}} v_0 \Vert_H +  \Vert ({\rm{e}}^{- (t_2 - t_1) \mathcal{A}} - 1 ) \mathcal{A}^{1/2} {\rm{e}}^{- t_1 \mathcal{A}} \mathcal{A}^{1/2}v_0 \Vert_H\\
 \leq C(T) (t_2-t_1)^{1/2} \Vert \mathcal{A}^{1/2} {\rm{e}}^{- t_1 \mathcal{A}} v_0 \Vert_H + C (T) (t_2 - t_1)^{1/2} \Vert \mathcal{A} {\rm{e}}^{- t_1 \mathcal{A}} \mathcal{A}^{1/2}v_0 \Vert_H\\ 
 \leq \frac{C(T)}{\varepsilon^{1/2}} (t_2-t_1)^{1/2} \Vert v_0 \Vert_H + \frac{C (T)}{\varepsilon} (t_2 - t_1)^{1/2} \Vert \mathcal{A}^{1/2} v_0 \Vert_H.
\end{multline*}
Thus, we find that $ v_1,\mathcal{A} v_1 \in C^{1/2}_{loc}((0,T);H)$. Since $d v_1/{dt} = - \mathcal{A} v_1$, we see \eqref{eq5053}. See \cite[Chapter 2]{Paz83} for details.

Next, we consider system \eqref{eq5052}. We prove that for each $m \in \mathbb{N}$ system \eqref{eq5052} admits a unique strong solution $v_{m+1}$ such that
\begin{equation}\label{eq5054}
v_{m+1} \in C ([0,T];H) \cap C((0,T);D(\mathcal{A})) \cap C^1((0,T);H),
\end{equation}
\begin{equation}
v_{m+1} (t) = {\rm{e}}^{ - t \mathcal{A}} v_0 + \int_0^t {\rm{e}}^{- (t - \tau )L} \{ \mathcal{F} ( v_m (\tau)) - \mathcal{B}v_m(\tau) \} { \ }d \tau { \ }(0 < t \leq T),
\end{equation}
\begin{equation}
\lim_{t \to 0 + 0} v_{m+1} (t) = v_0 \text{ in }H,
\end{equation}
\begin{equation}
v_{m+1} ,{ \ } d v_{m+1}/{dt},{ \ }\mathcal{A} v_{m+1} \in C^{1/2 }_{loc}((0,T);H), 
\end{equation}
and
\begin{equation}\label{eq5058}
\Vert v_{m+1} \Vert_{X_T} \leq \Vert v_0 \Vert_H + 2 \Vert \mathcal{A}^{1/2} v_0 \Vert_H + \left( \frac{1}{5} + C_* T^{1/2} \right) \Vert v_m \Vert_{X_T}.
\end{equation}
Here $C_*$ is the positive constant appearing in assertion $(\rm{v})$ in Proposition \ref{prop55}.

We now consider the case when $m=1$, that is,
\begin{equation}\label{eq5059}
\begin{cases}
\frac{d}{d t} v_2 + \mathcal{A} v_2 = \mathcal{F} ( v_1) - \mathcal{B} v_1 \text{ on }(0,T),\\
v_2 \vert_{t = 0 } = v_0.
\end{cases}
\end{equation}
From $v_1 \in X_T$, \eqref{eq5053}, and Proposition \ref{prop53}, we find that
\begin{equation*}
\mathcal{F} ( v_1) - \mathcal{B}v_1 \in L^2(0,T;H) \cap C^{1/2}_{loc}((0,T);H).
\end{equation*}
Since $v_0 \in D (\mathcal{A}^{1/2})$ and $\mathcal{F} ( v_1) - \mathcal{B}v_1 \in L^2(0,T;H) \cap C^{1/2}_{loc}((0,T);H)$, it follows from Proposition \ref{prop55} to see that system \eqref{eq5059} admits a unique strong solution $v_2$ such that
\begin{align*}
& v_2 \in C ([0,T];H) \cap C((0,T);D(\mathcal{A})) \cap C^1((0,T);H),\\
& v_2 (t) = {\rm{e}}^{ - t \mathcal{A} } v_0 + \int_0^t {\rm{e}}^{- ( t - \tau) \mathcal{A}} \{ \mathcal{F} ( v_1 (\tau)) - \mathcal{B}v_1(\tau) \}{ \ }d \tau { \ }(0 < t \leq T),\\
& \lim_{t \to 0 + 0} v_2 (t) = v_0 \text{ in }H,
\end{align*}
\begin{equation}\label{eq5060}
v_2 ,{ \ } d v_2/{dt},{ \ }\mathcal{A} v_2 \in C^{1/2}_{loc}((0,T);H),
\end{equation}
and
\begin{equation}
\Vert v_2 \Vert_{X_T} \leq \Vert v_0 \Vert_H + 2 \Vert \mathcal{A}^{1/2} v_0 \Vert_H + \left( \frac{1}{5} + C_* T^{1/2} \right) \Vert v_1 \Vert_{X_T}.\label{eq5061}
\end{equation}
From \eqref{eq5060}, \eqref{eq5061} and Proposition \ref{prop53}, we see that
\begin{equation*}
\mathcal{F} ( v_2) - \mathcal{B} v_2 \in L^2(0,T;H) \cap C^{1/2}_{loc}((0,T);H).
\end{equation*}

We now consider the case when $m=2$, that is,
\begin{equation}\label{eq5062}
\begin{cases}
\frac{d}{d t} v_3 + \mathcal{A} v_3 = \mathcal{F} ( v_2) - \mathcal{B} v_2 \text{ on }(0,T),\\
v_3 \vert_{t = 0 } = v_0.
\end{cases}
\end{equation}
Since $v_0 \in D (\mathcal{A}^{1/2})$ and $\mathcal{F} ( v_2) - \mathcal{B} v_2 \in L^2(0,T;H) \cap C^{1/2}_{loc}((0,T);H)$, it follow from Proposition \ref{prop55} to see that system \eqref{eq5062} admits a unique strong solution $v_3$ such that
\begin{align*}
& v_ 3\in C ([0,T];H) \cap C((0,T);D(\mathcal{A})) \cap C^1((0,T);H),\\
& v_3 (t) = {\rm{e}}^{ - t \mathcal{A} } v_0 + \int_0^t {\rm{e}}^{- ( t - \tau) \mathcal{A}} \{ \mathcal{F} ( v_2 (\tau)) - \mathcal{B} v_2(\tau) \}{ \ }d \tau { \ }(0 < t \leq T),\\
& \lim_{t \to 0 + 0} v_3 (t) = v_0 \text{ in }H,
\end{align*}
\begin{equation}\label{eq5063}
v_3 ,{ \ } d v_3/{dt},{ \ }\mathcal{A} v_3 \in C^{1/2}_{loc}((0,T);H),
\end{equation}
and
\begin{equation}\label{eq5064}
\Vert v_3 \Vert_{X_T} \leq \Vert v_0 \Vert_H + 2 \Vert \mathcal{A}^{1/2} v_0 \Vert_H + \left( \frac{1}{5} + C_{\star} T^{1/2}\right) \Vert v_2 \Vert_{X_T}.
\end{equation}
From \eqref{eq5063}, \eqref{eq5064}, and Proposition \ref{prop53}, we see that
\begin{equation*}
\mathcal{F} (v_3) - \mathcal{B} v_3 \in L^2(0,T;H) \cap C^{1/2}_{loc}((0,T);H).
\end{equation*}
By induction, we see that for each $m \in \mathbb{N}$ system \eqref{eq5052} admits a unique strong solution $v_{m+1}$ satisfying \eqref{eq5054}-\eqref{eq5058}.

Next, we prove that for each $m \in \mathbb{N}$
\begin{equation}\label{eq5065}
\Vert v_{m+2} - v_{m+1} \Vert_{X_T} \leq \left( \frac{1}{5} + C_* T^{1/2} \right) \Vert v_{m+1} - v_m \Vert_{X_T}.
\end{equation}
From
\begin{align*}
\begin{cases}
\frac{d}{d t} v_{m+2} + \mathcal{A} v_{m+2} = \mathcal{F} (v_{m+1}) - \mathcal{B} v_{m+1} \text{ on }(0,T),\\
v_{m + 2 } \vert_{t = 0 } = v_0,
\end{cases}\\
\begin{cases}
\frac{d}{d t} v_{m+1} + \mathcal{A} v_{m+1} = \mathcal{F} (v_m) - \mathcal{B} v_m \text{ on } (0,T),\\
v_{m + 1 } \vert_{t = 0 } = v_0,
\end{cases}
\end{align*}
we have
\begin{equation*}
\begin{cases}
\frac{d}{d t} (v_{m+2} - v_{m+1}) + \mathcal{A} (v_{m+2} - v_{m+1}) = \mathcal{F} ( v_{m+1} - v_m) - \mathcal{B}(v_{m+1} - v_m) \text{ on }(0,T),\\
(v_{m + 2 } - v_{m+1}) \vert_{t = 0 } = 0.
\end{cases}
\end{equation*}
Note that $\mathcal{F} ( v_{m+1}) - \mathcal{F}(v_m) = \mathcal{F} ( v_{m+1} - v_m)$. Since $\mathcal{F} ( v_{m+1} - v_m) - \mathcal{B}(v_{m+1} - v_m) \in L^2(0,T;H) \cap C^{1/2}_{loc}((0,T);H)$, it follows from \eqref{eq5047} in Proposition \ref{prop55} to derive \eqref{eq5065}.

Now we choose $T_* >0$ such that
\begin{equation*}
T_* = \min \bigg\{ 1 , \frac{9}{100 C_*^2} \bigg\}.
\end{equation*}
Note that $C_* = C_* ( \Vert \nabla_X^2 \varphi \Vert_{L^\infty (\mathbb{R}^2)}, \rho_A , \rho_B ,\rho_S , \kappa_A , \kappa_B , \kappa_S)$. From \eqref{eq5065} and \eqref{eq5058}, we have 
\begin{equation*}
\Vert v_{m+2} - v_{m+1} \Vert_{X_{T_*}} \leq \frac{1}{2} \Vert v_{m+1} - v_m \Vert_{X_{T_*}}
\end{equation*}
and
\begin{equation*}
\Vert v_{m+1} \Vert_{X_{T_*}} \leq \Vert v_0 \Vert_H + 2 \Vert \mathcal{A}^{1/2} v_0 \Vert_H + \frac{1}{2} \Vert v_m \Vert_{X_{T_*}}.
\end{equation*}
Since $\Vert v_1 \Vert_{X_{T_*}} \leq \Vert v_0 \Vert_H + 2 \Vert \mathcal{A}^{1/2} v_0 \Vert_H$, we use induction to see that for each $m \in \mathbb{N}$
\begin{equation*}
\Vert v_{m+1} \Vert_{X_{T_*}} \leq 2 \Vert v_0 \Vert_H + 4 \Vert \mathcal{A}^{1/2} v_0 \Vert_H.
\end{equation*}
We also see that
\begin{align*}
\Vert v_{m + 2} - v_{m+1} \Vert_{X_{T_*}} & \leq \bigg(\frac{1}{2}\bigg)^{m} \Vert v_2 - v_1 \Vert_{X_{T_*}},\\
 & \leq \bigg( \frac{1}{2} \bigg)^{m} ( 3 \Vert v_0 \Vert_H + 6 \Vert \mathcal{A}^{1/2} v_0 \Vert_H ) .
\end{align*}
From a fixed-point argument, we have a unique function $v = { }^t (v_A,v_B,v_S)$ in $X_{T_*}$ satisfying
\begin{align}
& \lim_{m \to \infty} \Vert v_m - v \Vert_{X_{T_*}} = 0,\label{eq5066}\\
& \Vert v \Vert_{X_{T_*}} \leq 2 \Vert v_0 \Vert_H + 4 \Vert \mathcal{A}^{1/2} v_0 \Vert_H. \label{eq5067}
\end{align}
Now we check that
\begin{align}
\Vert \gamma_+[v_A] \Vert_{L^2(0,T_*;L^2(\mathbb{R}^2))} =0,\label{eq5068}\\
\Vert \gamma_+[v_B] \Vert_{L^2(0,T_*;L^2(\mathbb{R}^2))} =0.\label{eq5069}
\end{align}
Since $v^A_m(t) \in W^{1,2}_0(\mathbb{R}^3_+) \cap W^{2,2} (\mathbb{R}^3_+)$, we find that
\begin{equation*}
\Vert \gamma_+[v_m^A] \Vert_{L^2(0,T_*;L^2(\mathbb{R}^2))} =0.
\end{equation*}
Applying Lemma \ref{lem52} and \eqref{eq5066}, we observe that
\begin{align*}
\Vert \gamma_+ [ v_A] - \gamma_+[v_m^A] \Vert_{L^2(0,T_*;L^2(\mathbb{R}^2))} & \leq C \Vert v_A - v^A_m \Vert_{L^2(0,T_*;W^{1,2} (\mathbb{R}^3_+) )}\\
& \leq C \Vert v - v_m \Vert_{X_{T_*}} \to 0 \text{ as }m \to \infty .
\end{align*}
Therefore, we see \eqref{eq5068}. Similarly, we have \eqref{eq5069}. Since $\mathcal{A}$ is a closed operator, it follows from \eqref{eq51}, \eqref{eq5067}, \eqref{eq5068}, \eqref{eq5069}, and Lemma \ref{lem52} to find that
\begin{equation*}
v \in C([0,T_*];H) \cap L^2(0,T_*; H_0^1 \cap H^2) \cap W^{1,2} (0,T_*; H).
\end{equation*}
From \eqref{eq5066}, \eqref{eq5067}, and assertion $(\rm{i})$ in Proposition \ref{prop53}, we see that
\begin{equation}\label{eq5070}
\mathcal{F} (v) - \mathcal{B} v \in L^2(0,T_* ;H),
\end{equation}
and that
\begin{equation}\label{eq5071}
\Vert (\mathcal{F} (v)- \mathcal{F} (v_m)) - ( \mathcal{B} v - \mathcal{B} v_m) \Vert_{ L^2(0,T_* ;H)} \leq C \Vert v - v_m \Vert_{X_{T_*}} \to 0 \text{ as }m\to \infty. 
\end{equation}
From \eqref{eq5066}, Lemma \ref{lem52}, Corollary \ref{cor3018}, and Proposition \ref{prop3017} we also see that
\begin{align}
& \Vert v - v_m \Vert_{L^2(0,T_*; H^2)} \to 0 \text{ as } m \to \infty,\label{eq5072}\\
& \Vert \mathcal{F}_3(v) - \mathcal{F}_3 (v_m) \Vert_{L^2(0,T_*; L^2(\mathbb{R}^2))} \to 0 \text{ as } m \to \infty. \label{eq5073}
\end{align}
Since $v_{m+1}$ satisfies
\begin{equation*}
\begin{cases}
\frac{d}{dt} v_{m+1} + \mathcal{A} v_{m+1} = \mathcal{F} (v_m) - \mathcal{B} v_m \text{ on }(0,T_*),\\
v_{m+1}\vert_{t = 0} = v_0,
\end{cases}
\end{equation*}
and
\begin{equation*}
v_{m+1} (t) = {\rm{e}}^{- t \mathcal{A}}v_0 + \int_0^t {\rm{e}}^{- (t - \tau )\mathcal{A}} \{ \mathcal{F} (v_m(\tau)) - \mathcal{B} v_m (\tau) \}{ \ }d \tau { \ } (0 < t \leq T_*),
\end{equation*}
we apply \eqref{eq5066}, \eqref{eq5071}, \eqref{eq5072}, \eqref{eq5073}, \eqref{eq53}, and \eqref{eq54} to see that $v$ satisfies that
\begin{equation*}
\left\Vert \frac{d}{dt}v + \mathcal{A} v - \mathcal{F} (v) + \mathcal{B} v \right\Vert_{L^2(0,T_*;H)} = 0
\end{equation*}
and that
\begin{equation*}
v (t) = {\rm{e}}^{- t \mathcal{A}}v_0 + \int_0^t {\rm{e}}^{- (t - \tau )\mathcal{A}} \{ \mathcal{F} (v(\tau)) - \mathcal{B} v(\tau) \} { \ }d \tau { \ } (0 < t \leq T_*).
\end{equation*}
Since ${\rm{e}}^{- t \mathcal{A}}$ is a $C_0$-semigroup on $H$, we use the Cauchy-Schwarz inequality and \eqref{eq5070} to observe that
\begin{align*}
\Vert v(t) - v_0 \Vert_H & \leq \Vert {\rm{e}}^{- t \mathcal{A}} v_0 - v_0 \Vert_H + t^{1/2} \Vert \mathcal{F} (v) - \mathcal{B} v \Vert_{L^2(0,T;H)}\\
& \to 0 { \ }(t \to 0 + 0).
\end{align*}
Therefore, we see that $v$ is a strong solution to system \eqref{eq2010} with initial data $v_0$.

Finally, we discuss the uniqueness of the solution $v$. To this end, we show that
\begin{equation}\label{eq5074}
v \in L^2((0,T_*) \times \mathbb{R}^3_{+,-,0}).
\end{equation}
From Proposition \ref{prop53}, we consider $v_1$ and $v_{m+1}$ as follows:
\begin{align*}
v_1 (x,t) & = \mathcal{G}* v_0,\\
v_{m+1} (x,t) &= \mathcal{G}*v_0 + \int_0^t \mathcal{G}*\{ \mathcal{F} (v_m(x, \tau)) - \mathcal{B}v_m(x, \tau) \} { \ }d \tau. 
\end{align*}
Here $\mathcal{G}$ denotes the heat kernels (see Proposition \ref{prop51}). We easily check that for each $m \in \mathbb{N}$, $v_m \in L^2((0,T_*) \times \mathbb{R}^3_{+,-,0})$. Using the H\"{o}lder inequality and \eqref{eq5066}, we see that
\begin{multline*}
\Vert v_{m+1} - v_m \Vert_{L^2((0,T_*) \times \mathbb{R}^3_{+,-,0} )}\\
 = \Vert v_{m+1}^A - v_m^A \Vert_{L^2( \mathbb{R}^3_+ \times (0,T_*))} + \Vert v_{m+1}^B - v_m^B \Vert_{L^2( \mathbb{R}^3_- \times (0,T_*))} + \Vert v_{m+1}^S - v_m^S \Vert_{L^2( \mathbb{R}^2 \times (0,T_*))}\\
\leq \Vert v_{m+1}^A - v_m^A \Vert_{L^\infty(0,T_*: L^2( \mathbb{R}^3_+))} T_*^{1/2} + \Vert v_{m+1}^B - v_m^B \Vert_{L^\infty(0,T_*;L^2( \mathbb{R}^3_-))}T_*^{1/2}\\
 + \Vert v_{m+1}^S - v_m^S \Vert_{L^\infty (0,T_*;L^2( \mathbb{R}^2))}T_*^{1/2}\\
 \leq T_*^{1/2}\Vert v_{m+1} - v_m \Vert_{X_{T_*}} \to 0 \text{ as }m \to \infty.
\end{multline*}
This implies that $v_\infty = { }^t (v_\infty^A, v_\infty^B , v_\infty^S) \in L^2 ((0,T_*) \times \mathbb{R}^3_{+,-,0})$ such that 
\begin{equation*}
\lim_{m \to \infty} \Vert v_m - v_\infty \Vert_{L^2((0,T_*) \times \mathbb{R}^3_{+,-,0} )} = 0.
\end{equation*}
It is easy check that
\begin{equation*}
\Vert v_\infty - v \Vert_{L^2(0,T_*; H)} \leq \Vert v_\infty - v_m \Vert_{L^2(0,T_*; H)} + \Vert v_m - v \Vert_{L^2(0,T_*; H)} \to 0 { \ }(\text{ as } m \to \infty). 
\end{equation*}
Thus, we see \eqref{eq5074}. Since ${ }^t (v_A, v_B , v_S)$ is a strong solution to system \eqref{eq29} with initial data ${ }^t (v_0^A,v_0^B , v_0^S)$ and $ { }^t (v_A ,v_B,v_S) \in L^2((0,T_*) \times \mathbb{R}^3_{+,-,0})$, it follows from Proposition \ref{prop43}(the uniqueness of the strong solutions to system \eqref{eq29}) to see that $v$ is a unique local-in-time strong solution to \eqref{eq2010} with initial data $v_0$. Therefore, Proposition \ref{prop56} is proved.
  \end{proof}

Let us prove Theorem \ref{thm24}.
\begin{proof}[Proof of Theorem \ref{thm24}]
Let $\varphi \in BC^2(\mathbb{R}^2)$ and $\rho_A, \rho_B, \rho_S, \kappa_A , \kappa_B , \kappa_S >0$. Let $\mathcal{M}_0 = \mathcal{M}_0(\kappa_A , \kappa_B ,\kappa_S)$ and $\delta_0 = \delta_0(\kappa_A , \kappa_B, \kappa_S)$ be the two positive constants defined by \eqref{eq5044} and \eqref{eq5045}, respectively. Set $\delta = \delta_0$. Assume that $\Vert \nabla_X \varphi \Vert_{L^\infty (\mathbb{R}^2)} \leq 1/2$ and
\begin{equation*}
\Vert \nabla_X \varphi \Vert_{L^\infty (\mathbb{R}^2)} + \frac{\rho_A + \rho_B}{\rho_S} \leq \mathcal{M}_0.
\end{equation*}
Fix $v_0 \in D (\mathcal{A}^{1/2})$. Let $T_* = T_*( \Vert \nabla_X^2 \varphi \Vert_{L^\infty (\mathbb{R}^2)} ,  \rho_A, \rho_B,\rho_S, \kappa_A , \kappa_B, \kappa_S )$ be the positive constant appearing in Proposition \ref{prop56}.

We first consider the following system:
\begin{equation}\label{eq5075}
\begin{cases}
\frac{d}{dt}v^1 + \mathcal{A} v^1 = \mathcal{F} (v^1) - \mathcal{B} v^1 \text{ on } (0,T_*),\\
v^1 \vert_{t=0} = v_0.
\end{cases}
\end{equation}
From Proposition \ref{prop56}, we see that there exists a unique strong solution $v^1$ to system \eqref{eq5075} with initial data $v_0$.

Let $T_1 \in [T_*/2 , T_*)$ such that $v^1(T_1) \in D (\mathcal{A})$. Since $v^1(T_1) \in D (\mathcal{A}^{1/2})$ and $T_*$ does not depend on initial data, it follows from Proposition \ref{prop56} to see that there exists a unique strong solution $v^2$ of system
\begin{equation*}
\begin{cases}
\frac{d}{dt}v^2 + \mathcal{A} v^2 = \mathcal{F} (v^2) - \mathcal{B} v^2 \text{ on } (T_1,T_1 + T_*),\\
v^2 \vert_{t=T_1} = v^1(T_1).
\end{cases}
\end{equation*}

Let $T_2 \in [T_1 + T_*/2 , T_1 + T_*)$ such that $v^2(T_2) \in D (\mathcal{A})$. Since $v^2(T_2) \in D (\mathcal{A}^{1/2})$ and $T_*$ does not depend on initial data, it follows from Proposition \ref{prop56} to see that there exists a unique strong solution $v^3$ of system
\begin{equation*}
\begin{cases}
\frac{d}{dt}v^3 + \mathcal{A} v^3 = \mathcal{F} (v^3) - \mathcal{B} v^3 \text{ on } (T_2, T_2 + T_*),\\
v^3 \vert_{t = T_2} = v^2 (T_2).
\end{cases}
\end{equation*}

We easily check that
\begin{equation*}
T_2 + T_* \geq (T_1 + T_*/2) + T_* \geq T_*/2 + T_*/2 + T_* = 2 T_*.
\end{equation*}
Now we set
\begin{equation*}
v = v (t) =
\begin{cases}
v^1 &\text{ on }(0,T_1],\\
v^2 & \text{ on }(T_1,  T_2],\\
v^3 & \text{ on }(T_2,  2 T_*).
\end{cases}
\end{equation*}
From Proposition \ref{prop43}, we see that $v^1 = v^2$ on $[T_1 , T_*)$ and $v^2 = v^3$ on $[T_2, T_1 + T_*)$. Thus, we find that
\begin{equation*}
v \in C([0,2T_*]; H) \cap L^2(0, 2T_*; H_0^1 \cap H^2 ) \cap W^{1,2} (0,  2T_* ; H) \cap L^2( (0, 2 T_*) \times \mathbb{R}^3_{+,-,0}),
\end{equation*}
and that $v$ is a strong solution to system
\begin{equation*}
\begin{cases}
\frac{d}{dt}v + \mathcal{A} v = \mathcal{F} (v) - \mathcal{B} v \text{ on } (0, 2T_*),\\
v \vert_{t=0} = v_0.
\end{cases}
\end{equation*}
Let $T >1$. Since $T_*$ does not depend on initial data, we repeat the same argument above to see that there is a function $v$ such that
\begin{equation*}
v \in C([0, T); H) \cap L^2(0,T; H_0^1 \cap H^2 ) \cap W^{1,2} (0, T ; H) \cap L^2( (0,T) \times \mathbb{R}^3_{+,-,0} )
\end{equation*}
and that $v$ is a strong solution to system
\begin{equation*}
\begin{cases}
\frac{d}{dt}v + \mathcal{A} v = \mathcal{F} (v) - \mathcal{B} v \text{ on } (0, T),\\
v \vert_{t=0} = v_0.
\end{cases}
\end{equation*}
Since we can choose $T$ to be any positive number, we find that there is $v$ such that
\begin{equation*}
v \in C([0, \infty); H) \cap L_{loc}^2(0, \infty; H_0^1 \cap H^2 ) \cap W_{loc}^{1,2} (0, \infty ; H) \cap L^2_{loc}( \mathbb{R}_+ \times \mathbb{R}^3_{+,-,0})
\end{equation*}
and that $v$ is a strong solution to system
\begin{equation*}
\begin{cases}
\frac{d}{dt}v + \mathcal{A} v = \mathcal{F} (v) - \mathcal{B} v \text{ on } (0, \infty),\\
v \vert_{t=0} = v_0.
\end{cases}
\end{equation*}
From Proposition \ref{prop43}, we see that $v$ is a unique global-in-time strong solution of system \eqref{eq2010} with initial data $v_0$. Therefore, Theorem \ref{thm24} is proved.
\end{proof}

Finally, we prove Theorem \ref{thm11}.
\begin{proof}[Proof of Theorem \ref{thm11}]
Let $\varphi \in BC^2(\mathbb{R}^2)$ and $\rho_A, \rho_B, \rho_S, \kappa_A , \kappa_B , \kappa_S >0$. Let $\mathcal{M}_0 = \mathcal{M}_0(\kappa_A , \kappa_B ,\kappa_S)$ and $\delta_0 = \delta_0(\kappa_A , \kappa_B, \kappa_S)$ be the two positive constants defined by \eqref{eq5044} and \eqref{eq5045}, respectively. Set $\delta = \delta_0$. Assume that $\Vert \nabla_X \varphi \Vert_{L^\infty (\mathbb{R}^2)} \leq 1/2$ and
\begin{equation*}
\Vert \nabla_X \varphi \Vert_{L^\infty (\mathbb{R}^2)} + \frac{\rho_A + \rho_B}{\rho_S} \leq \mathcal{M}_0.
\end{equation*}
Let $\theta_0^A \in W^{1,2} (\Omega_A)$, $\theta_0^B \in W^{1,2}( \Omega_B)$, $\theta_0^S \in W^{1,2} (\Gamma)$ satisfying $\gamma_A[\theta_0^A] = \theta_0^S$ and $\gamma_B[\theta_0^B] = \theta_0^S$. By definition, there is $v_0^S \in W^{1,2} (\mathbb{R}^2)$ such that $\theta_0^S = \mathcal{P}_S[v_0^S]$. Set $\delta$ by \eqref{eq5045}, and
\begin{equation*}
\begin{cases}
u_0^A = u_0^A (x) = \theta_0^A(x) - v_0^S(x_h) {\rm{e}}^{ \delta \{ \varphi (x_h ) -x_3 \} },\\
u_0^B = u_0^B (x) = \theta_0^B(x)  - v_0^S(x_h) {\rm{e}}^{ \delta \{ x_3 - \varphi (x_h )  \} } .
\end{cases}
\end{equation*}
From Proposition \ref{prop31}, we find that $u_0^A \in W_0^{1,2} (\Omega_A)$ and $u_0^B \in W_0^{1,2} (\Omega_B)$. By definition, there are $v_0^A \in W_0^{1,2} (\mathbb{R}^3_+)$ and $v_0^B \in W_0^{1,2}(\mathbb{R}^3_-)$ such that $u_0^A = \mathcal{P}_A [v_0^A]$ and $u_0^B = \mathcal{P}_B [v_0^B]$. Set $v_0 = { }^t (v_0^A ,v_0^B , v_0^S)$. From Theorem \ref{thm24}, we find that there exists a unique global-in-time strong solution $v = { }^t (v_A, v_B , v_S)$ to system \eqref{eq2010} with initial data $v_0 = { }^t(v_0^A , v_0^B , v_0^S)$. Since ${ }^t (v_A, v_B , v_A)$ is a strong solution to system \eqref{eq29} with initial data ${ }^t (v_0^A,v_0^B , v_0^S)$ and $ { }^t (v_A ,v_B,v_S) \in L_{loc}^2( \mathbb{R}_+ \times \mathbb{R}^3_{+,-,0})$, it follows from Proposition \ref{prop43} to see that there exists a unique global-in-time strong solution $\theta = { }^t (\theta_A, \theta_B , \theta_S)$ to system \eqref{eq15} with initial data ${ }^t (\theta_0^A , \theta_0^B ,\theta_0^S)$. By the same argument as in the proof of Lemma \ref{lem44}, we see that the solution $\theta$ satisfies \eqref{eq16}. Therefore, Theorem \ref{thm11} is proved.
\end{proof}

\section{Appendix $(\rm{I})$: Derivation of Heat Equations in two unbounded domains $\Omega_A$, $\Omega_B$ and the interface $\Gamma$}\label{sect6}

In this section, we derive the heat equations \eqref{eq12} in two unbounded domains $\Omega_A$, $\Omega_B$ and the interface $\Gamma  (=\partial \Omega_A \cap \partial \Omega_B)$ by applying an energetic variational approach based on \cite{Kob20,Kob23b}. We consider the temperatures of three phase problems such as a soap bubble flying in the air, oil floating on water, or a melting ice (see Figure \ref{Fig1}) from an energetic point of view. 

Let us first introduce our settings. Let $\varphi \in BC^2(\mathbb{R}^2)$ and $n= n(x) = { }^t (n_1,n_2,n_3)$ the unit outer normal vector at $x \in \Gamma$ defined by \eqref{eq11}. Let $T \in (0, \infty]$. Define $\Omega_{A,T} = \Omega_A \times (0,T)$, $\Omega_{B,T} = \Omega_B \times (0,T) $, $\Omega_{S,T} = \Gamma \times (0,T)$, $\overline{\Omega}_{A,T} = \overline{ \Omega_A} \times [0,T)$, $\overline{\Omega}_{B,T} = \overline{ \Omega_B} \times [0,T)$, and $\overline{\Omega}_{S,T} = \Gamma \times [0,T)$. We assume that there is a fluid (or a substance) in domain $\Omega_A$, domain $\Omega_B$, and surface $\Gamma$, respectively. For $\sharp = A , B, S$, let $\varrho_\sharp = \varrho_\sharp (x,t)$, $v_\sharp = v_\sharp (x,t) = { }^t (v^\sharp_1, v^\sharp_2 , v^\sharp_3)$, $\theta_\sharp = \theta_\sharp (x,t)$, $\mu_\sharp = \mu_\sharp (x,t)$, and $\mathcal{C}_\sharp = \mathcal{C}_\sharp (x,t)$ be the \emph{density}, the \emph{velocity}, the \emph{temperature}, the \emph{thermal conductivity}, and the \emph{specific heat} of the fluid in $\Omega_{\sharp, T}$, respectively. For each $\sharp \in \{ A,B,S \}$, we define
\begin{align*}
& C_0^2 (\overline{\mathbb{R}^4_+}) = \{ f : \mathbb{R}^3 \times [0,\infty ) \to \mathbb{R} ; { \ } f = \mathfrak{f} \vert_{\mathbb{R}^3 \times [0,\infty )}, \mathfrak{f} \in C_0^2 (\mathbb{R}^4) \},\\
& C_0^2 (\overline{\Omega}_{\sharp,T}) = \{ f ;{ \ } f = \mathfrak{f}  \vert_{\overline{\Omega}_{\sharp,T} }, \mathfrak{f}  \in C_0^2 (\overline{\mathbb{R}^4_+}) \},\\
& C^2 (\overline{\Omega}_{\sharp,T}) = \{ f ;{ \ } f = \mathfrak{f}  \vert_{\overline{\Omega}_{\sharp,T} }, \mathfrak{f}  \in C^2 (\overline{\mathbb{R}^4_+}) \}.
\end{align*}
We assume that $\varrho_A$, $\mathcal{C}_A \in C^2 (\overline{\Omega}_{A,T})$, $v_1^A$, $v_2^A$, $v_3^A$, $\theta_A \in C_0^2 ( \overline{\Omega}_{A,T})$, $\varrho_B$, $\mathcal{C}_B \in C^2 (\overline{\Omega}_{B,T})$, $v_1^B$, $v_2^B$, $v_3^B$, $\theta_B \in C_0^2 ( \overline{ \Omega }_{B,T})$, $\varrho_S$, $\mathcal{C}_S \in C^2 (\overline{\Omega}_{S,T})$, $v_1^S$, $v_2^S$, $v_3^S$, $\theta_S \in C_0^2 ( \overline{\Omega}_{S,T})$, and that $\mu_A$, $\mu_B$, $\mu_S$ are three positive constants. We assume that for each $0 \leq t<T$, $\theta_A (\cdot , t) \in W^{2,2} (\Omega_A)$, $\theta_B (\cdot ,t) \in W^{2,2} (\Omega_B)$, and $\theta_S (\cdot ,t) \in W^{2,2} (\Gamma)$.

We first introduce the transport theorems.
\begin{definition}[Velocity fields, Transport theorems]\label{def61}{ \ }\\ We say that $(\Omega_{A,T}, \Omega_{B,T}, \Omega_{S,T})$ is \emph{flowed by the velocity fields} $(v_A,v_B ,v_S)$ if for each $0< t <T$, $\Phi_A \in C^1 (\Omega_{A,T})$, $\Phi_B \in C^1(\Omega_{B,T})$, $\Phi_S \in C^1 (\Omega_{S,T})$, and $\Lambda \subset \mathbb{R}^3$,
\begin{align}
 \frac{d}{d t} \int_{\Omega_A \cap \Lambda} \Phi_A (x,t) { \ }d x & = \int_{\Omega_A \cap \Lambda}\{ D_t^A \Phi_A + (\nabla \cdot v_A ) \Phi_A \} { \ }dx,\label{eq61}\\ 
 \frac{d}{d t} \int_{\Omega_B \cap \Lambda} \Phi_B (x,t) { \ }d x & = \int_{\Omega_B \cap \Lambda} \{ D_t^B  \Phi_B + (\nabla \cdot v_B ) \Phi_B \} { \ }dx,\label{eq62}\\ 
 \frac{d}{d t} \int_{\Gamma \cap \Lambda} \Phi_S (x,t) { \ }d\mathcal{H}_x^2 & = \int_{\Gamma \cap \Lambda} \{ D_t^S \Phi_S + (\nabla_\Gamma \cdot v_S ) \Phi_S \} { \ }d\mathcal{H}_x^2. \label{eq63}
\end{align}
Here $D_t^A f := \partial_t f + (v_A \cdot \nabla)f$, $D_t^B f := \partial_t f + (v_B \cdot \nabla)f$, and $D_t^S f := \partial_t f + (v_S \cdot \nabla_\Gamma )f$.
\end{definition}
\noindent We often call the above three equalities the \emph{transport theorems}. In particular, we call \eqref{eq63} the \emph{surface transport theorem}. The derivation of the surface transport theorem can be founded in \cite{Bet86, GSW89, DE07, Kob23a}.

Throughout this section, we assume that $(\Omega_{A,T}, \Omega_{B,T}, \Omega_{S,T})$ is \emph{flowed by the velocity fields} $(v_A,v_B ,v_S)$. From the transport theorems, we admit that the densities $(\varrho_A, \varrho_B, \varrho_S)$ of our model satisfy
\begin{equation}\label{eq64}
\begin{cases}
D_t^A  \varrho_A + (\nabla \cdot v_A ) \varrho_A = 0 & \text{ in } \Omega_A \times (0, T),\\
D_t^B \varrho_B + (\nabla \cdot v_B ) \varrho_B = 0 & \text{ in } \Omega_B \times (0, T),\\
D_t^S \varrho_S + (\nabla_\Gamma \cdot v_S ) \varrho_S = 0 & \text{ in } \Gamma \times (0, T).
\end{cases}
\end{equation}

Let us now make our model by an energetic variational approach. To this end, we consider the variation of energies dissipation due to thermal diffusion. For $t \in (0,T)$, we set
\begin{multline*}
E_{TD} = E_{TD}[\theta_A , \theta_B , \theta_S] = E_{TD}[\theta_A , \theta_B , \theta_S](t)\\
= - \int_{\Omega_A} \frac{\mu_A}{2} \vert \nabla \theta_A (x, t) \vert^2 d x - \int_{\Omega_B} \frac{\mu_B}{2} \vert \nabla \theta_B (x,t) \vert^2 d x - \int_{\Gamma} \frac{\mu_S}{2} \vert \nabla_\Gamma  \theta_S (x,t) \vert^2 d\mathcal{H}_x^2.
\end{multline*}
We call $E_{TD}$ our \emph{dissipation energies}. We study the variation of the dissipation energies $E_{TD}$ under the restrictions that
\begin{equation}\label{eq65}
\theta_A \vert_{\Gamma} = \theta_B \vert_{\Gamma} = \theta_S \text{ on } (0,T).
\end{equation}
Fix $t \in (0,T)$. For $- 1 < \varepsilon <1$, $\phi_A \in C_0^2 (\overline{ \Omega_A }) \cap W^{1,2} (\Omega_A) $, $\phi_B \in C_0^2 ( \overline{ \Omega_B } ) \cap W^{1,2} (\Omega_B)$, $\phi_S \in C_0^2 ( \Gamma) \cap W^{1,2} (\Gamma)$, we set $\theta_A^\varepsilon = \theta_A + \varepsilon \phi_A$, $\theta_B^\varepsilon = \theta_B + \varepsilon \theta_B$, $\theta_S^\varepsilon = \theta_S + \varepsilon \phi_S$. Based on \eqref{eq65}, we assume that $(\theta_A^\varepsilon , \theta_B^\varepsilon , \theta_S^\varepsilon)$ satisfies that for every $- 1 < \varepsilon < 1$
\begin{equation}\label{eq66}
\theta^\varepsilon_A \vert_{\Gamma} = \theta^\varepsilon_B \vert_{\Gamma} = \theta^\varepsilon_S.
\end{equation}
From \eqref{eq65} and \eqref{eq66}, we find that
\begin{equation}\label{eq67}
\phi_A \vert_{\Gamma} = \phi_B \vert_{\Gamma} = \phi_S.
\end{equation}
A direct calculation gives
\begin{multline*}
\frac{d}{d \varepsilon} \bigg\vert_{\varepsilon = 0} E_{TD}[\theta_A^\varepsilon , \theta_B^\varepsilon , \theta_S^\varepsilon] = - \int_{\Omega_A} \mu_A \nabla \theta_A \cdot \nabla \phi_A { \ }d x - \int_{\Omega_B} \mu_B \nabla \theta_B \cdot \nabla \phi_B { \ }d x\\ - \int_{\Gamma} \mu_S \nabla_\Gamma  \theta_S \cdot \nabla_\Gamma \phi_S { \ }d\mathcal{H}_x^2.
\end{multline*}
Using integration by parts (Lemma \ref{lem7015} and Proposition \ref{prop3021}) with \eqref{eq67}, we see that
\begin{multline}\label{eq68}
\frac{d}{d \varepsilon} \bigg\vert_{\varepsilon = 0} E_{TD}[\theta_A^\varepsilon , \theta_B^\varepsilon , \theta_S^\varepsilon] = \int_{\Omega_A} (\mu_A \Delta \theta_A ) \phi_A { \ }d x + \int_{\Omega_B} ( \mu_B \Delta \theta_B ) \phi_B {  \ } d x\\ + \int_{\Gamma} ( \mu_S \Delta_\Gamma \theta_S + \mu_A (n \cdot \nabla) \theta_A\vert_{x_3 =0} - \mu_B (n \cdot \nabla) \theta_B\vert_{x_3 =0} ) \phi_S { \ }d\mathcal{H}_x^2.
\end{multline}
Therefore, we have the following proposition.
\begin{proposition}[Variation of Dissipation Energies]\label{prop62}
Let $t \in (0,T)$ and $q_A, q_B,q_S \in C (\mathbb{R}^3)$. Assume that for every $\phi_A \in C_0^2 (\overline{\Omega_A}) \cap W^{1,2} (\Omega_A) $, $\phi_B \in C_0^2 (\overline{ \Omega_B}) \cap W^{1,2} (\Omega_B)$ and $\phi_S \in C_0^2 (\Gamma) \cap W^{1,2} (\Gamma)$ satisfying \eqref{eq67},
\begin{equation*}
\frac{d}{d \varepsilon} \bigg\vert_{\varepsilon = 0} E_{TD}[\theta_A^\varepsilon , \theta_B^\varepsilon , \theta_S^\varepsilon] = \int_{\Omega_A} q_A \phi_A { \ }d x + \int_{\Omega_B} q_B \phi_B { \ }d x + \int_{\Gamma} q_S \phi_S { \ }d\mathcal{H}_x^2.
\end{equation*}
Then
\begin{equation*}
\begin{cases}
q_A = \mu_A \Delta \theta_A & \text{ in }\Omega_A,\\
q_B = \mu_B \Delta \theta_B & \text{ in } \Omega_B,\\
q_S = \mu_S \Delta_\Gamma \theta_S + \mu_A (n \cdot \nabla) \theta_A \vert_{\Gamma} - \mu_B (n \cdot \nabla) \theta_B \vert_{\Gamma} & \text{ in } \Gamma.
\end{cases}
\end{equation*}
\end{proposition}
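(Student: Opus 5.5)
The plan is to read the statement as a fundamental lemma of the calculus of variations on the coupled system. It follows from \eqref{eq68} once we choose test functions that isolate each region. Subtracting \eqref{eq68} from the assumed identity gives, for every admissible triple $(\phi_A,\phi_B,\phi_S)$ satisfying \eqref{eq67},
\begin{equation*}
\int_{\Omega_A} (q_A - \mu_A \Delta \theta_A)\phi_A \, dx + \int_{\Omega_B} (q_B - \mu_B\Delta\theta_B)\phi_B \, dx + \int_\Gamma (q_S - Q_S)\phi_S \, d\mathcal{H}^2_x = 0,
\end{equation*}
where $Q_S := \mu_S \Delta_\Gamma \theta_S + \mu_A (n\cdot\nabla)\theta_A|_\Gamma - \mu_B (n\cdot\nabla)\theta_B|_\Gamma$. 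Since $\theta_A,\theta_B,\theta_S$ are $C^2$ up to the boundary and $q_\sharp$ are continuous, all three integrands are continuous, so pointwise identification is legitimate.

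\textbf{Step 1 (bulk equations).} Take $\phi_A \in C_0^2(\Omega_A)$, i.e. $\phi_A = \mathcal{P}_A[\psi_A]$ with $\psi_A\in C_0^2(\mathbb{R}^3_+)$, together with $\phi_B = 0$ and $\phi_S = 0$. Then \eqref{eq67} holds trivially, because $\phi_A$ vanishes near $\Gamma$. This yields $\int_{\Omega_A}(q_A-\mu_A\Delta\theta_A)\phi_A\,dx=0$ for all such $\phi_A$. Since $C_0^2(\Omega_A)$ contains all $C^2$ functions compactly supported in the open set $\Omega_A$, the classical fundamental lemma gives $q_A = \mu_A\Delta\theta_A$ in $\Omega_A$. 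The same argument with the roles of $A$ and $B$ swapped gives $q_B=\mu_B\Delta\theta_B$ in $\Omega_B$.

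\textbf{Step 2 (interface equation).} With the bulk terms now zero, the identity reduces to $\int_\Gamma (q_S-Q_S)\phi_S\,d\mathcal{H}^2_x = 0$ for every $\phi_S$ that arises from an admissible triple. Given an arbitrary $\phi_S=\mathcal{P}_S[\psi_S]$ with $\psi_S\in C_0^2(\mathbb{R}^2)$, we must exhibit extensions. Take $\phi_A = \mathcal{P}_A[\psi_S(y_h)\chi(y_3)]$ and $\phi_B=\mathcal{P}_B[\psi_S(z_h)\chi(z_3)]$, where $\chi\in C_0^\infty(\mathbb{R})$ satisfies $\chi(0)=1$. These lie in $C_0^2(\overline{\Omega_A})\cap W^{1,2}(\Omega_A)$ and $C_0^2(\overline{\Omega_B})\cap W^{1,2}(\Omega_B)$ respectively; this is the same lifting used in Proposition \ref{prop31}, with the cutoff $\chi$ replacing ${\rm e}^{-\delta y_3}$ to keep compact support. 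Their traces on $\Gamma$ equal $\phi_S$, so \eqref{eq67} holds. Hence $\int_\Gamma (q_S-Q_S)\phi_S\,d\mathcal{H}^2_x=0$ for all $\phi_S\in C_0^2(\Gamma)$.

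Pulling back by $\hat{x}_S$, the integral becomes $\int_{\mathbb{R}^2} (q_S-Q_S)(\hat{x}_S(X))\,\psi_S(X)\sqrt{G_S}\,dX$. Because $\sqrt{G_S}\ge 1$ and the integrand is continuous, the fundamental lemma on $\mathbb{R}^2$ gives $q_S = Q_S$ on $\Gamma$.

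\textbf{Main obstacle.} The only delicate point is Step 2: justifying that \eqref{eq67} imposes no restriction on $\phi_S$ beyond $\phi_S \in C_0^2(\Gamma)$, which is exactly what the cutoff-lifting construction supplies. A second point to check is that \eqref{eq68} is valid for these test functions. This uses Proposition \ref{prop3021}(iii) with $\phi_\sharp\in W^{1,2}$ and $\theta_\sharp(\cdot,t)\in W^{2,2}$, which is assumed, and it requires reading the boundary term $(n\cdot\nabla)\theta_\sharp|_{x_3=0}$ in \eqref{eq68} as the trace on $\Gamma$.
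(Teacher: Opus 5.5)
Your proposal is correct and follows the paper's proof. First you test with $\phi_S\equiv 0$ and interior test functions to obtain the two bulk identities; then you use the remaining surface identity for arbitrary $\phi_S\in C_0^2(\Gamma)$ to identify $q_S$. Your cutoff lifting $\mathcal{P}_A[\psi_S(y_h)\chi(y_3)]$, $\mathcal{P}_B[\psi_S(z_h)\chi(z_3)]$, together with the pullback using the weight $\sqrt{G_S}\ge 1$, makes explicit a step the paper takes for granted: that every $\phi_S\in C_0^2(\Gamma)$ admits admissible extensions satisfying \eqref{eq67}.
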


\begin{proof}[Proof of Proposition \ref{prop62}]
Let $t \in (0,T)$ and $q_A, q_B,q_S \in C (\mathbb{R}^3)$. We first consider the case when $\phi_S \equiv 0$ in $\Gamma$. By assumption and \eqref{eq68}, we see that for all $\phi_A \in C_0^2 (\Omega_A)$, $\phi_B \in C_0^2 ( \Omega_B)$,
\begin{equation*}
\int_{\Omega_A} (\mu_A \Delta \theta_A ) \phi_A { \ }d x + \int_{\Omega_B} ( \mu_B \Delta \theta_B ) \phi_B {  \ } d x = \int_{\Omega_A} q_A \phi_A { \ }d x + \int_{\Omega_B} q_B \phi_B { \ }d x.
\end{equation*}
This implies that $q_A = \mu_A \Delta \theta_A$ in $\Omega_A$ and $q_B = \mu_B \Delta \theta_B$ in $\Omega_B$.

Next we consider the case when $\phi_S \in C_0^2 (\Gamma)$. Let $\phi_A \in C_0^2 (\overline{\Omega_A})$, $\phi_B \in C_0^2 (\overline{ \Omega_B})$, $\phi_S \in C_0^2 (\Gamma)$ satisfying \eqref{eq67}. Since $q_A = \mu_A \Delta \theta_A$ in $\Omega_A$ and $q_B = \mu_B \Delta \theta_B$ in $\Omega_B$, it follows from \eqref{eq68} to find that 
\begin{equation*}
 \int_{\Gamma} ( \mu_S \Delta_\Gamma \theta_S + \mu_A (n \cdot \nabla) \theta_A\vert_{\Gamma} - \mu_B (n \cdot \nabla) \theta_B\vert_{\Gamma} ) \phi_S { \ }d\mathcal{H}_x^2 = \int_{\Gamma} q_S \phi_S { \ }d\mathcal{H}_x^2.
\end{equation*}
Since the above quality holds for all $\phi_S \in C_0^2 (\Gamma)$, we see that $q_S = \mu_S \Delta_\Gamma \theta_S + \mu_A (n \cdot \nabla) \theta_A \vert_{\Gamma} - \mu_B (n \cdot \nabla) \theta_B \vert_{\Gamma}$. Therefore, Proposition \ref{prop62} is proved.
\end{proof}

Let us derive equations \eqref{eq12}. From Proposition \ref{prop62} we set
\begin{equation}\label{eq69}
\begin{cases}
Q_A = Q_A(x,t) = \mu_A \Delta \theta_A,\\
Q_B = Q_B (x,t) = \mu_B \Delta \theta_B,\\
Q_S = Q_S ( x , t) = \mu_S \Delta_\Gamma \theta_S + \mu_A (n \cdot \nabla) \theta_A \vert_{\Gamma} - \mu_B (n \cdot \nabla) \theta_B \vert_{\Gamma}.
\end{cases}
\end{equation}

\noindent Now we assume that the time rate of change of the heat energy is equal to the force derived from the variation of energy dissipation due to thermal diffusion, that is, suppose that for every $0<t<T$ and $\Lambda \subset \mathbb{R}^3$,
\begin{align*}
\frac{d}{d t}\int_{\Omega_A \cap \Lambda} \varrho_A \mathcal{C}_A \theta_A { \ }d x = \int_{\mathbb{R}_{+}^3 \cap \Lambda} Q_A { \ }d x,\\
\frac{d}{d t}\int_{\Omega_B \cap \Lambda} \varrho_B \mathcal{C}_B \theta_B { \ }d x = \int_{\mathbb{R}_{-}^3 \cap \Lambda} Q_B { \ }d x,\\
\frac{d}{d t}\int_{\Gamma \cap \Lambda} \varrho_S \mathcal{C}_S \theta_S { \ }d\mathcal{H}_x^2 = \int_{\Gamma \cap \Lambda} Q_S { \ }d\mathcal{H}_x^2.
\end{align*}
Then we apply the transport theorems \eqref{eq61}-\eqref{eq63} and \eqref{eq64} to derive
\begin{equation}\label{eq6010}
\begin{cases}
\varrho_A D_t^A (\mathcal{C}_A \theta_A) = Q_A & \text{ in } \Omega_A \times (0, T),\\
\varrho_B D_t^B (\mathcal{C}_B \theta_B) = Q_B & \text{ in } \Omega_B \times (0, T),\\
\varrho_S D_t^S ( \mathcal{C}_S \theta_S) = Q_S & \text{ in } \Gamma \times (0, T).
\end{cases}
\end{equation}
Now we assume that $v_\sharp \equiv { }^t (0,0,0)$, $\mathcal{C}_\sharp$, $\varrho_\sharp$ are positive constants, and that $\varrho_\sharp \mathcal{C}_\sharp \equiv \rho_\sharp$ for some $\rho_\sharp \in \mathbb{R}_+$ $(\sharp \in \{ A , B , S \})$. Combining \eqref{eq69}, \eqref{eq6010}, and \eqref{eq65}, we obtain
\begin{equation*}
\begin{cases}
\rho_A \partial_t \theta_A = \mu_A \Delta \theta_A & \text{ in } \Omega_A \times (0, T),\\
\rho_B \partial_t \theta_B = \mu_B \Delta \theta_B & \text{ in } \Omega_B \times (0, T),\\
\rho_S \partial_t \theta_S = \mu_S \Delta_\Gamma \theta_S + \mu_A (n \cdot \nabla) \theta_A \vert_{\Gamma} - \mu_B (n \cdot \nabla) \theta_B \vert_{\Gamma} & \text{ in } \Gamma \times (0, T),\\
\theta_A \vert_{\Gamma} = \theta_B \vert_{\Gamma} = \theta_S & \text{ in }\Gamma \times (0, T).
\end{cases}
\end{equation*}
Therefore, we have our heat equations \eqref{eq12}.

\section{Appendix $(\rm{II})$: Representation Formulas for Differential Operators}\label{sect7}

In this section, we study several representation formulas for differential operators in two unbounded domains $\Omega_A,\Omega_B$ and the interface $\Gamma$. Let $\varphi \in BC^2(\mathbb{R}^2)$. Let us recall that $\alpha,\beta, \alpha' , \beta' \in \{ 1,2\}$, $i,j,\ell, i',j',\ell' \in \{ 1,2,3 \}$, $\varphi_\alpha = \varphi_\alpha(x_h) = \partial \varphi/{\partial x_\alpha}$, $\varphi_{\alpha \beta} = \varphi_{\alpha \beta}(x_h) = \partial^2 \varphi/{\partial x_\alpha \partial x_\beta}$,
\begin{align*}
\Gamma &= \{ x \in \mathbb{R}^3; x_3 = \varphi (x_1,x_2) \},{ \ }\mathbb{R}^2 = \{ X = { }^t(X_1,X_2); X_1,X_2 \in \mathbb{R} \},\\
\Omega_A &= \{ x \in \mathbb{R}^3; x_3 > \varphi (x_1,x_2) \},{ \ }\mathbb{R}^3_+ = \{ y \in \mathbb{R}^3; y_3 > 0 \},\\
\Omega_B &= \{ x \in \mathbb{R}^3; x_3 < \varphi (x_1,x_2) \},{ \ }\mathbb{R}^3_- = \{ z \in \mathbb{R}^3; z_3 < 0 \},\\
\overline{\Omega_A} &= \{ x \in \mathbb{R}^3; x_3 \geq \varphi (x_1,x_2) \},{ \ } \overline{\mathbb{R}^3_+} = \{ y \in \mathbb{R}^3; y_3 \geq 0 \},\\
\overline{\Omega_B} &= \{ x \in \mathbb{R}^3; x_3 \leq \varphi (x_1,x_2) \},{ \ } \overline{\mathbb{R}^3_-} = \{ z \in \mathbb{R}^3; z_3 \leq 0 \}.
\end{align*}
One goal of this section is to derive the following three divergence theorems:
\begin{align*}
\int_{\Omega_A } \nabla \cdot F_A { \ }d x &= - \int_\Gamma F_A \cdot n { \ } d\mathcal{H}_x^2,\\
\int_{\Omega_B } \nabla \cdot F_B { \ }d x &= \int_\Gamma F_B \cdot n { \ } d\mathcal{H}_x^2,\\
\int_{\Gamma } \nabla_\Gamma \cdot F_S { \ }d \mathcal{H}_x^2 & = - \int_\Gamma H_\Gamma (F_S \cdot n) { \ } d\mathcal{H}_x^2,
\end{align*}
where $F_A = { }^t (F_1^A , F_2^A, F_3^A) \in [C_0^1 (\overline{\Omega_A})]^3$, $F_B= { }^t (F_1^B , F_2^B, F_3^B)  \in [C_0^1 ( \overline{\Omega_B})]^3$, $F_S= { }^t (F_1^S , F_2^S, F_3^S)  \in [C_0^1 (\Gamma)]^3$, the symbol $d \mathcal{H}_x^2$ denotes the $2$-dimensional Hausdorff measure, and $n = n(x)= { }^t(n_1,n_2,n_3)$ is the unit outer normal vector at $x \in \Gamma$ defined by \eqref{eq71}. See subsection \ref{subsec71} for differential operator $\nabla_\Gamma$ and the mean curvature $H_\Gamma$.

In subsections \ref{subsec71}, \ref{subsec72}, and \ref{subsec73}, we study representation formulas for the surface $\Gamma$, the domain $\Omega_A$, and the domain $\Omega_B$, respectively. In subsection \ref{subsec74}, we derive the three divergence theorems, and introduce their applications. Note that we often use the Einstein summation convention in this section, i.e.
\begin{equation*}
g_{\alpha \beta} g^\beta = \sum_{\beta = 1}^2 g_{\alpha \beta}g^\beta \text{ and } g^{i j} g_{i j} = \sum_{i,j = 1}^3 g^{i j} g_{i j}. 
\end{equation*}

\subsection{Representation Formulas for the Surface $\Gamma$}\label{subsec71}
Let us study some representation formulas for the surface $\Gamma$. For all $X = { }^t(X_1,X_2) \in \mathbb{R}^2$, we set
\begin{equation*}
\hat{x}_S = \hat{x}_S (X) =
\begin{pmatrix}
\hat{x}_1^S\\
\hat{x}_2^S\\
\hat{x}_3^S
\end{pmatrix} :=
\begin{pmatrix}
X_1\\
X_2\\
\varphi (X_1,X_2)
\end{pmatrix}.
\end{equation*}
It is clear that the mapping $\hat{x}_S: \mathbb{R}^2 \to \Gamma$ is bijective and that $\Gamma = \{ x \in \mathbb{R}^3; x = \hat{x}_S(X), X \in \mathbb{R}^2 \}$ (see Lemma \ref{lem22}). Define $g^S_1 =g^S_1(X)$, $g^S_2=g^S_2(X)$ by
\begin{equation*}
g^S_1(X) := \frac{\partial \hat{x}_S}{\partial X_1} = \begin{pmatrix}
1\\
0\\
\varphi_1
\end{pmatrix},{ \ }
g^S_2(X) := \frac{\partial \hat{x}_S}{\partial X_2} = \begin{pmatrix}
0\\
1\\
\varphi_2
\end{pmatrix}.
\end{equation*}
Here $\varphi_1 = \varphi_1(X) = \partial \varphi/{\partial X_1}$, $\varphi_2 = \varphi_2(X) = \partial \varphi/{\partial X_2}$. Direct calculations give
\begin{equation*}
g_ 1^S \times g_2^S = 
\begin{pmatrix}
- \varphi_1\\
- \varphi_2\\
1
\end{pmatrix},{ \ }\vert g_1^S \times g_2^S \vert = \sqrt{1 + \varphi_1^2 + \varphi_2^2}.
\end{equation*}
Set $g^S_{\alpha \beta} =g^S_{\alpha \beta}(X) := g^S_\alpha \cdot g^S_\beta$, $(g_S^{\alpha \beta})_{2 \times 2} := (g^S_{\alpha \beta})_{2 \times 2}^{-1}$, and $G_S = G_S (X) := {\rm{det}} (g^S_{\alpha \beta})_{2 \times 2}$. We easily check that
\begin{align*}
(g^S_{\alpha \beta})_{2 \times 2 } & =
\begin{pmatrix}
g^S_{11} & g^S_{12}\\
g^S_{21} & g^S_{22}
\end{pmatrix} =
\begin{pmatrix}
1 + \varphi_1^2 & \varphi_1 \varphi_2\\
\varphi_1 \varphi_2 & 1 + \varphi_2^2
\end{pmatrix},\\
(g_S^{\alpha \beta})_{2 \times 2 }  & =
\begin{pmatrix}
g_S^{11} & g_S^{12}\\
g_S^{21} & g_S^{22}
\end{pmatrix}= \frac{1}{1 + \varphi_1^2 + \varphi_2^2}
\begin{pmatrix}
1 + \varphi_2^2 & - \varphi_1 \varphi_2\\
-\varphi_1 \varphi_2 & 1 + \varphi_1^2
\end{pmatrix},
\end{align*}
and that $G_S = 1 + \varphi_1^2 + \varphi_2^2 = \vert g_1^S \times g_2^S \vert^2$. Set
\begin{equation*}
g_S^\alpha = g_S^\alpha(X) := g_S^{\alpha \beta}g^S_\beta \bigg(= \sum_{\beta=1}^2 g_S^{\alpha \beta}g^S_\beta \bigg).
\end{equation*}
It is clear that
\begin{equation*}
g_S^1(X) = \frac{1}{1 + \varphi_1^2 + \varphi_2^2}\begin{pmatrix}
1 + \varphi_2^2\\
- \varphi_1 \varphi_2\\
\varphi_1
\end{pmatrix},{ \ }
g_S^2(X) = \frac{1}{1 + \varphi_1^2 + \varphi_2^2}\begin{pmatrix}
- \varphi_1 \varphi_2\\
1 + \varphi_1^2\\
\varphi_2
\end{pmatrix},
\end{equation*}
and $g^{\alpha \beta}_S = g_S^\alpha \cdot g_S^\beta$. Let $n = n(x)= { }^t (n_1,n_2,n_3)$ be the unit outer normal vector at $x = { }^t (x_1,x_2,x_3) \in \Gamma$ defined by
\begin{equation}\label{eq71}
n(x_1,x_2, x_3) = \frac{ g_1^S \times g_2^S }{ \vert g_1^S \times g_2^S \vert } = \frac{1}{\sqrt{1 + \varphi_1^2 + \varphi_2^2}} 
\begin{pmatrix}
- \varphi_1\\
- \varphi_2\\
1
\end{pmatrix}.
\end{equation}
For all $f_S \in C^1 (\mathbb{R}^3)$, $F_S = { }^t(F_1^S,F_2^S ,F_3^S) \in [C^1(\mathbb{R}^3)]^3$, and $x \in \Gamma$, we define $\partial_j^\Gamma f_S := \partial_j f - n_j (n \cdot \nabla) f_S$ and ${\rm{div}}_\Gamma F_S := \partial_1^\Gamma F_1^S + \partial_2^\Gamma F_2^S + \partial_3^\Gamma F_3^S$. Write $\nabla_\Gamma = { }^t(\partial_1^\Gamma , \partial_2^\Gamma , \partial_3^\Gamma)$ and $\Delta_\Gamma = (\partial_1^\Gamma)^2 +(\partial_2^\Gamma)^2 +(\partial_3^\Gamma)^2$. From \cite[Lemma 3.1 and Theorem 2.4]{Kob23a}(see also \cite{Jos11} and \cite{Cia05}), we have the following representation formulas.
\begin{lemma}\label{lem71}
$(\rm{i})$ For all $f_S , f_S^\natural , f_S^\flat \in C^2 (\mathbb{R}^3)$,
\begin{align}
\int_\Gamma f_S(x) { \ }d \mathcal{H}^2_x &= \int_{\mathbb{R}^2} \hat{f}_S(X) \sqrt{G_S(X)} { \ }dX,\\
\int_\Gamma \partial^\Gamma_j f_S { \ }d \mathcal{H}^2_x &= \int_{\mathbb{R}^2} g_S^{\alpha \beta} \frac{\partial \hat{x}^S_j }{\partial X_\alpha} \frac{ \partial \hat{f}_S }{\partial X_\beta} \sqrt{G_S} { \ }dX,\\
\int_\Gamma \partial_i^\Gamma \partial^\Gamma_j f_S { \ }d \mathcal{H}^2_x &= \int_{\mathbb{R}^2}g_S^{\alpha' \beta'} \frac{\partial \hat{x}^S_i }{\partial X_{\alpha'}}  \frac{\partial}{\partial X_{\beta'}}\bigg( g_S^{\alpha \beta} \frac{\partial \hat{x}^S_j }{\partial X_\alpha} \frac{ \partial \hat{f}_S }{\partial X_\beta} \bigg) \sqrt{G_S} { \ }dX,\\
\int_\Gamma \nabla_\Gamma f_S^\natural \cdot \nabla_\Gamma f_S^\flat { \ }d \mathcal{H}^2_x & = \int_{\mathbb{R}^2} g_S^{\alpha \beta} \frac{\partial \hat{f}_S^\natural }{\partial X_\alpha} \frac{ \partial \hat{f}_S^\flat }{\partial X_\beta} \sqrt{G_S} { \ }dX,\\
\int_\Gamma \Delta_\Gamma f_S { \ }d \mathcal{H}^2_x & = \int_{\mathbb{R}^2} \frac{1}{\sqrt{G_S}} \frac{\partial}{\partial X_\alpha} \bigg( \sqrt{G_S} g_S^{\alpha \beta} \frac{\partial \hat{f}_S}{\partial X_\beta} \bigg)  \sqrt{G_S} { \ }dX,
\end{align}
where $\hat{f}_S = \hat{f}_S (X) = f_S (\hat{x}_S(X) )$.\\
$(\rm{ii})$ For all $F_S = { }^t (F_1^S, F_2^S,F_3^S) \in [C^1 (\mathbb{R}^3)]^3$ and $\Phi_S \in C^1 (\mathbb{R}^4)$,
\begin{align}
\int_\Gamma \nabla_\Gamma  \cdot F_S { \ }d \mathcal{H}^2_x &= \int_{\mathbb{R}^2} g_S^{\alpha} \cdot \frac{\partial \hat{F}_S }{\partial X_\alpha} \sqrt{G_S} { \ }dX,\label{eq77}\\
\int_\Gamma \frac{\partial \Phi_S}{\partial t} { \ }d \mathcal{H}_x^2 &= \int_{\mathbb{R}^2} \frac{\partial \hat{\Phi}_S}{\partial t} \sqrt{G_S} { \ }d X. 
\end{align}
Here $\hat{F}_S = \hat{F}_S (X) = F_S (\hat{x}_S(X) )$ and $\hat{\Phi}_S = \hat{\Phi}_S (X,t) = \Phi_S (\hat{x}_S(X),t)$.
\end{lemma}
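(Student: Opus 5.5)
The plan is to verify each identity first for smooth $f_S$ by pulling back along the parametrization $\hat{x}_S$. Every integral over $\Gamma$ becomes an integral over $\mathbb{R}^2$ with surface element $\sqrt{G_S}\,dX$, and the tangential operators $\partial_j^\Gamma$ are rewritten through the tangent frame $g^S_\alpha$ and its dual frame $g_S^\alpha$. The first formula is just the definition of the surface integral for the graph $x_3=\varphi(X)$. Here $|g^S_1\times g^S_2|=\sqrt{G_S}$, and $\hat{x}_S$ is a bijection by Lemma \ref{lem22}.

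The key pointwise identity is
\begin{equation*}
\partial_j^\Gamma f_S(\hat{x}_S(X)) = g_S^{\alpha\beta}\frac{\partial \hat{x}^S_j}{\partial X_\alpha}\frac{\partial \hat{f}_S}{\partial X_\beta},
\end{equation*}
which says that $\nabla_\Gamma f_S = g_S^\beta\,\partial_{X_\beta}\hat{f}_S$.

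To prove it, decompose $\nabla f_S$ in the basis $\{g^S_1,g^S_2,n\}$ of $\mathbb{R}^3$. The chain rule gives $\partial_{X_\beta}\hat{f}_S = g^S_\beta\cdot\nabla f_S$. Since $g^S_\alpha\cdot n=0$, we have $P:=I-n\otimes n = g^S_\alpha\otimes g_S^\alpha$. Here $g_S^\alpha = g_S^{\alpha\beta}g^S_\beta$ is the dual basis, so $g_S^\alpha\cdot g^S_\beta=\delta^\alpha_\beta$. Hence
\begin{equation*}
\nabla_\Gamma f_S = P\nabla f_S = g_S^{\alpha}(g^S_\alpha\cdot\nabla f_S)=g_S^{\alpha\beta}g^S_\alpha\,\partial_{X_\beta}\hat{f}_S.
\end{equation*}
Its $j$-th component is the claimed formula, because $(g^S_\alpha)_j=\partial \hat{x}^S_j/\partial X_\alpha$.

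Integrating this identity against $\sqrt{G_S}\,dX$ gives the second formula. Taking the dot product of two such expressions and using $g^S_\alpha\cdot g^S_{\alpha'}=g^S_{\alpha\alpha'}$ gives $\nabla_\Gamma f^\natural\cdot\nabla_\Gamma f^\flat = g_S^{\alpha\beta}\partial_\alpha\hat{f}^\natural\partial_\beta\hat{f}^\flat$, which is the fourth formula. For the third, note that $\partial_j^\Gamma f_S$ restricted to $\Gamma$ is again a function on $\Gamma$ whose pullback is $g_S^{\alpha\beta}\partial_\alpha\hat{x}^S_j\partial_\beta\hat{f}_S$. The operator $\partial_i^\Gamma$ only depends on values on $\Gamma$, since it is a tangential derivative. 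Applying the pointwise identity once more, with $\hat{f}_S$ replaced by this pullback, yields the iterated expression.

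The divergence formula \eqref{eq77} is the contraction $\nabla_\Gamma\cdot F_S = g_S^\alpha\cdot\partial_{X_\alpha}\hat{F}_S$, obtained by summing the pointwise identity over components. The time-derivative formula is immediate, because $\sqrt{G_S}$ does not depend on $t$.

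The main obstacle is the Laplace--Beltrami formula: showing that $\Delta_\Gamma f_S = \sum_i\partial_i^\Gamma\partial_i^\Gamma f_S$ equals $G_S^{-1/2}\partial_\alpha(\sqrt{G_S}g_S^{\alpha\beta}\partial_\beta\hat{f}_S)$. The approach is to write $\Delta_\Gamma f = g_S^\alpha\cdot\partial_\alpha(g_S^\beta\partial_\beta\hat{f})$, which is \eqref{eq77} applied to $F=\nabla_\Gamma f$. Expanding gives
\begin{equation*}
g_S^{\alpha\beta}\partial_\alpha\partial_\beta\hat{f} + (g_S^\alpha\cdot\partial_\alpha g_S^\beta)\,\partial_\beta\hat{f}.
\end{equation*}
One then checks the identity $g_S^\alpha\cdot\partial_\alpha g_S^\beta = G_S^{-1/2}\partial_\alpha(\sqrt{G_S}g_S^{\alpha\beta})$. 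This is the standard Christoffel-symbol computation:
\begin{itemize}
\item $\partial_\alpha g_S^{\alpha\beta} = \partial_\alpha g_S^\alpha\cdot g_S^\beta + g_S^\alpha\cdot\partial_\alpha g_S^\beta$;
\item $\partial_\alpha\log\sqrt{G_S} = g_S^{\gamma}\cdot\partial_\alpha g^S_\gamma$;
\item the normal components of $\partial_\alpha g_S^\beta$ drop out when dotted with the tangent vectors $g_S^\alpha$.
\end{itemize}
Alternatively, since $\varphi\in C^2$, one can verify the identity directly from the explicit matrices $g_S^{\alpha\beta}$ and $G_S=1+\varphi_1^2+\varphi_2^2$. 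Finally, integrating over $\mathbb{R}^2$ against $\sqrt{G_S}\,dX$ gives the stated integral identities. Since these are quoted from \cite{Kob23a}, it suffices to present the pointwise identities and cite that source for the remaining details.
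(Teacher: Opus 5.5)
Your argument is correct, but the paper does not actually prove this lemma: it imports the formulas from \cite{Kob23a} (Lemma 3.1 and Theorem 2.4 there), whereas you derive them. Your key step is the dual-frame form $I-n\otimes n=g^S_\alpha\otimes g_S^\alpha$ of the tangential projection. It gives the pointwise identity $(\partial_j^\Gamma f_S)(\hat{x}_S(X))=g_S^{\alpha\beta}\,\partial_{X_\alpha}\hat{x}^S_j\,\partial_{X_\beta}\hat{f}_S$, from which the rest follows:
the second to fourth formulas and \eqref{eq77} come from it directly (the third by applying it to $\partial_j^\Gamma f_S$);
the Laplace--Beltrami formula comes from the pointwise form of \eqref{eq77} applied to $\nabla_\Gamma f_S$, plus a Christoffel identity;
the area formula $d\mathcal{H}^2_x=\sqrt{G_S}\,dX$ then integrates everything.

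The citation buys brevity. Your route buys a self-contained proof that shows exactly which regularity is used: $\varphi\in C^2$, so that $n$ and $\partial_j^\Gamma f_S$ are $C^1$ and $\partial_\alpha g^S_\gamma=\partial_\gamma g^S_\alpha$. More usefully, it yields \emph{pointwise} identities, and those are what the paper relies on afterwards. Remark \ref{rem72}(iv)--(v), and through it Remark \ref{rem34}, use $\int_\Gamma|\partial_i^\Gamma\partial_j^\Gamma f_S|^2\,d\mathcal{H}^2_x$, $\int_\Gamma|\Delta_\Gamma f_S|^2\,d\mathcal{H}^2_x$ and $\int_\Gamma|\partial_j^\Gamma f_S|\,d\mathcal{H}^2_x$. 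These are not literally consequences of the integrated formulas as stated, but follow at once from your pointwise ones and the area formula. This viewpoint also fixes the meaning of the lemma for arbitrary $f_S\in C^2(\mathbb{R}^3)$, where the integrals may diverge: each identity holds whenever one side is absolutely integrable, e.g.\ for compactly supported $f_S$.

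When you write out the Christoffel step, state the two facts your bullet list leaves implicit. First, differentiating $g_S^\alpha\cdot g^S_\gamma=\delta^\alpha_\gamma$ shows that the tangential part of $\partial_\alpha g_S^\alpha$ is $-(g_S^\alpha\cdot\partial_\alpha g^S_\gamma)\,g_S^\gamma$. Second, the symmetry $\partial_\alpha g^S_\gamma=\partial_\gamma g^S_\alpha$ identifies $g_S^\alpha\cdot\partial_\alpha g^S_\gamma$ with $g_S^\alpha\cdot\partial_\gamma g^S_\alpha=\partial_\gamma\log\sqrt{G_S}$. Together these give $(\partial_\alpha g_S^\alpha)\cdot g_S^\beta+g_S^{\alpha\beta}\,\partial_\alpha\log\sqrt{G_S}=0$. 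Combined with your first bullet, this is exactly the identity $g_S^\alpha\cdot\partial_\alpha g_S^\beta=G_S^{-1/2}\partial_\alpha(\sqrt{G_S}\,g_S^{\alpha\beta})$ that you need.
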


\begin{remark}\label{rem72}
$(\rm{i})$ We easily check that for $\psi_S \in C^1(\mathbb{R}^2)$,
\begin{align*}
g_S^{\alpha \beta} \frac{\partial \hat{x}_1^S}{\partial X_\alpha} \frac{\partial \psi_S}{\partial X_\beta} & = \frac{1 + \varphi_2^2}{G_S} \frac{\partial \psi_S}{\partial X_1} -  \frac{ \varphi_1 \varphi_2}{G_S} \frac{\partial \psi_S}{\partial X_2},\\
g_S^{\alpha \beta} \frac{\partial \hat{x}_2^S}{\partial X_\alpha} \frac{\partial \psi_S}{\partial X_\beta} & = - \frac{ \varphi_1 \varphi_2}{G_S} \frac{\partial \psi_S}{\partial X_1} +  \frac{ 1 + \varphi_1^2 }{G_S} \frac{\partial \psi_S}{\partial X_2},\\
g_S^{\alpha \beta} \frac{\partial \hat{x}_3^S}{\partial X_\alpha} \frac{\partial \psi_S}{\partial X_\beta} & = \frac{ \varphi_1}{G_S} \frac{\partial \psi_S}{\partial X_1} +  \frac{ \varphi_2}{G_S} \frac{\partial \psi_S}{\partial X_2}.
\end{align*}
$(\rm{ii})$ Let $\kappa_S >0$. For $\psi_S \in C^2 (\mathbb{R}^2)$, we set
\begin{equation}\label{eq79}
\mathscr{L}_3 \psi_S := -\frac{\kappa_S}{\sqrt{G_S}} \frac{\partial}{\partial X_\alpha} \bigg(\sqrt{G_S} g_S^{\alpha \beta} \frac{\partial \psi_S}{\partial X_\beta} \bigg).
\end{equation}
A direct calculation gives
\begin{equation*}
\mathscr{L}_3 \psi_S  = - \kappa_S \bigg( g_S^{\alpha \beta} \frac{\partial^2 \psi_S}{\partial X_\alpha \partial X_\beta} + \frac{\partial g_S^{\alpha \beta}}{\partial X_\alpha} \frac{\partial \psi_S}{\partial X_\beta} + \frac{1}{2} \frac{g_S^{\alpha \beta}}{G_S} \frac{\partial G_S }{\partial X_\alpha} \frac{\partial \psi_S}{\partial X_\beta} \bigg).
\end{equation*}
It is easy to check that
\begin{align*}
g_S^{\alpha \beta} \frac{\partial^2 \psi_S}{\partial X_\alpha \partial X_\beta} &= \frac{1 + \varphi_2^2}{G_S} \frac{\partial^2 \psi_S}{\partial X_1^2} + \frac{1+ \varphi_1^2}{G_S}\frac{\partial^2 \psi_S}{\partial X_2^2} - \frac{2 \varphi_1 \varphi_2}{G_S}\frac{\partial^2 \psi_S}{\partial X_1 \partial X_2}\\
&= \Delta_X \psi_S - \frac{\varphi_1^2}{G_S} \frac{\partial^2 \psi_S}{\partial X_1^2} - \frac{\varphi_2^2}{G_S}\frac{\partial^2 \psi_S}{\partial X_2^2} - \frac{2 \varphi_1 \varphi_2}{G_S}\frac{\partial^2 \psi_S}{\partial X_1 \partial X_2},
\end{align*}
and that
\begin{multline*}
\frac{\partial g_S^{\alpha \beta}}{\partial X_\alpha} \frac{\partial \psi_S}{\partial X_\beta} + \frac{1}{2}\frac{g_S^{\alpha \beta}}{G_S} \frac{\partial G_S }{\partial X_\alpha} \frac{\partial \psi_S}{\partial X_\beta}
 = \frac{\varphi_1 ( 2 \varphi_1 \varphi_2 \varphi_{12} - \varphi_{11} - \varphi_{22} -  \varphi_2^2 \varphi_{11} -  \varphi_1^2 \varphi_{22}) }{G^2_S} \frac{\partial \psi_S}{\partial X_1}\\
  + \frac{ \varphi_2 ( 2 \varphi_1 \varphi_2 \varphi_{12} - \varphi_{11} - \varphi_{22} -  \varphi_2^2 \varphi_{11} -  \varphi_1^2 \varphi_{22}) }{G^2_S}  \frac{\partial \psi_S}{\partial X_2},
\end{multline*}
where $\varphi_{\alpha \beta} = \partial^2 \varphi/{\partial X_\alpha \partial X_\beta }$. Thus, we find that
\begin{equation}\label{eq7010}
\mathscr{L}_3 \psi_S = -\kappa_S \Delta_X \psi_S + \mathscr{B}_3 \psi_S.
\end{equation}
Here
\begin{multline}\label{eq7011}
\mathscr{B}_3 \psi_S := \kappa_S \bigg( \frac{\varphi_1^2}{G_S} \frac{\partial^2 \psi_S}{\partial X_1^2} + \frac{\varphi_2^2}{G_S}\frac{\partial^2 \psi_S}{\partial X_2^2} + \frac{2 \varphi_1 \varphi_2}{G_S}\frac{\partial^2 \psi_S}{\partial X_1 \partial X_2}\\
+ \frac{ \varphi_1 ( \varphi_{11} + \varphi_{22} + \varphi_2^2 \varphi_{11} +  \varphi_1^2 \varphi_{22} - 2 \varphi_1 \varphi_2 \varphi_{12}) }{G^2_S} \frac{\partial \psi_S}{\partial X_1}\\
+ \frac{ \varphi_2 ( \varphi_{11} + \varphi_{22} + \varphi_2^2 \varphi_{11} +  \varphi_1^2 \varphi_{22} - 2 \varphi_1 \varphi_2 \varphi_{12}) }{G^2_S}  \frac{\partial \psi_S}{\partial X_2} \bigg).
\end{multline}
$(\rm{iii})$ Let $H_\Gamma = H_\Gamma (x)$ be the mean curvature at $x \in \Gamma$ in the direction $n$ defined by $H_\Gamma = - {\rm{div}}_\Gamma n$ at $x \in \Gamma$. From \eqref{eq71} and \eqref{eq77}, we observe that
\begin{align*}
\int_\Gamma H_\Gamma { \ }d \mathcal{H}_x^2 & = \int_{\Gamma} (- {\rm{div}}_\Gamma n ) { \ }d \mathcal{H}_x^2\\
 &= \int_{\mathbb{R}^2} - g_S^\alpha \cdot \frac{\partial}{\partial X_\alpha} \bigg( \frac{g_1^S \times g_2^S}{ \vert g_1^S \times g_2^S \vert} \bigg) \sqrt{G_S} { \ }d X\\
 &= \int_{\mathbb{R}^2}\bigg( \frac{\varphi_{11} + \varphi_{22} + \varphi_2^2 \varphi_{11} + \varphi_1^2 \varphi_{22} -  2 \varphi_1 \varphi_2 \varphi_{12}}{ G_S \sqrt{G_S}} \bigg) \sqrt{G_S} { \ }d X.
\end{align*}
Set $\hat{H}_\Gamma = \hat{H}_\Gamma (X)$ as follows:
\begin{equation}\label{eq7012}
\hat{H}_\Gamma = \frac{\varphi_{11} + \varphi_{22} + \varphi_2^2 \varphi_{11} + \varphi_1^2 \varphi_{22} -  2 \varphi_1 \varphi_2 \varphi_{12}}{ G_S \sqrt{G_S}}.
\end{equation}
We also call $\hat{H}_\Gamma$ the \emph{mean curvature}. We easily check that
\begin{equation*}
\sup_{X \in \mathbb{R}^2} \vert \hat{H}_\Gamma (X) \vert \leq C (\Vert \nabla_X \varphi \Vert_{W^{1,\infty} (\mathbb{R}^2)}) < + \infty.
\end{equation*}
$(\rm{iv})$ From Lemma \ref{lem71}, we find that for each $f_S \in C^2(\mathbb{R}^3)$,
\begin{align*}
\int_\Gamma \vert f_S (x) \vert^2 { \ }d \mathcal{H}^2_x &= \int_{\mathbb{R}^2} \vert \hat{f}_S(X) \vert^2 \sqrt{G_S(X)} { \ }dX,\\
\int_\Gamma \vert \nabla_\Gamma f_S \vert^2 { \ }d \mathcal{H}^2_x & = \int_{\mathbb{R}^2} g_S^{\alpha \beta} \frac{\partial \hat{f}_S}{\partial X_\alpha} \frac{ \partial \hat{f}_S}{\partial X_\beta} \sqrt{G_S} { \ }dX,\\
\int_\Gamma \vert \partial_i^\Gamma \partial^\Gamma_j f_S \vert^2 { \ }d \mathcal{H}^2_x &= \int_{\mathbb{R}^2} \bigg\vert g_S^{\alpha' \beta'} \frac{\partial \hat{x}^S_i }{\partial X_{\alpha'}}  \frac{\partial}{\partial X_{\beta'}}\bigg( g_S^{\alpha \beta} \frac{\partial \hat{x}^S_j }{\partial X_\alpha} \frac{ \partial \hat{f}_S }{\partial X_\beta} \bigg) \bigg\vert^2 \sqrt{G_S} { \ }dX,\\
\int_\Gamma \vert \Delta_\Gamma f_S \vert^2 { \ }d \mathcal{H}^2_x & = \int_{\mathbb{R}^2} \bigg\vert \frac{1}{\sqrt{G_S}} \frac{\partial}{\partial X_\alpha} \bigg( \sqrt{G_S} g_S^{\alpha \beta} \frac{\partial \hat{f}_S}{\partial X_\beta} \bigg) \bigg\vert^2 \sqrt{G_S} { \ }dX,
\end{align*}
where $\hat{f}_S = \hat{f}_S(X) = f_S (\hat{x}_S(X))$. For $\psi_S \in L_{loc}^1 (\mathbb{R}^2)$, we define
\begin{align}
\Vert \psi_S \Vert_{\mathcal{L}^2(\Gamma)} &:= \bigg( \int_{\mathbb{R}^2} \vert \psi_S \vert^2 \sqrt{G_S} { \ }dX\bigg)^{1/2},\\
\Vert \psi_S \Vert_{\dot{\mathcal{W}}^{1,2} (\Gamma) } & := \bigg( \int_{\mathbb{R}^2} g_S^{\alpha \beta} \frac{\partial \psi_S}{\partial X_\alpha} \frac{ \partial \psi_S}{\partial X_\beta} \sqrt{G_S} { \ }dX \bigg)^{1/2},\\
\Vert \psi_S \Vert_{\mathring{\mathcal{W}}^{2,2} (\Gamma) }  & := \bigg( \int_{\mathbb{R}^2} \bigg\vert \frac{1}{\sqrt{G_S}} \frac{\partial}{\partial X_\alpha} \bigg( \sqrt{G_S} g_S^{\alpha \beta} \frac{\partial \psi_S }{\partial X_\beta} \bigg) \bigg\vert^2 \sqrt{G_S} { \ }dX \bigg)^{1/2},\label{eq7015}\\
\Vert \psi_S \Vert_{\mathcal{W}^{2,2} (\Gamma) } & := \bigg( \Vert \psi_S \Vert_{\mathcal{L}^2(\Gamma)}^2 +\Vert \psi_S \Vert_{\dot{\mathcal{W}}^{1,2} (\Gamma) }^2 + \Vert \psi_S \Vert_{\mathring{\mathcal{W}}^{2,2} (\Gamma) }^2 \bigg)^{1/2},
\end{align}
and
\begin{equation}\label{eq7017}
\Vert \psi_S \Vert_{\dot{\mathcal{W}}^{2,2} (\Gamma) } := \bigg( \sum_{i,j=1}^3 \int_{\mathbb{R}^2} \bigg\vert g_S^{\alpha' \beta'} \frac{\partial \hat{x}^S_i }{\partial X_{\alpha'}}  \frac{\partial}{\partial X_{\beta'}}\bigg( g_S^{\alpha \beta} \frac{\partial \hat{x}^S_j }{\partial X_\alpha} \frac{ \partial \psi_S }{\partial X_\beta} \bigg) \bigg\vert^2 \sqrt{G_S} { \ }dX \bigg)^{1/2}.
\end{equation}
$(\rm{v})$ From Lemma \ref{lem71}, we find that for each $f_S \in C^1(\mathbb{R}^3)$,
\begin{align*}
\int_\Gamma \vert f_S (x) \vert { \ }d \mathcal{H}^2_x &= \int_{\mathbb{R}^2} \vert \hat{f}_S \vert \sqrt{G_S} { \ }dX,\\
\int_\Gamma \vert \partial^\Gamma_j f_S \vert { \ }d \mathcal{H}^2_x &= \int_{\mathbb{R}^2} \bigg\vert g_S^{\alpha \beta} \frac{\partial \hat{x}^S_j }{\partial X_\alpha} \frac{ \partial \hat{f}_S }{\partial X_\beta} \bigg\vert \sqrt{G_S} { \ }dX,
\end{align*}
where $\hat{f}_S = \hat{f}_S(X) = f_S (\hat{x}(X))$. For $\psi_S \in L_{loc}^1 (\mathbb{R}^2)$, we define
\begin{align}
\Vert \psi_S \Vert_{\mathcal{L}^1(\Gamma)} &:=  \int_{\mathbb{R}^2} \vert \psi_S \vert \sqrt{G_S} { \ }dX,\label{eq7018}\\
\Vert \psi_S \Vert_{\dot{\mathcal{W}}^{1,1} (\Gamma) } & := \sum_{j=1}^3 \int_{\mathbb{R}^2} \bigg\vert g_S^{\alpha \beta} \frac{\partial \hat{x}^S_j }{\partial X_\alpha} \frac{ \partial \psi_S }{\partial X_\beta} \bigg\vert \sqrt{G_S} { \ }dX,\\
\Vert \psi_S \Vert_{\mathcal{W}^{1,1} (\Gamma) } & := \Vert \psi_S \Vert_{\mathcal{L}^1 (\Gamma) } + \Vert \psi_S \Vert_{\dot{\mathcal{W}}^{1,1} (\Gamma) }.
\end{align}
\end{remark}

Let us investigate the relationships between $\Vert \cdot \Vert_{\mathcal{L}^2(\Gamma)}$ and $\Vert \cdot \Vert_{L^2(\mathbb{R}^2)}$, between $\Vert \cdot \Vert_{\dot{\mathcal{W}}^{1,2}(\Gamma)}$ and $\Vert \nabla_X \cdot \Vert_{L^2(\mathbb{R}^2)}$, and between $\Vert \cdot \Vert_{\mathcal{W}^{2,2}(\Gamma)}$ and $\Vert \cdot \Vert_{W^{2,2}(\mathbb{R}^2)}$.
\begin{lemma}\label{lem73}Assume that $\Vert \nabla_X \varphi \Vert_{L^\infty (\mathbb{R}^2)} \leq 1/2$. Then,\\
$(\rm{i})$ For all $\psi_S \in L^2(\mathbb{R}^2)$
\begin{equation}\label{eq7021}
\Vert \psi_S \Vert_{L^2(\mathbb{R}^2)}^2 \leq  \Vert \psi_S \Vert_{\mathcal{L}^2(\Gamma)}^2 \leq 2 \Vert \psi_S \Vert_{L^2(\mathbb{R}^2)}^2.
\end{equation}
$(\rm{ii})$ For all $\psi_S \in W^{1,2}(\mathbb{R}^2)$
\begin{equation}\label{eq7022}
\frac{1}{4} \Vert \nabla_X \psi_S \Vert_{L^2(\mathbb{R}^2)}^2 \leq \Vert \psi_S \Vert_{\dot{\mathcal{W}}^{1,2}(\Gamma)}^2  \leq 4 \Vert \nabla_X \psi_S \Vert_{L^2(\mathbb{R}^2)}^2. 
\end{equation}
$(\rm{iii})$ There is $C = C( \Vert \nabla_X^2 \varphi \Vert_{L^{\infty} (\mathbb{R}^2)} ) > 0 $ such that for all $\psi_S \in W^{2,2}(\mathbb{R}^2)$
\begin{align}
\Vert \psi_S \Vert_{\dot{\mathcal{W}}^{2,2}(\Gamma)} & \leq C \Vert \psi_S \Vert_{W^{2,2} (\mathbb{R}^2)},\label{eq7023}\\
\Vert \psi_S \Vert_{\mathring{\mathcal{W}}^{2,2}(\Gamma)} & \leq C \Vert \psi_S \Vert_{W^{2,2} (\mathbb{R}^2)}.\label{eq7024}
\end{align}
$(\rm{iv})$ There is $C = C( \Vert \nabla_X^2 \varphi \Vert_{L^{\infty} (\mathbb{R}^2)} ) > 0 $ such that for all $\psi_S \in W^{2,2}(\mathbb{R}^2)$
\begin{equation}\label{eq7025}
\Vert \nabla_X^2 \psi_S \Vert_{L^2 (\mathbb{R}^2)} \leq C( \Vert \psi_S \Vert_{\dot{\mathcal{W}}^{1,2}(\Gamma)} + \Vert \psi_S \Vert_{\mathring{\mathcal{W}}^{2,2}(\Gamma)} ).
\end{equation}
$(\rm{v})$ There is $C = C( \Vert \nabla_X^2 \varphi \Vert_{L^{\infty} (\mathbb{R}^2)} ) > 0 $ such that for all $\psi_S \in W^{2,2}(\mathbb{R}^2)$
\begin{equation*}
C^{-1} \Vert \psi_S \Vert_{W^{2,2} (\mathbb{R}^2)} \leq \Vert \psi_S \Vert_{\mathcal{W}^{2,2}(\Gamma)} \leq C \Vert \psi_S \Vert_{W^{2,2} (\mathbb{R}^2)} .
\end{equation*}
\end{lemma}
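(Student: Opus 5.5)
The plan is to reduce everything to explicit pointwise bounds on the metric coefficients $g_S^{\alpha\beta}$ and $G_S$, using $|\varphi_1|+|\varphi_2|\le 1/2$. Then $\varphi_1^2+\varphi_2^2\le 1/4$, so $1\le G_S\le 5/4$ and $1\le\sqrt{G_S}\le 2$. This gives (i) at once from the definition of $\Vert\cdot\Vert_{\mathcal{L}^2(\Gamma)}$:
\begin{equation*}
\int|\psi_S|^2\,dX\le\int|\psi_S|^2\sqrt{G_S}\,dX\le 2\int|\psi_S|^2\,dX .
\end{equation*}

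For (ii) I will compute the eigenvalues of $(g_S^{\alpha\beta})=(g^S_{\alpha\beta})^{-1}$. The matrix $(g^S_{\alpha\beta})=I+\nabla_X\varphi\otimes\nabla_X\varphi$ has eigenvalues $1$ and $G_S$. Hence $(g_S^{\alpha\beta})$ has eigenvalues $1$ and $1/G_S\ge 4/5$, which gives
\begin{equation*}
\tfrac45|\xi|^2\le g_S^{\alpha\beta}\xi_\alpha\xi_\beta\le|\xi|^2 .
\end{equation*}
Multiplying by $\sqrt{G_S}\in[1,2]$ and taking $\xi=\nabla_X\psi_S$ yields (ii), with room to spare against the constants $1/4$ and $4$.

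For (iii), I expand $\partial_{X_{\beta'}}(g_S^{\alpha\beta}\partial_{X_\alpha}\hat x_j^S\,\partial_{X_\beta}\psi_S)$ by the product rule. The coefficients $g_S^{\alpha\beta}$ and $\partial_{X_\alpha}\hat x^S_j$ (which is $\delta_{j\alpha}$ or $\varphi_\alpha$) are bounded by a constant once $|\nabla_X\varphi|\le1/2$. Their first derivatives are bounded by $C\Vert\nabla_X^2\varphi\Vert_{L^\infty}$, since $G_S\ge1$ keeps the denominators harmless. Each term is therefore pointwise at most $C(|\nabla_X^2\psi_S|+|\nabla_X\psi_S|)$, and integrating against $\sqrt{G_S}\le2$ gives (eq7023). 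The same reasoning applied to the divergence form $G_S^{-1/2}\partial_\alpha(\sqrt{G_S}g_S^{\alpha\beta}\partial_\beta\psi_S)$ gives (eq7024). Equivalently, I can use (eq7010), which writes it as $\kappa_S^{-1}(\kappa_S\Delta_X\psi_S-\mathscr{B}_3\psi_S)$ with the coefficient bounds already recorded in the proof of Proposition~\ref{prop53}.

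For (iv), which I expect to be the main obstacle, I need a lower bound on $\nabla_X^2\psi_S$ from the Laplace--Beltrami norm. This is elliptic regularity for the uniformly elliptic operator $\mathscr{L}_3$ on $\mathbb{R}^2$. My first attempt is the perturbative route through (eq7010):
\begin{equation*}
\Delta_X\psi_S=-\kappa_S^{-1}\mathscr{L}_3\psi_S+\kappa_S^{-1}\mathscr{B}_3\psi_S .
\end{equation*}
By Plancherel, $\Vert\nabla_X^2\psi_S\Vert_{L^2}=\Vert\Delta_X\psi_S\Vert_{L^2}$. The second-order part of $\kappa_S^{-1}\mathscr{B}_3$ has coefficient norm at most $(\varphi_1^2+\varphi_2^2+2|\varphi_1\varphi_2|)/G_S\le 1/4\cdot(1+1)\le1/2<1$, so it can be absorbed into the left-hand side. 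The first-order part is bounded by $C\Vert\nabla_X^2\varphi\Vert_\infty\Vert\nabla_X\psi_S\Vert_{L^2}$, and (ii) controls this by $\Vert\psi_S\Vert_{\dot{\mathcal W}^{1,2}(\Gamma)}$. Finally, $\Vert\mathscr{L}_3\psi_S\Vert_{L^2}\le\kappa_S\Vert\psi_S\Vert_{\mathring{\mathcal W}^{2,2}(\Gamma)}$ by (i) and (eq7015). Together these give (eq7025). The delicate point is checking the absorption constant carefully. If the bound on the second-order coefficients came out larger than $1$, I would fall back on an integration-by-parts (Bochner-type) identity for the full operator $\mathscr{L}_3$.

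Part (v) follows by combining (i)--(iv). The upper bound comes from (i), (ii) and (eq7024). For the lower bound, $\Vert\psi_S\Vert_{L^2}$ and $\Vert\nabla_X\psi_S\Vert_{L^2}$ are controlled by (i) and (ii), and $\Vert\nabla_X^2\psi_S\Vert_{L^2}$ is controlled by (eq7025).
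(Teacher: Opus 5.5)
Your proposal is correct and follows the paper's proof essentially step by step. You use the pointwise bounds $1\le\sqrt{G_S}\le 2$ for (i), a two-sided bound on the form $g_S^{\alpha\beta}\xi_\alpha\xi_\beta$ for (ii), and product-rule coefficient bounds of size $C(\Vert\nabla_X^2\varphi\Vert_{L^\infty(\mathbb{R}^2)})$ for (iii). For (iv) you use the same perturbative absorption of the second-order part of $\mathscr{B}_3$ via \eqref{eq7010} together with $\Vert\nabla_X^2\psi_S\Vert_{L^2(\mathbb{R}^2)}\le\Vert\Delta_X\psi_S\Vert_{L^2(\mathbb{R}^2)}$. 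The only differences are cosmetic. You use the eigenvalues $1$ and $1/G_S$ of $(g_S^{\alpha\beta})$, whereas the paper writes the form as $\{\vert\nabla_X\psi_S\vert^2+(\varphi_2\partial_{X_1}\psi_S-\varphi_1\partial_{X_2}\psi_S)^2\}/G_S$. You also use Plancherel where the paper integrates by parts.
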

\noindent See also Lemma \ref{lem37}.

\begin{proof}[Proof of Lemma \ref{lem73}]
Fix $\psi_S \in L^2(\mathbb{R}^2)$. Assume that $\Vert \nabla_X \varphi \Vert_{L^\infty (\mathbb{R}^2)} \leq 1/2$. By assumption, we see that for every $X \in \mathbb{R}^2$,
\begin{equation*}
1 \leq 1 + \varphi_1^2 + \varphi_2^2 \leq 4.
\end{equation*}
This shows that for each $X \in \mathbb{R}^2$,
\begin{align}
1 \leq \sqrt{G_S} \leq 2,\label{eq7026}\\
\frac{1}{4} \leq \frac{1}{G_S} \leq 1. \label{eq7027}
\end{align}

We first show $(\rm{i})$. By \eqref{eq7026}, we check that for almost all $X \in \mathbb{R}^2$
\begin{equation*}
\vert \psi_S (X) \vert^2 \leq \vert \psi_S (X) \vert^2 \sqrt{G_S(X)} \leq 2 \vert \psi_S (X) \vert^2.
\end{equation*}
Integrating over $\mathbb{R}^2$ with respect to $X$, we have \eqref{eq7021}.

Next, we prove $(\rm{ii})$. Assume that $\psi_S \in W^{1,2} (\mathbb{R}^2)$. By the definition of $g_S^{\alpha \beta}$, we find that
\begin{multline}\label{eq7028}
\int_{\mathbb{R}^2} g_S^{\alpha \beta} \frac{\partial \psi_S }{\partial X_\alpha} \frac{ \partial \psi_S }{\partial X_\beta} \sqrt{G_S} { \ }dX\\
= \int_{\mathbb{R}^2} \bigg( \frac{1 + \varphi_2^2}{G_S} \bigg\vert \frac{\partial \psi_S}{\partial X_1} \bigg\vert^2 + \frac{1 + \varphi_1^2}{G_S} \bigg\vert \frac{\partial \psi_S}{\partial X_2} \bigg\vert^2   - \frac{2 \varphi_1 \varphi_2}{G_S} \frac{\partial \psi_S }{\partial X_1} \frac{ \partial \psi_S }{\partial X_2} \bigg) \sqrt{G_S} { \ }dX\\
 = \int_{\mathbb{R}^2}\bigg\{ \vert \nabla_X \psi_S \vert^2 + \bigg( \varphi_2 \frac{\partial \psi_S}{\partial X_1} - \varphi_1 \frac{\partial \psi_S}{\partial X_2} \bigg)^2  \bigg\} \frac{\sqrt{G_S}}{G_S} { \ }dX.
\end{multline}
Applying \eqref{eq7026}, \eqref{eq7027}, and \eqref{eq7028}, we check that
\begin{equation}\label{eq7029}
\int_{\mathbb{R}^2} g_S^{\alpha \beta} \frac{\partial \psi_S }{\partial X_\alpha} \frac{ \partial \psi_S }{\partial X_\beta} \sqrt{G_S} { \ }dX\\ \geq \frac{1}{4} \int_{\mathbb{R}^2} \vert \nabla_X \psi_S \vert^2 { \ }dX.
\end{equation}
We also check that
\begin{equation}\label{eq7030}
\int_{\mathbb{R}^2} g_S^{\alpha \beta} \frac{\partial \psi_S }{\partial X_\alpha} \frac{ \partial \psi_S }{\partial X_\beta} \sqrt{G_S} { \ }dX\\ \leq 4 \int_{\mathbb{R}^2} \vert \nabla_X \psi_S \vert^2 { \ }dX.
\end{equation}
Here we used the fact that $(a-b)^2 \leq 2a^2+ 2 b^2$ and $\Vert \nabla_X \varphi \Vert_{L^\infty (\mathbb{R}^2)} \leq 1/2$. From \eqref{eq7029} and \eqref{eq7030}, we have \eqref{eq7022}.

Thirdly, we show $(\rm{iii})$. Assume that $\psi_S \in W^{2,2} (\mathbb{R}^2)$. Direct calculations give
\begin{align*}
\frac{\partial}{\partial X_\alpha} \bigg( \frac{1}{G_S} \bigg) & = \frac{-2 \varphi_1 \varphi_{1 \alpha} - 2 \varphi_2 \varphi_{\alpha 2} }{G_S^2},\\
\frac{\partial}{\partial X_\beta} \bigg( \frac{\varphi_\alpha}{G_S} \bigg) & = \frac{\varphi_{\alpha \beta} (1 + \varphi_1^2 + \varphi_2^2)  - 2 \varphi_\alpha (\varphi_1 \varphi_{1\beta} + \varphi_2 \varphi_{\beta 2})  }{G_S^2},\\
\frac{\partial}{\partial X_1} \bigg( \frac{\varphi_\alpha \varphi_\beta}{G_S} \bigg) &=  \frac{ (\varphi_\beta \varphi_{1 \alpha} + \varphi_\alpha \varphi_{1 \beta})(1 + \varphi_1^2 + \varphi_2^2) - 2\varphi_\alpha \varphi_\beta ( \varphi_1 \varphi_{11} + \varphi_2 \varphi_{12}) }{G_S^2},\\
\frac{\partial}{\partial X_2} \bigg( \frac{\varphi_\alpha \varphi_\beta}{G_S} \bigg) &=  \frac{ (\varphi_\beta \varphi_{ \alpha 2} + \varphi_\alpha \varphi_{\beta 2})(1 + \varphi_1^2 + \varphi_2^2) - 2\varphi_\alpha \varphi_\beta ( \varphi_1 \varphi_{12} + \varphi_2 \varphi_{22}) }{G_S^2}.
\end{align*}
Since $1/{G_S} \leq 1$, $\varphi_1^2/{G_S} \leq 1$, $\varphi_2^2/{G_S} \leq 1$, $\varphi_1 \varphi_2/{G_S} \leq 1$ $(X \in \mathbb{R}^2)$, and $\Vert \nabla_X \varphi \Vert_{L^\infty (\mathbb{R}^2)} \leq 1/2$, we find that there is $C> 0$ such that for each $\alpha , \beta \in \{ 1,2\}$,
\begin{multline}\label{eq7031}
\bigg\Vert \frac{\partial}{\partial X_{\alpha}} \bigg( \frac{1}{G_S} \bigg) \bigg\Vert_{L^\infty (\mathbb{R}^2)} + \bigg\Vert \frac{\partial}{\partial X_\beta} \bigg( \frac{\varphi_\alpha}{G_S} \bigg) \bigg\Vert_{L^\infty (\mathbb{R}^2)} + \bigg\Vert \frac{\partial}{\partial X_1} \bigg( \frac{\varphi_\alpha \varphi_\beta}{G_S} \bigg) \bigg\Vert_{L^\infty (\mathbb{R}^2)}\\
  + \bigg\Vert \frac{\partial}{\partial X_2} \bigg( \frac{\varphi_\alpha \varphi_\beta}{G_S} \bigg) \bigg\Vert_{L^\infty (\mathbb{R}^2)} \leq C \Vert \nabla_X^2 \varphi \Vert_{L^\infty (\mathbb{R}^2)}.
\end{multline}
Applying assertion $(\rm{i})$ in Remark \ref{rem72} and \eqref{eq7031} into \eqref{eq7015} and \eqref{eq7017}, we see \eqref{eq7023} and \eqref{eq7024}.

Fourthly, we prove $(\rm{iv})$. To this end, we show that for each $\psi_S \in W^{2,2} (\mathbb{R}^2)$,
\begin{equation}\label{eq7032}
\Vert \nabla_X^2 \psi_S \Vert_{L^2(\mathbb{R}^2)} \leq \Vert \Delta_X \psi_S \Vert_{L^2(\mathbb{R}^2)}.
\end{equation}
Fix $\psi_S \in W^{2,2} (\mathbb{R}^2)$. By definition, we obtain
\begin{equation*}
\Vert \Delta_X \psi_S \Vert_{L^2(\mathbb{R}^2)}^2 = \int_{\mathbb{R}^2} \bigg( \bigg\vert \frac{\partial^2 \psi_S}{\partial X_1^2} \bigg\vert^2 + 2  \frac{\partial^2 \psi_S}{\partial X_1^2} \frac{\partial^2 \psi_S}{\partial X_2^2} + \bigg\vert \frac{\partial^2 \psi_S}{\partial X_2^2} \bigg\vert^2 \bigg) { \ }dX.
\end{equation*}
Since $C_0^\infty (\mathbb{R}^2)$ is dense in $W^{2,2}(\mathbb{R}^2)$, we apply integration by parts to find that
\begin{equation*}
\int_{\mathbb{R}^2} \frac{\partial^2 \psi_S}{\partial X_1^2} \frac{\partial^2 \psi_S}{\partial X_2^2} { \ }dX = \int_{\mathbb{R}^2} \bigg\vert \frac{\partial^2 \psi_S}{\partial X_1 \partial X_2} \bigg\vert^2 { \ }dX .
\end{equation*}
Thus, we observe that
\begin{align*}
\Vert \Delta_X \psi_S \Vert_{L^2(\mathbb{R}^2)}^2 & = \Vert \partial^2_{X_1}\psi_S \Vert_{L^2(\mathbb{R}^2)}^2 + 2 \Vert \partial_{X_1} \partial_{X_2} \psi_S \Vert_{L^2(\mathbb{R}^2)}^2 + \Vert \partial^2_{X_2}\psi_S \Vert_{L^2(\mathbb{R}^2)}^2\\
& \geq \Vert \nabla_X^2 \psi_S \Vert_{L^2(\mathbb{R}^2)}^2,
\end{align*}
which is \eqref{eq7032}. Using \eqref{eq7032}, \eqref{eq7010} when $\kappa_S =1$, and the H\"{o}lder inequality, we see that
\begin{multline}\label{eq7033}
\Vert \nabla_X^2 \psi_S \Vert_{L^2(\mathbb{R}^2)}  \leq \Vert - \Delta_X \psi_S \Vert_{L^2(\mathbb{R}^2)}\\
  \leq \Vert \mathscr{B}_3 \psi_S - \Delta_X \psi_S \Vert_{L^2(\mathbb{R}^2)} + \Vert \mathscr{B}_3 \psi_S \Vert_{L^2(\mathbb{R}^2)}\\
 \leq \Vert \mathscr{L}_3 \psi_S \Vert_{L^2(\mathbb{R}^2)} + \frac{1}{2} \Vert \nabla_X^2 \psi_S \Vert_{L^2(\mathbb{R}^2)} + C (\Vert \nabla_X^2 \varphi \Vert_{L^\infty(\mathbb{R}^2)}) \Vert \nabla_X \psi_S \Vert_{L^2(\mathbb{R}^2)}.
\end{multline}
Here we used the fact that
\begin{multline*}
\Vert \mathscr{B}_3 \psi_S \Vert_{L^2(\mathbb{R}^2)}\\
 \leq \bigg\Vert \frac{\varphi_1^2}{G_S} \frac{\partial^2 \psi_S}{\partial X_1^2} + \frac{\varphi_2^2}{G_S}\frac{\partial^2 \psi_S}{\partial X_2^2} + \frac{2 \varphi_1 \varphi_2}{G_S}\frac{\partial^2 \psi_S}{\partial X_1 \partial X_2}  \bigg\Vert_{L^\infty(\mathbb{R}^2)} + C( \Vert \nabla_X^2 \varphi \Vert_{L^\infty (\mathbb{R}^2)}) \Vert \nabla_X \psi_S \Vert_{L^2(\mathbb{R}^2)}\\
 \leq 2 \Vert \nabla_X \varphi \Vert_{L^2(\mathbb{R}^2)}^2 \Vert \nabla_X^2 \psi_S \Vert_{L^2(\mathbb{R}^2)} + C (\Vert \nabla_X^2 \varphi \Vert_{L^\infty(\mathbb{R}^2)}) \Vert \nabla_X \psi_S \Vert_{L^2(\mathbb{R}^2)}\\
 \leq \frac{1}{2} \Vert \nabla_X^2 \psi_S \Vert_{L^2(\mathbb{R}^2)} + C (\Vert \nabla_X^2 \varphi \Vert_{L^\infty(\mathbb{R}^2)}) \Vert \nabla_X \psi_S \Vert_{L^2(\mathbb{R}^2)}.
\end{multline*}
By \eqref{eq7033}, we have
\begin{equation}\label{eq7034}
\Vert \nabla_X^2 \psi_S \Vert_{L^2(\mathbb{R}^2)} \leq 2 \Vert \mathscr{L}_3 \psi_S \Vert_{L^2(\mathbb{R}^2)} + C (\Vert \nabla_X^2 \varphi \Vert_{L^\infty(\mathbb{R}^2)}) \Vert \nabla_X \psi_S \Vert_{L^2(\mathbb{R}^2)}.
\end{equation}
Applying \eqref{eq7021} and \eqref{eq7022} into \eqref{eq7034}, we observe that
\begin{align*}
\Vert \nabla_X^2 \psi_S \Vert_{L^2(\mathbb{R}^2)} & \leq 2 \Vert \mathscr{L}_3 \psi_S (G_S^{1/4}) \Vert_{L^2(\mathbb{R}^2)} + C (\Vert \nabla_X^2 \varphi \Vert_{L^\infty(\mathbb{R}^2)})  \Vert \psi_S \Vert_{\dot{\mathcal{W}}^{1,2}(\Gamma)}\\
& = 2 \Vert \psi_S \Vert_{\mathring{\mathcal{W}}^{2,2}(\Gamma)} + C (\Vert \nabla_X^2 \varphi \Vert_{L^\infty(\mathbb{R}^2)})  \Vert \psi_S \Vert_{\dot{\mathcal{W}}^{1,2}(\Gamma)}.
\end{align*}
Therefore, we see $(\rm{iv})$. Note that $\Vert \mathscr{L}_3 \psi_S (G_S^{1/4}) \Vert_{L^2(\mathbb{R}^2)} = \Vert \psi_S \Vert_{\mathring{\mathcal{W}}^{2,2}(\Gamma)} $ when $\kappa_S =1$.

Finally, we show $(\rm{v})$. Fix $\psi_S \in W^{2,2} (\mathbb{R}^2)$. From assertions $(\rm{i})$-$(\rm{iv})$, we find that there is $C= C( \Vert \nabla^2 \varphi \Vert_{L^\infty (\mathbb{R}^2)})$ such that
\begin{align*}
\Vert \psi_S \Vert_{W^{2,2} (\mathbb{R}^2)} & =( \Vert \psi_S \Vert_{L^2(\mathbb{R}^2)}^2 + \Vert \nabla_X \psi_S \Vert_{L^2(\mathbb{R}^2)}^2  + \Vert \nabla_X^2 \psi_S \Vert_{L^2(\mathbb{R}^2)}^2 )^{1/2}\\
&\leq C (\Vert \psi_S \Vert_{\mathcal{L}^2(\Gamma)} + \Vert \psi_S \Vert_{\dot{\mathcal{W}}^{1,2}(\Gamma)} + \Vert \psi_S \Vert_{\mathring{ \mathcal{W} }^{2,2}(\Gamma)}),
\end{align*}
and that
\begin{align*}
\Vert \psi_S \Vert_{\mathcal{L}^2(\Gamma)} + \Vert \psi_S \Vert_{\dot{\mathcal{W}}^{1,2}(\Gamma)} + \Vert \psi_S \Vert_{\mathring{\mathcal{W}}^{2,2}(\Gamma)} \leq C \Vert \psi_S \Vert_{W^{2,2} (\mathbb{R}^2)} .
\end{align*}
Thus, we see $(\rm{v})$. Therefore, the lemma follows.
\end{proof}

Finally, we study the relationship between $\Vert \cdot \Vert_{\mathcal{W}^{1,1}(\Gamma) }$ and $\Vert \cdot \Vert_{W^{1,1} (\mathbb{R}^2)}$.
\begin{lemma}\label{lem74}
Assume that $\Vert \nabla_X \varphi \Vert_{L^\infty (\mathbb{R}^2)} \leq 1/2$. Then,\\
$(\rm{i})$ For all $\psi_S \in L^1(\mathbb{R}^2)$,
\begin{equation}\label{eq7035}
\Vert \psi_S \Vert_{L^1(\mathbb{R}^2)} \leq \Vert \psi_S \Vert_{\mathcal{L}^1(\Gamma)} \leq 2 \Vert \psi_S \Vert_{L^1(\mathbb{R}^2)}.
\end{equation}
$(\rm{ii})$ For all $\psi_S \in W^{1,1}(\mathbb{R}^2)$,
\begin{equation}\label{eq7036}
\Vert \nabla_X \psi_S \Vert_{L^1(\mathbb{R}^2)} \leq \Vert \psi_S \Vert_{\dot{\mathcal{W}}^{1,1}(\Gamma)} \leq 12 \Vert \nabla_X \psi_S \Vert_{L^1(\mathbb{R}^2)}.
\end{equation}
Here
\begin{equation*}
\Vert \nabla_X \psi_S \Vert_{L^1(\mathbb{R}^2)} := \Vert \partial_{X_1} \psi_S \Vert_{L^1(\mathbb{R}^2)} + \Vert \partial_{X_2} \psi_S \Vert_{L^1(\mathbb{R}^2)}. 
\end{equation*}
$(\rm{iii})$ For all $\psi_S \in W^{1,1}(\mathbb{R}^2)$,
\begin{equation}\label{eq7037}
\Vert \psi_S \Vert_{W^{1,1}(\mathbb{R}^2)} \leq \Vert \psi_S \Vert_{\mathcal{W}^{1,1}(\Gamma)} \leq 12 \Vert \psi_S \Vert_{W^{1,1}(\mathbb{R}^2)}.
\end{equation}
\end{lemma}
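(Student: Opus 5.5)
The plan is to reduce every assertion to a pointwise inequality on $\mathbb{R}^2$, using only the bound $1 \leq \sqrt{G_S} \leq 2$ from \eqref{eq7026}. That bound holds because $\Vert \nabla_X \varphi \Vert_{L^\infty(\mathbb{R}^2)} \leq 1/2$ gives $|\varphi_1|+|\varphi_2| \leq 1/2$, hence $1 \leq G_S \leq 4$.

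\emph{Part (i).} Multiply $|\psi_S(X)|$ by $\sqrt{G_S(X)} \in [1,2]$ and integrate over $\mathbb{R}^2$. This gives \eqref{eq7035} exactly as in the proof of \eqref{eq7021}, without squares.

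\emph{Part (ii).} Write $\psi_\alpha = \partial \psi_S/\partial X_\alpha$ and let $a_j = g_S^{\alpha\beta}\, (\partial \hat{x}_j^S/\partial X_\alpha)\, \psi_\beta$ for $j=1,2,3$. Assertion (i) of Remark \ref{rem72} lists these explicitly. The key algebraic observation is that
\begin{equation*}
a_1 = \psi_1 - \varphi_1 a_3, \qquad a_2 = \psi_2 - \varphi_2 a_3, \qquad a_3 = \frac{\varphi_1 \psi_1 + \varphi_2 \psi_2}{G_S}.
\end{equation*}
For instance, $\psi_1 - \varphi_1(\varphi_1\psi_1+\varphi_2\psi_2)/G_S = ((1+\varphi_2^2)\psi_1 - \varphi_1\varphi_2\psi_2)/G_S$, because $G_S - \varphi_1^2 = 1+\varphi_2^2$. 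Hence $\psi_\alpha = a_\alpha + \varphi_\alpha a_3$, and pointwise
\begin{equation*}
|\psi_1| + |\psi_2| \leq |a_1| + |a_2| + (|\varphi_1|+|\varphi_2|)|a_3| \leq |a_1|+|a_2|+\tfrac12 |a_3| \leq \big(|a_1|+|a_2|+|a_3|\big)\sqrt{G_S}.
\end{equation*}
Integrating over $\mathbb{R}^2$ yields the lower bound in \eqref{eq7036} with constant exactly $1$. This identity is the main point of the lemma: estimating $|a_j|$ from below term by term would lose the constant, so the decomposition $\psi_\alpha = a_\alpha + \varphi_\alpha a_3$ is what makes constant $1$ attainable.

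For the upper bound I use $1/G_S \leq 1$, $|\varphi_\alpha| \leq 1/2$ and $|\varphi_1\varphi_2| \leq 1/4$. These give
\begin{equation*}
|a_1| \leq \tfrac54|\psi_1| + \tfrac14|\psi_2|, \qquad |a_2| \leq \tfrac14|\psi_1| + \tfrac54|\psi_2|, \qquad |a_3| \leq \tfrac12(|\psi_1|+|\psi_2|).
\end{equation*}
Therefore $\sum_j |a_j| \leq 2(|\psi_1|+|\psi_2|)$. Multiplying by $\sqrt{G_S} \leq 2$ and integrating gives the bound with constant $4 \leq 12$.

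\emph{Part (iii).} Add (i) and (ii), using the definition $\Vert \psi_S \Vert_{\mathcal{W}^{1,1}(\Gamma)} = \Vert \psi_S \Vert_{\mathcal{L}^1(\Gamma)} + \Vert \psi_S \Vert_{\dot{\mathcal{W}}^{1,1}(\Gamma)}$ from Remark \ref{rem72}(v). Take $\Vert \psi_S \Vert_{W^{1,1}(\mathbb{R}^2)} = \Vert \psi_S \Vert_{L^1} + \Vert \partial_{X_1}\psi_S\Vert_{L^1} + \Vert \partial_{X_2}\psi_S\Vert_{L^1}$, matching the convention for $\Vert \nabla_X \psi_S \Vert_{L^1(\mathbb{R}^2)}$ in the statement. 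The constants combine to $1$ below and $\max\{2,4\} \leq 12$ above, which is \eqref{eq7037}.
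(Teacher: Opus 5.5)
Your proof is correct and follows the paper's own argument. The paper also obtains the lower bound in (ii) from the identity $\partial_{X_\alpha}\psi_S = a_\alpha + \varphi_\alpha a_3$ together with $\sqrt{G_S}\geq 1$, and the upper bound from pointwise bounds on $g_S^{\alpha\beta}$, $\partial \hat{x}^S_j/\partial X_\alpha$ and $\sqrt{G_S}\leq 2$; its cruder entrywise bounds give the constant $12$ where your sharper estimates give $4$.
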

\noindent See also Lemma \ref{lem38}.

\begin{proof}[Proof of Lemma \ref{lem74}]
Let $\psi_S \in L^1(\mathbb{R}^2)$. Using \eqref{eq7018} and \eqref{eq7026}, we have \eqref{eq7035}. Now we show $(\rm{ii})$. Assume that $\psi_S \in W^{1,1} (\mathbb{R}^2)$. From $(\rm{i})$ in Remark \ref{rem72}, we find that
\begin{align*}
\frac{\partial \psi_S}{\partial X_1} = g_S^{\alpha \beta} \frac{\partial \hat{x}_1^S}{\partial X_\alpha} \frac{\partial \psi_S}{\partial X_\beta} + \varphi_1 g_S^{\alpha \beta} \frac{\partial \hat{x}_3^S}{\partial X_\alpha} \frac{\partial \psi_S}{\partial X_\beta},\\
\frac{\partial \psi_S}{\partial X_2} = g_S^{\alpha \beta} \frac{\partial \hat{x}_2^S}{\partial X_\alpha} \frac{\partial \psi_S}{\partial X_\beta} + \varphi_2 g_S^{\alpha \beta} \frac{\partial \hat{x}_3^S}{\partial X_\alpha} \frac{\partial \psi_S}{\partial X_\beta}.
\end{align*}
By $\Vert \nabla_X \varphi \Vert_{L^\infty (\mathbb{R}^2)} \leq 1/2$ and $1 \leq \sqrt{G_S}$, we check that for almost all $X \in \mathbb{R}^2$,
\begin{multline*}
\bigg\vert \frac{\partial \psi_S}{\partial X_1} \bigg\vert + \bigg\vert \frac{\partial \psi_S}{\partial X_2} \bigg\vert \leq \bigg\vert g_S^{\alpha \beta} \frac{\partial \hat{x}_1^S}{\partial X_\alpha} \frac{\partial \psi_S}{\partial X_\beta} \bigg\vert + \bigg\vert g_S^{\alpha \beta} \frac{\partial \hat{x}_2^S}{\partial X_\alpha} \frac{\partial \psi_S}{\partial X_\beta} \bigg\vert + \bigg\vert g_S^{\alpha \beta} \frac{\partial \hat{x}_3^S}{\partial X_\alpha} \frac{\partial \psi_S}{\partial X_\beta} \bigg\vert\\
 \leq \bigg( \bigg\vert g_S^{\alpha \beta} \frac{\partial \hat{x}_1^S}{\partial X_\alpha} \frac{\partial \psi_S}{\partial X_\beta} \bigg\vert + \bigg\vert g_S^{\alpha \beta} \frac{\partial \hat{x}_2^S}{\partial X_\alpha} \frac{\partial \psi_S}{\partial X_\beta} \bigg\vert + \bigg\vert g_S^{\alpha \beta} \frac{\partial \hat{x}_3^S}{\partial X_\alpha} \frac{\partial \psi_S}{\partial X_\beta} \bigg\vert \bigg)\sqrt{G_S}.
\end{multline*}
Integrating over $\mathbb{R}^2$ with respect to $X$, we have 
\begin{equation}\label{eq7038}
\Vert \nabla_X \psi_S \Vert_{L^1(\mathbb{R}^2)} \leq \Vert \psi_S \Vert_{\dot{\mathcal{W}}^{1,1}(\Gamma)}. 
\end{equation}
Since $\sqrt{G_S} \leq 2$, $\vert g_S^{\alpha \beta} \vert \leq 1$, $\vert \partial \hat{x}_j^S/{\partial X_\alpha} \vert \leq 1 $ for $X \in \mathbb{R}^2$, we see that for almost all $X \in \mathbb{R}^2$,
\begin{equation*}
\bigg( \bigg\vert g_S^{\alpha \beta} \frac{\partial \hat{x}_1^S}{\partial X_\alpha} \frac{\partial \psi_S}{\partial X_\beta} \bigg\vert + \bigg\vert g_S^{\alpha \beta} \frac{\partial \hat{x}_2^S}{\partial X_\alpha} \frac{\partial \psi_S}{\partial X_\beta} \bigg\vert + \bigg\vert g_S^{\alpha \beta} \frac{\partial \hat{x}_3^S}{\partial X_\alpha} \frac{\partial \psi_S}{\partial X_\beta} \bigg\vert \bigg)\sqrt{G_S} \leq 12 \bigg( \bigg\vert \frac{\partial \psi_S}{\partial X_1} \bigg\vert + \bigg\vert \frac{\partial \psi_S}{\partial X_2} \bigg\vert \bigg).
\end{equation*}
Integrating over $\mathbb{R}^2$ with respect to $X$, we have 
\begin{equation}\label{eq7039}
\Vert \psi_S \Vert_{\dot{\mathcal{W}}^{1,1}(\Gamma)} \leq 12 \Vert \nabla_X \psi_S \Vert_{L^1(\mathbb{R}^2)}. 
\end{equation}
By \eqref{eq7038} and \eqref{eq7039}, we have \eqref{eq7036}. From assertions $(\rm{i})$ and $(\rm{ii})$, we see $(\rm{iii})$. Therefore, the lemma follows.
\end{proof}

\subsection{Representation Formulas for the Domain $\Omega_A$}\label{subsec72}
Let us study some representation formulas for the domain $\Omega_A$. For all $y= { }^t(y_1,y_2,y_3) \in \overline{\mathbb{R}^3_+}$, we set
\begin{equation*}
\tilde{x}_A = \tilde{x}_A (y)  =
\begin{pmatrix}
\tilde{x}_1^A\\
\tilde{x}_2^A\\
\tilde{x}_3^A
\end{pmatrix}
:=
\begin{pmatrix}
y_1\\
y_2\\
y_3 + \varphi (y_1,y_2)
\end{pmatrix}.
\end{equation*}
It is clear that the mapping $\tilde{x}_A: \overline{\mathbb{R}^3_+} \to \overline{\Omega_A}$ is bijective, that $\overline{\Omega_A} = \{ x \in \mathbb{R}^3; x= \tilde{x}_A(y), y \in \overline{\mathbb{R}_+^3} \}$, and that $\Omega_A = \{ x \in \mathbb{R}^3; x= \tilde{x}_A(y), y \in \mathbb{R}_+^3 \}$ (see Lemma \ref{lem22}). Define $g^A_1 =g^A_1(y)$, $g^A_2=g^A_2(y)$, $g^A_3=g^A_3(y)$ by
\begin{equation*}
g^A_1(y) := \frac{\partial \tilde{x}_A}{\partial y_1} = \begin{pmatrix}
1\\
0\\
\varphi_1
\end{pmatrix},{ \ }
g^A_2(y) := \frac{\partial \tilde{x}_A}{\partial y_2} = \begin{pmatrix}
0\\
1\\
\varphi_2
\end{pmatrix},{ \ }
 g^A_3(y) := \frac{\partial \tilde{x}_A}{\partial y_3} = \begin{pmatrix}
0\\
0\\
1
\end{pmatrix}.
\end{equation*}
Here $\varphi_1 = \varphi_1(y_h) = \partial \varphi/{\partial y_1}$ and $\varphi_2 = \varphi_2(y_h) = \partial \varphi/{\partial y_2}$. Set $g^A_{ij} =g^A_{ij}(y) := g^A_i \cdot g^A_j$, $(g_A^{ij})_{3 \times 3} := (g^A_{ij})_{3 \times 3}^{-1}$, and $G_A = G_A (y) :={\rm{det}} (g^A_{ij})_{3 \times 3}$. We easily check that
\begin{equation*}
(g^A_{ij})_{3 \times 3 } =
\begin{pmatrix}
1 + \varphi_1^2 & \varphi_1 \varphi_2 & \varphi_1\\
\varphi_1 \varphi_2 & 1 + \varphi_2^2 & \varphi_2 \\
\varphi_1 & \varphi_2 &  1
\end{pmatrix},{ \ }
(g_A^{ij})_{3 \times 3 } =
\begin{pmatrix}
1 & 0 & - \varphi_1\\
0 & 1 & - \varphi_2 \\
- \varphi_1 & - \varphi_2 & 1 + \varphi_1^2 + \varphi_2^2
\end{pmatrix},
\end{equation*}
and that $G_A \equiv 1$. Set
\begin{equation*}
g_A^i = g_A^i(y)  := g_A^{ij}g^A_j \bigg( = \sum_{j=1}^3 g_A^{ij}g^A_j \bigg).
\end{equation*}
It is clear that
\begin{equation*}
g_A^1(y) = \begin{pmatrix}
1\\
0\\
0
\end{pmatrix},{ \ }
g_A^2(y) = \begin{pmatrix}
0\\
1\\
0
\end{pmatrix},{ \ }
g_A^3(y) = \begin{pmatrix}
-\varphi_1\\
-\varphi_2\\
1
\end{pmatrix},
\end{equation*}
and $g^{ij}_A = g_A^i \cdot g_A^j$. From \cite[Lemma 3.3]{KS18}, we have the following representation formulas.
\begin{lemma}\label{lem75}
$(\rm{i})$ For all $f_A, f_A^\natural, f_A^\flat \in C^2 (\mathbb{R}^3)$,
\begin{align}
\int_{\Omega_A} f_A (x) { \ }d x &= \int_{\mathbb{R}^3_+} \tilde{f}_A (y) { \ }dy,\\
\int_{\Omega_A} \partial_\ell f_A { \ }dx &= \int_{\mathbb{R}^3_+} g_A^{i j} \frac{\partial \tilde{x}^A_\ell }{\partial y_i} \frac{ \partial \tilde{f}_A }{\partial y_j} { \ }dy,\label{eq7041}\\
\int_{\Omega_A} \partial_{\ell'} \partial_\ell f_A { \ }dx &= \int_{\mathbb{R}^3_+} g_A^{i' j'} \frac{\partial \tilde{x}^A_{\ell'} }{\partial y_{i'}} \frac{ \partial }{\partial y_{j'} } \bigg( g_A^{i j} \frac{\partial \tilde{x}^A_\ell }{\partial y_i} \frac{ \partial \tilde{f}_A }{\partial y_j} \bigg) { \ }dy,\label{eq7042}\\
\int_{\Omega_A} \nabla f_A^\natural \cdot \nabla f_A^\flat { \ }dx & = \int_{\mathbb{R}^3_+} g_A^{i j} \frac{\partial \tilde{f}_A^\natural }{\partial y_i} \frac{ \partial \tilde{f}_A^\flat }{\partial y_j} { \ }dy,\label{eq7043}\\
\int_{\Omega_A} \Delta f_A { \ }d x & = \int_{\mathbb{R}^3_+} \frac{\partial}{\partial y_i} \bigg( g_A^{i j} \frac{\partial \tilde{f}_A}{\partial y_j} \bigg) { \ }dy,\label{eq7044}
\end{align}
where $\tilde{f}_A = \tilde{f}_A (y) = f_A (\tilde{x}_A(y) )$.\\
$(\rm{ii})$ For all $F_A = { }^t (F_1^A, F_2^A,F_3^A) \in [C^1 (\mathbb{R}^3)]^3$ and $\Phi_A \in C^1 (\mathbb{R}^4)$,
\begin{align}
\int_{\Omega_A} \nabla \cdot F_A { \ }d x &= \int_{\mathbb{R}^3_+} g_A^i \cdot \frac{\partial \tilde{F}_A }{\partial y_i} { \ }dy,\label{eq7045}\\
\int_{\Omega_A} \frac{\partial \Phi_A}{\partial t} { \ }d x &= \int_{\mathbb{R}^3_+} \frac{\partial \tilde{\Phi}_A }{\partial t} { \ }d y. 
\end{align}
Here $\tilde{F}_A = \tilde{F}_A (y) = F_A (\tilde{x}_A(y) )$ and $\tilde{\Phi}_A = \tilde{\Phi}_A (y,t) = \Phi_A (\tilde{x}_A(y),t)$. 
\end{lemma}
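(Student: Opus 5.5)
The plan is to reduce every identity in Lemma \ref{lem75} to the classical change of variables formula for the map $\tilde{x}_A:\mathbb{R}^3_+\to\Omega_A$, combined with the chain rule. The basic tools are the Jacobian $D\tilde{x}_A=(g_1^A,g_2^A,g_3^A)$ and the identity $\det D\tilde{x}_A=1$, which we already have from $G_A=(\det D\tilde{x}_A)^2\equiv1$ and the fact that the determinant is $+1$ (the matrix is lower triangular with unit diagonal). Then for $f_A\in C^2(\mathbb{R}^3)$ (tacitly assumed integrable, e.g.\ compactly supported; the formulas are understood whenever both sides make sense) we get $\int_{\Omega_A}f_A\,dx=\int_{\mathbb{R}^3_+}\tilde f_A\,dy$ directly, using the bijectivity of Lemma \ref{lem22}. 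The same identity applied to $\Phi_A(\cdot,t)$, followed by differentiation in $t$ under the integral (or simply noting $\widetilde{\partial_t\Phi_A}=\partial_t\tilde\Phi_A$, since $\tilde{x}_A$ is time-independent), gives the time-derivative formula.

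\textbf{Step 1: the chain rule.} Since $\tilde f_A=f_A\circ\tilde{x}_A$, we have $\partial_{y_j}\tilde f_A=g_j^A\cdot\widetilde{\nabla f_A}$. The dual basis satisfies $g_A^i\cdot g_j^A=\delta_{ij}$, so
\[
\widetilde{\nabla f_A}=g_A^i\,\partial_{y_i}\tilde f_A=g_A^{ij}g_j^A\,\partial_{y_i}\tilde f_A .
\]
Taking the $\ell$-th component and using $(g_j^A)_\ell=\partial\tilde x^A_\ell/\partial y_j$ together with the symmetry of $g_A^{ij}$ gives
\[
\widetilde{\partial_\ell f_A}=g_A^{ij}\frac{\partial\tilde x_\ell^A}{\partial y_i}\frac{\partial\tilde f_A}{\partial y_j}.
\]
With Step 0 this yields \eqref{eq7041}. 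Applying the same identity to $\partial_\ell f_A$ in place of $f_A$ gives the pointwise formula behind \eqref{eq7042}. For \eqref{eq7043}, take the dot product $\widetilde{\nabla f^\natural_A}\cdot\widetilde{\nabla f^\flat_A}=g_A^i\cdot g_A^{j}\,\partial_i\tilde f^\natural_A\,\partial_j\tilde f^\flat_A$ and use $g_A^{ij}=g_A^i\cdot g_A^j$. For \eqref{eq7045}, sum the chain rule componentwise to get $\widetilde{\nabla\cdot F_A}=g_A^i\cdot\partial_{y_i}\tilde F_A$.

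\textbf{Step 2: the Laplacian.} From Step 1, $\widetilde{\Delta f_A}=g_A^i\cdot\partial_{y_i}\big(g_A^j\partial_{y_j}\tilde f_A\big)$. The goal is to rewrite this in the divergence form $\partial_{y_i}(g_A^{ij}\partial_{y_j}\tilde f_A)$. The Piola-type identity needed is $\partial_{y_i}(\sqrt{G_A}\,g_A^i)=0$, which here reduces to checking that $\sum_i\partial_{y_i}g_A^i=0$. This holds because $g_A^1,g_A^2$ are constant and $g_A^3=(-\varphi_1,-\varphi_2,1)$ does not depend on $y_3$. Expanding $\partial_{y_i}(g_A^i\cdot g_A^j\partial_j\tilde f_A)$ then reproduces $g_A^i\cdot\partial_i(g_A^j\partial_j\tilde f_A)$. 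As a cross-check, both sides can be compared against the explicit $\mathscr{B}_1$ expansion, which equals $\Delta_y\tilde f_A-\mathscr{B}_1\tilde f_A/\kappa_A$.

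\textbf{Main obstacle.} Step 2 is the only step that requires care: the divergence-free property of the dual frame, and the bookkeeping of which index the derivative hits. Everything else is the chain rule plus $\det D\tilde{x}_A=1$.
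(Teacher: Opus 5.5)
Your proposal is correct. The paper does not prove this lemma itself. It quotes the formulas from \cite[Lemma 3.3]{KS18} and only remarks that \eqref{eq7044} and \eqref{eq7045} follow from \eqref{eq7041} and \eqref{eq7042}, referring to Remark \ref{rem76} and the proof of Lemma \ref{lem77}.

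Your change-of-variables argument (unit Jacobian, chain rule through the dual frame $g_A^i$, and $g_A^i\cdot g_A^j=g_A^{ij}$) is the standard proof behind that citation. For \eqref{eq7041}, \eqref{eq7042}, \eqref{eq7043} and \eqref{eq7045} it is essentially what the paper indicates.

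The one genuine difference is the Laplacian. The route the paper points to is computational. One takes the trace $\ell=\ell'$ of the nine explicit second-derivative formulas in the proof of Lemma \ref{lem77}, which gives $\Delta_y\tilde{f}_A-\kappa_A^{-1}\mathscr{B}_1\tilde{f}_A$. This is then matched against the expansion \eqref{eq7049}--\eqref{eq7050} of $\partial_{y_i}(g_A^{ij}\partial_{y_j}\tilde{f}_A)$.

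You instead apply the divergence formula to $F_A=\nabla f_A$ and invoke the Piola identity $\sum_i\partial_{y_i}g_A^i=0$. What each route buys:
\begin{itemize}
\item Your version explains structurally why the trace of \eqref{eq7042} collapses into divergence form. It also carries over to a general change of coordinates once the weight $\sqrt{G_A}$ is reinstated.
\item The explicit route has the side benefit of producing the operator $\mathscr{B}_1$, which the paper needs in \eqref{eq28} and in Proposition \ref{prop53}.
\end{itemize}

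Your integrability caveat is apt. For arbitrary $f_A\in C^2(\mathbb{R}^3)$ the integrals need not exist. Your pointwise identities (the left integrand composed with $\tilde{x}_A$ equals the right integrand), combined with the unit Jacobian, show that each side is finite exactly when the other is.
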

\noindent From \eqref{eq7041} and \eqref{eq7042}, we have \eqref{eq7044} and \eqref{eq7045} (see also Remark \ref{rem76} and the proof of Lemma \ref{lem77}).

\begin{remark}\label{rem76}$(\rm{i})$ We easily check that for $\psi_A \in C^1(\overline{\mathbb{R}^3_+})$,
\begin{align*}
g_A^{i j} \frac{\partial \tilde{x}_1^A}{\partial y_i} \frac{\partial \psi_A}{\partial y_j} & = \frac{\partial \psi_A}{\partial y_1} -  \varphi_1\frac{\partial \psi_A}{\partial y_3},\\
g_A^{i j} \frac{\partial \tilde{x}_2^A}{\partial y_i} \frac{\partial \psi_A}{\partial y_j} & = \frac{\partial \psi_A}{\partial y_2} - \varphi_2 \frac{\partial \psi_A}{\partial y_3},\\
g_A^{i j} \frac{\partial \tilde{x}_3^A}{\partial y_i} \frac{\partial \psi_A}{\partial y_j} & = \frac{\partial \psi_A}{\partial y_3},
\end{align*}
and for $\Psi_A = { }^t (\Psi_1^A , \Psi_2^A, \Psi_3^A) \in [C^1(\overline{\mathbb{R}_+^3})]^3$,
\begin{equation}\label{eq7047}
g_A^i \cdot \frac{\partial \Psi_A }{\partial y_i} = \frac{\partial \Psi_1^A}{\partial y_1 } + \frac{\partial \Psi_2^A}{\partial y_2 } -\varphi_1 \frac{\partial \Psi_1^A}{\partial y_3 } - \varphi_2 \frac{\partial \Psi_2^A}{\partial y_3 } + \frac{\partial \Psi_3^A}{\partial y_3 }   .
\end{equation}
$(\rm{ii})$ Let $\kappa_A >0$. For $\psi_A \in C^2 (\overline{\mathbb{R}^3_+})$, we set
\begin{equation}\label{eq7048}
\mathscr{L}_1 \psi_A := - \kappa_A \frac{\partial}{\partial y_i} \bigg( g_A^{ij} \frac{\partial \psi_A}{\partial y_j} \bigg).
\end{equation}
We easily check that
\begin{equation}\label{eq7049}
\mathscr{L}_1 \psi_A = -\kappa_A \Delta_y \psi_A + \mathscr{B}_1 \psi_A,
\end{equation}
where
\begin{equation}\label{eq7050}
\mathscr{B}_1 \psi_A := \kappa_A \bigg( 2 \varphi_1 \frac{\partial^2 \psi_A}{\partial y_1 \partial y_3} + 2 \varphi_2 \frac{\partial^2 \psi_A}{\partial y_2 \partial y_3} - (\varphi_1^2 + \varphi_2^2) \frac{\partial^2 \psi_A}{\partial y_3^2 } + (\varphi_{11} +\varphi_{22}) \frac{\partial \psi_A}{\partial y_3} \bigg).
\end{equation}
Here $\varphi_{\alpha \beta} = \partial^2 \varphi/{\partial y_\alpha \partial y_\beta}$.\\
$(\rm{iii})$ From Lemma \ref{lem75}, we find that for each $f_A \in C^2(\mathbb{R}^3)$,
\begin{align*}
\int_{\Omega_A} \vert f_A \vert^2 { \ }dx &=  \int_{\mathbb{R}^3_+} \vert \tilde{f}_A \vert^2 { \ }d y,\\
\int_{\Omega_A} \vert \nabla f_A \vert^2 { \ }dx & = \int_{\mathbb{R}^3_+} g_A^{i j} \frac{\partial \tilde{f}_A }{\partial y_i} \frac{ \partial \tilde{f}_A }{\partial y_j} { \ }dy,\\
\int_{\Omega_A} \vert \partial_{\ell'} \partial_ {\ell} f_A \vert^2 { \ }dx &= \int_{\mathbb{R}^3_+} \bigg\vert g_A^{i' j'} \frac{\partial \tilde{x}^A_{\ell'} }{\partial y_{i'}} \frac{ \partial }{\partial y_{j'} } \bigg( g_A^{i j} \frac{\partial \tilde{x}^A_\ell }{\partial y_i} \frac{ \partial \tilde{f}_A }{\partial y_j} \bigg) \bigg\vert^2 { \ }dy,
\end{align*}
where $\tilde{f}_A = \tilde{f}_A(y) = f_A (\tilde{x}_A (y))$. For $\psi_A \in L_{loc}^1 (\mathbb{R}^3_+)$, we define
\begin{align}
\Vert \psi_A \Vert_{\mathcal{L}^2(\Omega_A)} & :=  \bigg( \int_{\mathbb{R}^3_+} \vert \psi_A \vert^2 { \ }d y \bigg)^{1/2},\\
\Vert \psi_A \Vert_{\dot{\mathcal{W}}^{1,2}(\Omega_A)}  & := \bigg( \int_{\mathbb{R}^3_+}  g_A^{i j} \frac{\partial \psi_A }{\partial y_i} \frac{ \partial \psi_A }{\partial y_j}  { \ }dy \bigg)^{1/2},\\
\Vert \psi_A \Vert_{\dot{\mathcal{W}}^{2,2}(\Omega_A)}  & := \bigg( \sum_{\ell, \ell' =1}^3 \int_{\mathbb{R}^3_+} \bigg\vert g_A^{i' j'} \frac{\partial \tilde{x}^A_{\ell'} }{\partial y_{i'}} \frac{ \partial }{\partial y_{j'} } \bigg( g_A^{i j} \frac{\partial \tilde{x}^A_\ell }{\partial y_i} \frac{ \partial \psi_A }{\partial y_j} \bigg) \bigg\vert^2 {  \ }dy \bigg)^{1/2},\label{eq7053}\\
\Vert \psi_A \Vert_{ \mathcal{W}^{2,2}(\Omega_A)} & := ( \Vert \psi_A \Vert_{\mathcal{L}^2(\Omega_A)}^2 + \Vert \psi_A \Vert_{\dot{\mathcal{W}}^{1,2}(\Omega_A)}^2 + \Vert \psi_A \Vert_{\dot{\mathcal{W}}^{2,2}(\Omega_A)}^2 )^{1/2},\\
\Vert \psi_A \Vert_{\dot{W}^{2,2}(\mathbb{R}^3_+)}  & := \bigg( \sum_{\ell,\ell'=1}^3 \Vert \partial_{y_{\ell'} } \partial_{y_\ell} \psi_A \Vert_{L^2(\mathbb{R}^3_+)}^2 \bigg)^{1/2}.
\end{align}
$(\rm{iv})$ From Lemma \ref{lem75}, we find that for each $f_A \in C^2(\mathbb{R}^3)$,
\begin{align*}
\int_{\Omega_A} \vert f_A \vert { \ }d x &= \int_{\mathbb{R}^3_+} \vert \tilde{f}_A \vert { \ }dy,\\
\int_{\Omega_A} \vert \partial_\ell f_A \vert { \ }dx &= \int_{\mathbb{R}^3_+} \bigg\vert g_A^{i j} \frac{\partial \tilde{x}^A_\ell }{\partial y_i} \frac{ \partial \tilde{f}_A }{\partial y_j} \bigg\vert { \ }dy.
\end{align*}
where $\tilde{f}_A = \tilde{f}_A(y) = f_A (\tilde{x}_A (y))$. For $\psi_A \in L_{loc}^1 (\mathbb{R}^3_+)$, we define
\begin{align}
\Vert \psi_A \Vert_{\mathcal{L}^1(\Omega_A)} &:= \int_{\mathbb{R}^3_+} \vert \psi_A \vert { \ }dy,\\
\Vert \psi_A \Vert_{\dot{\mathcal{W}}^{1,1} (\Omega_A) } &:= \sum_{\ell =1}^3 \int_{\mathbb{R}^3_+} \bigg\vert g_A^{i j} \frac{\partial \tilde{x}^A_\ell }{\partial y_i} \frac{ \partial \psi_A }{\partial y_j} \bigg\vert { \ }dy,\\
\Vert \psi_A \Vert_{\mathcal{W}^{1,1} (\Omega_A) } & := \Vert \psi_A \Vert_{\mathcal{L}^1 (\Omega_A) } + \Vert \psi_A \Vert_{\dot{\mathcal{W}}^{1,1} (\Omega_A) }.
\end{align}
\end{remark}

Let us investigate the relationships between $\Vert \cdot \Vert_{\dot{\mathcal{W}}^{1,2} (\Omega_A)}$ and $\Vert \nabla_y \cdot \Vert_{L^2 (\mathbb{R}^3_+)}$, and between $\Vert \cdot \Vert_{\mathcal{W}^{2,2} (\Omega_A)}$ and $\Vert \cdot \Vert_{W^{2,2} (\mathbb{R}^3_+)}$.
\begin{lemma}\label{lem77}
Assume that $\Vert \nabla_X \varphi \Vert_{L^\infty (\mathbb{R}^2)} \leq 1/2$. Then,\\
$(\rm{i})$ For all $\psi_A \in W^{1,2}(\mathbb{R}^3_+)$,
\begin{equation}\label{eq7059}
\frac{1}{9}\Vert \nabla_y \psi_A \Vert_{L^2(\mathbb{R}^3_+)}^2 \leq \Vert \psi_A \Vert_{\dot{\mathcal{W}}^{1,2}(\Omega_A)}^2 \leq 9 \Vert \nabla_y \psi_A \Vert_{L^2(\mathbb{R}^3_+)}^2.
\end{equation}
$(\rm{ii})$ There is $C = C( \Vert \nabla_X^2 \varphi \Vert_{L^{\infty} (\mathbb{R}^2)} ) > 0 $ such that for all $\psi_A \in W^{2,2}(\mathbb{R}^3_+)$,
\begin{equation}
\Vert \psi_A \Vert_{\dot{\mathcal{W}}^{2,2}(\Omega_A)} \leq C \Vert \psi_A \Vert_{W^{2,2} (\mathbb{R}^3_+)}.
\end{equation}
$(\rm{iii})$ There is $C = C( \Vert \nabla_X^2 \varphi \Vert_{L^{\infty} (\mathbb{R}^2)} ) > 0 $ such that for all $\psi_A \in W^{2,2}(\mathbb{R}^3_+)$,
\begin{equation}
\Vert \psi_A \Vert_{\dot{W}^{2,2} (\mathbb{R}^3_+)} \leq C( \Vert \psi_A \Vert_{\dot{\mathcal{W}}^{1,2}(\Omega_A)} + \Vert \psi_A \Vert_{\dot{\mathcal{W}}^{2,2}(\Omega_A)} ) .
\end{equation}
$(\rm{iv})$ There is $C = C( \Vert \nabla_X^2 \varphi \Vert_{L^{\infty} (\mathbb{R}^2)} ) > 0 $ such that for all $\psi_A \in W^{2,2}(\mathbb{R}^3_+)$,
\begin{equation*}
C^{-1} \Vert \psi_A \Vert_{W^{2,2} (\mathbb{R}^3_+)} \leq \Vert \psi_A \Vert_{\mathcal{W}^{2,2}(\Omega_A)} \leq C \Vert \psi_A \Vert_{W^{2,2} (\mathbb{R}^3_+)} .
\end{equation*}
\end{lemma}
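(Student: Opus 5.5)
The plan is to work entirely with the explicit metric $g_A^{ij}$ computed above, using $\Vert \nabla_X \varphi \Vert_{L^\infty(\mathbb{R}^2)} \leq 1/2$ for the first-order bounds and $\Vert \nabla_X^2 \varphi \Vert_{L^\infty}$ for the second-order ones. All bounds are proved pointwise in $y$ and then integrated over $\mathbb{R}^3_+$.

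For (i), expand the quadratic form:
\begin{equation*}
g_A^{ij}\partial_{y_i}\psi_A\partial_{y_j}\psi_A=(\partial_{y_1}\psi_A-\varphi_1\partial_{y_3}\psi_A)^2+(\partial_{y_2}\psi_A-\varphi_2\partial_{y_3}\psi_A)^2+(\partial_{y_3}\psi_A)^2 .
\end{equation*}
This is the $|\nabla f_A|^2$ formula of Remark \ref{rem76}, i.e. the form $|M\nabla_y\psi_A|^2$ with $M=(g_A^{ij}\partial_{y_i}\tilde x^A_\ell)$ from Remark \ref{rem76}(i), which is lower triangular-type with unit diagonal.
\begin{itemize}
\item \emph{Upper bound.} Use $(a-b)^2\le 2a^2+2b^2$ and $|\varphi_\alpha|\le 1/2$. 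This gives the form is at most $2|\nabla_y\psi_A|^2+(\varphi_1^2+\varphi_2^2)\,2|\partial_{y_3}\psi_A|^2+|\partial_{y_3}\psi_A|^2\le 9|\nabla_y\psi_A|^2$.
\item \emph{Lower bound.} Invert $M$: $\partial_{y_\alpha}\psi_A=(M\nabla_y\psi_A)_\alpha+\varphi_\alpha(M\nabla_y\psi_A)_3$ and $\partial_{y_3}\psi_A=(M\nabla_y\psi_A)_3$. The same elementary inequality then gives $|\nabla_y\psi_A|^2\le 9|M\nabla_y\psi_A|^2$.
\end{itemize}
Integrating either bound over $\mathbb{R}^3_+$ yields \eqref{eq7059}.

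For (ii), write $D_\ell:=g_A^{ij}\partial_{y_i}\tilde x^A_\ell\partial_{y_j}$; by Remark \ref{rem76}(i) these are $D_1=\partial_{y_1}-\varphi_1\partial_{y_3}$, $D_2=\partial_{y_2}-\varphi_2\partial_{y_3}$, $D_3=\partial_{y_3}$. Each seminorm term is $\Vert D_{\ell'}D_\ell\psi_A\Vert_{L^2}$. Expanding, $D_{\ell'}D_\ell\psi_A$ is a combination of second derivatives of $\psi_A$ with coefficients that are products of at most two $\varphi_\alpha$ (bounded by $1/4$). Derivatives falling on coefficients produce terms $\varphi_{\alpha\beta}\partial_{y_3}\psi_A$, bounded by $\Vert\nabla^2_X\varphi\Vert_{L^\infty}|\nabla_y\psi_A|$. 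Summing gives $\Vert\psi_A\Vert_{\dot{\mathcal W}^{2,2}}\le C(\Vert\nabla_X^2\varphi\Vert_{L^\infty})\Vert\psi_A\Vert_{W^{2,2}}$.

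For (iii), run the same identity in reverse. Since $\partial_{y_\alpha}=D_\alpha+\varphi_\alpha D_3$ and $\partial_{y_3}=D_3$, each $\partial_{y_{\ell'}}\partial_{y_\ell}\psi_A$ equals a combination of the $D_{m'}D_m\psi_A$ with bounded coefficients, plus first-order terms $\varphi_{\alpha\beta}D_3\psi_A$. Those first-order terms are controlled by $\Vert\nabla_X^2\varphi\Vert_{L^\infty}\Vert\nabla_y\psi_A\Vert_{L^2}\le 3\Vert\nabla_X^2\varphi\Vert_{L^\infty}\Vert\psi_A\Vert_{\dot{\mathcal W}^{1,2}}$ by (i). This avoids any elliptic regularity argument, unlike Lemma \ref{lem73}(iv), because here the frame $D_\ell$ is invertible with a full set of second derivatives available.

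For (iv), combine the $L^2$ identity (equality of $\Vert\cdot\Vert_{\mathcal L^2(\Omega_A)}$ and $\Vert\cdot\Vert_{L^2(\mathbb{R}^3_+)}$) with (i)–(iii); the constants depend only on $\Vert\nabla_X^2\varphi\Vert_{L^\infty}$ since the first-order constant is absolute.

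I expect the main obstacle to be bookkeeping rather than any conceptual difficulty: one must ensure the commutator terms in (iii), where $\partial_{y_\alpha}$ hits $\varphi_\beta$ inside $D_\beta$, are only first order. They are, because $\varphi$ is independent of $y_3$.
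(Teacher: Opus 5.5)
Your proposal is correct and follows essentially the same route as the paper. Part (i) uses the same expansion of $g_A^{ij}\partial_{y_i}\psi_A\partial_{y_j}\psi_A$, handled with $(a-b)^2\le 2a^2+2b^2$ in both directions. Parts (ii)--(iv) rest on the same algebra, which the paper writes out explicitly as nine identities for $D_{\ell'}D_\ell\psi_A$ and then inverts for $\partial_{y_3}^2\psi_A$, $\partial_{y_\alpha}\partial_{y_3}\psi_A$ and $\partial_{y_\alpha}\partial_{y_\beta}\psi_A$; you package this more compactly through $\partial_{y_\alpha}=D_\alpha+\varphi_\alpha D_3$, $\partial_{y_3}=D_3$ and the $y_3$-independence of $\varphi$. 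The only slip is cosmetic: $M$ is upper rather than lower triangular, which changes nothing in the argument.
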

\noindent See also Lemma \ref{lem37}.

\begin{proof}[Proof of Lemma \ref{lem77}]

Assume that $\Vert \nabla_X \varphi \Vert_{L^\infty (\mathbb{R}^2)} \leq 1/2$. Fix $\psi_A \in W^{1,2} (\mathbb{R}^3_+)$.

We first show $(\rm{i})$. By definition, we check that
\begin{multline}\label{eq7062}
\int_{\mathbb{R}^3_+} g_A^{i j} \frac{\partial \psi_A }{\partial y_i} \frac{ \partial \psi_A }{\partial y_j} { \ }dy\\
 = \int_{\mathbb{R}^3_+}\bigg\{ \bigg(\frac{ \partial \psi_A }{\partial y_1} -  \varphi_1 \frac{ \partial \psi_A }{\partial y_3} \bigg)^2 +\bigg(\frac{ \partial \psi_A }{\partial y_2} -  \varphi_2 \frac{ \partial \psi_A }{\partial y_3} \bigg)^2 + \bigg\vert \frac{\partial \psi_A }{\partial y_3} \bigg\vert^2 \bigg\} { \ }dy.
\end{multline}
From \eqref{eq7062}, we obtain
\begin{equation}\label{eq7063}
\int_{\mathbb{R}^3_+} g_A^{i j} \frac{\partial \psi_A }{\partial y_i} \frac{ \partial \psi_A }{\partial y_j} { \ }dy \geq \int_{\mathbb{R}^3_+} \bigg\vert \frac{\partial \psi_A }{\partial y_3} \bigg\vert^2 { \ }dy.
\end{equation}
Applying $(a-b)^2 \leq 2 a^2 + 2b^2$ and $\Vert \nabla_X \varphi \Vert_{L^\infty (\mathbb{R}^2)} \leq 1/2$ into \eqref{eq7062}, we find that
\begin{equation}\label{eq7064}
\int_{\mathbb{R}^3_+} g_A^{i j} \frac{\partial \psi_A }{\partial y_i} \frac{ \partial \psi_A }{\partial y_j} { \ }dy \leq 9 \int_{\mathbb{R}^3_+} \vert \nabla_y \psi_A \vert^2 { \ }dy.
\end{equation}
Since $a^2 \leq 2 (a-b)^2 + 2b^2$, we use \eqref{eq7063} and \eqref{eq7062} to see that
\begin{multline}\label{eq7065}
\Vert \nabla_y \psi_A \Vert_{L^2(\mathbb{R}^3_+)}^2 = \int_{\mathbb{R}^3_+} \bigg( \bigg\vert \frac{\partial \psi_A }{\partial y_1} \bigg\vert^2 + \bigg\vert \frac{\partial \psi_A }{\partial y_2} \bigg\vert^2 + \bigg\vert \frac{\partial \psi_A }{\partial y_3} \bigg\vert^2 \bigg) { \ }dy\\
 \leq \int_{\mathbb{R}^3_+} \bigg\{ 2 \bigg(\frac{ \partial \psi_A }{\partial y_1} -  \varphi_1 \frac{ \partial \psi_A }{\partial y_3} \bigg)^2 + 2 \bigg(\frac{ \partial \psi_A }{\partial y_2} -  \varphi_2 \frac{ \partial \psi_A }{\partial y_3} \bigg)^2 + 2 (1 + \varphi_1^2 + \varphi_2^2) \bigg\vert \frac{\partial \psi_A }{\partial y_3} \bigg\vert^2 \bigg\} { \ }dy\\
\leq 9 \int_{\mathbb{R}^3_+} g_A^{i j} \frac{\partial \psi_A }{\partial y_i} \frac{ \partial \psi_A }{\partial y_j} { \ }dy.
\end{multline}
By \eqref{eq7064} and \eqref{eq7065}, we see \eqref{eq7059}.

Now we prove $(\rm{ii})$-$(\rm{iv})$. Assume that $\psi_A \in W^{2,2} (\mathbb{R}^3_+)$. Using assertion $(\rm{i})$ in Remark \ref{rem76}, we find that
\begin{align*}
g_A^{i' j'} \frac{\partial \tilde{x}^A_1 }{\partial y_{i'}} \frac{ \partial }{\partial y_{j'}} \bigg( g_A^{i j} \frac{\partial \tilde{x}^A_1 }{\partial y_i} \frac{ \partial \psi_A }{\partial y_j} \bigg) & = \frac{\partial^2 \psi_A}{\partial y_1^2} -  \varphi_{11} \frac{\partial \psi_A}{\partial y_3} - 2 \varphi_1 \frac{\partial^2 \psi_A }{\partial y_1  \partial y_3} +  \varphi_1^2 \frac{\partial^2 \psi_A }{\partial y_3^2},\\
g_A^{i' j'} \frac{\partial \tilde{x}^A_1 }{\partial y_{i'}} \frac{ \partial }{\partial y_{j'}} \bigg( g_A^{i j} \frac{\partial \tilde{x}^A_2 }{\partial y_i} \frac{ \partial \psi_A }{\partial y_j} \bigg) & = \frac{\partial^2 \psi_A}{\partial y_1 \partial y_2} -  \varphi_{12} \frac{\partial \psi_A}{\partial y_3} - \varphi_2 \frac{\partial^2 \psi_A }{\partial y_1  \partial y_3} - \varphi_1 \frac{\partial^2 \psi_A }{\partial y_2  \partial y_3}+  \varphi_1 \varphi_2 \frac{\partial^2 \psi_A }{\partial y_3^2},\\
g_A^{i' j'} \frac{\partial \tilde{x}^A_1 }{\partial y_{i'}} \frac{ \partial }{\partial y_{j'}} \bigg( g_A^{i j} \frac{\partial \tilde{x}^A_3 }{\partial y_i} \frac{ \partial \psi_A }{\partial y_j} \bigg) & = \frac{\partial^2 \psi_A}{\partial y_1 \partial y_3} -  \varphi_1 \frac{\partial^2 \psi_A }{\partial y_3^2},\\
g_A^{i' j'} \frac{\partial \tilde{x}^A_2 }{\partial y_{i'}} \frac{ \partial }{\partial y_{j'}} \bigg( g_A^{i j} \frac{\partial \tilde{x}^A_1 }{\partial y_i} \frac{ \partial \psi_A }{\partial y_j} \bigg) & = \frac{\partial^2 \psi_A}{\partial y_1 \partial y_2} -  \varphi_{12} \frac{\partial \psi_A}{\partial y_3} - \varphi_2 \frac{\partial^2 \psi_A }{\partial y_1  \partial y_3} - \varphi_1 \frac{\partial^2 \psi_A }{\partial y_2  \partial y_3}+  \varphi_1 \varphi_2 \frac{\partial^2 \psi_A }{\partial y_3^2},\\
g_A^{i' j'} \frac{\partial \tilde{x}^A_2 }{\partial y_{i'}} \frac{ \partial }{\partial y_{j'}} \bigg( g_A^{i j} \frac{\partial \tilde{x}^A_2 }{\partial y_i} \frac{ \partial \psi_A }{\partial y_j} \bigg) & = \frac{\partial^2 \psi_A}{\partial y_2^2} -  \varphi_{22} \frac{\partial \psi_A}{\partial y_3} - 2 \varphi_2 \frac{\partial^2 \psi_A }{\partial y_2  \partial y_3} +  \varphi_2^2 \frac{\partial^2 \psi_A }{\partial y_3^2},\\
g_A^{i' j'} \frac{\partial \tilde{x}^A_2 }{\partial y_{i'}} \frac{ \partial }{\partial y_{j'}} \bigg( g_A^{i j} \frac{\partial \tilde{x}^A_3 }{\partial y_i} \frac{ \partial \psi_A }{\partial y_j} \bigg) & = \frac{\partial^2 \psi_A}{\partial y_2 \partial y_3} -  \varphi_2 \frac{\partial^2 \psi_A }{\partial y_3^2},\\
g_A^{i' j'} \frac{\partial \tilde{x}^A_3 }{\partial y_{i'}} \frac{ \partial }{\partial y_{j'}} \bigg( g_A^{i j} \frac{\partial \tilde{x}^A_1 }{\partial y_i} \frac{ \partial \psi_A }{\partial y_j} \bigg) & = \frac{\partial^2 \psi_A}{\partial y_1 \partial y_3} -  \varphi_1 \frac{\partial^2 \psi_A }{\partial y_3^2},\\
g_A^{i' j'} \frac{\partial \tilde{x}^A_3 }{\partial y_{i'}} \frac{ \partial }{\partial y_{j'}} \bigg( g_A^{i j} \frac{\partial \tilde{x}^A_2 }{\partial y_i} \frac{ \partial \psi_A }{\partial y_j} \bigg) & = \frac{\partial^2 \psi_A}{\partial y_2 \partial y_3} -  \varphi_2 \frac{\partial^2 \psi_A }{\partial y_3^2},\\
g_A^{i' j'} \frac{\partial \tilde{x}^A_3 }{\partial y_{i'}} \frac{ \partial }{\partial y_{j'}} \bigg( g_A^{i j} \frac{\partial \tilde{x}^A_3 }{\partial y_i} \frac{ \partial \psi_A }{\partial y_j} \bigg) & = \frac{\partial^2 \psi_A }{\partial y_3^2}.
\end{align*}
Applying the H\"{o}lder inequality and the above equalities into \eqref{eq7053}, we see $(\rm{ii})$. Next we show $(\rm{iii})$. Since
\begin{equation*}
\frac{\partial^2 \psi_A }{\partial y_3^2} = g_A^{i' j'} \frac{\partial \tilde{x}^A_3 }{\partial y_{i'}} \frac{ \partial }{\partial y_{j'}} \bigg( g_A^{i j} \frac{\partial \tilde{x}^A_3 }{\partial y_i} \frac{ \partial \psi_A }{\partial y_j} \bigg), 
\end{equation*}
we find that
\begin{equation}\label{eq7066}
\bigg\Vert \frac{\partial^2 \psi_A }{\partial y_3^2} \bigg\Vert_{L^2(\mathbb{R}^3_+)} = \bigg\Vert g_A^{i' j'} \frac{\partial \tilde{x}^A_3 }{\partial y_{i'}} \frac{ \partial }{\partial y_{j'}} \bigg( g_A^{i j} \frac{\partial \tilde{x}^A_3 }{\partial y_i} \frac{ \partial \psi_A }{\partial y_j} \bigg) \bigg\Vert_{L^2(\mathbb{R}^3_+)} \leq \Vert \psi_A \Vert_{\dot{\mathcal{W}}^{2,2} (\Omega_A)}.
\end{equation}
It is clear that
\begin{align*}
\frac{\partial^2 \psi_A}{\partial y_1 \partial y_3} &= \bigg( \frac{\partial^2 \psi_A}{\partial y_1 \partial y_3} -  \varphi_1 \frac{\partial^2 \psi_A }{\partial y_3^2} \bigg) +  \varphi_1 \frac{\partial^2 \psi_A }{\partial y_3^2},\\
\frac{\partial^2 \psi_A}{\partial y_2 \partial y_3} &= \bigg( \frac{\partial^2 \psi_A}{\partial y_2 \partial y_3} -  \varphi_2 \frac{\partial^2 \psi_A }{\partial y_3^2} \bigg) +  \varphi_2 \frac{\partial^2 \psi_A }{\partial y_3^2}.
\end{align*}
By \eqref{eq7066} and $\Vert \nabla_X \varphi \Vert_{L^\infty (\mathbb{R}^2)} \leq 1/2$, we check that
\begin{multline}\label{eq7067}
\bigg\Vert \frac{\partial^2 \psi_A}{\partial y_1 \partial y_3} \bigg\Vert_{L^2(\mathbb{R}^3_+)} =  \bigg\Vert g_A^{i' j'} \frac{\partial \tilde{x}^A_1 }{\partial y_{i'}} \frac{ \partial }{\partial y_{j'}} \bigg( g_A^{i j} \frac{\partial \tilde{x}^A_3 }{\partial y_i} \frac{ \partial \psi_A }{\partial y_j} \bigg) + \varphi_1 \frac{\partial^2 \psi_A }{\partial y_3^2}\bigg\Vert_{L^2(\mathbb{R}^3_+)}\\
\leq  \bigg\Vert g_A^{i' j'} \frac{\partial \tilde{x}^A_1 }{\partial y_{i'}} \frac{ \partial }{\partial y_{j'}} \bigg( g_A^{i j} \frac{\partial \tilde{x}^A_3 }{\partial y_i} \frac{ \partial \psi_A }{\partial y_j} \bigg)\bigg\Vert_{L^2(\mathbb{R}^3_+)} + \bigg\Vert \varphi_1 \frac{\partial^2 \psi_A }{\partial y_3^2}\bigg\Vert_{L^2(\mathbb{R}^3_+)} \leq 2 \Vert \psi_A \Vert_{\dot{\mathcal{W}}^{2,2} (\Omega_A)}.
\end{multline}
Similarly, we see that
\begin{equation}\label{eq7068}
\bigg\Vert \frac{\partial^2 \psi_A}{\partial y_2 \partial y_3} \bigg\Vert_{L^2(\mathbb{R}^3_+)} \leq 2 \Vert \psi_A \Vert_{\dot{\mathcal{W}}^{2,2} (\Omega_A)}.
\end{equation}
It is easy to check that
\begin{multline}\label{eq7069}
\frac{\partial^2 \psi_A}{\partial y_1^2} = \bigg( \frac{\partial^2 \psi_A}{\partial y_1^2} -  \varphi_{11} \frac{\partial \psi_A}{\partial y_3} - 2 \varphi_1 \frac{\partial^2 \psi_A }{\partial y_1  \partial y_3} +  \varphi_1^2 \frac{\partial^2 \psi_A }{\partial y_3^2} \bigg)\\
 + \bigg( \varphi_{11} \frac{\partial \psi_A}{\partial y_3} + 2 \varphi_1 \frac{\partial^2 \psi_A }{\partial y_1  \partial y_3} -  \varphi_1^2 \frac{\partial^2 \psi_A }{\partial y_3^2} \bigg),\\
= g_A^{i' j'} \frac{\partial \tilde{x}^A_1 }{\partial y_{i'}} \frac{ \partial }{\partial y_{j'}} \bigg( g_A^{i j} \frac{\partial \tilde{x}^A_1 }{\partial y_i} \frac{ \partial \psi_A }{\partial y_j} \bigg)
  + \bigg( \varphi_{11} \frac{\partial \psi_A}{\partial y_3} + 2 \varphi_1 \frac{\partial^2 \psi_A }{\partial y_1  \partial y_3} -  \varphi_1^2 \frac{\partial^2 \psi_A }{\partial y_3^2} \bigg).
\end{multline}
From \eqref{eq7059}, we have
\begin{equation}\label{eq7070}
\bigg\Vert \frac{\partial \psi_A}{\partial y_3} \bigg\Vert_{L^2(\mathbb{R}^3_+)} \leq 3 \Vert \psi_A \Vert_{\dot{\mathcal{W}}^{1,2} (\Omega_A) }.
\end{equation}
Applying \eqref{eq7067}, \eqref{eq7068}, and \eqref{eq7070} into \eqref{eq7069}, we see that
\begin{equation}\label{eq7071}
\bigg\Vert \frac{\partial^2 \psi_A}{\partial y_1^2} \bigg\Vert_{L^2(\mathbb{R}^3_+)} \leq C  \Vert \psi_A \Vert_{\dot{\mathcal{W}}^{2,2} (\Omega_A) } + C (\Vert \nabla_X^2 \varphi \Vert_{L^\infty (\mathbb{R}^2)}) \Vert \psi_A \Vert_{\dot{\mathcal{W}}^{1,2} (\Omega_A) }.
\end{equation}
In the same manner, we find that
\begin{align}
\bigg\Vert \frac{\partial^2 \psi_A}{\partial y_2^2} \bigg\Vert_{L^2(\mathbb{R}^3_+)} \leq C  \Vert \psi_A \Vert_{\dot{\mathcal{W}}^{2,2} (\Omega_A) } + C (\Vert \nabla_X^2 \varphi \Vert_{L^\infty (\mathbb{R}^2)}) \Vert \psi_A \Vert_{\dot{\mathcal{W}}^{1,2} (\Omega_A) },\label{eq7072}\\
\bigg\Vert \frac{\partial^2 \psi_A}{\partial y_1 \partial y_2} \bigg\Vert_{L^2(\mathbb{R}^3_+)} \leq C  \Vert \psi_A \Vert_{\dot{\mathcal{W}}^{2,2} (\Omega_A) } + C (\Vert \nabla_X^2 \varphi \Vert_{L^\infty (\mathbb{R}^2)}) \Vert \psi_A \Vert_{\dot{\mathcal{W}}^{1,2} (\Omega_A) }.\label{eq7073}
\end{align}
Thus, we see $(\rm{iii})$ from \eqref{eq7066}-\eqref{eq7073}. Finally, we show $(\rm{iv})$. Since $\Vert \psi_A \Vert_{\mathcal{L}^2(\Omega_A)} = \Vert \psi_A \Vert_{L^2(\mathbb{R}^3_+)}$, we apply assertions $(\rm{i})$-$(\rm{iii})$ to derive $(\rm{iv})$. Therefore, the lemma follows.
\end{proof}

Finally, we study the relationship between $\Vert \cdot \Vert_{\mathcal{W}^{1,1}(\Omega_A) }$ and $\Vert \cdot \Vert_{W^{1,1} (\mathbb{R}^3_+)}$.
\begin{lemma}\label{lem78}
Assume that $\Vert \nabla_X \varphi \Vert_{L^\infty (\mathbb{R}^2)} \leq 1/2$. Then,\\
$(\rm{i})$ For all $\psi_A \in W^{1,1}(\mathbb{R}^3_+)$,
\begin{equation}\label{eq7074}
\frac{1}{2} \Vert \nabla_y \psi_A \Vert_{L^1(\mathbb{R}^3_+)} \leq \Vert \psi_A \Vert_{\dot{\mathcal{W}}^{1,1}(\Omega_A)} \leq 18 \Vert \nabla_y \psi_A \Vert_{L^1(\mathbb{R}^3_+)}.
\end{equation}
Here
\begin{equation*}
\Vert \nabla_y \psi_A \Vert_{L^1(\mathbb{R}^3_+)} := \Vert \partial_{y_1} \psi_A \Vert_{L^1(\mathbb{R}^3_+)} + \Vert \partial_{y_2} \psi_A \Vert_{L^1(\mathbb{R}^3_+)}  + \Vert \partial_{y_3} \psi_A \Vert_{L^1(\mathbb{R}^3_+)}. 
\end{equation*}
$(\rm{ii})$ For all $\psi_A \in W^{1,1}(\mathbb{R}^3_+)$,
\begin{equation}\label{eq7075}
\frac{1}{2} \Vert \psi_A \Vert_{W^{1,1}(\mathbb{R}^3_+)} \leq \Vert \psi_A \Vert_{\mathcal{W}^{1,1}(\Omega_A)} \leq 18 \Vert \psi_A \Vert_{W^{1,1}(\mathbb{R}^3_+)}.
\end{equation}
\end{lemma}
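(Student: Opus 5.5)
The statement is Lemma \ref{lem78}. I would prove it by a pointwise comparison of integrands, exactly as in the proof of Lemma \ref{lem74}, using the explicit formulas of assertion $(\rm{i})$ in Remark \ref{rem76}. Fix $\psi_A \in W^{1,1}(\mathbb{R}^3_+)$. By Remark \ref{rem76}, the three quantities entering $\Vert \psi_A \Vert_{\dot{\mathcal{W}}^{1,1}(\Omega_A)}$ are
\begin{equation*}
D_1 := \partial_{y_1}\psi_A - \varphi_1 \partial_{y_3}\psi_A, \quad D_2 := \partial_{y_2}\psi_A - \varphi_2 \partial_{y_3}\psi_A, \quad D_3 := \partial_{y_3}\psi_A .
\end{equation*}
Hence $\Vert \psi_A \Vert_{\dot{\mathcal{W}}^{1,1}(\Omega_A)} = \sum_{\ell} \Vert D_\ell \Vert_{L^1(\mathbb{R}^3_+)}$. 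Since $G_A \equiv 1$, no Jacobian weight appears.

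For the upper bound, I would use $|\varphi_\alpha| \le 1/2$ to get, pointwise,
\begin{equation*}
|D_1| + |D_2| + |D_3| \le |\partial_{y_1}\psi_A| + |\partial_{y_2}\psi_A| + 2|\partial_{y_3}\psi_A| \le 2 \sum_j |\partial_{y_j}\psi_A| .
\end{equation*}
Integrating gives the upper bound with constant $2$, which is well within the stated $18$.

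For the lower bound, I would invert the relations: $\partial_{y_3}\psi_A = D_3$ and $\partial_{y_\alpha}\psi_A = D_\alpha + \varphi_\alpha D_3$. This yields
\begin{equation*}
\sum_j |\partial_{y_j}\psi_A| \le |D_1| + |D_2| + 2|D_3| \le 2 \sum_\ell |D_\ell| .
\end{equation*}
Integrating gives $\frac12 \Vert \nabla_y \psi_A \Vert_{L^1(\mathbb{R}^3_+)} \le \Vert \psi_A \Vert_{\dot{\mathcal{W}}^{1,1}(\Omega_A)}$, which is \eqref{eq7074}.

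Assertion $(\rm{ii})$ then follows by adding $\Vert \psi_A \Vert_{\mathcal{L}^1(\Omega_A)} = \Vert \psi_A \Vert_{L^1(\mathbb{R}^3_+)}$ to both sides of \eqref{eq7074}. Here I use the norm $\Vert \psi_A \Vert_{W^{1,1}} = \Vert \psi_A \Vert_{L^1} + \Vert \nabla_y \psi_A \Vert_{L^1}$, and note that the constants $1/2 \le 1$ and $18 \ge 1$ absorb the $L^1$ term.

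There is no real obstacle. The only point needing care is that the formulas in Remark \ref{rem76} are stated for $C^1$ functions, while the lemma concerns $W^{1,1}$ functions. Since the formulas are pointwise algebraic identities in the weak derivatives with bounded coefficients $\varphi_\alpha$, they hold almost everywhere for $W^{1,1}$ functions. If one insists, this can also be obtained by density of $C_0^2(\overline{\mathbb{R}^3_+})$ in $W^{1,1}(\mathbb{R}^3_+)$, because both sides are continuous in the $W^{1,1}$ norm.
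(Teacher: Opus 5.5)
Your proof is correct and follows the paper's argument. Both proofs write the integrands of $\Vert \psi_A \Vert_{\dot{\mathcal{W}}^{1,1}(\Omega_A)}$ explicitly via Remark \ref{rem76}$(\rm{i})$, invert them through $\partial_{y_\alpha}\psi_A = D_\alpha + \varphi_\alpha D_3$, compare pointwise, integrate, and add the $L^1$ term for $(\rm{ii})$. The only difference is that you bound the explicit expressions directly and get the sharper upper constant $2$, whereas the paper uses the cruder bounds $\vert g_A^{ij} \vert \leq 2$ and $\vert \partial \tilde{x}^A_\ell/\partial y_i \vert \leq 1$, which produce the stated $18$.
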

\noindent See also Lemma \ref{lem38}.

\begin{proof}[Proof of Lemma \ref{lem78}]
Let $\psi_A \in W^{1,1} (\mathbb{R}^3_+)$. From $(\rm{i})$ in Remark \ref{rem76}, we find that
\begin{align}
\frac{\partial \psi_A}{\partial y_3} & = g_A^{i j} \frac{\partial \tilde{x}_3^A}{\partial y_i} \frac{\partial \psi_A}{\partial y_j},\label{eq7076}\\
\frac{\partial \psi_A}{\partial y_1} & = g_A^{i j} \frac{\partial \tilde{x}_1^A}{\partial y_i} \frac{\partial \psi_A}{\partial y_j} + \varphi_1 g_A^{i j} \frac{\partial \tilde{x}_3^A}{\partial y_i} \frac{\partial \psi_A}{\partial y_j} ,\label{eq7077}\\
\frac{\partial \psi_A}{\partial y_2} &=g_A^{i j} \frac{\partial \tilde{x}_2^A}{\partial y_i} \frac{\partial \psi_A}{\partial y_j}  + \varphi_2 g_A^{i j} \frac{\partial \tilde{x}_3^A}{\partial y_i} \frac{\partial \psi_A}{\partial y_j}.\label{eq7078}
\end{align}
By $\Vert \nabla_X \varphi \Vert_{L^\infty (\mathbb{R}^2)} \leq 1/2$, $\Vert g_A^{ij} \Vert_{L^\infty (\mathbb{R}^3_+)} \leq 2$, $\Vert \partial \tilde{x}_j^A/{\partial y_i} \Vert_{L^\infty (\mathbb{R}^3_+)} \leq 1$, \eqref{eq7076}-\eqref{eq7078}. we find that for almost all $y \in \mathbb{R}^3_+$,
\begin{multline*}
\bigg\vert \frac{\partial \psi_A}{\partial y_1} \bigg\vert + \bigg\vert \frac{\partial \psi_A}{\partial y_2} \bigg\vert + \bigg\vert \frac{\partial \psi_A}{\partial y_3} \bigg\vert \leq 2 \bigg\vert g_A^{i j} \frac{\partial \tilde{x}_1^A}{\partial y_i} \frac{\partial \psi_A}{\partial y_j} \bigg\vert + 2 \bigg\vert g_A^{i j} \frac{\partial \tilde{x}_2^A}{\partial y_i} \frac{\partial \psi_A}{\partial y_j} \bigg\vert +2 \bigg\vert g_A^{i j} \frac{\partial \tilde{x}_3^A}{\partial y_i} \frac{\partial \psi_A}{\partial y_j} \bigg\vert\\
\leq 36 \bigg( \bigg\vert \frac{\partial \psi_A}{\partial y_1} \bigg\vert + \bigg\vert \frac{\partial \psi_A}{\partial y_2} \bigg\vert + \bigg\vert \frac{\partial \psi_A}{\partial y_3} \bigg\vert \bigg).
\end{multline*}
Integrating over $\mathbb{R}^3_+$ with respect to $y$, we have \eqref{eq7074}. By $\Vert \psi_A \Vert_{\mathcal{L}^1(\Omega_A)} = \Vert \psi \Vert_{L^1(\mathbb{R}^3_+)}$ and \eqref{eq7074}, we see \eqref{eq7075}. Therefore, the lemma follows.
\end{proof}

\subsection{Representation Formulas for the Domain $\Omega_B$}\label{subsec73}
Let us study some representation formulas for the domain $\Omega_B$. For all $z = { }^t (z_1,z_2,z_3) \in \overline{\mathbb{R}^3_-}$, we set
\begin{equation*}
\check{x}_B = \check{x}_B (z) =
\begin{pmatrix}
\check{x}_1^B\\
\check{x}_2^B\\
\check{x}_3^B
\end{pmatrix} :=
\begin{pmatrix}
z_1\\
z_2\\
z_3 + \varphi (z_1,z_2)
\end{pmatrix}.
\end{equation*}
It is clear that the mapping $\check{x}_B: \overline{\mathbb{R}^3_-} \to \overline{\Omega_B}$ is bijective, that $\overline{\Omega_B} = \{ x \in \mathbb{R}^3; x= \check{x}_B(z), z \in \overline{\mathbb{R}_-^3} \}$, and that $\Omega_B = \{ x \in \mathbb{R}^3; x= \check{x}_B(z), z \in \mathbb{R}_-^3 \}$ (see Lemma \ref{lem22}). Define $g^B_1 =g^B_1(z)$, $g^B_2=g^B_2(z)$, $g^B_3=g^B_3(z)$ by
\begin{equation*}
g^B_1(z) := \frac{\partial \check{x}_B}{\partial z_1} = \begin{pmatrix}
1\\
0\\
\varphi_1
\end{pmatrix},{ \ }
g^B_2(z) := \frac{\partial \check{x}_B}{\partial z_2} = \begin{pmatrix}
0\\
1\\
\varphi_2
\end{pmatrix},{ \ }
 g^B_3(z) := \frac{\partial \check{x}_B}{\partial z_3} = \begin{pmatrix}
0\\
0\\
1
\end{pmatrix}.
\end{equation*}
Here $\varphi_1 = \varphi_1 (z_h) = \partial \varphi/{\partial z_1}$ and $\varphi_2 = \varphi_2 (z_h) = \partial \varphi/{\partial z_2}$. Set $g^B_{ij} =g^B_{ij}(z) := g^B_i \cdot g^B_j$, $(g_B^{ij})_{3 \times 3} := (g^B_{ij})_{3 \times 3}^{-1}$, and $G_B = G_B (z) :={\rm{det}} (g^B_{ij})_{3 \times 3}$. We easily check that
\begin{equation*}
(g^B_{ij})_{3 \times 3 } =
\begin{pmatrix}
1 + \varphi_1^2 & \varphi_1 \varphi_2 & \varphi_1\\
\varphi_1 \varphi_2 & 1 + \varphi_2^2 & \varphi_2 \\
\varphi_1 & \varphi_2 &  1
\end{pmatrix},{ \ }
(g_B^{ij})_{3 \times 3 } =
\begin{pmatrix}
1 & 0 & - \varphi_1\\
0 & 1 & - \varphi_2 \\
- \varphi_1 & - \varphi_2 & 1 + \varphi_1^2 + \varphi_2^2
\end{pmatrix},
\end{equation*}
and that $G_B \equiv 1$. Set
\begin{equation*}
g_B^i = g_B^i(z) := g_B^{ij}g^B_j \bigg( = \sum_{j=1}^3 g_B^{ij}g^B_j \bigg).
\end{equation*}
It is clear that
\begin{equation*}
g_B^1(z) = \begin{pmatrix}
1\\
0\\
0
\end{pmatrix},{ \ }
g_B^2(z) = \begin{pmatrix}
0\\
1\\
0
\end{pmatrix},{ \ }
g_B^3(z) = \begin{pmatrix}
-\varphi_1\\
-\varphi_2\\
1
\end{pmatrix},
\end{equation*}
and $g_B^{ij} = g_B^i \cdot g_B^j$. From \cite[Lemma 3.3]{KS18}, we have the following representation formulas.
\begin{lemma}\label{lem79}
$(\rm{i})$ For all $f_B, f_B^\natural, f_B^\flat \in C^2 (\mathbb{R}^3)$,
\begin{align*}
\int_{\Omega_B} f_B (x) { \ }d x &= \int_{\mathbb{R}^3_-} \check{f}_B (z) { \ }dz,\\
\int_{\Omega_B} \partial_\ell f_B { \ }dx &= \int_{\mathbb{R}^3_-} g_B^{i j} \frac{\partial \check{x}^B_\ell }{\partial z_i} \frac{ \partial \check{f}_B }{\partial z_j} { \ }dz,\\
\int_{\Omega_B} \partial_{\ell'} \partial_\ell f_B { \ }dx &= \int_{\mathbb{R}^3_-}  g_B^{i' j'} \frac{\partial \check{x}^B_{\ell'} }{\partial z_{i'}} \frac{ \partial }{\partial z_{j'}} \bigg( g_B^{i j} \frac{\partial \check{x}^B_\ell }{\partial z_i} \frac{ \partial \check{f}_B }{\partial z_j} \bigg) { \ }dz,\\
\int_{\Omega_B} \nabla f_B^\natural \cdot \nabla f_B^\flat { \ }dx & = \int_{\mathbb{R}^3_-} g_B^{i j} \frac{\partial \check{f}_B^\natural }{\partial z_i} \frac{ \partial \check{f}_B^\flat }{\partial z_j} { \ }dz,\\
\int_{\Omega_B} \Delta f_B { \ }d x & = \int_{\mathbb{R}^3_-} \frac{\partial}{\partial z_i} \bigg( g_B^{i j} \frac{\partial \check{f}_B}{\partial z_j} \bigg) { \ }dz,
\end{align*}
where $\check{f}_B = \check{f}_B (z) = f_B (\check{x}_B(z) )$.\\
$(\rm{ii})$ For all $F_B = { }^t (F_1^B, F_2^B ,F_3^B) \in [C^1 (\mathbb{R}^3)]^3$ and $\Phi_B \in C^1 (\mathbb{R}^4)$,
\begin{align*}
\int_{\Omega_B} \nabla \cdot F_B { \ }d x &= \int_{\mathbb{R}^3_-} g_B^i \cdot \frac{\partial \check{F}_B }{\partial z_i} { \ }dz,\\
\int_{\Omega_B} \frac{\partial \Phi_B}{\partial t} { \ }d x &= \int_{\mathbb{R}^3_-} \frac{\partial \check{\Phi}_B}{\partial t} { \ }d z. 
\end{align*}
Here $\check{F}_B = \check{F}_B (z) = F_B (\check{x}_B(z) )$ and $\check{\Phi}_B = \check{\Phi}_B (z,t) = \Phi_B (\check{x}_B(z),t)$. 
\end{lemma}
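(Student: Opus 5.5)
The statement just above is Lemma \ref{lem79}, the $\Omega_B$ analogue of Lemma \ref{lem75}. I would prove it by the change of variables $x = \check{x}_B(z)$ and the chain rule, mirroring the $\Omega_A$ case.

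\textbf{Measure and first derivatives.} First I note that $\check{x}_B:\mathbb{R}^3_-\to\Omega_B$ is a $C^2$ bijection (Lemma \ref{lem22}). Its Jacobian matrix has columns $g^B_1,g^B_2,g^B_3$, so $\det D\check{x}_B = 1$, i.e. $G_B\equiv 1$. The substitution formula then gives $\int_{\Omega_B} f_B\,dx=\int_{\mathbb{R}^3_-}\check f_B\,dz$; the same applies with $\partial_t\Phi_B$ in place of $f_B$, since $\check{x}_B$ does not depend on $t$. Next, the chain rule gives $\partial_{z_i}\check f_B = g^B_i\cdot(\nabla f_B)\circ\check{x}_B$. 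Inverting with the dual basis yields $(\nabla f_B)\circ\check x_B = g_B^{ij}g^B_j\,\partial_{z_i}\check f_B$. Taking the $\ell$-th component gives $(\partial_\ell f_B)\circ\check x_B = g_B^{ij}\,(\partial\check x^B_\ell/\partial z_i)\,\partial_{z_j}\check f_B$. Integrating this identity with the measure formula proves the second identity.

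\textbf{Second derivatives, gradients, divergence.} Applying the first-derivative identity to $\partial_\ell f_B$ instead of $f_B$ gives the formula for $\partial_{\ell'}\partial_\ell f_B$. The gradient product formula follows from $g_B^{ij}=g_B^i\cdot g_B^j$: the dot product of the two dual expansions is $g_B^{ij}\partial_{z_i}\check f^\natural_B\,\partial_{z_j}\check f^\flat_B$. The divergence formula comes from summing the first-derivative identity over components, $\nabla\cdot F_B=\sum_\ell g_B^{ij}(\partial\check x^B_\ell/\partial z_i)\partial_{z_j}\check F^B_\ell=g_B^i\cdot\partial_{z_i}\check F_B$. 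Finally, for the Laplacian I write $\Delta f_B=\nabla\cdot\nabla f_B$ and use the divergence formula. This yields $g_B^{i}\cdot\partial_{z_i}(g_B^{j}\partial_{z_j}\check f_B)$, and it must be recast as $\partial_{z_i}(g_B^{ij}\partial_{z_j}\check f_B)$.

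\textbf{Main obstacle.} That last step is where I expect the difficulty. It requires the Piola-type identity $\partial_{z_i}(\sqrt{G_B}\,g_B^i)=0$, which reduces to $\partial_{z_i}g_B^i=0$ because $G_B\equiv1$. I would verify it directly from the explicit vectors: $g_B^1,g_B^2$ are constant, and $g_B^3=(-\varphi_1,-\varphi_2,1)^t$ does not depend on $z_3$. With this identity, $g_B^i\cdot\partial_{z_i}V=\partial_{z_i}(g_B^i\cdot V)$, and the Laplacian formula follows. The divergence formula can also be seen from the explicit expression analogous to \eqref{eq7047}.

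All identities above hold pointwise, so they hold under the integral sign without any issue at $z_3=0$. Alternatively, the whole computation is identical to \cite[Lemma 3.3]{KS18} with $\mathbb{R}^3_+$ replaced by $\mathbb{R}^3_-$, since the metric coefficients $g^B_{ij}$ and $g_B^{ij}$ coincide with those for $\Omega_A$.
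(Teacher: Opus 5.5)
Your proposal is correct. It is essentially the standard change-of-variables computation that the paper delegates entirely to \cite[Lemma 3.3]{KS18}: $\det D\check{x}_B = 1$ (so $G_B\equiv 1$), the dual-basis chain rule $(\nabla f_B)\circ\check{x}_B = g_B^i\,\partial_{z_i}\check{f}_B$, and its iteration for second derivatives. The only minor difference is the Laplacian formula: for the analogous $\Omega_A$ case the paper reads it off from the second-derivative formula via explicit coordinate expressions (cf.\ Remark~\ref{rem76} and the proof of Lemma~\ref{lem77}), whereas you go through the divergence formula and the identity $\partial_{z_i} g_B^i = 0$, which is cleaner and shows why no $\sqrt{G_B}$ weight appears.
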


\begin{remark}\label{rem7010}  $(\rm{i})$ Let $\kappa_B >0$. For $\psi_B \in C^2 (\overline{\mathbb{R}^3_-})$, we set
\begin{equation}\label{eq7079}
\mathscr{L}_2 \psi_B := - \kappa_B \frac{\partial}{\partial z_i} \bigg( g_B^{ij} \frac{\partial \psi_B}{\partial z_j} \bigg).
\end{equation}
We easily check that
\begin{equation}\label{eq7080}
\mathscr{L}_2 \psi_B = - \kappa_B \Delta_z \psi_B + \mathscr{B}_2 \psi_B,
\end{equation}
where
\begin{equation}\label{eq7081}
\mathscr{B}_2 \psi_B := \kappa_B \bigg(  2 \varphi_1 \frac{\partial^2 \psi_B}{\partial z_1 \partial z_3} + 2 \varphi_2 \frac{\partial^2 \psi_B}{\partial z_2 \partial z_3} - (\varphi_1^2 + \varphi_2^2) \frac{\partial^2 \psi_B}{\partial z_3^2 } + ( \varphi_{11} + \varphi_{22} ) \frac{\partial \psi_B}{\partial z_3} \bigg).
\end{equation}
Here $\varphi_{\alpha \beta} = \partial^2 \varphi/{\partial z_\alpha \partial z_\beta}$.\\
$(\rm{ii})$ For $\psi_B \in L_{loc}^1 (\mathbb{R}^3_-)$, we define
\begin{align*}
\Vert \psi_B \Vert_{\mathcal{L}^2(\Omega_B)} & :=  \bigg( \int_{\mathbb{R}^3_-} \vert \psi_B \vert^2 { \ }d z \bigg)^{1/2},\\
\Vert \psi_B \Vert_{\dot{\mathcal{W}}^{1,2}(\Omega_B)}  & := \bigg( \int_{\mathbb{R}^3_-} g_B^{i j} \frac{\partial \psi_B }{\partial z_i} \frac{ \partial \psi_B }{\partial z_j}  { \ }dz \bigg)^{1/2},\\
\Vert \psi_B \Vert_{\dot{\mathcal{W}}^{2,2}(\Omega_B)}  & := \bigg( \sum_{\ell, \ell' =1}^3 \int_{\mathbb{R}^3_-} \bigg\vert g_B^{i' j'} \frac{\partial \check{x}^B_{\ell'} }{\partial z_{i'}} \frac{ \partial }{\partial z_{j'} } \bigg( g_B^{i j} \frac{\partial \check{x}^B_\ell }{\partial z_i} \frac{ \partial \psi_B }{\partial z_j} \bigg) \bigg\vert^2 { \ }dz \bigg)^{1/2},\\
\Vert \psi_B \Vert_{ \mathcal{W}^{2,2}(\Omega_B)} & := ( \Vert \psi_B \Vert_{\mathcal{L}^2(\Omega_B)}^2 + \Vert \psi_B \Vert_{\dot{\mathcal{W}}^{1,2}(\Omega_B)}^2 + \Vert \psi_B \Vert_{\dot{\mathcal{W}}^{2,2}(\Omega_B)}^2 )^{1/2}.
\end{align*}
$(\rm{iii})$ For $\psi_B \in L_{loc}^1 (\mathbb{R}^3_+)$, we define
\begin{align*}
\Vert \psi_B \Vert_{\mathcal{L}^1(\Omega_B)} &:= \int_{\mathbb{R}^3_-} \vert \psi_B \vert { \ }dz,\\
\Vert \psi_B \Vert_{\dot{\mathcal{W}}^{1,1} (\Omega_B) } &:= \sum_{\ell =1}^3 \int_{\mathbb{R}^3_+} \bigg\vert g_B^{i j} \frac{\partial \check{x}^B_\ell }{\partial z_i} \frac{ \partial \psi_B }{\partial z_j} \bigg\vert { \ }dz,\\
\Vert \psi_B \Vert_{\mathcal{W}^{1,1} (\Omega_B) } &: = \Vert \psi_B \Vert_{\mathcal{L}^1 (\Omega_B) } + \Vert \psi_B \Vert_{\dot{\mathcal{W}}^{1,1} (\Omega_B) }.
\end{align*}
\end{remark}

In the same manner in the proof of Lemmas \ref{lem77} and \ref{lem78}, we have the following two lemmas.
\begin{lemma}\label{lem7011}
Assume that $\Vert \nabla_X \varphi \Vert_{L^\infty(\mathbb{R}^2)} \leq 1/2$. Then,{ \ }\\
$(\rm{i})$ For all $\psi_B \in W^{1,2}(\mathbb{R}^3_-)$
\begin{equation*}
\frac{1}{9}\Vert \nabla_z \psi_B \Vert_{L^2(\mathbb{R}^3_-)}^2 \leq \Vert \psi_B \Vert_{\dot{\mathcal{W}}^{1,2}(\Omega_B)}^2 \leq 9 \Vert \nabla_z \psi_B \Vert_{L^2(\mathbb{R}^3_-)}^2.
\end{equation*}
$(\rm{ii})$ There is $C = C( \Vert \nabla_X^2 \varphi \Vert_{L^{\infty} (\mathbb{R}^2)} ) > 0 $ such that for all $\psi_B \in W^{2,2}(\mathbb{R}^3_-)$
\begin{equation*}
C^{-1} \Vert \psi_B \Vert_{W^{2,2} (\mathbb{R}^3_-)} \leq \Vert \psi_B \Vert_{\mathcal{W}^{2,2}(\Omega_B)} \leq C \Vert \psi_B \Vert_{W^{2,2} (\mathbb{R}^3_-)}.
\end{equation*}
\end{lemma}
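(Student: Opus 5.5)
The plan is to reproduce the proof of Lemma \ref{lem77} with $(\mathbb{R}^3_+, y, \tilde{x}_A, g_A^{ij})$ replaced by $(\mathbb{R}^3_-, z, \check{x}_B, g_B^{ij})$. The point is that the metric data coincide: $g_B^{ij}$, $g_B^i$ and $\partial \check{x}^B_\ell/\partial z_i$ are given by exactly the same formulas in $\varphi_1,\varphi_2$ as their $A$-counterparts. Hence every pointwise identity used in the $\Omega_A$ case remains valid. Only the domain of integration changes, and no integration by parts in $z_3$ is used there.

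For $(\rm{i})$, fix $\psi_B \in W^{1,2}(\mathbb{R}^3_-)$ and expand
\begin{equation*}
g_B^{ij}\partial_{z_i}\psi_B\,\partial_{z_j}\psi_B = (\partial_{z_1}\psi_B - \varphi_1\partial_{z_3}\psi_B)^2 + (\partial_{z_2}\psi_B - \varphi_2\partial_{z_3}\psi_B)^2 + |\partial_{z_3}\psi_B|^2.
\end{equation*}
For the upper bound, use $(a-b)^2\le 2a^2+2b^2$ together with $|\varphi_\alpha|\le 1/2$; this gives the factor $9$. For the lower bound, write $|\partial_{z_\alpha}\psi_B|^2 \le 2(\partial_{z_\alpha}\psi_B-\varphi_\alpha\partial_{z_3}\psi_B)^2 + 2\varphi_\alpha^2|\partial_{z_3}\psi_B|^2$ and add $|\partial_{z_3}\psi_B|^2$. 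Integrating over $\mathbb{R}^3_-$ then gives $\Vert\nabla_z\psi_B\Vert^2_{L^2(\mathbb{R}^3_-)}\le 9\Vert\psi_B\Vert^2_{\dot{\mathcal{W}}^{1,2}(\Omega_B)}$.

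For $(\rm{ii})$, first compute the nine second-order expressions
\begin{equation*}
g_B^{i'j'}\,\frac{\partial\check{x}^B_{\ell'}}{\partial z_{i'}}\,\frac{\partial}{\partial z_{j'}}\Big(g_B^{ij}\,\frac{\partial\check{x}^B_\ell}{\partial z_i}\,\frac{\partial\psi_B}{\partial z_j}\Big).
\end{equation*}
They are the same as in Lemma \ref{lem77}: each equals $\partial^2_{z_{\ell'}z_\ell}\psi_B$ plus terms with coefficients bounded by $|\nabla_X\varphi|$, plus terms $\varphi_{\alpha\beta}\partial_{z_3}\psi_B$.
\begin{itemize}
\item The upper bound $\Vert\psi_B\Vert_{\dot{\mathcal{W}}^{2,2}(\Omega_B)}\le C\Vert\psi_B\Vert_{W^{2,2}(\mathbb{R}^3_-)}$ then follows from Hölder's inequality, with $C$ depending on $\Vert\nabla_X^2\varphi\Vert_{L^\infty}$.
\item For the reverse bound, recover the derivatives in a triangular order. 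First $\partial_{z_3}^2\psi_B$, which is the $(3,3)$ expression itself. Then $\partial_{z_\alpha}\partial_{z_3}\psi_B$, from the $(\alpha,3)$ expression plus $\varphi_\alpha\partial^2_{z_3}\psi_B$. Finally $\partial_{z_\alpha}\partial_{z_\beta}\psi_B$, absorbing $\varphi_{\alpha\beta}\partial_{z_3}\psi_B$ through part $(\rm{i})$. This yields $\Vert\nabla_z^2\psi_B\Vert_{L^2}\le C(\Vert\psi_B\Vert_{\dot{\mathcal{W}}^{1,2}(\Omega_B)}+\Vert\psi_B\Vert_{\dot{\mathcal{W}}^{2,2}(\Omega_B)})$.
\end{itemize}
Combining this with $\Vert\psi_B\Vert_{\mathcal{L}^2(\Omega_B)}=\Vert\psi_B\Vert_{L^2(\mathbb{R}^3_-)}$ and part $(\rm{i})$ gives the two-sided equivalence.

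The only mildly delicate step is to confirm that the second-order identities hold for $W^{2,2}$ functions and not only for smooth ones. I would verify them for $\psi_B\in C_0^2(\overline{\mathbb{R}^3_-})$ and pass to the limit by density. This is legitimate because the coefficients $\varphi_\alpha$ and $\varphi_{\alpha\beta}$ are bounded, since $\varphi\in BC^2$. Everything else is bookkeeping of constants.
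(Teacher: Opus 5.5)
Your proposal is correct and takes essentially the same approach as the paper. The paper proves this lemma simply by invoking ``the same manner'' as Lemma \ref{lem77}, namely the expansion of $g_B^{ij}\partial_{z_i}\psi_B\,\partial_{z_j}\psi_B$ into three squares for $(\rm{i})$, and for $(\rm{ii})$ the nine explicit second-order expressions with the triangular recovery order $\partial_{z_3}^2\psi_B$, then $\partial_{z_\alpha}\partial_{z_3}\psi_B$, then $\partial_{z_\alpha}\partial_{z_\beta}\psi_B$, which is exactly what you carry out with $(\mathbb{R}^3_-, z, \check{x}_B, g_B^{ij})$.
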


\begin{lemma}\label{lem7012}
Assume that $\Vert \nabla_X \varphi \Vert_{L^\infty (\mathbb{R}^2)} \leq 1/2$. Then,\\
$(\rm{i})$ For all $\psi_B \in W^{1,1}(\mathbb{R}^3_-)$,
\begin{equation*}
\frac{1}{2} \Vert \nabla_z \psi_B \Vert_{L^1(\mathbb{R}^3_-)} \leq \Vert \psi_B \Vert_{\dot{\mathcal{W}}^{1,1}(\Omega_B)} \leq 18 \Vert \nabla_z \psi_B \Vert_{L^1(\mathbb{R}^3_-)}.
\end{equation*}
Here
\begin{equation*}
\Vert \nabla_z \psi_B \Vert_{L^1(\mathbb{R}^3_-)} := \Vert \partial_{z_1} \psi_B \Vert_{L^1(\mathbb{R}^3_-)} + \Vert \partial_{z_2} \psi_B \Vert_{L^1(\mathbb{R}^3_-)}  + \Vert \partial_{z_3} \psi_B \Vert_{L^1(\mathbb{R}^3_-)}. 
\end{equation*}
$(\rm{ii})$ For all $\psi_B \in W^{1,1}(\mathbb{R}^3_-)$,
\begin{equation*}
\frac{1}{2} \Vert \psi_B \Vert_{W^{1,1}(\mathbb{R}^3_-)} \leq \Vert \psi_B \Vert_{\mathcal{W}^{1,1}(\Omega_B)} \leq 18 \Vert \psi_B \Vert_{W^{1,1}(\mathbb{R}^3_-)}.
\end{equation*}
\end{lemma}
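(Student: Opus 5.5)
The plan is to transfer the computations of Lemmas \ref{lem77} and \ref{lem78} from $\Omega_A$ to $\Omega_B$. The only differences are that $\mathbb{R}^3_+$ is replaced by $\mathbb{R}^3_-$ and $y$ by $z$. The metric data in subsection \ref{subsec73} are literally identical to those of subsection \ref{subsec72}: the same $g^B_{ij}$, $g_B^{ij}$, $g_B^i$, and $G_B\equiv 1$. Every pointwise identity therefore carries over verbatim, and the integrations run over $\mathbb{R}^3_-$. Nothing in those proofs used the sign of $y_3$ or the boundary; they only used algebra at each point plus $\Vert \nabla_X\varphi\Vert_{L^\infty}\le 1/2$.

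For Lemma \ref{lem7011}(i), I will first write
\begin{equation*}
g_B^{ij}\partial_{z_i}\psi_B\partial_{z_j}\psi_B=(\partial_{z_1}\psi_B-\varphi_1\partial_{z_3}\psi_B)^2+(\partial_{z_2}\psi_B-\varphi_2\partial_{z_3}\psi_B)^2+|\partial_{z_3}\psi_B|^2,
\end{equation*}
which is the analogue of \eqref{eq7062}. Applying $(a-b)^2\le 2a^2+2b^2$ gives the upper bound $9\Vert\nabla_z\psi_B\Vert^2$. Applying $a^2\le 2(a-b)^2+2b^2$ together with $1+\varphi_1^2+\varphi_2^2\le 4$ gives the lower bound, exactly as in \eqref{eq7064}--\eqref{eq7065}. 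For (ii), I will expand the nine second-order expressions $g_B^{i'j'}\partial_{z_{i'}}\check x^B_{\ell'}\partial_{z_{j'}}(g_B^{ij}\partial_{z_i}\check x^B_\ell\partial_{z_j}\psi_B)$ as in the proof of Lemma \ref{lem77}. Each is a combination of second derivatives with coefficients bounded by $1$, plus $\varphi_{\alpha\beta}\partial_{z_3}\psi_B$. This yields $\Vert\psi_B\Vert_{\dot{\mathcal W}^{2,2}(\Omega_B)}\le C\Vert\psi_B\Vert_{W^{2,2}}$. For the reverse bound I will recover the derivatives in triangular order: first $\partial_{z_3}^2\psi_B$, then $\partial_{z_\alpha}\partial_{z_3}\psi_B$, then $\partial_{z_\alpha}\partial_{z_\beta}\psi_B$, as in \eqref{eq7066}--\eqref{eq7073}. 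The first-order error terms are controlled by (i). Combining these with $\Vert\psi_B\Vert_{\mathcal L^2(\Omega_B)}=\Vert\psi_B\Vert_{L^2(\mathbb{R}^3_-)}$ gives the two-sided $W^{2,2}$ equivalence.

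For Lemma \ref{lem7012}, I will use the identities $\partial_{z_3}\psi_B=g_B^{ij}\partial_{z_i}\check x^B_3\partial_{z_j}\psi_B$ and $\partial_{z_\alpha}\psi_B=g_B^{ij}\partial_{z_i}\check x^B_\alpha\partial_{z_j}\psi_B+\varphi_\alpha g_B^{ij}\partial_{z_i}\check x^B_3\partial_{z_j}\psi_B$. Together with $|g_B^{ij}|\le 2$ and $|\partial\check x^B_j/\partial z_i|\le 1$, these give pointwise bounds in both directions, and integrating over $\mathbb{R}^3_-$ finishes the argument. Since $\Vert\psi_B\Vert_{\mathcal L^1(\Omega_B)}=\Vert\psi_B\Vert_{L^1}$, part (ii) follows from part (i).

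None of these steps is a real obstacle. The only point needing care is the bookkeeping in the triangular recovery of second derivatives in Lemma \ref{lem7011}(ii). The order must be $\partial_{z_3}^2$ first, then the mixed derivatives, then the horizontal ones, so that each step uses only already-bounded quantities. The constant depends on $\Vert\nabla_X^2\varphi\Vert_{L^\infty}$ solely through the $\varphi_{\alpha\beta}\partial_{z_3}\psi_B$ terms.
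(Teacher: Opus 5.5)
Your proposal is correct and follows the paper's route. The paper proves this lemma by repeating the proof of Lemma \ref{lem78} on $\mathbb{R}^3_-$: it uses the pointwise identities analogous to \eqref{eq7076}--\eqref{eq7078} with the bounds $|g_B^{ij}|\le 2$ and $|\partial\check x^B_j/\partial z_i|\le 1$, integrates, and for (ii) adds $\Vert\psi_B\Vert_{\mathcal L^1(\Omega_B)}=\Vert\psi_B\Vert_{L^1(\mathbb{R}^3_-)}$, exactly as you do (your paragraph on Lemma \ref{lem7011} is not needed for this statement).
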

\noindent See also Lemmas \ref{lem37} and \ref{lem38}.

\subsection{Divergence Theorems and their Applications}\label{subsec74}

In this subsection, we derive the three divergence theorems, and state their applications. Let $\mathcal{P}_A$, $\mathcal{P}_B$, $\mathcal{P}_S$ be the three pullback operators defined by Definition \ref{def32}, and $C_0^k (\Gamma)$, $C_0^k(\overline{\Omega_A})$, $C_0^k( \overline{\Omega_B})$ $(k = 0,1,2)$ be the function spaces defined by Section \ref{sect3}.

Using tools ($\tilde{x}_A$, $\check{x}_B$, $\hat{x}_S$, $g_A^i$, $g_B^i$, $g_S^\alpha$) provided in subsections \ref{subsec71}, \ref{subsec72}, and \ref{subsec73}, we prove the divergence theorems. 
\begin{lemma}[Divergence theorems]\label{lem7013} For all $F_A = { }^t (F^A_1,F^A_2,F^A_3) \in C_0^1(\overline{\Omega_A})$, $F_B = { }^t (F^B_1,F^B_2,F^B_3) \in C_0^1(\overline{\Omega_B})$, and $F_S = { }^t (F^S_1,F^S_2,F^S_3) \in C_0^1(\Gamma)$,
\begin{align}
\int_{\Omega_A } \nabla \cdot F_A { \ }d x &= - \int_\Gamma F_A \cdot n { \ } d\mathcal{H}_x^2,\label{eq7082}\\
\int_{\Omega_B } \nabla \cdot F_B { \ }d x &= \int_\Gamma F_B \cdot n { \ } d\mathcal{H}_x^2,\label{eq7083}\\
\int_{\Gamma } \nabla_\Gamma \cdot F_S { \ }d \mathcal{H}_x^2 & = - \int_\Gamma H_\Gamma (F_S \cdot n) { \ } d\mathcal{H}_x^2.\label{eq7084}
\end{align}
Here $H_\Gamma$ is the mean curvature defined by $H_\Gamma = - {\rm{div}}_\Gamma n$.
\end{lemma}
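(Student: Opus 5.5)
The plan is to pull each integral back to the flat reference domain by Lemma \ref{lem75}, Lemma \ref{lem79} and Lemma \ref{lem71}, and there apply the fundamental theorem of calculus in one variable together with compact support. I start with \eqref{eq7082}. By \eqref{eq7045} and \eqref{eq7047}, with $\tilde F_A = F_A\circ\tilde x_A \in C_0^1(\overline{\mathbb{R}^3_+})$,
\begin{equation*}
\int_{\Omega_A}\nabla\cdot F_A\,dx=\int_{\mathbb{R}^3_+}\Big(\partial_{y_1}\tilde F^A_1+\partial_{y_2}\tilde F^A_2-\varphi_1\partial_{y_3}\tilde F^A_1-\varphi_2\partial_{y_3}\tilde F^A_2+\partial_{y_3}\tilde F^A_3\Big)dy .
\end{equation*}
The terms $\partial_{y_\alpha}\tilde F^A_\alpha$ integrate to zero, by integrating in $y_\alpha$ first and using compact support. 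Since $\varphi_\alpha$ depends only on $y_h$, each $y_3$-derivative term integrates in $y_3\in(0,\infty)$ to minus its value at $y_3=0$. This gives
\begin{equation*}
-\int_{\mathbb{R}^2}\big(-\varphi_1\tilde F^A_1-\varphi_2\tilde F^A_2+\tilde F^A_3\big)\big|_{y_3=0}\,dy_h .
\end{equation*}
This equals $-\int_{\mathbb{R}^2}(F_A\cdot n)\circ\hat x_S\,\sqrt{G_S}\,dX$ by \eqref{eq71}. By Lemma \ref{lem71}(i) that is $-\int_\Gamma F_A\cdot n\,d\mathcal H^2_x$, since $\tilde x_A(y_h,0)=\hat x_S(y_h)$.

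\eqref{eq7083} is identical via Lemma \ref{lem79}. The only change is that the $z_3$-integration runs over $(-\infty,0)$, so the boundary value appears with a plus sign.

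For \eqref{eq7084} I use \eqref{eq77}: $\int_\Gamma\nabla_\Gamma\cdot F_S\,d\mathcal H^2=\int_{\mathbb{R}^2}g_S^\alpha\cdot\partial_{X_\alpha}\hat F_S\sqrt{G_S}\,dX$. The key step is to decompose $F_S=F_S^{\tan}+(F_S\cdot n)n$. For the tangential part, write $\hat F_S^{\tan}=a^\beta g^S_\beta$. Then $g_S^\alpha\cdot\partial_\alpha(a^\beta g^S_\beta)\sqrt{G_S}=\partial_\alpha(\sqrt{G_S}a^\alpha)$, using the standard Christoffel identity $g_S^\alpha\cdot\partial_\alpha g^S_\beta=\partial_\beta\sqrt{G_S}/\sqrt{G_S}$. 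This integrates to zero by compact support. For the normal part, $g_S^\alpha\cdot n=0$, so
\begin{equation*}
g_S^\alpha\cdot\partial_\alpha(f n)=f\,g_S^\alpha\cdot\partial_\alpha n=f\,{\rm div}_\Gamma n=-H_\Gamma f,
\end{equation*}
by \eqref{eq77} in pointwise form and Remark \ref{rem72}(iii). Integrating gives the claim.

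The main obstacle is this tangential computation, specifically verifying the identity for $g_S^\alpha\cdot\partial_\alpha g^S_\beta$. I would check it directly from the explicit formulas for $g^S_\beta$, $g_S^\alpha$ and $G_S$ in subsection \ref{subsec71}, where it reduces to an algebraic identity in $\varphi_\alpha,\varphi_{\alpha\beta}$. A technical point is that $a^\beta$ and $F_S\cdot n$ are only $C^1$ when $n$ is $C^1$. This holds because $\varphi\in BC^2$, so the decomposition is legitimate.
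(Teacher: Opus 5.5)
Your proof is correct. For \eqref{eq7082} and \eqref{eq7083} it is the paper's argument: pull back with unit Jacobian, use \eqref{eq7047}, integrate out the $y_\alpha$- and $y_3$-derivatives, and recognize $-\varphi_1\tilde F^A_1-\varphi_2\tilde F^A_2+\tilde F^A_3=\sqrt{G_S}\,(F_A\cdot n)$ on $y_3=0$.

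For \eqref{eq7084} you reach the same result by a different route. The paper does not split $F_S$. It integrates by parts in $X$ once, moving the derivative onto $\sqrt{G_S}\,g_S^\alpha$, and then asserts in one line the vector identity
\[
\frac{1}{\sqrt{G_S}}\frac{\partial}{\partial X_\alpha}\big(\sqrt{G_S}\,g_S^\alpha\big)=\hat H_\Gamma\, n ,
\]
which is the mean-curvature formula $\Delta_\Gamma x_j=H_\Gamma n_j$ for the coordinate functions.

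Your tangential/normal decomposition amounts to proving this identity componentwise. Its $g^S_\beta$-component is $\partial_\beta\log\sqrt{G_S}-g_S^\alpha\cdot\partial_\alpha g^S_\beta=0$, which is your contracted Christoffel identity. Its $n$-component is $-g_S^\alpha\cdot\partial_\alpha n=-{\rm div}_\Gamma n=H_\Gamma$, which is your normal computation.

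The paper's route is shorter and needs no decomposition. Yours obtains $H_\Gamma$ directly from the definition $H_\Gamma=-{\rm div}_\Gamma n$, so it also justifies the step the paper leaves unverified. The only extra cost is $C^1$ regularity of $a^\beta=\hat F_S\cdot g_S^\beta$ and $\hat F_S\cdot n$, which holds because $\varphi\in BC^2$, as you note.

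The step you flagged as the main obstacle is a one-line check. We have $\partial_\alpha g^S_\beta={}^t(0,0,\varphi_{\alpha\beta})$, and the third component of $g_S^\alpha$ is $\varphi_\alpha/G_S$. Hence $g_S^\alpha\cdot\partial_\alpha g^S_\beta=\varphi_\alpha\varphi_{\alpha\beta}/G_S=\partial_\beta G_S/(2G_S)=\partial_\beta\sqrt{G_S}/\sqrt{G_S}$.
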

\begin{proof}[Proof of Lemma \ref{lem7013}]
We first show \eqref{eq7082}. Fix $F_A = { }^t (F^A_1,F^A_2,F^A_3) \in C_0^1(\overline{\Omega_A})$. By the definition of $C_0^1(\overline{\Omega_A})$, there is $\Psi_A = { }^t(\Psi^A_1, \Psi^A_2, \Psi^A_3) \in C_0^1( \overline{\mathbb{R}^3_+})$ such that
\begin{equation*}
F_A = \mathcal{P}_A[\Psi_A].
\end{equation*}
Since $F_A (x) = \Psi_A (x_1,x_2, x_3 - \varphi (x_1,x_2) )$, we find that
\begin{equation*}
F_A (\tilde{x}_A (y)) = \Psi_A (y).
\end{equation*}
Using \eqref{eq7045}, \eqref{eq7047}, and the divergence theorem for half space $\mathbb{R}^3_+$, we check that
\begin{align}
\int_{\Omega_A} \nabla \cdot F_A { \ } dx & = \int_{\Omega_A} \nabla \cdot \mathcal{P}_A[\Psi_A] { \ } dx= \int_{\mathbb{R}^3_+} g_A^i \cdot \frac{\partial \Psi_A}{\partial y_i} { \ }dy\notag\\
& = \int_{\mathbb{R}^3_+} \bigg( \frac{\partial \Psi_1^A}{\partial y_1 } + \frac{\partial \Psi_2^A}{\partial y_2 } -\varphi_1 \frac{\partial \Psi_1^A}{\partial y_3 } - \varphi_2 \frac{\partial \Psi_2^A}{\partial y_3 } + \frac{\partial \Psi_3^A}{\partial y_3 }  \bigg) { \ }dy\notag\\
& = \int_{\mathbb{R}^2} \bigg( \frac{\varphi_1}{\sqrt{G_S}} \Psi_1^A + \frac{\varphi_2}{\sqrt{G_S}} \Psi_2^A - \frac{1}{\sqrt{G_S}} \Psi_3^A  \bigg) \bigg\vert_{y_3=0}\sqrt{G_S} { \ }dy_h.\label{eq7085}
\end{align}
By the definition of $\mathcal{P}_A$ and \eqref{eq71}, we find that
\begin{align}
- \int_{\Gamma} (n \cdot F_A ) { \ }d \mathcal{H}_x^2 &= \int_{\Gamma} (-n \cdot \mathcal{P}_A[\Psi_A] ) { \ }d \mathcal{H}_x^2\notag\\
&= \int_{\Gamma} (-n(x) \cdot \Psi_A(x_1,x_2,x_3 - \varphi (x_1,x_2)) ) { \ }d \mathcal{H}_x^2\notag\\    
& = \int_{\mathbb{R}^2} \bigg( \frac{\varphi_1}{\sqrt{G_S}} \Psi_1^A + \frac{\varphi_2}{\sqrt{G_S}} \Psi_2^A - \frac{1}{\sqrt{G_S}} \Psi_3^A  \bigg) \bigg\vert_{y_3=0}\sqrt{G_S} { \ }dy_h.\label{eq7086}
\end{align}
From \eqref{eq7085} and \eqref{eq7086}, we see \eqref{eq7082}. Similarly, we have \eqref{eq7083}.

Next, we derive \eqref{eq7084}. Fix $F_S = { }^t (F^S_1,F^S_2,F^S_3) \in C_0^1(\Gamma)$. By the definition of $C_0^1 (\Gamma)$, there is $\psi_S = { }^t(\Psi^S_1, \Psi^S_2, \Psi^S_3) \in C_0^1( \mathbb{R}^2)$ such that
\begin{equation*}
F_S = \mathcal{P}_S[\Psi_S].
\end{equation*}
By the definitions of $\mathcal{P}_S$ and $\hat{x}_S$, we find that
\begin{equation*}
F_S (\hat{x}_S (X)) = \Psi_S (X).
\end{equation*}
Applying \eqref{eq77} and integration by parts, we check that
\begin{align*}
\int_{\Gamma } {\rm{div}}_\Gamma F_S { \ }d \mathcal{H}_x^2 & = \int_{\Gamma } {\rm{div}}_\Gamma \mathcal{P}_S [\Psi_S] { \ }d \mathcal{H}_x^2 = \int_{\mathbb{R}^2} g_S^\alpha \cdot \frac{\partial \Psi_S}{\partial X_\alpha} \sqrt{G_S} { \ }d X\\
& = - \int_{\mathbb{R}^2} \bigg( \frac{\partial g_S^\alpha}{\partial X_\alpha} + \frac{g_S^\alpha}{2 G_S} \frac{\partial G_S}{\partial X_\alpha} \bigg) \cdot \Psi_S \sqrt{G_S} { \ }d X\\
& = - \int_{\mathbb{R}^2} \hat{H}_\Gamma \bigg( \frac{g_1^S \times g_2^S}{ \vert g_1^S \times g_2^S \vert} \bigg) \cdot \Psi_S \sqrt{G_S} { \ }d X\\
& = - \int_{\Gamma } H_\Gamma (n \cdot \mathcal{P}_S[\Psi_S]) { \ }d\mathcal{H}_x^2= - \int_{\Gamma } H_\Gamma (n \cdot F_S) { \ }d\mathcal{H}_x^2.
\end{align*}
Here $\hat{H}_\Gamma$ is \eqref{eq7012}. Thus, we see \eqref{eq7084}. Therefore, the lemma follows.
\end{proof}

Now we introduce several applications of Lemma \ref{lem7013}.
\begin{lemma}[Integration by parts formulas]\label{lem7014} For each $f_A, \phi_A \in C_0^1(\overline{\Omega_A})$, $f_B, \phi_B \in C_0^1(\overline{\Omega_B} )$, $f_S, \phi_S \in C_0^1(\Gamma )$, and $j \in \{ 1,2,3\}$,
\begin{align}
\int_{\Omega_A } f_A (\partial_j \phi_A) { \ }d x & = - \int_{\Omega_A } (\partial_j f_A) \phi_A { \ }d x - \int_\Gamma f_A \phi_A n_j { \ } d\mathcal{H}_x^2,\label{eq7087}\\
\int_{\Omega_B } f_B (\partial_j \phi_B) { \ }d x & = - \int_{\Omega_B } (\partial_j f_B) \phi_B { \ }d x + \int_\Gamma f_B \phi_B n_j { \ } d\mathcal{H}_x^2,\label{eq7088}\\
\int_{\Gamma } f_S (\partial_j^\Gamma \phi_S) { \ }d \mathcal{H}_x^2 &= - \int_{\Gamma } (\partial_j^\Gamma f_S) \phi_S { \ }d \mathcal{H}_x^2 - \int_\Gamma H_\Gamma f_S \phi_S n_j { \ } d\mathcal{H}_x^2.\label{eq7089}
\end{align}
\end{lemma}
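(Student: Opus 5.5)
The statement to prove is Lemma \ref{lem7014}. I would derive each formula by applying the divergence theorems of Lemma \ref{lem7013} to a product vector field.

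\textbf{Formula \eqref{eq7087}.} Fix $j$ and $f_A,\phi_A \in C_0^1(\overline{\Omega_A})$. Define $F_A := f_A \phi_A e_j$, where $e_j$ is the $j$-th standard basis vector.

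First I check that $F_A$ is admissible. Write $f_A = \mathcal{P}_A[\psi_A]$ and $\phi_A = \mathcal{P}_A[\chi_A]$ with $\psi_A,\chi_A \in C_0^1(\overline{\mathbb{R}^3_+})$. Then $f_A\phi_A = \mathcal{P}_A[\psi_A\chi_A]$, since the pullback is composition with a fixed map. Hence each component of $F_A$ lies in $C_0^1(\overline{\Omega_A})$.

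Next I compute both sides. The product rule gives $\nabla\cdot F_A = (\partial_j f_A)\phi_A + f_A \partial_j\phi_A$, and on $\Gamma$ we have $F_A\cdot n = f_A\phi_A n_j$. Substituting into \eqref{eq7082} and rearranging yields \eqref{eq7087}.

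\textbf{Formula \eqref{eq7088}.} This is the same computation using \eqref{eq7083}. The boundary term now enters with a plus sign.

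\textbf{Formula \eqref{eq7089}.} Take $F_S := f_S\phi_S e_j \in C_0^1(\Gamma)$; this is admissible because $\mathcal{P}_S[\psi_S\chi_S] = \mathcal{P}_S[\psi_S]\mathcal{P}_S[\chi_S]$. I will need the Leibniz rule for $\partial_j^\Gamma$. One way is to note that $\partial_j^\Gamma = \partial_j - n_j(n\cdot\nabla)$ is a first-order derivation. Alternatively, Remark \ref{rem72}(i) expresses $\partial_j^\Gamma$ as the first-order operator $g_S^{\alpha\beta}\partial_{X_\alpha}\hat x^S_j\,\partial_{X_\beta}$ on the parameter plane, which is again a derivation. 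Either way, $\nabla_\Gamma\cdot F_S = \partial_j^\Gamma(f_S\phi_S) = (\partial_j^\Gamma f_S)\phi_S + f_S\partial_j^\Gamma\phi_S$. Also $F_S\cdot n = f_S\phi_S n_j$. Inserting these into \eqref{eq7084} gives \eqref{eq7089}.

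\textbf{Main obstacle.} The only delicate point is interpreting $\partial_j^\Gamma$ for functions defined only on $\Gamma$, since $f_S$ and $\phi_S$ live on the surface. I would handle this through the $X$-coordinate representation of Lemma \ref{lem71} and Remark \ref{rem72}(i). In these coordinates $\partial_j^\Gamma f_S$ depends only on $\hat f_S$, so the Leibniz rule reduces to the ordinary product rule in $X$. The remaining steps are routine substitution.
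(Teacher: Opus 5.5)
Your proposal is correct and follows the paper's proof. The paper likewise applies the divergence theorem \eqref{eq7084} (and, for the domain formulas, \eqref{eq7082}, \eqref{eq7083}) to the field $f\phi\, e_j$ and uses the product rule, writing out only \eqref{eq7089} for $j=1$ and calling the other cases similar. Your additional checks — that the product of two pullbacks is the pullback of the product, and the Leibniz rule for $\partial_j^\Gamma$ via the parametrization $\hat{x}_S$ — spell out details the paper leaves implicit.
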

\begin{proof}[Proof of Lemma \ref{lem7014}]
We only show \eqref{eq7089} when $j=1$. Fix $f_S, \phi_S \in C_0^1(\Gamma )$. Set $F_S ={ }^t (f_S \phi_S ,0,0)$. Since $F_S \in C_0^1 (\Gamma)$, we use \eqref{eq7084} to find that
\begin{equation*}
\int_{\Gamma } (\partial_1^\Gamma f_S) \phi_S { \ }d \mathcal{H}_x^2 + \int_{\Gamma } f_S (\partial_1^\Gamma \phi_S) { \ }d \mathcal{H}_x^2  =  - \int_\Gamma H_\Gamma f_S \phi_S n_1 { \ } d\mathcal{H}_x^2.
\end{equation*}
Thus, we see \eqref{eq7089} when $j=1$. Other cases are similar. Therefore, the lemma follows.
\end{proof}

\begin{lemma}\label{lem7015}
For all $f_A \in C_0^2(\overline{\Omega_A})$, $\phi_A \in C_0^1(\overline{\Omega_A})$, $f_B \in C_0^2(\overline{\Omega_B} )$, $\phi_B \in C_0^1(\overline{\Omega_B} )$, $f_S \in C_0^2(\Gamma )$, $\phi_S \in C_0^1(\Gamma )$, and $\mu_A,\mu_B , \mu_S \in \mathbb{R}$,
\begin{align*}
-  \int_{\Omega_A} \mu_A \nabla f_A \cdot \nabla \phi_A { \ }d x & = \int_{\Omega_A} ( \mu_A \Delta f_A ) \phi_A { \ }d x +  \int_{\Gamma} \mu_A \frac{\partial f_A}{\partial n} \phi_A { \ }d \mathcal{H}_x^2,\\
-  \int_{\Omega_B} \mu_B \nabla f_B \cdot \nabla \phi_B { \ }d x & = \int_{\Omega_B} ( \mu_B \Delta f_B ) \phi_B { \ }d x -  \int_{\Gamma} \mu_B \frac{\partial f_B}{\partial n} \phi_B { \ }d \mathcal{H}_x^2,\\
-  \int_{\Gamma} \mu_S \nabla_\Gamma f_S \cdot \nabla_\Gamma \phi_S { \ }d \mathcal{H}_x^2 &= \int_{\Gamma} ( \mu_S \Delta_\Gamma f_S ) \phi_S { \ }d \mathcal{H}_x^2.
\end{align*}
Here $\partial f/{\partial n} := (n \cdot \nabla)f $.
\end{lemma}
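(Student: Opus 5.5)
The plan is to derive all three identities from Lemma \ref{lem7014} (integration by parts), applied componentwise with suitable choices of the two functions, and then sum over $j$.

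\textbf{Bulk identity on $\Omega_A$.} Fix $f_A \in C_0^2(\overline{\Omega_A})$ and $\phi_A \in C_0^1(\overline{\Omega_A})$. For each $j$, the function $\partial_j f_A$ lies in $C_0^1(\overline{\Omega_A})$. To check this, write $f_A = \mathcal{P}_A[\psi_A]$ with $\psi_A \in C_0^2(\overline{\mathbb{R}^3_+})$. The chain rule expresses $\partial_j f_A$ as $\mathcal{P}_A$ applied to a combination of $\partial_{y_i}\psi_A$ with coefficients $1$ and $-\varphi_\alpha$ (Remark \ref{rem76}(i)). Since $\varphi \in BC^2$, this combination is in $C_0^1(\overline{\mathbb{R}^3_+})$. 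Now apply \eqref{eq7087} with $f_A$ replaced by $\partial_j f_A$:
\begin{equation*}
\int_{\Omega_A} \partial_j f_A \, \partial_j \phi_A \, dx = - \int_{\Omega_A} (\partial_j^2 f_A) \phi_A \, dx - \int_\Gamma (\partial_j f_A)\phi_A n_j \, d\mathcal{H}^2_x .
\end{equation*}
Summing over $j$, the boundary term becomes $(n\cdot\nabla f_A)\phi_A = \frac{\partial f_A}{\partial n}\phi_A$. Multiplying by $-\mu_A$ gives the first identity. The $\Omega_B$ case is identical using \eqref{eq7088}; the opposite sign of the boundary term there produces the minus sign in front of $\int_\Gamma \mu_B \frac{\partial f_B}{\partial n}\phi_B$.

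\textbf{Surface identity.} Apply \eqref{eq7089} with $f_S$ replaced by $\partial_j^\Gamma f_S$, after checking that $\partial_j^\Gamma f_S \in C_0^1(\Gamma)$. This holds by Remark \ref{rem72}(i): the components are $\mathcal{P}_S$ of combinations of $\partial_{X_\beta}\psi_S$ with $C^1_b$ coefficients built from $g_S^{\alpha\beta}$ and $\varphi_\alpha$. The result is
\begin{equation*}
\int_\Gamma \partial_j^\Gamma f_S\, \partial_j^\Gamma \phi_S \, d\mathcal{H}_x^2 = -\int_\Gamma (\partial_j^\Gamma\partial_j^\Gamma f_S)\phi_S \, d\mathcal{H}_x^2 - \int_\Gamma H_\Gamma (\partial_j^\Gamma f_S) n_j \phi_S \, d\mathcal{H}_x^2 .
\end{equation*}
Summing over $j$ gives $\Delta_\Gamma f_S$ in the first term on the right. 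The curvature term involves $\sum_j n_j \partial_j^\Gamma f_S = n\cdot \nabla_\Gamma f_S$, which vanishes identically. Indeed, $n\cdot\nabla_\Gamma f = n\cdot\nabla f - |n|^2 (n\cdot\nabla) f = 0$ since $|n|=1$. Multiplying by $-\mu_S$ gives the third identity.

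\textbf{Main obstacle.} The expected difficulty is only a matter of bookkeeping. We must make sure that $\partial_j f$ and $\partial_j^\Gamma f_S$ genuinely lie in the $C_0^1$ classes defined through the pullbacks, so that Lemma \ref{lem7014} applies. In the surface case there is a further subtlety: $\partial_j^\Gamma f_S$ must be understood as a function on $\Gamma$ determined by $\hat f_S$ alone, which is what Remark \ref{rem72}(i) provides. With that settled, the tangentiality $n\cdot\nabla_\Gamma f_S=0$ is the only structural fact needed, and it is what kills the mean-curvature term.
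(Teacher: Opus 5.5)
Your proposal is correct and is essentially the paper's argument. The paper applies the divergence theorems of Lemma~\ref{lem7013} directly to the fields $\mu_A(\nabla f_A)\phi_A$, $\mu_B(\nabla f_B)\phi_B$, $\mu_S(\nabla_\Gamma f_S)\phi_S$. You instead apply the componentwise formulas of Lemma~\ref{lem7014} (themselves derived from Lemma~\ref{lem7013}) to $\partial_j f$ and $\partial_j^\Gamma f_S$ and sum over $j$, which by linearity is the same computation. In both versions the mean-curvature term vanishes because $n\cdot\nabla_\Gamma f_S=0$. Your check that $\partial_j f_A$ and $\partial_j^\Gamma f_S$ lie in the relevant $C_0^1$ classes is a detail the paper leaves implicit.
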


\begin{proof}[Proof of Lemma \ref{lem7015}]
Fix $f_A \in C_0^2(\overline{\Omega_A})$, $\phi_A \in C_0^1(\overline{\Omega_A})$, $f_B \in C_0^2(\overline{\Omega_B} )$, $\phi_B \in C_0^1(\overline{\Omega_B} )$, $f_S \in C_0^2(\Gamma )$, $\phi_S \in C_0^1(\Gamma )$, and $\mu_A,\mu_B , \mu_S \in \mathbb{R}$. Direct calculations give
\begin{align*}
\int_{\Omega_A} \nabla \cdot \{ \mu_A (\nabla f_A)\phi_A \} { \ }d x &= \int_{\Omega_A} ( \mu_A \Delta f_A ) \phi_A { \ }d x +  \int_{\Omega_A} \mu_A \nabla f_A \cdot \nabla \phi_A { \ }d x,\\
\int_{\Omega_B} \nabla \cdot \{ \mu_B (\nabla f_B)\phi_B \} { \ }d x &= \int_{\Omega_B} ( \mu_B \Delta f_B ) \phi_B { \ }d x +  \int_{\Omega_B} \mu_B \nabla f_B \cdot \nabla \phi_B { \ }d x,\\
\int_{\Gamma} \nabla_\Gamma \cdot \{ \mu_S (\nabla_\Gamma f_S)\phi_S \} { \ }d \mathcal{H}_x^2 &= \int_{\Gamma} ( \mu_S \Delta_\Gamma f_S ) \phi_S { \ }d \mathcal{H}_x^2 +  \int_{\Gamma} \mu_S \nabla_\Gamma f_S \cdot \nabla_\Gamma \phi_S { \ }d\mathcal{H}_x^2.
\end{align*}
Using \eqref{eq7082}, \eqref{eq7083}, and \eqref{eq7084}. we see that
\begin{align*}
- \int_{\Gamma} \mu_A \frac{\partial f_A}{\partial n} \phi_A { \ }d \mathcal{H}_x^2 =  \int_{\Omega_A} ( \mu_A \Delta f_A ) \phi_A { \ }d x + \int_{\Omega_A} \mu_A \nabla f_A \cdot \nabla \phi_A { \ }d x ,\\
 \int_{\Gamma} \mu_B \frac{\partial f_B}{\partial n} \phi_B { \ }d \mathcal{H}_x^2 = \int_{\Omega_B} ( \mu_B \Delta f_B ) \phi_B { \ }d x + \int_{\Omega_B} \mu_B \nabla f_B \cdot \nabla \phi_B { \ }d x ,
\end{align*}
and
\begin{equation*}
0 = \int_{\Gamma} ( \mu_S \Delta_\Gamma f_S ) \phi_S { \ }d \mathcal{H}_x^2 + \int_{\Gamma} \mu_S \nabla_\Gamma f_S \cdot \nabla_\Gamma \phi_S { \ }d \mathcal{H}_x^2.
\end{equation*}
Here we used the fact that $(n \cdot \nabla_\Gamma) f_S =0 $. Therefore, the lemma follows.
\end{proof}

Finally, we study representation formulas for the Neumann and Dirichlet conditions.
\begin{lemma}\label{lem7016}
$(\rm{i})$ Let $f_A \in C_0^2 (\overline{\Omega_A})$ and $\psi_A \in C_0^2(\overline{\mathbb{R}^3_+})$ such that $f_A = \mathcal{P}_A[\psi_A]$. Then
\begin{equation}
\int_{\Gamma} f_A { \ }d \mathcal{H}^2_x = \int_{\mathbb{R}^2} \psi_A \vert_{y_3=0} \sqrt{G_S}{ \ }d y_h,\label{eq7090}
\end{equation}
\begin{multline}\label{eq7091}
\int_{\Gamma} (n \cdot \nabla ) f_A { \ }d\mathcal{H}_x^2\\
 = \int_{\mathbb{R}^2} \bigg\{ - \frac{\varphi_1}{\sqrt{G_S}} \frac{\partial \psi_A }{\partial y_1} - \frac{\varphi_2}{\sqrt{G_S}} \frac{\partial \psi_A }{\partial y_2} +  \frac{ 1 +\varphi_1^2 + \varphi_2^2 }{\sqrt{G_S}} \frac{\partial \psi_A }{\partial y_3} \bigg\} \bigg\vert_{y_3 =0} \sqrt{G_S}{ \ }dy_h.
\end{multline}

$(\rm{ii})$ Let $f_B \in C_0^2 (\overline{\Omega_B})$ and $\psi_A \in C_0^2(\overline{\mathbb{R}^3_-})$ such that $f_B = \mathcal{P}_B[\psi_B]$. Then
\begin{align*}
& \int_{\Gamma} f_B { \ }d \mathcal{H}^2_x = \int_{\mathbb{R}^2} \psi_B \vert_{z_3=0} \sqrt{G_S}{ \ }d z_h,\\
& \int_{\Gamma} (n \cdot \nabla ) f_B { \ }d\mathcal{H}_x^2 = \int_{\mathbb{R}^2} \bigg\{ - \frac{\varphi_1}{\sqrt{G_S}} \frac{\partial \psi_B }{\partial z_1} - \frac{\varphi_2}{\sqrt{G_S}} \frac{\partial \psi_B }{\partial z_2} +  \frac{ 1 +\varphi_1^2 + \varphi_2^2 }{\sqrt{G_S}} \frac{\partial \psi_B }{\partial z_3} \bigg\} \bigg\vert_{z_3 =0}  \sqrt{G_S}{ \ }dz_h.
\end{align*}
\end{lemma}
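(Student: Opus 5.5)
The plan is to reduce both formulas to integrals over $\mathbb{R}^2$ through the parametrization $\hat{x}_S$ and the surface-measure formula of Lemma \ref{lem71}(i). First I prove them for $C_0^2$ data, where every trace is a genuine pointwise restriction. I only treat $(\rm{i})$; $(\rm{ii})$ is identical with $\check{x}_B$ and $z$ in place of $\tilde{x}_A$ and $y$.

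For \eqref{eq7090}, take $x=\hat{x}_S(X)=(X_1,X_2,\varphi(X))\in\Gamma$. By the definition of $\mathcal{P}_A$,
\begin{equation*}
f_A(\hat{x}_S(X))=\psi_A(X_1,X_2,\varphi(X)-\varphi(X))=\psi_A(X,0).
\end{equation*}
Inserting this into $\int_\Gamma f_A\,d\mathcal{H}^2_x=\int_{\mathbb{R}^2}\hat{f}_A\sqrt{G_S}\,dX$ from Lemma \ref{lem71}(i) gives \eqref{eq7090}. This is the same computation already used in \eqref{eq3023}.

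For \eqref{eq7091}, the key step is the chain rule for $f_A(x)=\psi_A(x_1,x_2,x_3-\varphi(x_h))$:
\begin{equation*}
\partial_1 f_A=\partial_{y_1}\psi_A-\varphi_1\partial_{y_3}\psi_A,\quad
\partial_2 f_A=\partial_{y_2}\psi_A-\varphi_2\partial_{y_3}\psi_A,\quad
\partial_3 f_A=\partial_{y_3}\psi_A,
\end{equation*}
all evaluated at $y=\tilde{x}_A^{-1}(x)$. This agrees with Remark \ref{rem76}(i). Contracting with $n$ from \eqref{eq71} gives
\begin{equation*}
(n\cdot\nabla)f_A=-\frac{\varphi_1}{\sqrt{G_S}}\partial_{y_1}\psi_A-\frac{\varphi_2}{\sqrt{G_S}}\partial_{y_2}\psi_A+\frac{1+\varphi_1^2+\varphi_2^2}{\sqrt{G_S}}\partial_{y_3}\psi_A .
\end{equation*}
The coefficient of $\partial_{y_3}\psi_A$ comes from $(\varphi_1^2+\varphi_2^2+1)/\sqrt{G_S}$. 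Since $n$ and the $\varphi_\alpha$ depend only on $x_h$, this expression is a $C^1$ function on $\overline{\Omega_A}$ and lies in $C_0^1(\overline{\Omega_A})$. On $\Gamma$ we have $y_3=0$, so applying \eqref{eq7090}, or again Lemma \ref{lem71}(i), to this function yields \eqref{eq7091}.

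I expect no real obstacle. The only care needed is the sign bookkeeping in the chain rule: the $-\varphi_\alpha\partial_{y_3}$ terms combine with $-\varphi_\alpha/\sqrt{G_S}$ to give the $+(\varphi_1^2+\varphi_2^2)$ contribution. One must also check that $\varphi\in BC^2$ keeps $(n\cdot\nabla)f_A$ regular enough to be restricted to $\Gamma$.
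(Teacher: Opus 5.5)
Your proof is correct. For \eqref{eq7090} it is the paper's argument: evaluate $\mathcal{P}_A[\psi_A]$ at $\hat{x}_S(X)$ and use the surface-measure formula.

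For \eqref{eq7091} you take a genuinely more direct route. You compute $(n\cdot\nabla)f_A$ pointwise by the chain rule and then apply the area formula to this compactly supported $C^1$ function.

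The paper instead computes $\int_{\Omega_A}\Delta f_A\,dx$ in two ways. First, the divergence theorem \eqref{eq7082} on $\Omega_A$ gives $-\int_\Gamma (n\cdot\nabla)f_A\,d\mathcal{H}^2_x$, see \eqref{eq7092}. Second, it flattens the integral with \eqref{eq7044} and \eqref{eq7049}, integrates the $\varphi_{11}$, $\varphi_{22}$ terms by parts in $y_h$, and applies the divergence theorem in $\mathbb{R}^3_+$, see \eqref{eq7093}. Comparing the two boundary integrals yields \eqref{eq7091}. The closing ``Note that'' in the paper's proof is essentially your chain-rule computation, offered as a consistency check.

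Your route has two advantages:
\begin{itemize}
\item It needs neither Lemma~\ref{lem7013} nor second derivatives of $\psi_A$; it already works for $\psi_A\in C_0^1(\overline{\mathbb{R}^3_+})$.
\item It proves more than the integrated statement, namely the pointwise identity $\bigl((n\cdot\nabla)f_A\bigr)(\hat{x}_S(X))=\{\cdots\}\vert_{y_3=0}(X)$. This is the form actually used later, e.g.\ in Proposition~\ref{prop3017}, where $\gamma_A[(n\cdot\nabla)f_A]$ is written as $\mathcal{P}_S$ of the trace of that combination, and in the boundary terms of $\mathring{\mathcal{F}}_3$. An equality of integrals alone would not justify those steps.
\end{itemize}
What the paper's route buys is a check that the flattened operator $\mathscr{L}_1$ and the flattened normal derivative are compatible with Green's formula.

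One cosmetic point: \eqref{eq7090} and the first formula of Lemma~\ref{lem71} are stated for $C^2$ functions, while $(n\cdot\nabla)f_A$ is only $C^1$. Their proof uses only continuity and compact support (it is the area formula for a graph), so applying them here is harmless.
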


\begin{proof}[Proof of Lemma \ref{lem7016}]

We only show \eqref{eq7090} and \eqref{eq7091}. Let $f_A \in C_0^2 (\overline{\Omega_A})$ and $\psi_A \in C_0^2(\overline{\mathbb{R}^3_+})$ such that $f_A = \mathcal{P}_A[\psi_A]$. By definition, we find that
\begin{align*}
\int_{\Gamma} f_A (x) { \ }d \mathcal{H}_x^2 & = \int_{\Gamma} \mathcal{P}_A[\psi_A](x) { \ }d \mathcal{H}_x^2\\
& = \int_{\mathbb{R}^2} \mathcal{P}_A[ \psi_A] (X_1,X_2, \varphi (X_1,X_2)) \sqrt{G_S} { \ }dX\\
 & = \int_{\mathbb{R}^2} \psi_A (X_1,X_2,0) \sqrt{G_S} { \ }dX\\
 &= \int_{\mathbb{R}^2} \psi_A \vert_{y_3=0} \sqrt{G_S} { \ }dy_h,
\end{align*}
which is \eqref{eq7090}. Since $\Delta f_A= \nabla \cdot \nabla f_A$, we use \eqref{eq7082} to have
\begin{equation}\label{eq7092}
\int_{\Omega_A} \Delta f_A { \ }dx = - \int_{\Gamma} \nabla f_A \cdot n { \ }d \mathcal{H}_x^2.
\end{equation}
Using \eqref{eq7044}, \eqref{eq7049} when $\kappa_A =1$, integration by parts, and the divergence theorem for half space $\mathbb{R}^3_+$, we see that
\begin{multline}\label{eq7093}
 \int_{\Omega_A } \Delta f_A { \ }dx= \int_{\Omega_A } \Delta \mathcal{P}_A[ \psi_A] { \ }dx\\
 =  \int_{\mathbb{R}^3_+}\Delta_y \psi_A - 2 \varphi_1 \frac{\partial^2 \psi_A}{\partial y_1 \partial y_3} - 2 \varphi_2 \frac{\partial^2 \psi_A}{\partial y_2 \partial y_3} + (\varphi_1^2 + \varphi_2^2) \frac{\partial^2 \psi_A}{\partial y_3^2 } - (\varphi_{11} + \varphi_{22}) \frac{\partial \psi_A}{\partial y_3} { \ } dy\\
 =  \int_{\mathbb{R}^3_+}\Delta_y \psi_A - \varphi_1 \frac{\partial^2 \psi_A}{\partial y_1 \partial y_3}- \varphi_2 \frac{\partial^2 \psi_A}{\partial y_2 \partial y_3} + (\varphi_1^2 + \varphi_2^2) \frac{\partial^2 \psi_A}{\partial y_3^2 } dy\\
  = \int_{\mathbb{R}^2} \bigg\{ \varphi_1 \frac{\partial \psi_A}{\partial y_1} + \varphi_2 \frac{\partial \psi_A}{\partial y_2} -  (1 + \varphi_1^2 + \varphi_2^2 ) \frac{\partial \psi_A}{\partial y_3} \bigg\} \bigg\vert_{y_3 =0} \frac{\sqrt{G_S}}{\sqrt{G_S}} { \ }dy_h.
\end{multline}
By \eqref{eq7092} and \eqref{eq7093}, we see \eqref{eq7091}. Note that
\begin{multline*}
\int_{\mathbb{R}^2} \bigg\{ \varphi_1 \frac{\partial \psi_A}{\partial y_1} + \varphi_2 \frac{\partial \psi_A}{\partial y_2} -  ( 1 + \varphi_1^2 + \varphi_2^2 ) \frac{\partial \psi_A}{\partial y_3} \bigg\}\bigg\vert_{y_3 =0} \frac{\sqrt{G_S}}{\sqrt{G_S}} { \ }dy_h\\
   = \int_{\mathbb{R}^2} (\varphi_1 , \varphi_2 , - 1) \cdot \bigg( \frac{\partial \psi_A}{\partial y_1} -\varphi_1 \frac{\partial \psi_A}{\partial y_3}, \frac{\partial \psi_A}{\partial y_2} - \varphi_2 \frac{\partial \psi_A}{\partial y_3},  \frac{\partial \psi_A}{\partial y_3} \bigg) \bigg\vert_{y_3 =0} \frac{\sqrt{G_S}}{\sqrt{G_S}} { \ }dy_h\\
= \int_{\Gamma} (- n \cdot \nabla ) \mathcal{P}_A[\psi_A] { \ }d\mathcal{H}_x^2 = - \int_{\Gamma} (n \cdot \nabla ) f_A { \ }d\mathcal{H}_x^2.
\end{multline*}
Therefore, the lemma follows.
\end{proof}

\begin{remark}\label{rem7017}
From Lemma \ref{lem7016}, we see that for all $f_A,f_B \in C^2 (\mathbb{R}^3)$,
\begin{equation*}
\int_{\Gamma} (n \cdot \nabla ) f_A { \ }d\mathcal{H}_x^2 = \int_{\mathbb{R}^2} \bigg\{ - \frac{\varphi_1}{\sqrt{G_S}} \frac{\partial \tilde{f}_A }{\partial y_1} - \frac{\varphi_2}{\sqrt{G_S}} \frac{\partial \tilde{f}_A }{\partial y_2} +  \frac{1 + \varphi_1^2 + \varphi_2^2 }{\sqrt{G_S}} \frac{\partial \tilde{f}_A }{\partial y_3} \bigg\} \bigg\vert_{y_3 =0} \sqrt{G_S}{ \ }dy_h,
\end{equation*}
and
\begin{equation*}
\int_{\Gamma} (n \cdot \nabla ) f_B { \ }d\mathcal{H}_x^2 = \int_{\mathbb{R}^2} \bigg\{ - \frac{\varphi_1}{\sqrt{G_S}} \frac{\partial \check{f}_B }{\partial z_1} - \frac{\varphi_2}{\sqrt{G_S}} \frac{\partial \check{f}_B }{\partial z_2} +  \frac{1 + \varphi_1^2 + \varphi_2^2 }{\sqrt{G_S}} \frac{\partial \check{f}_B }{\partial z_3} \bigg\} \bigg\vert_{z_3 =0}  \sqrt{G_S}{ \ }dz_h.
\end{equation*}
Here $\tilde{f}_A = \tilde{f}_A (y) = f_A (\tilde{x}_A (y))$ and $\check{f}_B = \check{f}_B(z) = f_B (\check{x}_B (z) )$.
\end{remark}

\end{document}